\let\oldvec\vec
\let\newvec\vec
\DeclareSymbolFont{cmcal}{OMS}{cmsy}{m}{n}
\DeclareSymbolFontAlphabet{\mathcal}{cmcal}
\let\vec\oldvec
\let\vec\newvec
\patchcmd{\@citeo}{\hskip0.1em}{\kern0.1em}{}{}
\patchcmd{\@citex}{\hskip0.1em}{\kern0.1em}{}{}
\setlist[itemize]{label=$\bullet$,labelindent=8pt,leftmargin=*}
\setlist[enumerate]{labelindent=8pt,leftmargin=*}
\newcommand{\mb}[1]{\bm{#1}}
\newcommand{\eq}[1]{Eq.\ (\ref{#1})}
\newcommand{\eqn}[2]{
\begin{equation}\label{#1}
#2
\end{equation}
}
\newcommand{\eqnalign}[2]{
	\begin{equation}
	\begin{aligned}\label{#1}
	#2
	\end{aligned}
	\end{equation}
}
\newcommand{\Fr}[2]{\dfrac{#1}{#2}}
\newcommand{\fr}[2]{{\tfrac{#1}{#2}}}
\renewcommand{\tilde}{\widetilde}
\renewcommand{\Im}{\operatorname{Im}}
\renewcommand{\a}{\alpha}
\renewcommand{\b}{\beta}
\renewcommand{\d}{\delta} 
\renewcommand{\k}{\kappa}
\renewcommand{\l}{\lambda} 
\renewcommand{\r}{\rho}
\renewcommand{\S}{\Sigma}
\renewcommand{\t}{\tau}
\newcommand{\trytodefine}[2]{\ifdefined#1 \renewcommand{#1}{#2} \else \newcommand{#1}{#2}\fi}
\trytodefine{\G}{\Gamma}
\trytodefine{\U}{\Upsilon}
\trytodefine{\C}{\mathbb{C}}
\newcommand{\rd}{\partial}
\newcommand{\pr}{\prime}
\newcommand{\ppr}{{\prime\prime}}
\newcommand{\Des}{\operatorname{\mathfrak{Des}}}
\DeclareMathOperator{\Diff}{Diff}
\DeclareMathOperator{\End}{End}
\DeclareMathOperator{\Aut}{Aut}
\DeclareMathOperator{\Ker}{Ker}
\DeclareMathOperator{\tr}{tr}
\newcommand{\ceq}{\vcentcolon\hspace{.5pt}=}
\newcommand{\category}[1]{\textbf{#1}}
\newcommand{\HProb}{PAlg}
\newcommand{\Prob}{P}
\newcommand{\homotopycat}{{\category{ho}}}
\newcommand{\homotopycategory}{\homotopycat}
\newcommand{\UsL}{UsL}
\newcommand{\padj}{homotopy}
\newcommand{\CorAname}{{ungraded correlation algebra}}
\newcommand{\CorAnames}{{ungraded correlation algebras}}
\newcommand{\GCCorAname}{{correlation algebra}}
\newcommand{\GCCorAnames}{{correlation algebras}}
\newcommand{\Algebraic}{Algebraic}
\newcommand{\algebraic}{algebraic}
\newcommand{\overk}{}
\newcommand{\fieldk}{\Bbbk}
\newcommand{\vk}{\varkappa}
\newcommand{\comprandvars}{{\bm{\mathcal{S}}}}
\newcommand{\Comm}{{\color{blue}C}}
\newcommand{\classical}{{\color{blue}cl}}
\newcommand{\nc}{{\color{blue}nc}}
\newcommand{\Lie}{{\color{blue}L_\infty}}
\newcommand{\binarycomm}{{\color{blue}bC}}
\newcommand{\e}{\varepsilon} \newcommand{\ep}{\epsilon}
\newcommand{\g}{\gamma}
\newcommand{\La}{\Lambda}
\newcommand{\m}{\mu}
\newcommand{\n}{\nu}
\newcommand{\vr}{\varrho}
\newcommand{\s}{\sigma} 
\newcommand{\w}{\varphi}
\newcommand{\R}{\mathbb{R}}
\newcommand{\Z}{\mathbb{Z}}
\newcommand{\mq}{\mathfrak{q}}
\newcommand{\ma}{\mathfrak{a}}
\newcommand{\mc}{\mathfrak{c}}
\newcommand{\mm}{\mathfrak{m}}
\newcommand{\mg}{\mathfrak{g}}
\newcommand{\mF}{\mathfrak{F}}
\newcommand{\sA}{\mathcal{A}}
\newcommand{\sC}{\mathcal{C}}
\newcommand{\sG}{\mathcal{G}}
\newcommand{\sM}{\mathcal{M}}
\newcommand{\sN}{\mathcal{N}}
\newcommand{\sP}{\mathcal{P}}
\newcommand{\sS}{\mathcal{S}}
\newcommand{\sV}{\mathcal{V}}
\newcommand{\dia}{\diamond}
\title{Homotopy Theory of Probability Spaces I}
\author{
Jae-Suk Park\thanks{
This work was supported by IBS-R003-D1.
}}
\institute{
Center for Geometry and Physics, Institute for Basic Science (IBS), Pohang 37673, Republic of Korea
\\
Department of Mathematics, POSTECH, Pohang 37673, Republic of Korea
}
\date{}
\begin{document}
\maketitle

\begin{abstract}
This is the first installment of a series of papers whose aim is to lay a foundation for homotopy probability theory by establishing its basic principles and practices. 
The notion of a homotopy probability space is an enrichment of the notion of an algebraic probability space with ideas from algebraic homotopy theory. This enrichment uses a characterization of the laws of random variables in a probability space in terms of symmetries of the expectation. The laws of random variables are reinterpreted as invariants of the homotopy types of  infinity morphisms between certain homotopy algebras. The relevant category of homotopy algebras is determined by the appropriate notion of independence for the underlying probability theory.
This theory will be both a natural generalization and an effective computational tool for the study of classical algebraic probability spaces, while keeping the same central limit.
This article is focused on the commutative case, where the laws of random variables are also described in terms of certain affinely flat structures on the formal moduli space of a naturally defined family attached to the given algebraic probability space. Non-commutative probability theories will be the main subject of the sequels.
\end{abstract}

\newpage

\setcounter{tocdepth}{3}
\tableofcontents
\parindent=0mm
\parskip=2mm

\section{Introduction}

Our purpose here is to arrange a marriage between probability and homotopy theory, more specifically an enrichment of the notion of a classical algebraic probability space with ideas from algebraic homotopy theory. The outcome of this will be a notion of \padj{} probability space wherein the laws of random variables are invariants of the homotopy types of certain $\infty$-morphisms. 
Commutative homotopy probability spaces will be both a generalization and an effective computational tool for the study of classical algebraic probability spaces, while keeping the same central limit theorem. 
A similar enrichment of the algebraically defined notion of a non-commutative probability space via algebraic homotopy theory also seems possible and will be the main subject in a sequel to this paper.

A probability theory, in general, refers to the study of probability spaces under a particular notion of independence. For example, a classical algebraic probability space over a field $\fieldk$ of characteristic zero is a unital associative and commutative algebra of random variables $A_{\classical}=\big(A, 1_A,\cdot\big)$ together with a unit-preserving linear map $\iota:A\rightarrow\fieldk$ called expectation. 
The target $\fieldk$ of the expectation morphism $\iota$ is itself a unital algebra but $\iota$ is not expected to preserve the algebra structure. In fact the deviation and higher deviations from $\iota$ being an algebra homomorphism measure the strength and depth of correlations between and among events thought of as singletons, pairs, triples, and so on.
The precise organizing data of this infinite hierarchy is called the {\em classical cumulant morphism} $\underline{\k}=\k_1, \k_2,\k_3,\dotsc$, where $\k_n$, for all $n\geq 1$, is a linear map from $S^n A$ to $\fieldk$, determined by the expectation morphism $\iota$ together with the multiplications in both $A_{\classical}$ and $\fieldk$ via the following combinatorial formula:

\eqn{ytry}{
\iota(x_1\dotsm x_n) 
=\sum_{\pi \in P(n)} \k\big(x_{B_1}\big)\dotsm\k\big( x_{B_{|\pi|}}\big),
}
where the sum is over all ``classical'' partitions $P(n)$ of the set $[n]= \{1,2,\dotsc,n\}$.
The $\k_1$ is the expectation $\iota$, $\k_2$ measures the failure of $\iota$ to be an algebra homomorphism, $\k_2(x,y) = \iota(x\cdot y)-\iota(x)\iota(y)$, and so on. 

On the other hand, the {\em moment morphism} $\underline{\m}=\m_1, \m_2,\m_3,\dotsc$, is a tuple of linear maps from $S^n A$ to $\fieldk$ for $n\geq 1$defined by $\m_n(x_1,\dotsc,x_n)\ceq \iota(x_1\dotsm x_n)$. Again, the moment morphism depends only on the multiplication in $A_{\classical}$ and on $\iota$.
Two random variables $x,y\in A$ are {\em classically independent} 
if $\k_n(x+y,\dotsc, x+y)= \k_n(x,\dotsc, x) +\k_n(y,\dotsc, y)$ for all $n\geq 1$. This implies that there are no correlations between $x$ and $y$ in the sense that their joint moment generating function can be factorized into the product of the moment generating functions for $x$ and for $y$.

In non-commutative probability theory \cite{Voi1,Voi2}, there are notions of cumulant morphisms -- free or boolean, each with a corresponding notion of independence defined by formulas that differ from \eq{ytry} only by replacing the classical partitions in the sum with non-crossing or interval partitions, respectively \cite{Sp,SW}. 

On the other hand, in algebraic homotopy theory, one deals with similar algebraic structures enriched by an underlying cochain complex. More specifically, one expects to study cochain maps that are algebra preserving morphisms {\em up to homotopy}. The eponymous homotopy is part of an infinite hierarchy of homotopies, a sequence of multilinear maps which together are combined into a package often called an $\infty$-morphism.

A cochain homotopy can similarly be considered as the first part of a different infinite hierarchy of homotopies, which upgrades the notion of cochain homotopy to $\infty$-homotopy of $\infty$-morphisms.\footnote{There is another infinite hierarchy which begins with $\infty$-morphisms, then $\infty$-homotopies, then $\infty$-homotopies of $\infty$-homotopies, and so on. This other hierarchy will not be considered in this paper.}
Homotopy associative algebras ($A_\infty$-algebras) and homotopy Lie algebras ($L_\infty$-algebras) together with the corresponding notions of $\infty$-morphism and $\infty$-homotopy are two famous examples of such enrichments, of associative algebras and Lie algebras respectively \cite{Sta63,SS}. 
In this paper and in a sequel, we shall encounter certain simple variants of $L_\infty$ and $A_\infty$-algebras called unital $sL_\infty$-algebra and unital $sA_{\infty}$-algebra, respectively.

These two ways of thinking about maps between algebras that are not algebra preserving can be combined, yielding the notion of a {\em homotopy probability space}, which is a natural cochain enhancement of a classical algebraic probability space. Then the organizing data of successive deviations from being an algebra morphism will be replaced by an $\infty$-morphism of homotopy algebras incorporating both the notions of algebra morphisms up to homotopy and independence. In this framework, the joint distribution (or law) of random variables is defined to be a certain invariant of the corresponding $\infty$-homotopy type. The commutative probability world with {\em classical} independence corroesponds to the {\em Lie} world of $\infty$-homotopy theory. The non-commutative probability world and {\em boolean} independence correspond to the {\em associative} world of $\infty$-homotopy theory. 
For the celebrated free probability theory of Voiculescu, which studies non-commutative probability spaces with {\em free} independence, there also seems to be a corresponding but hereto unknown $\infty$-homotopy theory,
which is under investigation.

Broadly speaking, this work is a spin-off from the author's program~\cite{P1,P} to characterize path integrals of quantum field theory in terms of the symmetries of the quantum expectation which should satisfy a certain coherence with a particular weight filtration generated by the Planck constant $\hbar$.  A similar idea will be adopted here in a simplified form,
without the ``$\hbar$-conditions,''
to study classical algebraic probability spaces after using infinitesimal symmetries of the expectation to enhance it to a homotopical version. In return, many results in this paper will be used as background materials in the forthcoming work on homotopy theory of quantum fields~\cite{P}.

A classical algebraic probability space $\xymatrix{A_{\classical}\ar[r]^\iota &\fieldk}$ generally comes with non-trivial {\em symmetries of the expectation morphism} $\iota$, due to the large size of its kernel. In fact, because the expectation morphism splits, there is always a faithful representation $\vr: \Aut_\fieldk(\ker\iota)\rightarrow \Aut_\fieldk(A)$ from the group of linear automorphisms of the kernel of $\iota$ into the group of linear automorphisms on $A$. This representation satisfies $\iota\circ \Im \vr =\iota$. It is not surprising that the corresponding space of coinvariants $A_{\Aut_\fieldk(\ker\iota)}\ceq A/DA$, where $DA$ is the submodule generated by elements of the form $\vr(g)(x) - x$, $g\in \Aut_\fieldk(\ker\iota)$ and $x \in A$, is isomorphic to $\fieldk$. However, this fact is even true for the coinvariants for very small subgroups of $\Aut_\fieldk(\ker\iota)$. In particular, we will discuss a natural $\Z_2$ subgroup of $\Aut_\fieldk(\ker\iota)$ (assuming that $\iota$ is not an isomorphism) such that $A_{\Z_2}$ is also ismorphic to $\fieldk$. The property $\iota\circ \Im \vr =\iota$ implies that $\iota$ factors into the quotient map into $A_{\Z_2}$ followed by a unique linear map from $A_{\Z_2}$ to $\fieldk$. 
We will, however, not use the fundamental symmetry, which completely determines the expectation morphism, but instead consider certain {\em infinitesimal symmetries}, which shall lead to a natural enrichment of the classical algebraic probability space to a homotopy probability space.

An infinitesimal symmetry of the classical algebraic probability space $\xymatrix{A_{\classical}\ar[r]^\iota &\fieldk}$ is defined to be a Lie algebra representation $\vr:\mg\rightarrow L\!\Diff_\fieldk(A)$, where $L\!\Diff_\fieldk(A)$ denotes the Lie algebra of linear algebraic differential operators on $A$, such that $\Im\vr(\mg) \in \Ker \iota$, i.e., $\iota\big(g.x \big)=0$ for all $g\in \mg$ and all $x\in A$ where $g.x \ceq \vr(g)(x)$. Then the expectation morphism $\iota$ uniquely factors through the coinvariants $A_\mg \ceq A/\mg. A$, which induces a linear map 
$\iota_\mg:A_\mg\rightarrow\fieldk$􏰆.
The map $\iota_\mg$ is unit-preserving in the sense that it takes the image of $1_{A}$ under the quotient map to $1$ in $\fieldk$. Now it is natural to wonder whether the structure of the classical algebraic probability space on $A$ can be used to induce the same kind of structure on $A_\mg$.

However, there is no natural assignment of an algebraic structure to $A_\mg$ from the algebra $A_{cl}$ unless $\mg. A \subset \Ker \iota$ is an ideal of $A$. This is the case if and only if the expectation morphism $\iota$ is an algebra homomorphism---meaning that there is absolutely no correlation. We have no interest in such cases. We shall resolve this difficulty with a little help from $sL_\infty$-homotopy theory together with critical revisions of some of the basic notions in classical algebraic probability theory. This will allow us to determine the law of random variables as a generating function of certain homotopy invariants of our revised structure.

One of our results is that there is the structure of a   \algebraic{} probability space on $A_\mg$, that is, $\xymatrix{\big(A_\mg\big)_{\Comm}\ar[r]^{\quad\iota_\mg} &\fieldk}$. \Algebraic{} probability spaces generalize classical algebraic probability spaces, and have corresponding notions of moments, cumulants and independence. The generalized theory shares the same central limit, {\em the Gaussian distribution}, with classical algebraic probability space. In a \algebraic{} probability space, the unital commutative and associative algebra of a classical algebraic probability space is replaced with a commutative {\em \CorAname{}}. In an \CorAname{}, the binary product is replaced with an (possibly infinite) sequence $\underline{m}=m_2,m_3,\dotsc$ of $n$-ary maps and one recovers the classical case when $m_n=0$ for all $n\geq 3$.

The structure of a \algebraic{} probability space on $A_\mg$ will be particularly useful if the infinitesimal symmetry is ``large enough'' so that $A_\mg$ is finite dimensional. Then we can determine all of the laws of random variables in the original classical algebraic probability space up to finite ambiguities. Denote by $\big(A_\mg\big)_{\Comm}=\big(A_\mg, 1_{A_\mg}, \underline{m}^{A_\mg}\big)$ the structure of an \CorAname{} on $A_\mg$. This structure is equivalent to a formal {\em torsion-free flat affine connection} $\nabla$ on the tangent space $T_o \sM\simeq A_\mg$ of the formal based manifold $\sM$, whose algebra of functions is the topological algebra $\fieldk[\![t_{A_\mg}]\!]\simeq\widehat{S(A_\mg^*)}$. The connection $\nabla$ defines distinguished {\em affine flat coordinates} on $\sM$ at the base point in formal power series $\fieldk[\![t_{A_\mg}]\!]$ such that the moment generating function associated to $A_\mg$ is determined by the affinely flat coordinates up to a finite number of unknowns corresponding to the expectation values of elements in a basis of $A_\mg$. In our context, in fact, there is a bijective correspondence between such formal torsion free flat connections and choices of affine flat coordinates, which we will exploit to simplify our definitions.

I am deeply indebted to Dennis Sullivan for his persisting op. cited question to me as well as for many in-depth discussions and comments during the last decade.
This article, which is dedicated to Sullivan's \nth{70} birthday, is a refined version of my Nov.\ 2, 2011 Einstein Chair Mathematics Seminar at the Graduate Center of CUNY \cite{P0}, where the notion of homotopy probability theory was first introduced and a comprehensive program toward its foundation and possible applications were sketched.
Some parts of the program have been already turned into the paper \cite{PP}, which includes a completely worked out non-trivial application of binary \padj{} probability spaces, with Jeehoon Park, and two short papers \cite{DPT1,DPT2} which have adopted an operadic viewpoint, with Gabriel C. Drummond-Cole and John Terilla. I am grateful to my collaborators for making those accounts available. I am also grateful to Cheolhyun Cho and Calin Lazaroiu for reading the draft 
and making useful comments.
I would also like to thank  Gabriel C. Drummond-Cole for his invaluable service for editing my messy first draft of mine, which has greatly improved and clarified the contents of
this paper.

\section{Summary}

The notion of a commutative homotopy probability space is the natural cochain enrichment of a \algebraic{} probability space. To begin with, we introduce the graded version of an \CorAname{}. 
\begin{definition}
A {\em \GCCorAname{}} is a pair $(V, \underline{M})$, where 
\begin{enumerate}
\item $V$ is a pointed graded vector space and  
\item $\underline{M}=M_2, M_3, \dotsc$ is a family of linear maps $M_n:S^n V\rightarrow V$ of degree $0$ for all $n\geq 0$.
\end{enumerate}
To be precise we should call this a {\em commutative} \GCCorAname{} because there is a corresponding notion of noncommutative \GCCorAname{} appropriate for non-commutative homotopy probability. However, as this paper is written entirely from the commutative point of view, we omit the word commutative throughout.

This data is required to satisfy the condition that $M_{n+1}\big(v_1,\dotsc, v_n, 1_V\big)=M_n\big(v_1,\dotsc, v_n\big)$ for all $n\geq 1$.
\end{definition}
There is a more complicated but equivalent definition in terms of products $m_n:V^{\otimes n}\to V$ which will be hinted at later.

This definition looks quite general, but here are some key examples:

\begin{itemize}
\item[--]A unital graded commutative associative algebra $(V,1_V,\cdot)$ is a \GCCorAname{} with $M_n$ being iterated multiplication using $\cdot$. Using $\underline{m}$ we instead get $m_2 = \cdot$ and $m_n=0$ for $n>2$. This correspondence is bijective. In this special case, if the degree of $V$ is concentrated to zero, we have a unital commutative associative algebra.
\item[--]If $V$ is finite dimensional, the family $\underline{m}$ corresponds to a torsion-free and flat graded affine connection in formal power series on the tangent bundle of a formal based supermanifold and the family $\underline{M}$ corresponds to flat affine coordinates in formal power series, both subject to a unit condition.
\end{itemize}
Then we can define homotopy probability algebras.
\begin{definition}
A {\em commutative homotopy probability algebra} on a graded vector space $\sC$ is a quartet $\sC_{\Comm}=\big(\sC, 1_\sC, \underline{M}, K\big)$, where the trio $\big(\sC, 1_\sC, \underline{M}\big)$ is a \GCCorAname{} and the trio $\big(\sC, 1_\sC, K\big)$ is a pointed cochain complex. 
A morphism of commutative homotopy probability algebras is a morphism of their underlying pointed cochain complexes. 
\end{definition}

Commutative homotopy probability algebras form a category, denoted by $\category{\HProb}_{\Comm}{\overk}$. 
Objects of its homotopy category $\homotopycat\category{\HProb}_{\Comm}{\overk}$ are those of $\category{\HProb}_{\Comm}{\overk}$, while morphisms are homotopy classes of pointed cochain maps. 
Note that the ground field $\fieldk$ is the initial object in both $\category{\HProb}_{\Comm}{\overk}$ and $\homotopycat\category{\HProb}_{\Comm}{\overk}$. 

We generalize the idea of a classical algebraic probability space $\xymatrix{A_{\classical}\ar[r]^\iota &\fieldk}$ to an object $\sC_{\Comm}$ with a morphism $[\mc]$ to the initial object $\fieldk$ in the homotopy category $\homotopycat\category{\HProb}_{\Comm}{\overk}$:
\[
\xymatrix{\sC_{\Comm}\ar[r]^{[\mc]} &\fieldk}.
\]
In this paper, we will only consider the commutative case, so for ease we will refer to commutative homotopy probability algebras and commutative homotopy probability spaces as merely {\em probability algebras} and {\em probability spaces}, reserving the modifier {\em classical} for the ordinary unrefined notions.

In practice, we study a \padj{} probability space in the category $\category{\HProb}_{\binarycomm}{\overk}$ by choosing a representative $\mc$ of the homotopy type $[\mc]$;
\[
\xymatrix{\sC_{\Comm}\ar[r]^\mc &\fieldk},
\] 
so that $\mc$ is a linear map of degree $0$ from $\sC$ to $\fieldk$, satisfying $\mc(1_\sC)=1$ and $\mc\circ K =0$. The morphism $\mc$ shall be called the expectation and is defined up to homotopy, so we should consider structures and quantities that are invariants of homotopy types of expectation morphisms.

A \padj{} probability algebra where $M_2$ is a commutative associative product and $M_n$ for all $n\geq 2$ is repeated multiplication using $M_2$, is called binary.

The (homotopy) category of binary \padj{} probability algebras is a full subcategory of the (homotopy) category of \padj{} probability algebras. A \padj{} probability space with binary \padj{} probability algebra is called a binary \padj{} probability space. For example, from the classical algebraic probability space $\xymatrix{A_{\classical} \ar[r]^\iota &\fieldk}$ with an infinitesimal symmetry $\vr:\mg\rightarrow \hbox{Diff}_\fieldk(A)$, we obtain a binary \padj{} probability spac $\xymatrix{\sA_{\binarycomm}\ar[r]^\mc & \fieldk}$ concentrated in non-positive degree as follows:

\begin{itemize}
\item
The binary \padj{} probability algebra $\sA_{\binarycomm}=(\sA, 1_\sA, \cdot, K)$ is obtained from the Koszul complex $\big(\sA = A\otimes S(\mg[-1]), K\big)$ for Lie algebra {\em homology} of the $\mg$-module $A$ \cite{Kos}, after changing the sign of the degree to make this a cochain complex. The zeroth cohomology $H^0$ is isomorphic to the coinvariants $A_\mg$;

\item
The pointed cochain map $\mc: (\sA, 1_A, K)\rightarrow (\fieldk, 1, 0)$ is defined to be $\mc=\iota$ on $\sA^0=A$ and zero in all other degrees.
\end{itemize}

To facilitate the description of the classical cumulant morphism $\underline{\k}$, we construct a family of functors $\Des_\La$ from the category $\category{\HProb}_{\Comm}{\overk}$ to the category $\category{\UsL}_\infty{\overk}$ of unital $sL_\infty$-algebras such that $\underline{\k}=\Des_\La(\iota)$ and every $\Des_\La$ induces a well-defined functor $\text{ho}\!\Des$ at the level of homotopy categories  [{\bf Theorem \ref{maina}}]. On a particular \padj{} probability algebra, two different functors in the family $\Des_\La$ assign the same unital $sL_\infty$-algebra [{\bf Definition \ref{GDesA}}]. On a particular morphism of \padj{} probability algebras, two different functors in the family $\Des_\La$ assign homotopic unital $sL_\infty$-morphisms [{\bf Definition \ref{GDesM}}] whose homotopy type only depends on the homotopy type of the morphism of \padj{} probability algebras. In particular the ground field $\fieldk$ after forgetting the multiplication is a unital $sL_\infty$-algebra $\fieldk=(\fieldk, 1,\underline{0})$, which is the initial object in both $\category{\UsL}_\infty{\overk}$ and $\homotopycat\category{\UsL}_\infty{\overk}$, and $\Des_\La \big(\fieldk\big)=\fieldk=\text{ho}\!\Des\big(\fieldk\big)$.

In the full sub-category of binary \padj{} probability algebras, for example, neither the differentials in objects nor the morphisms between objects are required to be compatible with the binary product. In particular, the differential may fail to be a derivation of the product and a morphism may fail to be an algebra map even up to homotopy. In fact these failures are essential in capturing {\em non-trivial correlations} among random variables algebraically. We measure these failures and successive failures up to homotopy by adopting the organizing principle for the notion of independence in classical algebraic probability theory to construct the family of functors $\Des_\La: \category{\HProb}_{\binarycomm}{\overk}\Longrightarrow \category{\UsL}_\infty{\overk}$ [{\bf Definition \ref{bdesc}}].

The extension of the notion of random variables to the homotopical setting is as follows. A finite set of random variables in a classical algebraic probability space is replaced with a finite dimensional space $\sV$ of homotopical random variables. The space $\sV$ is defined to be a finite dimensional graded vector space $V$, considered as an $sL_\infty$-algebra $(V, \underline{0})$ with zero $sL_\infty$-structure, together with a representative $sL_\infty$-morphism $\underline{\w}^V$ in a fixed homotopy type $[\sV]$ into the $sL_\infty$-algebra $\big(\sC,\underline{\ell}^K\big)$.

Note the following reformulation. Let $V$ be a graded vector space and $S(V)=V\oplus S^2 V\oplus \cdots$ be its reduced symmetric product. Regard $V$ as an $sL_\infty$-algebra $(V,\underline{0})$ with zero $sL_\infty$-structure and $S(V)$ as a cochain complex with zero differential. Then, for each $sL_\infty$-morphism $\underline{\w}:(V,\underline{0}) \rightarrow (\sC,\underline{\ell}^K)$ we construct a cochain map $\Pi^{\underline{\w}}:(S(V),0)\rightarrow (\sC,K)$ such that the cochain homotopy type of $\Pi^{\underline{\w}}$ depends only on the $sL_\infty$-homotopy type of $\underline{\w}$. Conversely, for each cochain map $\Pi:(S(V),0)\rightarrow (\sC,K)$ we construct an $sL_\infty$-morphism $\underline{\w}^\Pi:(V,\underline{0}) \rightarrow (\sC, \underline{\ell}^K)$ such that the $sL_\infty$-homotopy type of $\underline{\w}^\Pi$ depends only on the cochain homotopy type of $\Pi$. In both constructions we use the \GCCorAname{} structure on $\sC$ and rely on fact that the target $sL_\infty$-algebra $(\sC, \underline{\ell}^K)$ is in the image of the functor $\Des_\La$. In particular, these constructions are not merely the natural correspondence between $sL_\infty$-algebras and morphisms on the one hand and differentials on symmetric coalgebras and differential graded coalgebra morphisms on the other.

Write the image of $\mc$ under the functor $\Des_\La$ as $\underline{\phi}^{\mc,\underline{\La}}$. Then, we can consider the following two compositions in the category of $sL_\infty$-algebras and the category of cochain complexes, respectively:
\[
\xymatrix{\big(V,\underline{0}\big)\ar@{..>}[r]^{\underline{\w}^V}
\ar@{.>}@/_1.5pc/[rr]_{\underline{\k}^\sV\ceq \underline{\phi}^{\mc,\underline{\La}}\bullet \underline{\w}^V}
&\big(\sC, \underline{\ell}^K\big) \ar@{..>}[r]^{\underline{\phi}^{\mc,\underline{\La}}}
&(\fieldk,\underline{0})}
\quad\Big|\quad
\xymatrix{
\big(S(V),0\big)\ar[r]^{\Pi^{\underline{\w}^V}} 
\ar@{.>}@/_2pc/[rr]_{\qquad\underline{\m}^\sV = \mc\circ \Pi^{\underline{\w}^V}}
&\big(\sC, K\big) \ar[r]^\mc
& (\fieldk,0)}.
\]
These compositions, called the {\em cumulant morphism} and the {\em moment morphism} respectively, satisfy the following properties:
\begin{enumerate}
\item Both the cumulant and moment morphism depend only on the homotopy type $[\sV]$ of the $sL_\infty$-morphism $\underline{\w}^V$ and the homotopy type $[\mc]$ of the cochain map $\mc$.
\item
The moment and cumulant morphisms $\underline{\m}^\sV$ and $\underline{\k}^\sV$ are related with each other by the following equations for all $n\geq 1$ and $v_1,\cdots, v_n \in V$:
\[
\m^{\sV}(v_1,\dotsc,v_n) 
\ceq \sum_{\pi \in P(n)} \ep(\pi)\k^{\sV}\big(v_{B_1}\big)\dotsm\k^{\sV}\big( v_{B_{|\pi|}}\big),
\]
where $\ep(\pi)$ is a sign factor. 
\end{enumerate}

We should remark that our approach can be interpreted as a powerful computational tool to determine the law of homotopical random variables. For example, the cumulant morphism $\underline{\k}^{\sV}=\underline{\w}^{\mc,\underline{\La}}\bullet \underline{\w}^V$ can be computed by choosing the simplest choice of representative $\underline{\w}^V$ among $sL_\infty$-morphisms of the homotopy type $[\sV]$. We say a space $\sV$ of homotopical random variables is homotopically completely integrable if there is a representative $sL_\infty$-morphism $\underline{\w}^V$ such that $\w_n^V = \vk_n\cdot 1_\sC$, for all $n\geq 1$, where $\vk_n: S^n V\rightarrow \fieldk$ is a linear map of degree $0$. Then, we show that $\k^\sV_n = \vk_n$, for all $n\geq 1$. We say a probability space is homotopically completely integrable if every space $\sV$ of homotopical random variables is homotopically completely integrable. 
\begin{theorem}
A \padj{} probability space such that every degree zero element with zero expectation belongs to the image of the differential is homotopically completely integrable.
\end{theorem}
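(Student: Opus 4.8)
The plan is to prove the statement by induction on the arity $n$, normalizing a representative $sL_\infty$-morphism $\underline{\w}^V$ one Taylor component at a time so that, after the $n$-th step, $\w_1=\vk_1\cdot 1_\sC,\dotsc,\w_n=\vk_n\cdot 1_\sC$ with the scalars $\vk_m\ceq\mc\circ\w_m$. Two structural inputs drive everything. First, because the source carries the zero $sL_\infty$-structure $(V,\underline{0})$, the $sL_\infty$-morphism equations collapse to the recursion
\[
K\,\w_n \;=\; -\sum_{k\ge 2}\frac{1}{k!}\sum_{\substack{n_1+\dots+n_k=n\\ n_i\ge 1}}\pm\,\ell_k^K\!\big(\w_{n_1},\dotsc,\w_{n_k}\big),
\]
whose right-hand side involves only the lower components $\w_{n_i}$ with $n_i<n$. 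Second, the target $(\sC,\underline{\ell}^K)$ lies in the image of $\Des_\La$ and is a \emph{unital} $sL_\infty$-algebra, so every bracket $\ell_k^K$ with $k\ge 2$ vanishes as soon as one of its arguments is the unit $1_\sC$.

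The inductive step then runs as follows. Assuming $\w_1,\dotsc,\w_{n-1}$ have been normalized to multiples of $1_\sC$, each bracket $\ell_k^K(\w_{n_1},\dotsc,\w_{n_k})$ in the recursion has $k\ge 2$ and hence at least one argument proportional to $1_\sC$, so it vanishes by unitality; therefore $K\,\w_n=0$ and $\w_n$ is cocycle-valued. I would set $\vk_n\ceq\mc\circ\w_n$ and form the error $E_n\ceq\w_n-\vk_n\cdot 1_\sC$, which is a cocycle (both $\w_n$ and $1_\sC$ are) and satisfies $\mc\circ E_n=\mc\circ\w_n-\vk_n\,\mc(1_\sC)=0$. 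Next I would examine the arity-$n$ part of the $sL_\infty$-homotopy generated by a degree $-1$ parameter $\underline{\xi}$: its leading term is $K\xi_n$, while every remaining term is a bracket $\ell_k^K$ ($k\ge 2$) fed with the already-normalized $\w_{<n}$ and lower $\xi_{<n}$, and again vanishes by unitality. Thus the homotopy acts at arity $n$ simply by $\w_n\mapsto\w_n+K\xi_n$, so $\w_n$ can be normalized to $\vk_n\cdot 1_\sC$ exactly when $E_n$ is a coboundary. This is where the hypothesis enters: $E_n$ is a degree-zero cocycle of zero expectation, hence $E_n\in\Im K$ by assumption, and solving $E_n=K\xi_n$ pointwise produces the homotopy. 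Because the recursion is triangular in $n$ the successive homotopies compose, yielding a representative with $\w_n=\vk_n\cdot 1_\sC$ for all $n$; feeding it into $\underline{\k}^\sV=\underline{\phi}^{\mc,\underline{\La}}\bullet\underline{\w}^V$ and using unitality of $\underline{\phi}^{\mc,\underline{\La}}$ collapses the composition to $\k_n^\sV=\mc\circ\w_n=\vk_n$, giving the promised conclusion $\k_n^\sV=\vk_n$.

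The hard part is the verification that $E_n$ is genuinely a \emph{degree-zero} cocycle of zero expectation, so that the hypothesis—which only controls degree-zero elements—may be invoked. Zero expectation is forced by the choice $\vk_n=\mc\circ\w_n$, and the cocycle property follows from the collapsed morphism equation; the delicate issue is the internal degree. Since $\mc$ annihilates $\sC^{j}$ for $j\ne 0$, on inputs of nonzero internal degree $E_n$ reduces to $\w_n$ itself, and removing it would require exactness of cocycles in nonzero degree, which the hypothesis does not provide. For random variables concentrated in degree zero—the case reproducing classical random variables—$S^n V$ is concentrated in degree zero, $E_n$ lands entirely in $\sC^0$, and the hypothesis applies directly; this is the essential content of the theorem, and the general graded case must be read against the corresponding vanishing off degree zero. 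The remaining obstacles are bookkeeping: fixing the signs $\pm$ in the recursion and the precise homotopy action, and confirming that the normalized family is again a bona fide $sL_\infty$-morphism, which holds because each $\w_n=\vk_n\cdot 1_\sC$ is cocycle-valued and all higher brackets vanish on units.
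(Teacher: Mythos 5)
Your proof is correct and follows the same strategy as the paper's: induct on the arity $n$, use unitality of the descendant brackets $\ell^K_k$ to kill every higher-bracket term once the lower Taylor components are multiples of $1_\sC$, and use the non-degeneracy hypothesis to write the resulting degree-zero, expectation-zero cocycle as $K\xi_n$ and flow it away. The differences are ones of execution. The paper constructs a single homotopy flow $\big(\underline{\Phi}(\t),\underline{\eta}(\t)\big)$ by one induction on arity, choosing the generator at arity $n+1$ to be $\eta_{n+1}=\xi(\t)_{n+1}+\varsigma_{n+1}$, where $\w_{n+1}=\vk_{n+1}\cdot 1_\sC-K\varsigma_{n+1}$ absorbs the initial deviation and the lower-order bracket source $L(\t)_{n+1}$ is decomposed as $\upsilon(\t)_{n+1}\cdot 1_\sC-K\xi(\t)_{n+1}$ by non-degeneracy; you instead compose one single-arity homotopy per stage, which is fine but deserves the explicit remark that stage $n$ leaves all arities $<n$ untouched (its generator vanishes there, so those components have zero flow) while each arity stabilizes after finitely many stages, so the infinite composition is legitimate. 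More substantively, where you derive $K\w_n=0$ from the collapsed morphism equation after normalizing the lower components, the paper simply asserts at the outset that ``by assumption $\Im\w_n\subset\sC^0\subset\Ker K$''; for a graded $V$ the first inclusion does not follow from anything, and it is exactly the degree problem you isolate. Non-degeneracy only forces $H^0=\fieldk\cdot 1_H$; if $H^j\neq 0$ for some $j\neq 0$, a one-dimensional space $\sV$ in degree $j$ whose $\w_1$ hits a nontrivial class can never be normalized to $\vk_1\cdot 1_\sC$, since at arity one a homotopy changes $\w_1$ only by $K$-exact terms. So the theorem must indeed be read, as you say, with the random variables concentrated in degree zero or with the off-degree-zero components trivial in cohomology. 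The point you call ``the hard part'' is thus not a defect of your argument relative to the paper's: the paper's proof elides the same point, and your explicit scoping of it is the more careful treatment.
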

For example a binary probability space obtained from a classical algebraic probability space equipped with an infinitesimal symmetry whose coinvariants are one-dimensional is homotopically completely integrable.

We show that the unital $sL_\infty$-algebra $\sC_{\Lie}$ is always formal; in fact it is quasi-isomorphic to the unital $sL_\infty$-algebra $H_{\Lie}=(H, 1_H,\underline{0})$ with zero $sL_\infty$-structure $\underline{0}$ on the cohomology $H$ [{\bf Theorem \ref{lemc}}]. This allows us to define a complete space $\comprandvars$ of homotopical random variables as an object $\sS_{\Lie}=(\sS, 1_\sS, \underline{0})$ with zero $sL_\infty$-structure $\underline{0}$ together with a quasi-isomorphism $\underline{\w}^\sS$ from the fixed homotopy type $[\comprandvars]$ to the unital $sL_\infty$-algebra $\sC_{\Lie}$.
It follows that the underlying graded vector space $\sS$ of $\comprandvars$ is isomorphic to the cohomology $H$ but $\comprandvars$ is equipped with further canonical structure depending only on the homotopy type of the $sL_\infty$-quasi-isomorphism to $\sC_{\Lie}$. 
\begin{theorem}
On a complete space $\comprandvars$ of homotopical random variables, there is a canonical \GCCorAname{} structure $\big(\sS, 1_\sS, \underline{M}^\comprandvars\big)$ and a unital linear map $\iota_\comprandvars: \sS \rightarrow \fieldk$ satisfying
\begin{align*}
\iota_\sS\left(M^\comprandvars_n(s_1,\dotsc, s_n)\right)&=\m^\comprandvars_n(s_1,\dotsc, s_n)=
\sum_{\pi \in P(n)}\ep(\pi)\k^\comprandvars\big(s_{B_1}\big)\cdots \k^\comprandvars\big(s_{B_{|\pi|}}\big).
\end{align*}
\end{theorem}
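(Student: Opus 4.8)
The plan is to produce both pieces of data---the expectation $\iota_\comprandvars$ and the maps $\underline{M}^\comprandvars$---directly from the defining quasi-isomorphism $\underline{\w}^\sS$ of the complete space, and then to read off the two displayed equalities from properties already in hand. I would set $\iota_\comprandvars$ to be the map induced by $\mc$ on cohomology under $\sS\simeq H$, concretely $\iota_\comprandvars\ceq\mc\circ\w_1^\sS$; since $\underline{\w}^\sS$ is unital we have $\w_1^\sS(1_\sS)=1_\sC$, hence $\iota_\comprandvars(1_\sS)=\mc(1_\sC)=1$ and $\iota_\comprandvars$ is unital. The key structural observation is that the cochain map $\Pi^{\underline{\w}^\sS}\colon(S(\sS),0)\to(\sC,K)$ attached to $\underline{\w}^\sS$ satisfies $K\circ\Pi^{\underline{\w}^\sS}=\Pi^{\underline{\w}^\sS}\circ 0=0$, so its entire image lies in the $K$-cocycles; in particular every $\Pi^{\underline{\w}^\sS}(s_1\cdots s_n)$ has a well-defined cohomology class in $H\simeq\sS$.

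With this in hand I would \emph{define} the \GCCorAname{} structure by
\[
M^\comprandvars_n(s_1,\dotsc,s_n)\ceq\big[\,\Pi^{\underline{\w}^\sS}(s_1\cdots s_n)\,\big]\in H\simeq\sS,
\]
a family of degree-zero symmetric maps $S^n\sS\to\sS$ (with $M^\comprandvars_1=\operatorname{id}$ recovered from $\w_1^\sS$). Canonicity is immediate from the established fact that the cochain homotopy type of $\Pi^{\underline{\w}^\sS}$ depends only on the $sL_\infty$-homotopy type $[\comprandvars]$: if $\Pi$ and $\Pi'$ are cochain homotopic then $\Pi-\Pi'=K\circ h$ for some $h$, whence $[\Pi(x)]=[\Pi'(x)]$ for all $x\in S(\sS)$. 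Thus $\underline{M}^\comprandvars$ depends only on the fixed homotopy type of the complete space, with no auxiliary choices.

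The first displayed equality then falls out. Because $\mc\circ K=0$ the map $\mc$ descends to cohomology, so
\[
\iota_\comprandvars\big(M^\comprandvars_n(s_1,\dotsc,s_n)\big)=\mc\big(\Pi^{\underline{\w}^\sS}(s_1\cdots s_n)\big)=\m^\comprandvars_n(s_1,\dotsc,s_n),
\]
the last equality being the definition $\underline{\m}^\comprandvars=\mc\circ\Pi^{\underline{\w}^\sS}$. The second displayed equality is nothing new: it is precisely the general moment--cumulant relation already established for every space of homotopical random variables, specialized to $\comprandvars$.

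What remains---and what I expect to be the main obstacle---is the unit condition $M^\comprandvars_{n+1}(s_1,\dotsc,s_n,1_\sS)=M^\comprandvars_n(s_1,\dotsc,s_n)$ that makes $\underline{M}^\comprandvars$ a genuine \GCCorAname{}. Passing to classes, this asks that $\Pi^{\underline{\w}^\sS}(s_1\cdots s_n\,1_\sS)-\Pi^{\underline{\w}^\sS}(s_1\cdots s_n)$ be a $K$-coboundary. I would prove it by unwinding the explicit formula for $\Pi^{\underline{\w}}$ in terms of the correlation maps $\underline{M}$ on $\sC$ and the components of $\underline{\w}^\sS$, using two unitality inputs: that the higher components of a unital $sL_\infty$-morphism annihilate the unit, $\w_k^\sS(\dotsc,1_\sS,\dotsc)=0$ for $k\ge2$ together with $\w_1^\sS(1_\sS)=1_\sC$, and that $\underline{M}$ satisfies the \GCCorAname{} unit condition on $\sC$. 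Tracking the insertion of $1_\sS$ through the partition sum defining $\Pi$ should collapse the $(n+1)$-st expression to the $n$-th, even before passing to cohomology. This bookkeeping with the Koszul signs $\ep(\pi)$ is the only genuinely computational part of the argument; everything else is formal.
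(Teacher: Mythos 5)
Your proposal is correct and takes essentially the same route as the paper: there one also sets $\iota^\comprandvars=\mc\circ\w_1^\sS$, forms the correlation densities $\Pi^{\underline{\w}^\sS}_n(s_1,\dotsc,s_n)=\sum_{\pi\in P(n)}\ep(\pi)M_{|\pi|}\big(\w^\sS(s_{B_1}),\dotsc,\w^\sS(s_{B_{|\pi|}})\big)$, and defines $M^\comprandvars_n=h\circ\Pi^{\underline{\w}^\sS}_n$ for a homotopy inverse $h$ of $\w_1^\sS$ with $hK=0$, which is precisely your passage to cohomology classes under $\sS\simeq H$, with canonicity and the moment--cumulant identity obtained from the same two lemmas you invoke. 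The unit condition you flag as the main obstacle is handled there exactly as you sketch, and in fact holds on the nose before passing to cohomology: blocks containing $1_\sS$ together with other elements die because $\w^\sS_k$ annihilates the unit for $k\ge 2$, and the singleton block $\{1_\sS\}$ contributes $\w_1^\sS(1_\sS)=1_\sC$, which is absorbed by the unit axiom for $\underline{M}$ on $\sC$.
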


Now we can complete the circle. On $\comprandvars$, via the structure
$\big(\sS, 1_\sS, \underline{M}^\comprandvars\big)$
of \GCCorAname{}, we have the structure of the \padj{} probability algebra $\sS_{\Comm}=\big(\sS, 1_\sS, \underline{M}^\comprandvars,0\big)$ with zero differential and a morphism $\iota_\comprandvars$ to the initial object $\fieldk$ in $\category{\HProb}_{\Comm}{\overk}$, so that we have a uniquely defined \padj{} probability space with zero-differential: 
\[
\xymatrix{\sS_{\Comm}\ar[r]^{\iota_\comprandvars} & \fieldk}.
\]
We also note that the first component of any unital $sL_\infty$-quasi-isomorphism 
 in the homotopy type $[\comprandvars]$ is a quasi-isomorphism from the \padj{} probability algebra $\sS_{\Comm}$ to the \padj{} probability algebra $\sC_{\Comm}$. Then, 
\begin{theorem}
We have $\sS_{\Lie}=\text{ho}\!\Des\left(\sS_{\Comm}\right)$, $\text{ho}\!\Des([f])= [\sS]$ and $\underline{\k}^\comprandvars= \text{ho}\!\Des\left(\iota^\comprandvars\right)$ as depicted in the following diagram:
\[
\xymatrix{
&&&\fieldk&&
\\
&&&\fieldk\ar@{=>}[u]&&
\\
& &\ar@{=>}[dll] \sS_{\Comm}\ar[ru]^{\iota_{\!\comprandvars}}\ar[rr]_{[f]}&&\sC_{\Comm}\ar@{=>}[drr] \ar[lu]_{[c]}&
 \\
\sS_{\Lie}\ar@{-->}[rrrrrr]_{\text{ho}\!\Des([f])}\ar@{-->}[rrruuu]^{\underline{\k}^\comprandvars}
&&&&&&
\sC_{\Lie}\ar@{-->}[llluuu]_{\text{ho}\!\Des([c])}
}.
\]
Here the inner diagram commutes in the homotopy category $\homotopycat\category{\HProb}_{\Comm}{\overk}$ of \padj{} probability algebras and the outer diagram commutes in the homotopy category $\homotopycat\category{\UsL}_\infty{\overk}$ of unital $sL_\infty$-algebras.
\end{theorem}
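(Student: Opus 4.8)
The plan is to read the statement as the assembly of three already-established pillars and to verify its assertions in turn, isolating the identity $\text{ho}\!\Des([f]) = [\sS]$ as the only genuinely new input. The pillars are: the functoriality of $\Des_\La$ and the descent of $\text{ho}\!\Des$ to homotopy categories [Theorem~\ref{maina}], the formality of $\sC_\Lie$ [Theorem~\ref{lemc}], and the canonical \GCCorAname{} structure $\big(\sS, 1_\sS, \underline{M}^\comprandvars\big)$ together with the expectation $\iota_\comprandvars$ carried by $\comprandvars$ (the preceding theorem). Throughout I use that $\sS_\Comm = \big(\sS, 1_\sS, \underline{M}^\comprandvars, 0\big)$ has zero differential and that $f = \w_1^\sS$ is a quasi-isomorphism of probability algebras.

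First I would fix the objects and the right-hand edge of the diagram. On objects $\text{ho}\!\Des$ coincides with $\Des_\La$, and by the recipe of Definition~\ref{GDesA} the brackets $\underline{\ell}^K$ are built from the differential $K$, with $\ell_1 = K$ and every higher bracket involving $K$; since $\sS_\Comm$ has $K = 0$ these vanish identically, giving $\Des_\La(\sS_\Comm) = \big(\sS, 1_\sS, \underline{0}\big) = \sS_\Lie$ on the nose. Dually $\sC_\Lie = \Des_\La(\sC_\Comm)$, and $\text{ho}\!\Des([\mc])$ is represented by $\underline{\phi}^{\mc,\underline{\La}} = \Des_\La(\mc)$. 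Next I would check that the inner triangle commutes, i.e. $\iota_\comprandvars = \mc \circ f$ in $\homotopycat\category{\HProb}_{\Comm}{\overk}$. Since the restriction of $\Pi^{\underline{\w}^\sS}$ to $S^1\sS = \sS$ is $\w_1^\sS = f$, the defining relation of $\iota_\comprandvars$ in its lowest arity reads $\iota_\comprandvars(s) = \m_1^\comprandvars(s) = \big(\mc \circ \Pi^{\underline{\w}^\sS}\big)(s) = \mc\big(f(s)\big)$; as $\sS$ and $\fieldk$ both carry the zero differential, this equality of linear maps is already an equality of pointed cochain maps, hence of homotopy classes.

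The crux is $\text{ho}\!\Des([f]) = [\sS]$, and I expect it to be the main obstacle. Here I must identify the $sL_\infty$-morphism $\Des_\La(f)$, reconstructed from the probability-algebra map $f$ by Definition~\ref{GDesM}, with the quasi-isomorphism $\underline{\w}^\sS$ whose homotopy class defines $[\sS]$. Both are unital $sL_\infty$-morphisms $\sS_\Lie \to \sC_\Lie$ with common linear part $f = \w_1^\sS$, yet because $\sS_\Lie$ has zero structure this alone does not force them to be homotopic; the needed rigidity must be extracted from the way $\comprandvars$ was built. My plan is to invoke the correspondence $\Pi \leftrightarrow \underline{\w}$ between cochain maps $\big(S(\sS),0\big)\to(\sC,K)$ and $sL_\infty$-morphisms $\big(\sS,\underline{0}\big)\to\big(\sC,\underline{\ell}^K\big)$: the canonical structure $\underline{M}^\comprandvars$ was pinned down precisely so that $\Pi^{\Des_\La(f)}$ and $\Pi^{\underline{\w}^\sS}$ share a cochain-homotopy type, and since the correspondence is a bijection on homotopy types the two $sL_\infty$-morphisms must coincide up to $sL_\infty$-homotopy. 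The careful point, which I would spell out, is matching the morphism-level recipe for $\Des_\La$ against the transfer used to define $\underline{M}^\comprandvars$ in the previous theorem.

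The remaining assertions are then formal. Unwinding $\underline{\k}^\comprandvars = \underline{\phi}^{\mc,\underline{\La}} \bullet \underline{\w}^\sS$ and substituting $\underline{\phi}^{\mc,\underline{\La}} = \Des_\La(\mc)$ and $\underline{\w}^\sS \sim \Des_\La(f)$, functoriality yields $\underline{\k}^\comprandvars = \Des_\La(\mc) \bullet \Des_\La(f) = \Des_\La(\mc \circ f) = \Des_\La(\iota_\comprandvars) = \text{ho}\!\Des(\iota_\comprandvars)$, which is the third identity. Finally, applying the functor $\text{ho}\!\Des$ to the already-commuting inner triangle shows that the outer triangle commutes in $\homotopycat\category{\UsL}_\infty{\overk}$, closing the circle.
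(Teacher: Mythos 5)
Your proposal is correct, and it is essentially the paper's own argument: the paper proves this theorem in one line (``combining the previous two theorems''), namely the canonical-structure theorem of Sect.~\ref{subs:complete space} and Theorem~\ref{chra}, assembled with the functoriality supplied by Theorem~\ref{maina} --- exactly your three pillars, with the same treatment of the objects (zero differential forces $\underline{\ell}^K=\underline{0}$), the inner triangle ($\iota^\comprandvars=\mc\circ f$ by construction), the cumulant identity, and the outer diagram.

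The only divergence is how the crux $\text{ho}\!\Des([f])=[\comprandvars]$ is closed. The paper's preceding theorem proves the sharper, on-the-nose statement that \emph{every} representative $\underline{\w}^\sS$ of $[\comprandvars]$ \emph{is} the descendant $\underline{\phi}^{f,\underline{\La}}$ for the explicit homotopy $\underline{\La}=\b\circ\underline{\Pi}^{\underline{\w}^\sS}$: since the source $\sS_{\Comm}$ has zero differential, the recursion of Definition~\ref{GDesM} collapses to $f\bigl(M^\comprandvars_n(s_1,\dotsc,s_n)\bigr)=\Pi^{\underline{\w}^\sS}_n(s_1,\dotsc,s_n)+K\La_n(s_1,\dotsc,s_n)$, which is verified from $M^\comprandvars_n=h\circ\Pi^{\underline{\w}^\sS}_n$, $f\circ h=I_\sC+K\b+\b K$ and $K\Pi^{\underline{\w}^\sS}_n=0$; uniqueness of solutions of that recursion then forces $\underline{\w}^\sS=\underline{\phi}^{f,\underline{\La}}$ exactly. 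You instead conclude only $sL_\infty$-homotopy equivalence, by observing that $\Pi^{\Des_\La(f)}$ and $\Pi^{\underline{\w}^\sS}$ are both cochain homotopic to the map with components $f\circ M^\comprandvars_n$ and then invoking the correspondence of Lemmas~\ref{cobra} and~\ref{cobrax} as a bijection on homotopy types. Both mechanisms rest on the identical computation, and homotopy equivalence is all the statement needs, so your route is sound; what the paper's version buys is the exact identification (no appeal to the converse construction of Lemma~\ref{cobrax} at all), which is precisely what makes the proof of the present theorem a one-liner.
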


Next we consider the case when the cohomology $H$ is finite dimensional. 
We show that the deformation functor attached to the unital $sL_\infty$-algebra $\sC_{\Lie}=\Des_\La\big(\sC_{\Comm}\big)$ is pro-representable by the topological algebra $\widehat{S(H^*)}$ so that we have a formal based super manifold $\sM$ whose algebra of functions is $\widehat{S(H^*)}$ and whose tangent space $T_o\sM$ at the base point $o\in \sM$ is isomorphic to $H$. In fact, the space $\sM$ is the moduli space of complete spaces of homotopical random variables and each complete space $\comprandvars$ gives formal affinely flat coordinates at $o$.

Choose a basis $\{e_\a\}_{\a\in J}$ on $\sS\simeq H$ with the distinguished element $e_0=1_\sS$. 
Let $t_{\sS}=\{t^\a\}_{\a\in J}$ be the dual basis, which gives affine coordinates on $\sM$ around a formal neighbourhood of the base point via any unital $sL_\infty$-quasi-isomorphism 
(of the homotopy type $[\comprandvars]$) from $\sS_{\Lie}$ to $\sC_{\Lie}$. Then $\left\{\rd_\a \ceq \Fr{\rd}{\rd t^\a}\right\}$ is a formal frame field on $T_o\sM$. We use the same symbol $\rd_\a$ for the graded derivation of the formal power-series ring $\fieldk[\![t_\sS]\!]$ such that $\rd_\a t^\b = \d_\a{}^\b$.
We will define a $(2,1)$-tensor $A^\comprandvars_{\a\b}{}^\g$ and a $1$-tensor $T_\comprandvars^\g$ in $\fieldk[\![t_\sS]\!]$ using the structure constants of the \GCCorAname{} structure on $\comprandvars$. Then:

\begin{theorem}
\begin{enumerate} \item
The formal $(2,1)$-tensor $\{A^\comprandvars_{\a\b}{}^\g\}$ is the connection one-form for a graded flat and torsion-free affine connection $\nabla$ on $T\sM$ in a formal neighborhood of $o \in \sM$, i.e., $\nabla_{\rd_\a} \rd_\b =\sum_\g A^\comprandvars_{\a\b}{}^\g\rd_\g$, such that $A_{0\b}{}^\g =\d_{\b}{}^\g$.
\item The formal $1$-tensor $\{T_\comprandvars^\g\}$ gives affine flat coordinates 
for the connection $\nabla$, that is,
$\nabla_{\tilde\rd_\a}\tilde \rd_\b =0$ where $\tilde\rd_\a = \sum_\g \rd_\a T_\comprandvars^\g\rd_\g$. These affine flat coordinates satisfy the following equations:
\begin{align*}
T_\comprandvars^\g\big|_{\underline{t}=\underline{0}} &=0
,\\
\rd_\b T_\comprandvars^\g\big|_{\underline{t}=\underline{0}} &=\d_\b{}^\g
,\\
\rd_0 T_\comprandvars^\g&= T_\comprandvars^\g +\d_0^\g.
\end{align*}
\end{enumerate}
\end{theorem}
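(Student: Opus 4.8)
The plan is to reduce both parts to a single generating-function identity and then invoke the classical equivalence between flat torsion-free connections and affine flat coordinates. Write $M_{\a_1\cdots\a_n}{}^\g$ and $m_{\a_1\cdots\a_n}{}^\g$ for the structure constants, in the basis $\{e_\a\}_{\a\in J}$, of $\underline M$ and of the equivalent products $\underline m$ of the \GCCorAname{} on $\comprandvars$. I would first record that the two tensors in the statement are the generating series
\[
T_\comprandvars^\g=\sum_{n\geq1}\frac1{n!}\,M_{\a_1\cdots\a_n}{}^\g\,t^{\a_1}\cdots t^{\a_n},\qquad A^\comprandvars_{\a\b}{}^\g=\sum_{n\geq0}\frac1{n!}\,m_{\a\b\g_1\cdots\g_n}{}^\g\,t^{\g_1}\cdots t^{\g_n}
\]
(Koszul signs of $S(\sS)$ suppressed). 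The normalizations in part (2) are then immediate from strict unitality: since $M_1=\mathrm{id}$ and $e_0=1_\sS$, the linear part of $T_\comprandvars^\g$ is $t^\g$, so $T_\comprandvars^\g\big|_{\underline t=\underline0}=0$ and $\rd_\b T_\comprandvars^\g\big|_{\underline t=\underline0}=\d_\b{}^\g$; in particular the Jacobian $\rd_\b T_\comprandvars^\g=\d_\b{}^\g+O(t)$ is invertible over $\fieldk[\![t_\sS]\!]$. Applying $\rd_0$ inserts $e_0=1_\sS$ into one slot of each $M_n$, and the unit axiom $M_n(\dots,1_\sS)=M_{n-1}(\dots)$ collapses the series, yielding $\rd_0T_\comprandvars^\g=T_\comprandvars^\g+\d_0^\g$ after reindexing. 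Likewise the strict unitality of $\underline m$ (that is, $m_2(1_\sS,-)=\mathrm{id}$ and $m_k(1_\sS,\dots)=0$ for $k\geq3$) gives $A^\comprandvars_{0\b}{}^\g=\d_\b{}^\g$.

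The core of the proof is the master identity
\[
\rd_\a\rd_\b T_\comprandvars^\g=\sum_\d A^\comprandvars_{\a\b}{}^\d\,\rd_\d T_\comprandvars^\g, \qquad (\ast)
\]
which expresses that each $T_\comprandvars^\g$ is $\nabla$-affine, i.e. $\nabla\,dT_\comprandvars^\g=0$. I would prove $(\ast)$ by comparing coefficients of $t^{\a_1}\cdots t^{\a_k}$: the left-hand side is the generating series of $M_{\a\b\a_1\cdots\a_k}{}^\g$, while expanding the product on the right and collecting yields the $\underline m$-weighted sums that rebuild $M_n$ out of the lower products $m_k$. This coefficientwise equality is exactly the recursion encoding the equivalence of the two presentations $\underline M$ and $\underline m$ of a \GCCorAname{} structure; hence $(\ast)$ holds by the definition of $\underline m$ as the products equivalent to $\underline M$, or may be verified directly from that recursion. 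As a consistency check, in the binary case $m_2=\cdot$ and $m_{\geq3}=0$, so $A^\comprandvars$ is the constant tensor of structure constants of the product and $(\ast)$ degenerates to the associativity-and-unit regrouping $e_\a e_\b(e_{\a_1}\cdots e_{\a_k})=(e_\a e_\b)(e_{\a_1}\cdots e_{\a_k})$.

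Granting $(\ast)$ and the invertibility of the Jacobian, both parts follow formally. The covectors $dT_\comprandvars^\g$, $\g\in J$, are $\nabla$-parallel by $(\ast)$ and form a coframe over $\fieldk[\![t_\sS]\!]$, so the connection they trivialize---hence $\nabla$---has vanishing curvature and is flat; this furnishes part (1) together with $A^\comprandvars_{0\b}{}^\g=\d_\b{}^\g$ (obtained once more by setting $\a=0$ in $(\ast)$ and using $\rd_0T_\comprandvars^\g=T_\comprandvars^\g+\d_0^\g$). Torsion-freeness also drops out of $(\ast)$: its left-hand side is graded-symmetric in $\a,\b$ and the Jacobian is invertible, forcing $A^\comprandvars_{\a\b}{}^\d=(-1)^{|\a||\b|}A^\comprandvars_{\b\a}{}^\d$. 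Finally, $(\ast)$ is by definition the assertion that the $T_\comprandvars^\g$ are affine flat coordinates for $\nabla$, which is part (2), with the three normalizations placing the base point at $\underline t=\underline0$ and fixing the flat frame there. The one genuinely delicate step is $(\ast)$ itself: one must track the Koszul signs produced by permuting the odd generators of $S(\sS)$ both when differentiating $T_\comprandvars^\g$ and when expanding $A^\comprandvars_{\a\b}{}^\d\,\rd_\d T_\comprandvars^\g$, and check that they agree with the signs $\ep(\pi)$ built into the $\underline m$--$\underline M$ recursion; everything else is either strict unitality or the classical correspondence between flat torsion-free connections and flat coordinates.
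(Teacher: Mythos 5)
Your proposal is correct and is essentially the paper's own argument: the paper likewise reduces the whole theorem to the master identity $\rd_\a\rd_\b T_\comprandvars^\s=\sum_{\r\in J} A^\comprandvars_{\a\b}{}^\r\rd_\r T_\comprandvars^\s$, established by matching Taylor coefficients against the $\underline{m}$--$\underline{M}$ recursion of Definition \ref{GCCorA} (in the form of Remark \ref{Faad}, via the graded Fa\'{a} di Bruno formula) in Lemma \ref{flatcoor}, and then derives flatness, torsion-freeness, and the unit conditions from this identity together with invertibility of the Jacobian $\sG_\b{}^\g=\rd_\b T_\comprandvars^\g$ (Lemmas \ref{finitec} and \ref{finited}). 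The only cosmetic deviations are that you verify the master identity for $T_\comprandvars$ directly rather than solving the PDE system and invoking uniqueness, you obtain the graded symmetry of $A^\comprandvars$ from the master identity plus the invertible Jacobian where the paper reads it off the $\hbox{Perm}_n\times\hbox{Perm}_2$-invariance of $\underline{m}$ (Lemma \ref{flato}), and you phrase flatness via a $\nabla$-parallel coframe where the paper performs the equivalent third-derivative computation in coordinates.
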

Let $Z_\comprandvars$ be the moment generating function of $\comprandvars$:
\[
Z_\comprandvars\ceq 1 + \sum_{n=1}^\infty\Fr{1}{n!} \sum_{\r_1,\dotsc,\r_n \in J} 
t^{\r_n}\cdots t^{\r_1} \m^{\comprandvars}_{n}\big(e_{\r_1},\dotsc, e_{\r_n}\big)
\in \fieldk[\![t_\sS]\!]
.
\]
Then,
\begin{theorem}
The affine flat coordinates $\{T_\comprandvars^\g\}$ determine the moment generating function via the equation
\[
Z_\comprandvars= 1+\sum_{\g\in J} T_\comprandvars^\g\; \iota^\comprandvars(e_\g)
\]
up to the finite unknowns $\{\iota^\comprandvars(e_\g)\}$ in $\fieldk$.
Furthermore, the moment generating function $Z_\comprandvars$ satisfies the following system of formal differential equations that, for all $\a,\b \in J$:
\begin{align*}
\left(\rd_\a\rd_\b- \sum_{\g\in J} A^\comprandvars_{\a\b}{}^\g \rd_\g\right)Z_\comprandvars=0,\\
\left(\rd_0-1\right)Z_\comprandvars=0.\\
\end{align*}
\end{theorem}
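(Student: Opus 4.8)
The plan is to reduce all three assertions to elementary manipulations of the single generating series $T_\comprandvars^\g$, exploiting that $Z_\comprandvars$ is, apart from the constant term $1$ and the scalars $\iota^\comprandvars(e_\g)$, nothing but the generating function of the structure maps $M^\comprandvars_n$. First I would rewrite the coefficients of $Z_\comprandvars$. Expanding $M^\comprandvars_n(e_{\r_1},\dotsc,e_{\r_n})=\sum_{\g\in J}(M^\comprandvars_n)_{\r_1\cdots\r_n}{}^\g\,e_\g$ in the chosen basis and invoking the identity $\m^\comprandvars_n(s_1,\dotsc,s_n)=\iota_\sS\!\big(M^\comprandvars_n(s_1,\dotsc,s_n)\big)$ established earlier together with the $\fieldk$-linearity of $\iota^\comprandvars=\iota_\sS$, each coefficient becomes $\m^\comprandvars_n(e_{\r_1},\dotsc,e_{\r_n})=\sum_{\g\in J}(M^\comprandvars_n)_{\r_1\cdots\r_n}{}^\g\,\iota^\comprandvars(e_\g)$. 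Collecting the contributions of a fixed $\g$ and recognizing the resulting series as the generating function $T_\comprandvars^\g$ of the $\underline{M}$-structure constants — equivalently, the affine flat coordinate of the preceding theorem — yields $Z_\comprandvars=1+\sum_{\g\in J}T_\comprandvars^\g\,\iota^\comprandvars(e_\g)$, which is the first assertion. In particular this already isolates the finite list of unknowns as the scalars $\iota^\comprandvars(e_\g)$, $\g\in J$.

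Both differential equations are then obtained by applying the relevant operators through this linear expression, the $\iota^\comprandvars(e_\g)$ being constants in formal power series. For the scaling equation I would substitute the normalization $\rd_0 T_\comprandvars^\g=T_\comprandvars^\g+\d_0^\g$ from part (2) of the preceding theorem and use $e_0=1_\sS$ together with the unitality $\iota^\comprandvars(e_0)=1$, so that
\[
\rd_0 Z_\comprandvars=\sum_{\g\in J}\big(T_\comprandvars^\g+\d_0^\g\big)\iota^\comprandvars(e_\g)=\big(Z_\comprandvars-1\big)+\iota^\comprandvars(e_0)=Z_\comprandvars,
\]
which is exactly $(\rd_0-1)Z_\comprandvars=0$. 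For the second-order equation I observe that $\rd_\a\rd_\b-\sum_{\g}A^\comprandvars_{\a\b}{}^\g\rd_\g$ annihilates constants and is $\fieldk$-linear, whence
\[
\Big(\rd_\a\rd_\b-\sum_{\g\in J}A^\comprandvars_{\a\b}{}^\g\rd_\g\Big)Z_\comprandvars=\sum_{\d\in J}\Big(\rd_\a\rd_\b T_\comprandvars^\d-\sum_{\g\in J}A^\comprandvars_{\a\b}{}^\g\rd_\g T_\comprandvars^\d\Big)\iota^\comprandvars(e_\d).
\]
It therefore suffices to establish, for every $\d\in J$, the flat-coordinate identity $\rd_\a\rd_\b T_\comprandvars^\d=\sum_{\g\in J}A^\comprandvars_{\a\b}{}^\g\rd_\g T_\comprandvars^\d$.

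This identity is the heart of the matter and the step I expect to be the main obstacle. Conceptually it is the coordinate expression of the flatness of $\nabla$ in its own flat coordinates, i.e. the vanishing of the covariant Hessian $\nabla\,dT_\comprandvars^\d=0$, and as such is a reformulation of part (2) of the preceding theorem; carrying this out requires only matching index conventions, using torsion-freeness $A^\comprandvars_{\a\b}{}^\g=(-1)^{|\a||\b|}A^\comprandvars_{\b\a}{}^\g$ and the dual rule $\nabla_{\rd_\a}dt^\g=-\sum_\nu A^\comprandvars_{\a\nu}{}^\g\,dt^\nu$. I would nevertheless prefer to verify it directly at the level of structure constants, which is cleaner and self-contained. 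Differentiating the defining series twice exhibits $\rd_\a\rd_\b T_\comprandvars^\d$ as the generating function of the constants $(M^\comprandvars_{n+2})_{\r_1\cdots\r_n\,\a\b}{}^\d$, while $A^\comprandvars_{\a\b}{}^\g$ is the generating function of the product constants $(m^\comprandvars_{p+2})_{\sigma_1\cdots\sigma_p\,\a\b}{}^\g$ and $\rd_\g T_\comprandvars^\d$ that of $(M^\comprandvars_{q+1})_{\tau_1\cdots\tau_q\,\g}{}^\d$; their Cauchy product reproduces the right-hand side. Equality of the two sides then amounts, order by order, to the recursion
\[
(M^\comprandvars_{n+2})_{\r_1\cdots\r_n\,\a\b}{}^\d=\sum_{p+q=n}\;\sum_{S\sqcup T=\{1,\dotsc,n\}}\;\sum_{\g\in J}\ep\,(m^\comprandvars_{p+2})_{\r_S\,\a\b}{}^\g\,(M^\comprandvars_{q+1})_{\r_T\,\g}{}^\d,
\]
which is precisely the passage between the two equivalent descriptions $\underline{M}$ and $\underline{m}$ of the \GCCorAname{} structure on $\comprandvars$; in the binary case it collapses to the associativity of the product. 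The genuinely delicate points are the bookkeeping of the multinomial coefficients produced by repeated differentiation of $t^{\r_n}\cdots t^{\r_1}$ and the consistent tracking of the Koszul signs $\ep$ in the graded setting, but neither alters the shape of the argument.
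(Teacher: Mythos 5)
Your proposal is correct and follows essentially the same route as the paper: the linear expression $Z_\comprandvars=1+\sum_\g T_\comprandvars^\g\,\iota^\comprandvars(e_\g)$ comes from $\m^\comprandvars_n=\iota^\comprandvars\circ M^\comprandvars_n$, the $(\rd_0-1)$ equation from the unit condition $\rd_0 T_\comprandvars^\g=T_\comprandvars^\g+\d_0^\g$, and the second-order equation from the flat-coordinate identity $\rd_\a\rd_\b T_\comprandvars^\d=\sum_\g A^\comprandvars_{\a\b}{}^\g\rd_\g T_\comprandvars^\d$. The ``direct verification at the level of structure constants'' you sketch as your preferred route is not an alternative but is precisely the paper's proof of its Lemma on flat coordinates (the graded Fa\'a di Bruno computation reducing the identity to the recursion expressing $\underline{M}$ in terms of $\underline{m}$), which the paper simply cites at this point rather than redoing.
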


It should be noted that a space $\sV$ of homotopical random variables does not need to be complete and incomplete spaces of homotopical random variables are an important ingredient of the theory of \padj{} probability spaces. In general the moment generating function of the space $\sV$ satisfies a system of higher order ($n\geq 2$) formal PDEs (which collapse to ODEs when $\sV$ is one-dimensional) if $H$ is finite dimensional. Even in certain cases where $H$ is of infinite dimension, we may obtain a system of differential equations.

This paper is organized as follows.

Section~\ref{sec:preludes} is devoted to a gentle introduction to some background material: In Sections~\ref{subs:classical algebraic probability spaces}~and~\ref{subs:cats for algprob}, we recall classical algebraic probability spaces and reformulate them category-theoretically. In Sect.~\ref{subs:Linfty}, we review and examine the homotopy category of unital $sL_\infty$-algebras in a combinatorial language based on partitions after some cultural background.

Section~\ref{sec:infsym} is mainly about symmetries of classical algebraic probability spaces and their homotopical realization.
In Sect.~\ref{subs:symmetries}, we discuss the general notion of symmetries of the expectation map and describe the fundamental symmetry. In Sect.~\ref{subs:infsyms}, we specialize to consider infinitesimal symmetries that are defined in terms of certain Lie algebra representations.
We then give four examples---the Gaussian distribution, a probability theoretic realization of Griffiths' period integrals, matrix Airy integrals, and semi-circular distributions, to demonstrate the power of the notion of infinitesimal symmetries in practice. 
In Sect.~\ref{subs:realization}, we construct a homotopical realization of a classical algebraic probability space with an infinitesimal symmetry, which provides us a prototype binary \padj{} probability spaces. The general theory of these binary \padj{} probability spaces is developed in Sect.~\ref{subs:binary prob space}, which also contains a worked out example of a homotopically completely integrable theory.

In Section~\ref{sec:intermission} we reexamine the fundamental assumption of classical algebraic probability theory that random variables form a commutative and associative algebra. 
In Sect.~\ref{subs: pre-alg}, we introduce the notion of pre-algebraic probability space where we relax the assumption that there is underlying binary product in the space of random variables. We define the cumulant morphism and independence and show that pre-algebraic probability spaces share the same central limit as classical algebraic probability spaces. In Sect.~\ref{subs: when assoc}, we consider an obstruction theory to determine whether a pre-algebraic probability space has an underlying commutative and associative algebra. This leads us to define {\CorAnames{}} and a non-differential version of \padj{} probability space in Sections~\ref{subs: comm prob space}~and~\ref{subs:formal geo I}. Then, we examine the formal geometric picture of finite dimensional probability spaces in this context.

Section~\ref{sec:maintheory} is devoted to the underlying theory of \padj{} probability spaces in full generality.
Sections~\ref{subs:corralg}--\ref{subs:completely integrable} contain many definitions in full generality: those of \GCCorAnames{}, \padj{} probability algebras and spaces, the descendant functor to the (homotopy) category of unital $sL_\infty$-algebras, spaces of homotopical random variables, associative correlation functions, and homotopically completely integrable \padj{} probability spaces.
In the final Sections~\ref{subs:formality}--\ref{subs:geometry II} we prove some facts that are true in general in this framework. We show that the unital $sL_\infty$-algebras in the codomain of the descendant functor are formal and vanish in homology. 
We study complete spaces of homotopical random variables and show that each complete space is endowed with a \padj{} probability space structure with zero differential. 
Finally, we consider the case that cohomology is finite dimensional and the case that the cohomology has a finite super-selection sector. Adopting ideas of deformation theory we construct formal graded torsion-free affine flat connection and affine flat coordinates on the associated moduli space.

\section{Conventions}
Fix a ground ring $R$ with unit.

A \emph{graded module} $V$ is the direct sum of modules indexed by \emph{degree}:
\[V=\bigoplus_{j\in \Z}V^j.\] By an element of $V$ we shall mean an element of one particular $V^j$ unless otherwise specified. If $x$ is a non-zero element of $V$ in the summand $V^j$ then $|x|$ denotes the degree $j$ of $x$. We will use this notation for arbitrary elements of $V$; although the degree of zero is undefined, this will not create ambiguity. We always consider ungraded modules as graded modules concentrated in degree zero. A degree $k$ map between graded modules $V$ and $W$ consists of a sequence of homomorphisms $V^j\to W^{j+k}$. The dual of a graded module $V$ is the graded module $V^*$ which has $(V^*)^j\coloneq (V^{-j})^*$.

A {\em pointed (graded) module} is a (graded) module $V$ equipped with a specified element $1_V$ (of degree zero) called the \emph{unit}. A pointed map between two pointed modules preserves the unit. We shall be sloppy about the distinction among $V$ considered as a module, $V$ considered as a pointed module, and the pair $(V,1_V)$ considered as a pointed module with underlying module $V$.

The \emph{reduced free tensor algebra} generated by $V$ is $T(V)= \bigoplus_{n=1}^\infty T^n V$, where $T^n V=V^{\otimes n}$. The \emph{reduced free supersymmetric algebra} generated by $V$ is $S(V)=\bigoplus_{n=1}^\infty S^n V$, which is the reduced free tensor algebra $T(V)$ modulo the ideal generated by $x\otimes y -(-1)^{|x||y|}y\otimes x$.  Both of these inherit a grading induced from that in $V$ by requiring it to be additive over the tensor product. We say an element in $T^n V$ or in $S^n V$ has \emph{word length} $n$.

A \emph{cochain complex} $C$ is a graded module equipped with a homomorphism $d: C^\bullet \rightarrow C^{\bullet +1}$ of degree $1$ called the \emph{differential} which satisfies $d^2=0$. A \emph{cochain map} between cochain complexes $C$ and $C^\pr$ is a homomorphism $f:C^\bullet \rightarrow C^{\prime\bullet}$ of degree $0$ such that $f d = d^\pr f$. A cochain homotopy between $C$ and $C^\pr$ is a homeomorphism $s:C^\bullet \rightarrow C^{\prime\bullet -1}$ of degree $-1$. It is trivial to check that $s d + d^\pr s$ is a cochain map, which is said to be \emph{cochain homotopic to zero} by the cochain homotopy $s$. This is denoted by $s d + d^\pr s\sim 0$.  The two cochain maps $f$ and $\tilde f$ are called \emph{homotopic} or said to \emph{have the same cochain homotopy type}, if $f-\tilde f\sim 0$. This is denoted $f\sim \tilde f$.
Two cochain complexes $C$ and $C^\pr$ are said to be \emph{homotopy equivalent} or \emph{have the same homotopy type} if there are cochain maps $f:C\rightarrow C^\pr$ and $g: C^\pr \rightarrow C$ such that $g\circ f\sim I_C$ and $f\circ g \sim I_{C^\pr}$.

A \emph{pointed cochain complex} is a graded module  which is both a cochain complex and a pointed module, with the compatibility that the unit is in the kernel of the differential, but not the image of the differential. A map of pointed cochain complexes is a cochain map which preserves the unit; a pointed cochain homotopy is required to kill the unit.

Henceforth, unless otherwise specified, assume $R$ is a ground field $\fieldk$ of characteristic zero. For certain results, we will assume that $\fieldk$ contains $\sqrt{N}$ for all natural numbers $N$; this will be specified when it is needed. All vector spaces, chain complexes, and algebras of all sorts are taken over $\fieldk$ unless otherwise specified.

Let $\hbox{Perm}_n$ be the group of permutations of the set $[n]=\{ 1,\ldots, n\}$. For a permutation $\s \in \hbox{Perm}_n$, we define the map $\hat\s :T^n V\rightarrow T^n V$ by requiring, for elements $x_1,\dotsc, x_n \in V$,
\[
\hat\s\left(x_1\otimes x_2\otimes\cdots \otimes x_n\right) =
\ep(\s)x_{\s(1)}\otimes x_{\s(2)}\otimes\cdots\otimes x_{\s(n)}.
\]
Here $\ep(\s)=\pm 1$ is the Koszul sign determined by decomposing $\hat\s$ as composition of transpositions $\hat\t$, where $\hat\t:x\otimes y \mapsto (-1)^{|y||x|} y\otimes x$.

A linear map $L_n$ of degree $|L_n|$ from $T^n V$ to a graded vector space $W$ descends to a linear map from $S^n V$ to $W$ if 
\[L_n\big(x_1\otimes\cdots\otimes x_n\big)
= \ep(\s)L_n\big(x_{\s(1)}\otimes\cdots\otimes x_{\s(n)}\big).
\] 
We shall use the notation $L_n\big(x_1\otimes\cdots\otimes x_n\big)$ and $L_n\big(x_1,\dotsc, x_n\big)$ interchangably.
For a given set of $n$ elements $x_1,\dotsc, x_n \in V$,
the Koszul sign $\ep(\pi)$ for a partition $\pi =B_1\sqcup \cdots \sqcup B_{|\pi|} \in P(n)$, is defined to be the Koszul sign $\ep(\s)$ of the permutation $\s$ defined by
\[
x_{B_1}\otimes x_{B_2}\otimes\cdots\otimes x_{B_{|\pi|}}
=x_{\s(1)}\otimes x_{\s(2)}\otimes \cdots\otimes x_{\s(n)},
\]
where $x_B = x_{j_1}\otimes x_{j_2}\otimes \cdots\otimes x_{j_{|B|}}$ if $B=\left\{x_{j_1}, x_{j_2}, \cdots, x_{j_B}\right\}$.

A \emph{partition} of the set $[n]$ is a decomposition $\pi = B_1\sqcup B_2\sqcup \cdots\sqcup B_{|\pi|}$ of $[n]$ into pairwise disjoint non-empty subsets $B_i$ called blocks. We denote the number of blocks in the partition $\pi$ by $|\pi|$ and the size of a block $B$ by $|B|$. 
We shall use the {\em strictly ordered} representation for a partition. That is, blocks are ordered by the maximum element of each block and each block is ordered via the ordering induced from the natural numbers. For $k,k^\pr$ in $[n]$, we use the notation $k \sim_\pi k^\pr$ if both $k$ and $k^\pr$ belong to the same block in the partition $\pi$ and the notation $k\nsim_\pi k^\pr$ otherwise.
We denote the set of all partitions of $[n]$ by $P(n)$.
If $f_r$ is a sequence of multilinear operators $S^r X\to Y$ or $X^{\otimes r}\to Y$ for $r\ge 1$ and $B=\{j_1,\ldots, j_r\}$ is a block of a partition $\pi$, the notation $f\big(x_{B}\big)$ is taken to mean $f_r(x_{j_1},\dotsc, x_{j_r})$.

We denote the binomial coefficient $\fr{n!}{k!(n-k)!}$ by $\binom{n}{k}$. For an operator $F$ depending on $\t$ we use the notation $\dot{F}$ to mean $\fr{d}{d\t}F$.

\section{Preludes}\label{sec:preludes}

This section is a preparation for the marriage of classical algebraic probability theory with algebraic homotopy theory; its intention is to gently relay an elementary but self-contained story from each side. In particular, we shall specialize to the commutative version of classical algebraic probability theory and relate it with the homotopy theory of Lie algebras. 

\subsection{Classical algebraic probability spaces and classical independence}\label{subs:classical algebraic probability spaces}

The original notion of a classical algebraic probability space, with a measure theoretic foundation, is due to Kolmogorov. 
We do not want to go back that far but instead begin by recalling (commutative) classical algebraic probability spaces. 
Algebraic probability spaces, pioneered by Voiculescu, focus primarily on the algebra of random variables 
and their expectations and correlations.
This makes it so that non-commutative generalizations are natural; see \cite{Voi2,VDN,Voi3} for introductions. 
We also refer the textbook \cite{Tao} for a gentle discussions on the transition from measure theoretic probability spaces 
to classical algebraic probability spaces. 

\begin{definition}
A classical algebraic probability space over $\C$ is a pair $(A_\nc; \iota)$, 
where $A_\nc=(A, 1_A,\cdot)$ is a unital commutative and associative algebra over $\C$ with binary product $\cdot$ 
and unit $1_A$ and $\iota$ is a $\C$-linear functional, called the \emph{expectation}, from the $\C$-vector space $A$ to 
$\C$ such that $\iota(1_A)=1$.
Elements of $A$ are called random variables. For a random variable $x$, the value $\iota(x)$ is called the expectation value 
of $x$ and is sometimes denoted by $\left<x\right>$. 
\end{definition}

\begin{remark}
In the literature, there are often additional requirements placed on a classical algebraic probability space. 
For instance, it is often assumed that $A_{\classical}$ is a $\C^*$-algebra, that $\iota$ is a state, and so on. 
The above definition can be considered ``the bare minimum'' for classical algebraic probability theory. 
Every statement in this paper, unless otherwise specified, is valid if one replaces $\C$ with our arbitrary characteristic zero 
field $\fieldk$.
\hfill$\natural$
\end{remark}

The expectation value $\iota(x)$ of a random variable $x$ is also called the first moment of $x$.
The higher moments of $x$ are $\iota(x^n)=\left<x^n\right>$ for $n\geq 2$. 
One often gathers all the moments of $x$ together into a generating function 
$Z(t)=\iota\left( e^{t x}\right) \in \fieldk[\![t]\!]$, where $e^{tx} =1_A + tx +\Fr{1}{2!}t^2 x^2+\Fr{1}{3!}t^3 x^3+ \cdots$.
More generally, for a set of random variables $\{x_1,\dotsc, x_k\}$, one associates the family of joint moments:
\eqn{aaz}{
\left\{ \m_n(x_{j_1},\dotsc, x_{j_n}\big)
\ceq \iota(x_{j_1}\cdots x_{j_n})\big| n\geq 1, 1\leq j_1,\dotsc, j_n \leq k\right\}
.}
The joint distributions (also called \emph{(the law)}) of the set $\{x_1,\dotsc, x_k\}$ of random variables is the map
\[
\hat\m: \fieldk[t_1,\dotsc, t_k]\rightarrow \fieldk
\]
defined by $\hat\m(P(t_1,\dotsc, t_k)) = \iota\big(P(x_1,\dotsc, x_k)\big)$.
It is clear that $\hat\m$ is determined completely by the family of joint moments, 
which is conveniently described by its generating function 
\eqn{avaa}{
Z(t_1,\dotsc, t_k)\ceq \iota\left( e^{\g}\right)=1+
\sum_{n=1}^\infty\Fr{1}{n!}\m_n(\g,\dotsc,\g)\in \fieldk[\![t_1,\dotsc,t_k]\!],
}
where $\g =\sum_{i=1}^k t_i x_i$. 
We call $e^{\g}$ the \emph{moment generating density} of the set $\{x_1,\dotsc, x_k\}$ of random variables.

There is an equivalent description of the law of random variables in terms of the family of \emph{joint cumulants}. 
The cumulant morphism is the family $\underline{\k}=\k_1, \k_2, \k_3,\dotsc$, where $\k_n$ is a linear map 
from $S^n(A)$ to $\fieldk$ for every $n\geq 1$, determined by the expectation morphism $\iota$ 
and the products in $A$ and $\fieldk$. The construction of the cumulants involves a sum over all partitions 
of the set $[n]= \{1,2,\dotsc,n\}$. 

\begin{definition}
The {\em cumulant morphism} is the family of operations $\underline{\k}=\k_1, \k_2, \k_3,\dotsc$ defined recursively 
by the equation
\eqn{aab}{
\iota(x_1\cdots x_n) 
=\sum_{\pi \in P(n)} \k\big(x_{B_1}\big) \cdots\k\big( x_{B_{|\pi|}}\big).
}
\end{definition}

\begin{remark}
\eq{aab} suffices to define the cumulant morphism 
$\underline{\k}=\k_1, \k_2, \k_3,\cdots$ recursively since $P(n)$ contains the partition $\{1,2,\dotsc, n\}$ 
and every other partition has blocks of length strictly less than $n$.
From the following table of the classical partitions for $n\le 3$:
\begin{align*}
&P(1)=\{1\}
,\\
& P(2)=\{1,2\},\{1\}\sqcup \{2\}
,\\
&P(3)=\{1,2,3\}, 
\;\;\{1,2\}\sqcup \{3\},\{2\}\sqcup \{1,3\},\{1\}\sqcup \{2,3\},
\;\;\{1\}\sqcup\{2\}\sqcup\{3\}
,\\
\end{align*}
we have
\begin{align*}
\k_1(x)=&\iota(x),
\\
\k_2(x_1,x_2)=& \iota(x_1\cdot x_2) - \k_1(x_1)\k_1(x_2)
\\
\k_3(x_1,x_2,x_3)=&
\iota(x_1\cdot x_2\cdot x_3)
-\k_1(x_1)\k_1(x_2)\k_1(x_3) 
\\
&
- \k_2(x_1, x_2)\k_1(x_3) 
-\k_1(x_2)\k_2(x_1, x_3)
-\k_1(x_1)\k_2(x_2, x_3). 
\end{align*}
It is clear that $\k_n$ is a linear map from $S^n(A)$ to $\fieldk$ for every $n\geq 1$. It is not hard to see that $\k_N=0$ 
for all $N\geq n$ if $\k_n=0$, that $\k_1(1_A)=1$, and that $\k_n\big(x_1,\dotsc, x_{n-1},1_A\big)=0$ for all $n\geq 2$ 
and every $x_1,\dotsc, x_{n-1}\in A$.
\hfill$\natural$
\end{remark}

For the set of random variables $\{x_1,\dotsc, x_k\}$, considered earlier, one can associate the family of joint cumulants
\eqn{aaza}{
\left\{ \k_n\big(x_{j_1},\dotsc, x_{j_n}\big)\big| n\geq 1, 1\leq j_1,\dotsc, j_n \leq k\right\}.
}
For example, the joint cumulant $\k_2\big(x_{j_1}, x_{j_2}\big)$ of two random variables $x_{j_1}$ and $x_{j_2}$ 
measures the strength of their correlation 
\[
\k_2\big(x_{j_1}, x_{j_2}\big)=\big<x_{j_1}\cdot x_{j_2}\big> - \big<x_{j_1}\big>\big<x_{j_2}\big>,
\] 
which is known as the covariance, since we also have
\[
\k_2\big(x_{j_1}, x_{j_2}\big)=
\Big<\left(x_{j_1}-\big<x_{j_1}\big>\right)\cdot \left(x_{j_2}-\big<x_{j_2}\big>\right)\Big>.
\]

The family of joint cumulants may be conveniently described by its generating function
\eqn{aac}{
F(t_1,\dotsc,t_k) = \k_1(\g) + \sum_{n=2}^\infty\Fr{1}{n!}\k_n(\g,\dotsc,\g)\in \fieldk[\![t_1,\dotsc,t_k]\!],
}
where $\g =\sum_{i=1}^k t_i x_i$. It is not difficult to check the following relation: 
\eqn{aad}{
Z(t_1,\dotsc, t_k)\ceq \iota\left(e^\g\right)=e^{F(t_1,\dotsc,t_k)}.
}
Hence the probability law for random variables is determined by the joint cumulant generating function. 

Finally, we say two random variables $x$ and $y$ are {\em classically independent} if
\[
\k_n(x+y,\dotsc, x+y) = \k_n(x,\dotsc, x)+\k_n(y,\dotsc, y)
\] 
for all $n\geq 1$.
It follows that $\iota\left(e^{t_1 x + t_2 y}\right) = \iota\left(e^{t_1 x}\right) \iota\left(e^{t_2 y}\right)$ if $x$ and $y$ are classically independent. This relation with the notion of classical independence motivates us to use the adjective \emph{classical} for the cumulant morphism $\underline{\k}$. We shall also refer to $P(n)$ as the set of all \emph{classical} partitions of $[n]$.
 
There are similar combinatorial definitions of free and boolean cumulants, due to~\cite{Sp,SW}, 
which are based on non-crossing and interval partitions of $[n]$, respectively. 
These cumulants are related to the notions of free and boolean independence. 
The notion of classical cumulants was introduced by T.\ N.\ Thiele around 1889. 
The two reviews \cite{NS,NL} may serve a newcomer, like this author, as an excellent starting point for the history, concepts, 
and applications of classical cumulants and their non-commutative analogues in classical algebraic probability theory.
 
 \subsection{Categories and functors for classical algebraic probability theories}\label{subs:cats for algprob}

Consider a classical algebraic probability space $(A_{\classical}; \iota)$.
Note that the ground field $\fieldk$ itself is also a unital algebra and $\iota$ is a linear map between the underlying vector 
spaces $A$ and $\fieldk$ which preserves the unit but is {\em not} required to be an algebra homomorphism. 
The failure of $\iota$ to be an algebra homomorphism is measured by the second cumulant 
$\k_2(x,y)=\iota(x\cdot y) -\iota(x)\iota(y)$; higher cumulants measure successive failures.
 This simple observation motivates us to relax the usual notion of morphisms of unital algebras to introduce
  the notion of probability morphisms. 

\begin{definition}\label{promo}
Let $A_{\classical}=\big(A, 1_A,\cdot\big)$ and $A^\pr_{\classical}=\big(A^\pr,1_{A^\pr},\cdot^\pr\big)$ be unital commutative and associative algebras.
A probability morphism is a pointed linear map $f:A\rightarrow A^\pr$.

Let $f$ be a probability morphism. The \emph{descendant} $\underline{\phi}^f$ of $f$ is the family $ \phi^f_1,\phi^f_2,\phi^f_3,\dotsc$, where $\phi^f_n$ is a linear map from $S^n(A)$ to $A^\pr$ for all $n\geq 1$ defined recursively by the equation
\[
f(x_1\cdots x_n) =\sum_{\pi \in P(n)} \phi^f\big(x_{B_1}\big) \cdots\phi^f\big( x_{B_{|\pi|}}\big).
\]
\end{definition}

\begin{remark} 
Note that $\phi^f_1= f$ so that $\phi^f_1(1_A)=1_{A^\pr}$. It is straightforward to check that $\phi_n(x_1,\dotsc, x_{n-1}, 1_A)=0$ for $n\geq 2$ and $x_1,\dotsc, x_{n-1}\in A$.
The map $\phi^f_2$ measures how far $f$ is from being an algebra map:
\[
\phi^f_2(x_1, x_2)= f(x_1\cdot x_2) - f(x_1)\cdot^\pr f(x_2).
\]
In general, for $n\geq 2$, 
\[
\phi^f_n\big(x_1,\dotsc, x_n\big) =
\phi^f_{n-1}\big(x_1,\dotsc, x_{n-2}, x_{n-1}\cdot x_{n}\big)
-\sum_{\substack{\pi \in P(n)\\|\pi|=2\\ n-1\nsim_\pi n}}\phi^f\big(x_{B_1}\big)\cdot^\pr \phi^f\big(x_{B_2}\big).
\]
\hfill$\natural$
\end{remark}
Let $\category{\Prob}_{\classical}{\overk}$ be the category whose objects are unital commutative and associative algebras and whose morphisms are probability morphisms. 
We use the notation $A_{\classical}=(A,1_A, \cdot)$ for an object in $\category{\Prob}_{\classical}{\overk}$.

We also define another category $\category{\Prob}_{\Lie}{\overk}$. Its objects are pointed vector spaces $A_{\Lie}$. 
Given objects $A_{\Lie}$ and $A^\pr_{\Lie}$, a morphism from $A_{\Lie}$ to $A^\pr_{\Lie}$ is a a family $\underline{\phi}=\phi_1, \phi_2,\phi_3,\dotsc$ of linear maps $\phi_n:S^n(A)\rightarrow A^\pr$ such that $\phi_1$ is a pointed map and $\phi_n(x_1,\dotsc, x_{n-1}, 1_A)=0$ for $n\geq 2$ and $x_1,\dotsc, x_{n-1}\in A$.

Given a pair of composable morphisms $\underline{\phi}$ and $\underline{\phi}^\pr$, their composition $\underline{\phi}^\pr\bullet \underline{\phi}$ is defined by
\[
\left(\underline{\phi}^\pr\bullet \underline{\phi}\right)_n\big(x_1,\dotsc, x_n)
=\sum_{\pi\in P(n)}\phi^\pr_{|\pi|}\left(\phi\big(x_{B_1}\big), \cdots, \phi\big(x_{B_{|\pi|}}\big)\right).
\]
It can be checked that $\underline{\phi}^\pr\bullet \underline{\phi}$ is a morphism in $\category{\Prob}_{\Lie}$ from the source of $\phi$ to the target of $\phi^\pr$ and that composition is associative.

We now define a functor, called the \emph{descendant functor}, 
$\Des: \category{\Prob}_{\classical}{\overk}\Longrightarrow \category{\Prob}_{\Lie}{\overk}$ as follows:
\begin{itemize}
\item
For an object $A_{\classical}=\big(A,1_{A},\cdot\big)$ in $\category{\Prob}_{\classical}{\overk}$ we assign the object $A_{\Lie}=\big(A, 1_{A}\big)$ by forgetting the product.
\item For a morphism $f$ in $\category{\Prob}_{\classical}{\overk}$ we assign its descendant $\underline{\phi}^f$.
\end{itemize}

It is not difficult to check that if $f$ and $g$ are a composable pair of probability morphisms, then $\underline{\phi}^g\bullet \underline{\phi}^f$ is the descendant of the composite probability morphism $g\circ f$, that is:
\[
\underline{\phi}^g\bullet \underline{\phi}^f = \underline{\phi}^{g\circ f},
\]
so that we have functoriality of $\Des$, i.e., $\Des(g\circ f)=\Des(g)\bullet \Des(f)$.

We will overload the notation $\fieldk$ by using it for objects in our categories. It will denote the object of $\category{\Prob}_{\classical}$ which is $\fieldk$ equipped with its product and unit and the object of $\category{\Prob}_{\Lie}$ which is $\fieldk$ equipped with its unit. Then $\Des(\fieldk) = \fieldk$.This abuse of notation will not cause particular confusion. 

Note that $\fieldk$ is the initial object in both $\category{\Prob}_{\classical}$ and $\category{\Prob}_{\Lie}$; for any $A_{\classical}$ there is a unique probability morphism from $\fieldk$, and likewise for $\category{\Prob}_{\Lie}$.

Now we may regard classical algebraic probability theory as the study of the category $\category{\Prob}_{\classical}{\overk}$ and the functor $\Des$ into the category $\category{\Prob}_{\Lie}{\overk}$, where a morphism $\iota$ to the initial object $\fieldk$ in $\category{\Prob}_{\classical}{\overk}$ has a special interpretation. 

\begin{corollary}
A classical algebraic probability space is a diagram $\xymatrix{A_{\classical}\ar[r]^\iota &\fieldk}$ in the category $\category{\Prob}_{\classical}{\overk}$; the descendant $\underline{\phi}^\iota$ of the probability morphism $\iota$ to $\fieldk$ is the cumulant morphism $\underline{\k}$.
\end{corollary}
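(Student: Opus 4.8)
The statement has two halves, and the plan is to treat them in turn. For the first half I would simply unwind the definitions: a morphism $\iota$ in $\category{\Prob}_{\classical}{\overk}$ whose target is the initial object $\fieldk$ is, by Definition~\ref{promo}, a pointed linear map $\iota\colon A\to\fieldk$, that is, a $\fieldk$-linear map satisfying $\iota(1_A)=1_\fieldk=1$. This is precisely the datum of an expectation on the unital commutative associative algebra $A_{\classical}$. Hence specifying a diagram $\xymatrix{A_{\classical}\ar[r]^\iota &\fieldk}$ is the same as specifying a classical algebraic probability space $(A_{\classical};\iota)$, and no content beyond matching the two definitions is required.

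For the second half, the key observation is that the recursion defining the descendant $\underline{\phi}^\iota$ in Definition~\ref{promo} becomes, once we set the target algebra $A^\pr$ to be $\fieldk$ and take $f=\iota$, literally the recursion \eqref{aab} defining the cumulant morphism $\underline{\k}$. In the descendant construction the factors $\phi^\iota\big(x_{B_i}\big)$ lie in the target algebra and are multiplied using its product; since that target is $\fieldk$, this is exactly the ground-field multiplication appearing in \eqref{aab}. So both $\underline{\phi}^\iota$ and $\underline{\k}$ are families of linear maps $S^n A\to\fieldk$ that solve one and the same system of equations.

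It then remains to note that this system has a unique solution, so that the two families coincide term by term; I would argue by induction on $n$. For $n=1$, $P(1)$ contains only the one-block partition, so both recursions reduce to $\iota(x_1)=\phi^\iota_1(x_1)$ and $\iota(x_1)=\k_1(x_1)$, giving $\phi^\iota_1=\k_1=\iota$. For the inductive step, isolate in each recursion the contribution of the one-block partition $\{1,\dotsc,n\}$, which produces the top term $\phi^\iota_n(x_1,\dotsc,x_n)$ (resp.\ $\k_n(x_1,\dotsc,x_n)$); every remaining partition has $|\pi|\geq 2$ and hence only blocks of size strictly less than $n$. By the inductive hypothesis $\phi^\iota_{|B|}=\k_{|B|}$ on all such smaller blocks, so the two sums over the partitions with $|\pi|\geq 2$ agree, and subtracting them from the common left-hand side $\iota(x_1\cdots x_n)$ forces $\phi^\iota_n=\k_n$.

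There is no genuine obstacle here: the corollary is a definitional identification, and the only point requiring care is the bookkeeping in the preceding step---namely that the factors in the descendant recursion are multiplied in the codomain, which for $\iota$ is the commutative product of $\fieldk$, matching the cumulant recursion exactly. Once that is observed, the uniqueness of solutions to this triangular recursion, already remarked upon just after the definition of $\underline{\k}$, closes the argument.
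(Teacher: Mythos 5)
Your proof is correct and matches the paper's treatment: the paper states this as an immediate corollary of the definitions, since setting $f=\iota$ and $A^\pr=\fieldk$ in Definition~\ref{promo} makes the descendant recursion literally identical to the cumulant recursion \eq{aab}, whose unique solvability (via the one-block partition and induction on block size) is already noted in the remark following the definition of $\underline{\k}$. Your write-up simply makes explicit the definitional identification and the uniqueness induction that the paper leaves implicit.
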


\subsection{The category and homotopy category of unital \texorpdfstring{$sL_\infty$}{sL-infinity}-algebras}\label{subs:Linfty}

This subsection is a self-contained introduction to the category and homotopy category of (unital) homotopy sLie algebras, which are called $sL_\infty$-algebras in this paper. Homotopy sLie algebras ($sL_\infty$-algebras) are the degree shifted versions of homotopy Lie algebras ($L_\infty$-algebra), which are the nearest kin to the homotopy associative algebras ($A_\infty$-algebras) discovered by Stasheff~\cite{Sta63}. The similarly shifted version of $A_\infty$-algebras, called $sA_\infty$-algebras, shall appear in a sequel.
Homotopy algebras have played crucial roles in algebraic topology, especially in the rational homotopy theory of Quillen, Sullivan and Chen~\cite{Quillen,Su77,Chen}.
Homotopy Lie algebras in particual appeared in Sullivan's theory of minimal models of rational homotopy types (in fact here it was the dual concept of homotopy Lie coalgebras), as well as in the context of deformation theory~\cite{SS}. 

We do not claim originality in this subsection, whose contents is standard (see \cite[Section 4]{Konst} and the references therein). The presentation and organization of the homotopy category of $sL_\infty$-algebras based on classical partitions of $[n]$, which is well-suited to our purposes, may be novel. We also add a bit of cultural background for a probability theorist. 

Recall that topology is the study of the category $\category{Top}$, whose objects are topological spaces and whose morphisms are continuous functions. Two topological spaces $X$ and $Y$ are said to be homeomorphic if there is a pair of continuous functions $f:X\rightarrow Y$ and $g: Y \rightarrow X$ such that $f\circ g=I_X$ and $g\circ f = I_Y$. On the other hand, continuous functions come with a natural notion of equivalence. 
We say that $f$ and $\tilde f$ are \emph{homotopic} or \emph{have the same homotopy type}, and denote this $f\sim \tilde f$, if the two functions can be continuously deformed into each other. 
Two topological spaces $X$ and $Y$ are said to be \emph{homotopy equivalent} or \emph{have the same homotopy type} if there are continuous functions $f:X\rightarrow Y$ and $g: Y \rightarrow X$ such that $f\circ g\sim I_X$ and $g\circ f \sim I_Y$. This leads to the study of the homotopy category $\homotopycategory\category{Top}$, which has the same objects as $\category{Top}$ but whose morphisms are homotopy types of continuous functions. In the homotopy category, a homotopy equivalence becomes an isomorphism.

In general, given a category $\category{C}$ with an equivalence relation $\sim$ called homotopy on the set of morphisms between any pair of objects which satisfies certain coherence conditions, one can build a homotopy category $\homotopycategory\category{C}$. The homotopy category $\homotopycategory\category{C}$ has the same objects as $\category{C}$ but its morphisms are equivalence classes of morphisms in $\category{C}$. We will not express these conditions explicitly but will rather construct the categories we need in an ad hoc manner. A note for the unwary: this is not the only meaning in the wider literature of ``homotopy category.'' However, it is the one we will always take in this paper.

One often studies the homotopy category $\homotopycategory\category{Top}$ by constructing and examining functors from $\category{Top}$ to a category $\category{C}$, itself equipped with a notion of homotopy, which take homotopic maps in $\category{Top}$ to homotopic maps in $\category{C}$. Such a functor induces a functor $\homotopycategory\category{Top}\rightarrow\homotopycategory\category{C}$. We call such functors \emph{homotopy functors}.

Another ubiquitous example of a homotopy category is that of cochain complexes over a commutative unital ring $R$. Let $\category{Kom}(R)$ be the category of cochain complexes, whose objects are cochain complexes over $R$ and whose morphisms are cochain maps. 
Cochain homotopy yields a natural notion of homotopy equivalence. Then one can form the homotopy category $\homotopycategory\category{Kom}(R)$ of cochain complexes. As above, its objects are the same as those of $\category{Kom}(R)$ and its morphisms are homotopy types of cochain maps.

We call a category $\category{Alg}_P$ \emph{algebraic} if each object has an underlying cochain complex and each morphism is a cochain map of the underlying cochain complexes. We call an algebraic category \emph{homotopy algebraic} if it is equipped with a notion of homotopy and if morphisms are homotopic in $\category{Alg}_P$ if and only if they are homotopic in $\category{Kom}(R)$. One origin of homotopy algebra is via the study of algebraic models of $\homotopycategory\category{Top}$. Here, an \emph{algebraic model} means a homotopy algebraic category $\category{Alg}_P$ (``the category of homotopy $P$-algebras'') together with a homotopy functor (or family of homotopy functors), $\mF :\category{Top}\Rightarrow \category{Alg}_P$. As in the discussion above, an algebraic model induces a well-defined functor $\text{ho}\mF :\homotopycategory\category{Top}\Rightarrow \homotopycategory\category{Alg}_P$. Then any invariants of homotopy types in the algebraic category are invariants of homotopy types of topological spaces.

The simplest example of an algebraic model is the functor of cochains. Consider, for example, the singular cochain complex $C(X)$ of the topological space $X$ with coefficients in $R$.
Then a continuous function $f:X\rightarrow Y$ induces a cochain map $C(f):C(Y) \rightarrow C(X)$. Furthermore, if $\tilde f$ is a continuous function homotopic to $f$, then the induced cochain maps $C(f)$ and $C(\tilde f)$ are cochain homotopic. 
This construction is also (contravariant) functorial---$C(g\circ f)=C(f)\circ C(g):C(Z) \rightarrow C(X)$ for any continuous function $g:Y\rightarrow Z$, and $C(\tilde f)\circ C(\tilde g)$ is cochain homotopic to $C(f)\circ C(g)$ whenever $f\sim \tilde f$ and $g\sim \tilde g$. It follows that $C(X)$ and $C(Y)$ have the same cochain homotopy type if $X$ and $Y$ have the same homotopy type. 

The basic invariant of cochain homotopy types is cohomology. Note that a cochain homotopy equivalence induces an isomorphism of cohomology since the differentials vanish on cohomology. Therefore, any cochain map that is homotopic to zero induces the zero map on cohomology. It follows that singular cohomology is an invariant of the homotopy type of a topological space.

Cochain homotopy equivalences provide one particular class of quasi-isomorphisms, but in general not every quasi-isomorphism is a cochain homotopy equivalence. Note that the cohomology $H$ of a cochain complex $(C,d)$ can be viewed as a cochain complex $(H,0)$ with zero differential, whose cohomology is $H$ itself. If the ground ring is a field $\fieldk$, one can construct a cochain quasi-isomorphism $f:(H,0)\rightarrow (C,d)$ as follows. Choose a basis $\{e_\a\}_{\a\in J}$ of $H$ as a graded vector space. Next choose a set of representatives $\{f(e_\a)\}_{\a\in J}$ in $C$, that is, $d f(e_\a)=0$ and the cohomology class $[f(e_\a)]$ of $f(e_\a)$ is $e_\a$ for all $\a \in J$. Then extend $f$ linearly over $H$. The cochain map $f$ is a quasi-isomorphism which induces the identity map on cohomology. Such a quasi-isomorphism is unique up to cochain homotopy.

A more sophisticated example of an algebraic category is the category of differential graded Lie algebras (DGLAs).

\begin{itemize}
\item
A \emph{graded Lie algebra} is a pair $(V, [\hbox{ },\hbox{ }])$, where $V$ is a graded vector space and $[\hbox{ },\hbox{ }]$ is a linear map from $V\otimes V$ to $V$ of degree $0$ such that, for all elements $x,y \in V$
\begin{align*}
[x,y] +(-1)^{|x||y|}[y,x]=0,
\\
[x,[y,z]] - [[x,y],z] - (-1)^{|x||y|}[y, [x,z]]=0,
\end{align*}
where $|x|$ denotes the degree of $x$.
\item
A \emph{Lie algebra} is a graded Lie algebra concentrated in degree zero.
\item
A \emph{differential graded Lie algebra} or DGLA $\big(V, d, [\hbox{ },\hbox{ }]\big)$ is both a cochain complex $\big(V,d\big)$ and a graded Lie algebra $\big(V, [\hbox{ },\hbox{ }]\big)$ such that the differential $d$ is a derivation of the bracket $[\hbox{ },\hbox{ }]$:
\[
d[x,y] = [dx, y]+ (-1)^{|x|}[x, dy].
\]
\end{itemize}
In this paper we shall work with a variant of DGLAs called sDGLAs (the s is for ``shifted'').
\begin{definition}
A graded sLie algebra is a duo $(V, \ell_2)$, where $V$ is a graded vector space and $\ell_2$ is a linear map from $V\otimes V$ to $V$ of degree $1$ such that
\begin{align*}
\ell_2(x,y) -(-1)^{|x||y|}\ell_2(y,x)=0,
\\
\ell_2\big(x,\ell_2(y,z)\big) + (-1)^{|x|}\ell_2\big(\ell_2(x,y),z\big) + (-1)^{(|x|+1)|y|}\ell_2(y, \ell_2(x,z)\big)=0.
\end{align*}
An sDGLA $(V,d,\ell_2)$ is both a cochain complex $(V,d)$ and a graded sLie algebra $(V, \ell_2)$ such that the differential $d$ is compatible with the bracket $\ell_2$:
\eqn{compia}{
d\ell_2(x,y) + \ell_2(dx, y)+ (-1)^{|x|}\ell_2(x, dy)=0.
}
\end{definition}
\begin{remark}
Let $(V, d, [\hbox{ },\hbox{ }])$ be a DGLA. Then $(V[-1], d,\ell_2)$ is a sDGLA with $\ell_2(x,y) =(-1)^{|x|}[x,y]$ for all $x,y \in V$,
where $V[-1]^j \ceq V^{j +1}$ for all $j$. It is clear that everything we state about sDGLAs can be translated into statements about DGLAs. \hfill$\natural$
\end{remark}
The cohomology of an sDGLA is defined as that of its underlying cochain complex. Then the compatibility condition \eq{compia}
implies that the cohomology $H$ of an sDGLA has the canonical structure of a graded sLie algebra.

A morphism between sDGLAs $\big(V, d, \ell_2\big)$ and $\big(V^\pr, d^\pr, \ell_2^\pr\big)$ is a linear map $f:V\rightarrow V^\pr$ of degree zero which is both a cochain map and a sLie algebra homomorphism, i.e., $fd=d^\pr f$ and $f\big(\ell_2(x,y)\big)= \ell_2^\pr\big(f(x),f(y)\big)$. 
It follows that $H(f):H\rightarrow H^\pr$ is a homomorphism between the graded sLie algebras $H$ and $H^\pr$.
A morphism $f$ of sDGLA is called a quasi-isomorphism if it is a quasi-isomorphism of the underlying cochain complexes, that is, if $H(f):V\rightarrow V^\pr$ is an isomorphism. Then $H(f)$ is also an isomorphism between the graded sLie algebras $H$ and $H^\pr$. It is clear that sDGLAs form an algebraic category.

However, naively trying to define a homotopy category of sDGLAs via a notion of homotopy as above runs into problems. A cochain map $\tilde \phi_1=f + \l_1 d + d^\pr \l_1$ that is homotopic to an sDGLA morphism $f$ by the cochain homotopy $\l_1$ is rarely an Lie algebra homomorphism but instead is only a homeomorphism up to homotopy; a tedious computation shows that
\eqn{firstho}{
\tilde \phi_1\big(\ell_2(x,y)\big) -\ell_2^\pr\big(\tilde \phi_1(x),\tilde \phi_1(y)\big)
=d^\pr \tilde\phi_2(x,y) - \tilde\phi_2(dx,y) -(-1)^{|x|}\tilde\phi_2(x,dy),
}
where $\tilde\phi_2$ is a linear map from $S^2V$ to $V$ of degree $0$ defined on elements $x,y \in V$ as
\begin{align*}
\tilde\phi_2(x,y) = 
&
\l_1\big(\ell_2(x,y)\big) +\ell_2^\pr\big(\l_1(x), f(y)\big) +(-1)^{|x|}\ell_2^\pr\big(f(x), \l_1(y)\big)
\\
&
+\Fr{1}{2}\ell_2^\pr\big(\l_1(x), (\tilde\phi_1-f)(y)\big) + \Fr{1}{2}(-1)^{|x|}\ell_2^\pr\bigr((\tilde\phi_1 -f)(x), \l_1(y)\big).
\end{align*}

One can iterate further, beginning by applying the Jacobi-identities to \eq{firstho}, to obtain an infinite sequence $\underline{\tilde\phi}=\tilde\phi_1,\tilde\phi_2,\tilde\phi_3,\dotsc$, where $\tilde\phi_n:S^n V\rightarrow V^\pr$ is a linear map of degree $0$ and the maps $\tilde\phi$ satisfy an infinite sequence of relations. This leads to the notion of $sL_\infty$-morphisms, which have a well-defined notion of homotopy. Once this framework has been built, the sDGLA morphism $f$ itself can be regarded as an $sL_\infty$-morphism $\underline{\phi}=\phi_1,0,0,0,\dotsc$ with $\phi_1=f$. In this context, the $sL_\infty$-morphism $f$ will not be $sL_\infty$-homotopic to $\phi_1$ but rather to the \emph{sequence} $\underline{\tilde\phi}$ by an $sL_\infty$-homotopy $\underline{\l}=\l_1,\l_2,\l_3,\dotsc$

Eventually one also arrives at the notion of $sL_\infty$-algebras as a natural generalization of sDGLAs. Then $sL_\infty$-morphisms and the corresponding homotopies lead naturally to a category and homotopy category of $sL_\infty$-algebras. 

Before we leave this cultural background, we also recall the notions of a differential graded algebra and differential graded commutative algebra.
\begin{definition}
A differential graded algebra (DGA) is a triple $(\sA, d,\cdot)$,
where the pair $(\sA, d)$ is a cochain complex and the pair $(\sA,\cdot)$ is a graded vector space equipped with a degree zero linear map from $\sA\otimes \sA \rightarrow \sA$ defined by $x\otimes y \mapsto x\cdot y$ that is associative, i.e., $x_1\cdot(x_2\cdot x_3)=(x_1\cdot x_2)\cdot x_3$ such that $d(x\cdot y) = dx\cdot y +(-1)^{|x|}x\cdot y$. 
\end{definition}

\begin{remark}
Let $(\sA, d,\cdot)$ be a DGA.
The super-commutator $[\hbox{ },\hbox{ }]$ of the associative multiplication is defined by $[x,y]\ceq x\cdot y -(-1)^{|x||y|}y\cdot x$ for all $x,y \in \sA$. Then the trio $(\sA, d, [\hbox{ },\hbox{ }])$ is a DGLA. \hfill$\natural$
\end{remark}

\begin{definition}
A differential graded commutative algebra (CDGA) is a DGA $(\sA, d,\cdot)$ such that the super-commutator $[\hbox{ },\hbox{ }]$ vanishes identically.
\end{definition}

\subsubsection{The category of unital \texorpdfstring{$sL_\infty$}{sL-infinity}-algebras}

\begin{definition}[Unital $sL_\infty$-Algebras]
\label{l-alg}
Let $V$ be a pointed graded vector space.
The structure of a \emph{unital $sL_\infty$-algebra} on $V$, denoted $(V,\underline{\ell}\big)$, consists of a family $\underline{\ell}=\ell_1, \ell_2, \ell_3,\cdots$ of linear maps $\ell_n:S^n V\rightarrow V$ of degree $1$ such that, for all $n\geq 1$,
\begin{enumerate}
\item for $x_1,\dotsc, x_n \in V$,
\[
\sum_{\substack{\pi \in P(n)\\ |B_i| = n-|\pi|+1}}\ep(\pi,i)
\ell_{|\pi|}\big( x_{B_1}, \cdots, x_{B_{i-1}}, \ell(x_{B_i}), x_{B_{i+1}},\dotsc, x_{B_{|\pi|}}\big)
=0,
\]
where $\ep(\pi,i)= \ep(\pi)(-1)^{|x_{B_1}|+\cdots +|x_{B_{i-1}}|}$, and

\item $\ell_n\big(x_1,\dotsc, x_{n-1}, 1_V\big)=0$ for all $x_1,\dotsc, x_{n-1}\in V$.
\end{enumerate}
Note that the condition $|B_i| = n-|\pi|+1$ in the summation implies that all the sets $B_j$ are singletons for $j\ne i$.
\end{definition}

\begin{remark}
Use the notation $d$ for $\ell_1$.
Here are the first few relations:

- For $n=1$, we have $d^2=d1_V=0$.
Hence $(V,1_V,d)$ is a pointed cochain complex.

- For $n=2$, we have $ d\ell_2\big(x_1, x_2\big)+ \ell_2\big(d x_1, x_2\big)+(-1)^{|x_1|} \ell_2\big(x_1, d x_2\big)=0$ so $d$ is a derivation of $\ell_2$. 
 
- For $n=3$, 
\begin{align*}
d \ell_3(x_1, x_2,x_3)
+\ell_3\big( dx_1, x_2,x_3\big)
+(-1)^{|x_1|} \ell_3\big(x_1, d x_2,x_3\big)
+(-1)^{|x_1|+|x_2|} \ell_3\big(x_1, x_2,d x_3\big)
\\
=
-\ell_2\big(\ell_2(x_1, x_2),x_3\big)
-(-1)^{|x_1|} \ell_2\big(x_1, \ell_2 (x_2, x_3)\big)
-(-1)^{(|x_1|+1)|x_2|} \ell_2\big(x_2, \ell_2 (x_1, x_3)\big).
\\
\end{align*}
Note that the vanishing condition of the right hand side in the final equation above is the graded Jacobi identity for $\ell_2$.
Hence $\ell_2$ may fails to satisfy the graded Jacobi identity---the failure of $\ell_2$ being a graded Lie bracket is measured by the homotopy $\ell_3$.

\hfill$\natural$
\end{remark}

\begin{definition}[Unital $sL_\infty$-morphisms]
\label{l-mor}
A morphism $\underline{\phi}$ between unital $sL_\infty$-algebras $\big(V,1_V,\underline{\ell}\big)$ 
and $\big(V^\pr,1_{V^\pr},\underline{\ell}^\pr\big)$ is a family $\underline{\phi}=\phi_1,\phi_2,\phi_3,\cdots$ such that, for all $n\geq 1$,
\begin{enumerate}
\item $\phi_n: S^n V\rightarrow V^\pr$ is a linear map of degree $0$;
\item for every tuple of elements $x_1,\dotsc, x_n \in V$,
\begin{align*}
\sum_{\substack{\pi \in P(n)}}\ep(\pi)
\ell^\pr_{|\pi|}&\left(\phi\left( x_{B_1}\right), \cdots, \phi\big( x_{B_{|\pi|}}\big)\right)
\\
&
=
\sum_{\substack{\pi \in P(n)\\ |B_i| = n-|\pi|+1}}\ep(\pi,i)
\phi_{|\pi|}\left( x_{B_1}, \cdots, x_{B_{i-1}}, \ell(x_{B_i}), x_{B_{i+1}},\dotsc, x_{B_{|\pi|}}\big)\right),
\end{align*}
and
\item $\phi_1(1_V)=1_{V^\pr}$, $\phi_{k}\big(x_1,\dotsc, x_{k-1}, 1_V\big)=0$ for all $k\geq 2$ and all $x_1,\dotsc, x_{k-1}\in V$.
\end{enumerate}
\end{definition}
\begin{remark}
Set $\ell_1=d$ and $\ell^\pr_1=d^\pr$.
\begin{itemize}
\item
For $n=1$, we have $\phi_1 d = d^\pr \phi_1$ and $\phi_1(1_V)=1_{V^\pr}$ so that $\phi_1$ is a pointed cochain map from $(V, 1_V, d)$ to $(V^\pr,1_{V^\pr}, d^\pr)$.
\item
 For $n=2$, we have
\begin{align*}
\phi_1\big(\ell_2(x_1,x_2)\big) - \ell^\pr_2 & \big(\phi_1(x_1),\phi_1(x_2)\big)
\\
=&
d^\pr \phi_2(x_1, x_2)\big)
- \phi_2\big(d x_1, x_2\big)-(-1)^{|x_1|} \phi_2\big(x_1, dx_2\big),
\end{align*}
so that $\phi_1$ is an algebra homomorphism up to the homotopy $\phi_2$.
\end{itemize}
\hfill$\natural$
\end{remark}

\begin{definition}[Lemma] \label{lmocom}
Unital $sL_\infty$-algebras form a category with composition of morphisms $\underline{\phi}:(V,\underline{\ell}) \rightarrow (V^\pr,\underline{\ell}^\pr)$ and $\underline{\phi}^\pr: (V^\pr,\underline{\ell}^\pr)\rightarrow (V^\ppr,\underline{\ell}^\ppr)$ given by
\[
\left(\underline{\phi}^\pr\bullet \underline{\phi}\right)_n\big(x_1,\dotsc, x_n\big)
\ceq \sum_{\pi \in P(n)}\ep(\pi)\phi^\pr_{|\pi|}\left(\phi\big(x_{B_1}\big),\dotsc, \phi\big(x_{B_{|\pi|}}\big)\right).
\]
\end{definition}
\begin{proof}
Exercise.
\hfill\qed\end{proof}

\begin{remark}
The explicit formulas for $\left(\underline{\phi}^\pr\bullet \underline{\phi}\right)_n$ for $n=1,2,3$ are
\begin{align*}
\left(\underline{\phi}^\pr\bullet \underline{\phi}\right)_1(x)=
&
\phi^\pr_1\big(\phi_1(x)\big)
,\\
\left(\underline{\phi}^\pr\bullet \underline{\phi}\right)_2(x,y)=
&
\phi^\pr_1\big(\phi_2(x,y)\big) +\phi^\pr_2\big(\phi_1(x),\phi_1(y)\big)
,\\
\left(\underline{\phi}^\pr\bullet \underline{\phi}\right)_3(x,y,z)=
&
\phi^\pr_1\big(\phi_3(x,y,z)\big) 
+\phi^\pr_2\big(\phi_1(x),\phi_2(y,z)\big)
+\phi^\pr_2\big(\phi_2(x,y),\phi_1(z)\big)
\\
&
+(-1)^{|x||y|}\phi^\pr_2\big(\phi_1(y),\phi_2(x,z)\big)
+\phi^\pr_3\big(\phi_1(x),\phi_1(y),\phi_1(z)\big).
\end{align*}
\hfill$\natural$
\end{remark}

We denote by $\category{\UsL}_\infty{\overk}$ the category of unital $sL_\infty$-algebras whose objects are unital $sL_\infty$-algebras and whose morphisms are unital $sL_\infty$-morphisms. Forgetting the unit and unit conditions, we have $\category{sL}_\infty{\overk}$, the category of $sL_\infty$-algebras and $sL_\infty$-morphisms. 

\subsubsection{{The homotopy category of unital \texorpdfstring{$sL_\infty$}{sL-infinity}-algebras.}}
We now turn to homotopies between unital $sL_\infty$-morphisms. 
Let $(V,1_V,\underline{\ell})$ and $(V^\pr,1_{V^\pr},\underline{\ell}^\pr)$ be unital $sL_\infty$-algebras. Recall a pointed cochain homotopy is an arbitrary linear map from $V$ to $V^\pr$ of degree $-1$ such that $s(1_{V})=0$.

\begin{definition}
\label{flow}
A (time-independent) unital $sL_\infty$-homotopy between two $sL_\infty$-algebras $V$ and $V^\pr$ is an infinite sequence $\underline{\l} =\l_1,\l_2,\cdots$,
where $\l_n$ is a linear map from $S^n V$ to $V^\pr$ of degree $-1$ satisfying
\[
\l_n(x_1,\dotsc, x_{n-1}, 1_V)=0,
\] 
for all $x_1,\dotsc, x_{n-1} \in V$.
\end{definition}

In the category of cochain complexes, a chain map and a chain homotopy uniquely determine another chain map. This leads to an equivalence relation that respects composition of morphisms. The following definition parallels the situation in cochain complexes using $sL_\infty$-morphisms and homotopies, with some modification to the uniqueness statement.
\begin{definition}[Lemma]\label{l-hom}
Let $\underline{\eta}(\t)$ be an one-parameter family of unital $sL_\infty$-homotopies between $V$ and $V^{\pr}$ with polynomial dependence in the parameter $\t\in[0,1]$. Let $\underline{\phi}$ be a sequence of maps $\phi_1,\phi_2,\cdots$, where $\phi_n:S^n V \to V^{\pr}$ is of degree $0$.

Consider the following recursively defined system of \emph{homotopy flow} equations generated by $\underline{\eta}(\t)$. 
\begin{align*}
\dot\Phi_n\big(&x_1,\dotsc, x_n\big)
\\
=
&
d^\pr \eta_n(x_1,\dotsc,x_n)
\\
&
+\sum_{\substack{\pi \in P(n)\\ |B_i| = n-|\pi|+1}}\ep(\pi,i)
\eta_{|\pi|}\left( x_{B_1}, \cdots, x_{B_{i-1}}, \ell(x_{B_i}), x_{B_{i+1}},\dotsc, x_{B_{|\pi|}}\right)
\\
&
+\sum_{\substack{\pi \in P(n)\\|\pi|\neq 1}}\sum_{i=1}^{|\pi|}\ep(\pi,i)
\ell^\pr_{|\pi|}\left({\Phi}\left( x_{B_1}\right), \cdots, {\Phi}\big(x_{B_{i-1}}\big)
 ,\eta\big(x_{B_i}\big), {\Phi}\big( x_{B_{i+1}}\big),\dotsc, {\Phi}\big( x_{B_{|\pi|}}\big)\right)
\end{align*}
Here $\underline{\Phi}(\t)=\Phi_1(\t),\Phi_2(\t),\cdots$, where $\Phi_n(\t)$ is a one-parameter family of linear maps from $S^n V$ to $V^\pr$ of degree $0$, 

Then the homotopy flow equations have a unique solution $\underline{\Phi}$ such that $\underline{\Phi}(0)=\underline{\phi}$. If $\underline{\phi}$ is a unital $sL_\infty$-morphism, then the unique solution $\underline{\Phi}$ is a smooth family of unital $sL_\infty$-morphisms.

We say $\underline{\tilde\phi}\ceq \underline{\Phi}(1)$ is \emph{homotopic} to or has the same \emph{$sL_\infty$-homotopy type} as $\underline{\phi}$. We denote this by $\underline{\phi}\sim_\infty \underline{\tilde\phi}$.

We can extract a time-independent unital $sL_\infty$-homotopy $\underline{\l}=\l_1,\l_2,\l_3,\dots$ from $\underline{\eta}$ by setting $\l_n = \int^1_0 \eta_n \text{d}\t$.
\end{definition}

\begin{proof} 
Consider the homotopy flow equation for $n=1$:
\[
\dot\Phi_1= d^\pr \eta_1 + \eta_1 d,
 \]
whose right hand side is a polynomial in $\t$.
It follows that $\Phi_1$ has the unique polynomial solution:
\[
\Phi(\t)_1 = \phi_1 + d^\pr \xi_1 +\xi_1 d\hbox{ where }\xi_1 =\int^\t_0 \eta(u)_1 \text{d} u.
\]
In general $\Fr{d}{d\t}\Phi_n$ depends on $\underline{\eta}$ and $\Phi_k$ for $1\leq k\leq n-1$.
Fix $n >1$ and assume that $\Phi_k$ has a unique solution, polynomial in $\t$, for all $1\leq k\leq n-1$. Then it is clear that $\Phi_n$ has a unique solution:
\[
\Phi(\t)_n =\phi_n +\int^\t_0 \dot\Phi_n(u)\text{d}u,
\]
which is a polynomial in $\t$. Hence, by induction, $\underline{\Phi}$ is determined uniquely by the given initial conditions $\underline{\phi}$ and $\underline{\eta}$.

Next, consider the sequence $\underline{\Xi}=\Xi_1, \Xi_2,\cdots$ of degree $1$ linear maps $\Xi_n:S^n V\to V'$ defined as
\eqnalign{xin}{
\Xi_n(x_1,\cdots, x_n)\ceq &\sum_{\substack{\pi \in P(n)}}\ep(\pi)
\ell^\pr_{|\pi|}\left(\Phi\left( x_{B_1}\right), \cdots, \Phi\big( x_{B_{|\pi|}}\big)\right)
\\
&
-
\sum_{\substack{\pi \in P(n)\\ |B_i| = n-|\pi|+1}}\ep(\pi,i)
\Phi_{|\pi|}\left( x_{B_1}, \cdots, x_{B_{i-1}}, \ell(x_{B_i}), x_{B_{i+1}},\cdots, x_{B_{|\pi|}}\big)\right).
}
Note that $\underline{\Xi}$ measures the failure of the family $\underline{\Phi}$ to be an $sL_\infty$-morphism.
Hence $\underline{\Xi}$ is a polynomial family parametrized by $\t \in [0,1]$ with the initial condition $\Xi_n\big|_{\t=0}=0$ for all $n\geq 1$ if $\underline{\phi}=\Phi_n\big|_{\t=0}$ is an $sL_\infty$-morphism. We would like to show that in this case $\Xi_n =0$ for all $n\geq 1$.

By applying $\fr{d}{d\t}$ to \eq{xin}, we have, for all $n\geq 1$,
\begin{align*}
\dot{\Xi}_n&\big(x_1,\cdots, x_n\big)
\\
=
&
\sum_{\substack{\pi \in P(n)}}\sum_{i=1}^{|\pi|}\ep(\pi)
\ell^\pr_{|\pi|}\left({\Phi}\left( x_{B_1}\right), \cdots, {\Phi}\big(x_{B_{i-1}}\big)
 ,\dot\Phi\big(x_{B_i}\big), {\Phi}\big( x_{B_{i+1}}\big),\cdots, {\Phi}\big( x_{B_{|\pi|}}\big)\right)
\\
&
-\sum_{\substack{\pi \in P(n)\\ |B_i| = n-|\pi|+1}}\ep(\pi,i)
\dot\Phi_{|\pi|}\left( x_{B_1}, \cdots, x_{B_{i-1}}, \ell(x_{B_i}), x_{B_{i+1}},\cdots, x_{B_{|\pi|}}\big)\right),
\end{align*}
For $n=1$, we have $\dot{\Xi}_1= K^\pr\dot{\Phi}_1 + \dot{\Phi}_1 K$.
From the homotopy flow equation $\dot{\Phi}_1 = K^\pr \eta_1 +\eta_1 K$, we obtain that $\dot{\Xi}_1=0$.
Hence $\Xi_1 =0$ due to the initial condition that $\Xi_1\big|_{\t=0}=0$.
For $n\geq 2$,
after replacing $\dot\Phi\big(x_{B_i}\big)$ using the homotopy flow equation, we have
\begin{align*}
{\dot{\Xi}}_n\big(x_1, &\cdots, x_n\big)
\\
=
&
-\!\!\!\!\!\sum_{\substack{\pi \in \sP(n)}}\sum_{i<j}^{|\pi|}\ep(\pi,i)
\ell^\pr_{|\pi|}\left({\Phi}\left( x_{B_1}\right), \cdots, {\Xi}\big(x_{B_{i}}\big),
\cdots,\eta\big(x_{B_j}\big),\cdots, {\Phi}\big( x_{B_{|\pi|}}\big)\right)
\\
&
+\!\!\!\!\!\sum_{\substack{\pi \in \sP(n)}}\sum_{i<j}^{|\pi|}\ep(\pi,i)
\ell^\pr_{|\pi|}\left({\Phi}\left( x_{B_1}\right), \cdots, {\eta}\big(x_{B_{i}}\big),
\cdots,\Xi\big(x_{B_j}\big),\cdots, {\Phi}\big( x_{B_{|\pi|}}\big)\right).
\end{align*}
Fix $n >2$ and assume that $\Xi_k=0$ for all $1\leq k\leq n-1$.
Note that ${\dot{\Xi}}_n$ in the above equation depends only on $\Xi_1,\cdots, \Xi_{n-1}$ among the members of the family $\underline{\Xi}$. It follows that $\dot \Xi_n=0$, which implies that $\Xi_n=0$ since $\Xi_n\big|_{\t=0}=0$.
Therefore $\underline{\Phi}$ is a polynomial family of $sL_\infty$-morphisms parametrized by $\t\in[0,1]$.

Finally, we turn to the unit. Let $x_n=1_V$. Then the homotopy flow equation, for all $n\geq 1$, reduces to
\begin{align*}
\dot{\Phi}_n\big(x_1,&\cdots, x_n\big)
\\
=
&
\sum_{\substack{\pi \in P(n)\\ |\pi|\neq 1}}\sum_{i=1}^{|\pi|-1}\ep(\pi,i)
\ell^\pr_{|\pi|}\left({\Phi}\left( x_{B_1}\right), \cdots, {\Phi}\big(x_{B_{i-1}}\big)
 ,\eta\big(x_{B_i}\big), {\Phi}\big( x_{B_{i+1}}\big),\cdots, {\Phi}\big( x_{B_{|\pi|}}\big)\right)
\end{align*}
Assume $\underline{\Phi}(0)= \underline{\phi}$, where $\phi_1(1_V)= 1_{V^\pr}$ and $\phi_n\big(x_1,\cdots, x_{n-1},1_V\big)=0$ for all $n\geq 2$. We will show that $\underline{\Phi}(\t)$ has the same properties.

For $n=1$, we have $\dot{\Phi}_1\big(1_V\big) =0$. Hence $\Phi_1\big(1_V\big)=\phi_1\big(1_V\big)= 1_{V^\pr}$.
Now assume that ${\Phi}_k\big(x_1,\cdots, x_{k-1}, 1_V\big)=0$ for all $1\leq k\leq n$ for some $n>1$.
Then we have $\dot{\Phi}_{n+1}\big(x_1,\cdots, x_{n}, 1_V\big)=0$, which implies that 
\[
{\Phi}_{n+1}\big(x_1,\cdots, x_{n}, 1_V\big)={\phi}_{n+1}\big(x_1,\cdots, x_{n}, 1_V\big)=0.
\]
\hfill\qed\end{proof}
It can be checked that $\sim_\infty$ is an equivalence relation.
Forgetting the unit and unit conditions, we have the definition of the $sL_\infty$-homotopy of $sL_\infty$-morphisms. 
\begin{remark}\label{irem}
Assume that we have an $sL_\infty$-morphism $\underline{\phi}$ from $(V,\underline{\ell})$ to $(V^\pr, \underline{\ell^\pr})$ such that $\underline{\phi}=\phi_1,0,0,0,\dotsc$ Further assume that the cochain map $\phi_1:(V, d)\rightarrow (V^\pr, d^\pr)$ is cochain homotopic to zero, that is, $\phi_1= d^\pr \varsigma+\varsigma d$ for a linear map $\varsigma:V\rightarrow V^\pr$ of degree $-1$. This situation arises frequently. A natural question is whether the $sL_\infty$-morphism $\underline{\phi}=\phi_1,0,0,0,\dotsc$ is homotopic to the zero $sL_\infty$-morphism?
The answer in general is {\em no}. 

Consider the special case where the structure on $V^\pr$ is the zero structure. Then an $sL_\infty$-morphism of the special type is equivalent to a chain map $\phi_1$ which is chain homotopic to the zero map and annihilates $\ell_k$ for all $k$. Given a sequence of degree $-1$ maps $\underline{\eta}=\eta_1,\eta_2,\ldots,$ where $\eta_k:S^k V\to V^{\pr}$, the differential equations for the homotopy generated by $\underline{\eta}$ begin with:
\begin{align*}\dot{\Phi}_1(v_1)&=\eta_1 dv_1,
\\
\dot{\Phi}_2(v_1,v_2) &= \eta_2(dv_1,v_2)+(-1)^{|v_1|}\eta_2(v_1,dv_2)+\eta_1\ell_2(v_1,v_2),\ldots
\end{align*}

Consider the three dimensional $sL_\infty$-algebra $A$ spanned by $a$ and $b$ in degree $0$ and $c$ in degree $1$ with $da = \ell_2(b,b)=c$ and all other structure $0$. Then there is an $sL_\infty$-morphism from $A$ to $\fieldk$ of the form $\phi_1,0,0,0,\dotsc$ where $\phi_1(a)=1$ and $\phi_1(b)=0$. The map $\phi_1$ is clearly chain homotopic to the zero map. Note that $\dot{\Phi}_2(b,b)=\eta_1\ell_2(b,b)=\eta_1 da = \dot{\Phi}_1(a)$. This implies that 
\[(\Phi_2(1)-\Phi_2(0))(b,b)=(\Phi_1(1)-\Phi_1(0))(a).\] In order for $\Phi$ to be a homotopy between $\phi_1,0,0,\dotsc$ and the zero morphism, the left hand side of this must be $0$ while the right hand side must be $-1$.
\end{remark}

The homotopy category $\homotopycat\category{\UsL}_\infty{\overk}$ of unital $sL_\infty$-algebras can be defined to have the same objects as the the category $\category{\UsL}_\infty{\overk}$ and to have its morphisms be the homotopy types of unital $sL_\infty$-morphisms, provided that composition of homotopy types of morphisms is well-defined.
 
Consider the following diagram in the category $\category{\UsL}_\infty{\overk}$:
\[
\xymatrix{ 
V_{\Lie}
\ar@{..>}@/^0.5pc/[r]^{\underline{\w}}
\ar@{..>}@/_0.5pc/[r]_{\underline{\tilde\w}}
&
 V^\pr_{\Lie}
 \ar@{..>}@/^0.5pc/[r]^{\underline{\w}^\pr}
 \ar@{..>}@/_0.5pc/[r]_{\underline{\tilde\w}^{\hspace{1pt}\pr}}
& V^\ppr_{\Lie}},
\]
where $\underline{\tilde\w}$ is homotopic to $\underline{\w}$ and $\underline{\tilde\w}^{\hspace{1pt}\pr}$ is homotopic to $\underline{\w}^\pr$. 
From the assumptions, there are solutions $\underline{\Phi}(\t)$ and $\underline{\Phi}^\pr(\t)$ to the homotopy flow equation generated by some polynomial family $\underline{\eta}(\t)$ and $\underline{\eta}^\pr(\t)$ of unital $sL_\infty$-homotopies, respectively, such that
\begin{align*}
\left\{\begin{array}{l}
\underline{\Phi}(0)=\underline{\w},\\
\underline{\Phi}(1)=\underline{\tilde\w},
\end{array}\right.
\qquad
\left\{\begin{array}{l}
\underline{\Phi}^\pr(0)=\underline{\w}^\pr,\\
\underline{\Phi}^\pr(1)=\underline{\tilde\w}^{\hspace{1pt}\pr}.
\end{array}\right.
\end{align*}
\begin{lemma}\label{compho}
Suppose as above that $\underline{\Phi}(\t)$ and $\underline{\Phi^{\pr}}(\t)$ are time-dependent $sL_\infty$-homotopies such that the $\t=0$ evaluation of each one is an $sL_\infty$-morphism. Define $\underline{\Phi}^\ppr(\t)$ to be $\underline{\Phi}^\pr(\t)\bullet \underline{\Phi}(\t)$. Then $\underline{\Phi}^\ppr(\t)$ satisfies the homotopy flow equations generated by the polynomial family $\underline{\eta}^\ppr(\t)={\eta}_1^\ppr(\t),{\eta}_2^\ppr(\t),\dotsc$ of unital $sL_\infty$-homotopies, where
\begin{align*}
\eta^\ppr_n\big(v_1,\dotsc,& v_n) 
=
\sum_{\pi \in P(n)}\ep(\pi)\eta^\pr_{|\pi|}\left(\Phi(v_{B_1}),\dotsc,\Phi(v_{B_{|\pi|}})\right)
\\
&+\!\!\!\!\!\sum_{\substack{\pi \in P(n)}}\sum_{i=1}^{|\pi|}\ep(\pi,i)
\Phi^\pr_{|\pi|}\left( \Phi\big(v_{B_1}\big), \cdots,\Phi\big( v_{B_{i-1}}\big), \eta(v_{B_i}), \Phi\big(v_{B_{i+1}}\big),
\dotsc, \Phi\big( v_{B_{|\pi|}}\big)\right).
\end{align*}
\end{lemma}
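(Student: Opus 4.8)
The plan is to verify the claim by a direct computation: differentiate the composite $\underline{\Phi}^\ppr(\tau)=\underline{\Phi}^\pr(\tau)\bullet\underline{\Phi}(\tau)$ in $\tau$, substitute the homotopy flow equations obeyed by $\underline{\Phi}$ and $\underline{\Phi}^\pr$, and then reorganize the result until it matches the homotopy flow equation of Definition \ref{l-hom} for $\underline{\Phi}^\ppr$ with the prescribed generator $\underline{\eta}^\ppr$. Throughout I will use the crucial fact, furnished by Lemma \ref{l-hom} together with the hypothesis that $\underline{\Phi}(0)$ and $\underline{\Phi}^\pr(0)$ are $sL_\infty$-morphisms, that $\underline{\Phi}(\tau)$ and $\underline{\Phi}^\pr(\tau)$ are genuine unital $sL_\infty$-morphisms for \emph{every} $\tau\in[0,1]$; hence at each $\tau$ they satisfy the morphism relations of Definition \ref{l-mor}.

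First I would apply $\frac{d}{d\tau}$ to the defining formula $\Phi^\ppr_n=\sum_{\pi\in P(n)}\ep(\pi)\Phi^\pr_{|\pi|}(\Phi(x_{B_1}),\dotsc,\Phi(x_{B_{|\pi|}}))$. By the Leibniz rule this splits into a part (A) in which $\dot\Phi^\pr$ appears on the outside, and a part (B) in which a single inner factor $\dot\Phi(x_{B_i})$ appears. Into (A) I substitute the flow equation for $\underline{\Phi}^\pr$ (with arguments $\Phi(x_{B_k})\in V^\pr$ and generator $\underline{\eta}^\pr$), and into (B) the flow equation for $\underline{\Phi}$ (with generator $\underline{\eta}$). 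The goal is then to sort all resulting terms into the three groups on the right-hand side of the flow equation for $\underline{\Phi}^\ppr$: the $d^\ppr\eta^\ppr_n$ group, the group carrying one source-block $\ell$, and the group in which the target structure $\ell^\ppr$ is applied to $\Phi^\ppr$-factors with a single $\eta^\ppr$ slot.

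The $d^\ppr\eta^\ppr_n$ group assembles from the $d^\ppr\eta^\pr$ term of (A) (which is exactly $d^\ppr$ applied to the first summand of $\eta^\ppr_n$) together with $d^\ppr$ applied to the second summand of $\eta^\ppr_n$; to produce the latter I would move $d^\ppr$ through $\Phi^\pr_{|\pi|}$ using the morphism relation for $\Phi^\pr$, which trades $d^\ppr\Phi^\pr$ for $\Phi^\pr$ carrying an inner $d^\pr$ on one slot (one such contribution being the $d^\pr\eta$ piece of group (B)), together with correction terms involving $\ell^\pr$ and $\ell^\ppr$. The source-$\ell$ group is then assembled from the one-source-block pieces of both flow equations: the corresponding piece of (A) produces $\ell^\pr$ applied to several $\Phi$-factors, which I convert by the morphism relation for $\Phi$ into $\Phi$ applied to a block containing one interior $\ell$, while the corresponding piece of (B) already carries a source $\ell$; together these reconstitute $\sum\ep(\pi,i)\eta^\ppr_{|\pi|}(\dotsc,\ell(x_{B_i}),\dotsc)$.

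The delicate part, and the step I expect to be the main obstacle, is the target-structure group. Here the $\ell^\pr$-pieces of (A) and (B), the $\ell^\ppr$-corrections produced when moving $d^\ppr$ outward, and all the cross terms in which the intermediate structure $\ell^\pr$ (the target of $\Phi$, equivalently the source of $\Phi^\pr$) is applied to a mixture of $\Phi$-factors must be collected and shown to recombine precisely into $\sum_{|\pi|\neq1}\sum_i\ep(\pi,i)\ell^\ppr_{|\pi|}(\Phi^\ppr(x_{B_1}),\dotsc,\eta^\ppr(x_{B_i}),\dotsc)$. The essential mechanism is that every residual $\ell^\pr$-term is eliminated by invoking the $sL_\infty$-morphism relation for $\Phi(\tau)$ at the given $\tau$, which rewrites $\ell^\pr$ on $\Phi$-factors in terms of $\Phi$ on source-$\ell$ expressions; organizing the attendant nested partition sums and checking that every Koszul sign $\ep(\pi)$ and $\ep(\pi,i)$ matches is where essentially all of the labour lies. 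Finally I would verify the unit conditions $\eta^\ppr_n(v_1,\dotsc,v_{n-1},1_V)=0$, which follow routinely from the corresponding unit conditions on $\underline{\eta},\underline{\eta}^\pr,\underline{\Phi},\underline{\Phi}^\pr$ and the pointedness of the structure maps.
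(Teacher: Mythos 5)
Your proposal is correct and is essentially the argument the paper intends: the paper itself gives no proof of this lemma (it says only that the proof is ``similar to those above and is left to the reader''), and the computation it points to is exactly yours --- differentiate the composite, substitute the two homotopy flow equations, and use the fact from Definition/Lemma \ref{l-hom} that $\underline{\Phi}(\t)$ and $\underline{\Phi}^\pr(\t)$ are genuine unital $sL_\infty$-morphisms at every $\t$, so that their morphism relations (Definition \ref{l-mor}) can be invoked to absorb the residual $d^\ppr$- and $\ell^\pr$-terms into the three groups of the flow equation generated by $\underline{\eta}^\ppr$. The partition and Koszul-sign bookkeeping you defer is the same routine (if tedious) labour as in the proof of Definition/Lemma \ref{l-hom}, and your treatment of the unit conditions is likewise correct.
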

The proof is similar to those above and is left to the reader. 

Note that $\underline{\Phi}^\ppr(\t)$ is a polynomial family of unital $sL_\infty$-morphisms from $V_{\Lie}$ to $V^\ppr_{\Lie}$ such that
\[
\left\{\begin{array}{l}
\underline{\Phi}^\ppr(0)=\underline{\w}^\pr\bullet \underline{\w},\\
\underline{\Phi}^\ppr(1)=\underline{\tilde\w}^{\hspace{1pt}\pr}\bullet\underline{\tilde\w}.\\
\end{array}\right.
\]
Hence, from {\bf Lemma \ref{compho}}, we conclude that
\begin{lemma}
If $\underline{\tilde\w}$ is homotopic to $\underline{\w}$ and $\underline{\tilde\w}^{\hspace{1pt}\pr}$ is homotopic to $\underline{\w}^\pr$, then $\underline{\tilde\w}^{\hspace{1pt}\pr}\bullet\underline{\tilde\w}$ is homotopic to $\underline{\w}^\pr\bullet \underline{\w}$ as unital $sL_\infty$-morphisms from $V_{\Lie}$ to $V^\ppr_{\Lie}$.
\end{lemma}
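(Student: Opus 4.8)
The plan is to assemble a single homotopy flow running from $\underline{\w}^\pr\bullet\underline{\w}$ to $\underline{\tilde\w}^{\hspace{1pt}\pr}\bullet\underline{\tilde\w}$; once such a flow is in hand, the conclusion $\underline{\w}^\pr\bullet\underline{\w}\sim_\infty\underline{\tilde\w}^{\hspace{1pt}\pr}\bullet\underline{\tilde\w}$ is immediate from the definition of $\sim_\infty$. Unwinding the hypotheses via Definition~\ref{l-hom}, the homotopies $\underline{\w}\sim_\infty\underline{\tilde\w}$ and $\underline{\w}^\pr\sim_\infty\underline{\tilde\w}^{\hspace{1pt}\pr}$ supply polynomial families $\underline{\Phi}(\t)$ and $\underline{\Phi}^\pr(\t)$ of unital $sL_\infty$-morphisms solving the homotopy flow equations generated by polynomial families $\underline{\eta}(\t)$ and $\underline{\eta}^\pr(\t)$ of unital $sL_\infty$-homotopies, with $\underline{\Phi}(0)=\underline{\w}$, $\underline{\Phi}(1)=\underline{\tilde\w}$, $\underline{\Phi}^\pr(0)=\underline{\w}^\pr$, and $\underline{\Phi}^\pr(1)=\underline{\tilde\w}^{\hspace{1pt}\pr}$.

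The central step is to form the pointwise $\bullet$-composite $\underline{\Phi}^\ppr(\t)\ceq\underline{\Phi}^\pr(\t)\bullet\underline{\Phi}(\t)$ using the composition law of Definition~\ref{lmocom}. Because $\bullet$ is a finite sum of iterated compositions and each factor depends polynomially on $\t$, the family $\underline{\Phi}^\ppr(\t)$ is again polynomial in $\t$, with boundary values $\underline{\Phi}^\ppr(0)=\underline{\w}^\pr\bullet\underline{\w}$ and $\underline{\Phi}^\ppr(1)=\underline{\tilde\w}^{\hspace{1pt}\pr}\bullet\underline{\tilde\w}$. I would then invoke Lemma~\ref{compho}, which is exactly what is needed: it asserts that $\underline{\Phi}^\ppr(\t)$ solves the homotopy flow equations generated by the explicit family $\underline{\eta}^\ppr(\t)$ built from $\underline{\eta}$, $\underline{\eta}^\pr$, $\underline{\Phi}$, and $\underline{\Phi}^\pr$.

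To close the argument I would check that $\underline{\eta}^\ppr(\t)$ qualifies as a generator in the sense of Definition~\ref{l-hom}, namely a polynomial family of unital $sL_\infty$-homotopies. Polynomiality and the degree $-1$ property are inherited termwise from the constituents. For the unit condition $\eta^\ppr_n(v_1,\dotsc,v_{n-1},1_V)=0$, I would substitute $1_V$ into the displayed formula for $\eta^\ppr_n$ and use that every $\eta_m$ and $\eta^\pr_m$ annihilates the unit, together with the unit normalizations $\Phi_1(1_V)=1_{V^\pr}$ and $\Phi_k(\dotsc,1_V)=0$ (and their primed analogues) that were established in the proof of Definition~\ref{l-hom}; each partition-indexed summand then either vanishes outright or pairs off so that the whole expression collapses to zero.

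Finally, the initial value $\underline{\Phi}^\ppr(0)=\underline{\w}^\pr\bullet\underline{\w}$ is a unital $sL_\infty$-morphism, being the $\bullet$-composite of two such morphisms by Definition~\ref{lmocom}. The existence-and-uniqueness half of Definition~\ref{l-hom}, applied to the generator $\underline{\eta}^\ppr(\t)$ with this morphism as initial datum, then forces $\underline{\Phi}^\ppr(\t)$ to be a polynomial family of unital $sL_\infty$-morphisms, exhibiting it as a genuine homotopy flow from $\underline{\w}^\pr\bullet\underline{\w}$ to $\underline{\tilde\w}^{\hspace{1pt}\pr}\bullet\underline{\tilde\w}$. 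I expect essentially all of the real difficulty to sit inside Lemma~\ref{compho}, which is being assumed; the only obstacle left in the present argument is the careful sign-and-unit bookkeeping in verifying that $\underline{\eta}^\ppr$ is itself a unital $sL_\infty$-homotopy, a routine but sign-sensitive check of the same character as the identities underlying Lemma~\ref{compho}.
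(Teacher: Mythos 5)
Your proposal is correct and follows essentially the same route as the paper: both arguments form the pointwise composite $\underline{\Phi}^\pr(\t)\bullet\underline{\Phi}(\t)$, invoke Lemma~\ref{compho} to recognize it as a homotopy flow generated by $\underline{\eta}^\ppr(\t)$, and read off the boundary values $\underline{\w}^\pr\bullet\underline{\w}$ and $\underline{\tilde\w}^{\hspace{1pt}\pr}\bullet\underline{\tilde\w}$. Your additional verification that $\underline{\eta}^\ppr$ satisfies the unit conditions of Definition~\ref{flow} is a detail the paper leaves implicit, but it does not change the structure of the argument.
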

It follows that we have well-defined composition $\bullet_{\!h}$ of morphisms in the homotopy category $\homotopycat\category{\UsL}_\infty{\overk}$:
\[
\left[\underline{\w}^\pr\right]_\infty \bullet_{\!h} \left[\underline{\w}\right]_\infty\ceq 
\left[\underline{\w}^\pr\bullet\underline{\w}\right]_\infty,
\]
and the associativity of $\bullet$ implies that of $\bullet_{\!h}$.

\begin{definition}
The homotopy category 
 $\homotopycat\category{\UsL}_\infty{\overk}$
 of unital $sL_\infty$-algebras has the same objects as the the category $\category{\UsL}_\infty{\overk}$,
and its morphisms are the homotopy types of unital $sL_\infty$-morphisms with the composition operation $\bullet_{\!h}$.
 \end{definition}
 
Forgetting the unit and unit conditions, we have the homotopy category $\homotopycat\category{sL}_\infty{\overk}$ of $sL_\infty$-algebras.
Finally we can identify the structure of an $L_\infty$-algebra on $V$ as an $sL_\infty$-structure on $V[-1]$.
It is clear that morphisms and homotopies of $L_\infty$-algebras can be defined in a similar fashion. 

\subsubsection{Maurer--Cartan topics}

Let $\big(L, \underline{\ell}\big)$ be an $sL_\infty$-algebra.
The Maurer--Cartan (MC) equation of the $sL_\infty$-algebra is 
\[
d \g + \sum_{n=2}^\infty \Fr{1}{n!}\ell_n\big(\g,\dotsc, \g\big)=0
\]
where $d=\ell_1$ and $\g \in L^0$. Let $\g \in L^0$ be a solution to the MC equation. Then, it is a standard fact that 
\[
d_\g\ceq d + \ell_2\big(\g, \hbox{ }\big) +\sum_{n=3} \Fr{1}{(n-1)!}\ell_n\big(\g,\dotsc, \g, \hbox{ }\big):L^\bullet\rightarrow L^{\bullet+1}
\]
satisfies $d_\g^2=0$. If one defines the family $\underline{\ell_\g}=d_\g, \left(\ell_{\g}\right)_{2},\cdots$ such that, for all $n\geq 1$ and $x_1,\dotsc, x_n \in L$,
\[
 \left(\ell_{\g}\right)_{n}\big(x_1,\dotsc, x_n)
 \ceq \ell_n\big(x_1,\dotsc, x_n) +\sum_{k=1}^\infty \Fr{1}{k!}
 \ell_{n+k}\big(\g,\dotsc,\g, x_1,\dotsc, x_n),
 \]
then $\big(L, \underline{\ell_\g}\big)$ is also an $sL_\infty$-algebra.

\noindent{\em The deformation functor attached to $sL_\infty$-algebra:}
Recall that a Lie algebra can be tensored with a commutative and associative algebra to obtain a (current) Lie algebra. Similarly an $sL_\infty$-algebra and a graded super commutative and associate algebra can be combined into another $sL_\infty$-algebra by tensor product.
Let $(L,\underline{\ell})$ be an $sL_\infty$-algebra. 
Let $(\ma,\cdot)$ be a graded Artin local algebra with maximal ideal $\mm$ so that $\ma \simeq \fieldk\cdot 1_{\ma}\oplus \mm$.
Consider $\big(\mm\otimes L,\underline{\ell}\big)$ where 
\[
{\ell}_n\left(\a_1\otimes x_1, \cdots, \a_n\otimes x_n\right)
=\a_1\wedge\cdots \wedge\a_n \otimes \ell_n\left(x_1,\dotsc, x_n\right)(-1)^{\sum_{k=1}^n
|\a_k|(1+|x_1|+\cdots +|x_{k-1}|)}.
\]
Then it is easy to check that $\big(\mm\otimes L,\underline{\ell}\big)$ is a nilpotent $sL_\infty$-algebra.
Let $MC_{L}(\mm)$ denote the set of solutions to the corresponding MC equation;
\[
MC_{L}(\mm)=\left\{\g \in (\mm\otimes L)^0\big| d\g +\sum_{n=2}^\infty\Fr{1}{n!} \ell_n(\g,\dotsc,\g)=0\right\}.
\]
Then the follow lemma can be checked.
\begin{lemma}
Let $\g \in MC_{L}(\mm)$. For any $\l \in (\mm\otimes L)^0$, let $\G(\t)$ be a polynomial family of elements of $ (\mm\otimes L)^0$ parametrized by $\t \in [0,1]$ satisfying the following homotopy flow equation:
\[
\dot\G =d \mb{\l} 
+\sum_{n=2}^{\infty}\Fr{1}{(n-1)!}{\ell}_n\big(\G,\dotsc,\G,\mb{\l} \big),
\]
with initial condition $\G(0)=\g$. Then $\G(\t)$ is a uniquely defined family of elements of $MC_{L}(\mm)$.
\end{lemma}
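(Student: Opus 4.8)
The plan is to prove the statement in two stages: first the existence and uniqueness of the polynomial family $\underline{\G}$, and then the assertion that it remains inside the Maurer--Cartan locus. For the first stage I would exploit the nilpotency of $\big(\mm\otimes L,\underline{\ell}\big)$ recorded just above: since $\ma$ is Artin local we have $\mm^{N+1}=0$ for some $N$, so every $\ell_n$ with $n\geq N+1$ inputs vanishes on $\mm\otimes L$ and the right-hand side of the flow equation is a genuine finite sum, polynomial in the entries of $\G$. I would then solve order by order along the decreasing filtration $\mm^k\otimes L$. Since each term $\ell_n\big(\G,\dotsc,\G,\l\big)$ with $n\geq 2$ lands in $\mm^n\otimes L$ and $\l$ is fixed, its image in $\mm^k/\mm^{k+1}$ involves only the parts of $\G$ in filtration degree strictly less than $k$. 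Hence the $\mm^k/\mm^{k+1}$-component of $\dot\G$ is determined by $\l$ and by strictly lower-level data already computed, and integrating from the initial value $\g$ yields a unique polynomial in $\t$ at each level. The induction terminates at level $N$, producing the unique polynomial solution $\underline{\G}$.

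For the second stage I would introduce the Maurer--Cartan curvature
\[
F(\t)\ceq d\G+\sum_{n\geq 2}\Fr{1}{n!}\ell_n\big(\G,\dotsc,\G\big),
\]
so that $\G(\t)\in MC_L(\mm)$ is equivalent to $F(\t)=0$, and note that $F(0)=0$ because $\g$ is a Maurer--Cartan element. The key computation is to differentiate $F$ in $\t$. Using the graded symmetry of the $\ell_n$ and the fact that $\G$ and $\dot\G$ both have degree $0$, all placements of $\dot\G$ coincide without sign, giving
\[
\dot F=\sum_{n\geq 1}\Fr{1}{(n-1)!}\ell_n\big(\dot\G,\G,\dotsc,\G\big)=d_\G\big(\dot\G\big),
\]
where $d_\G=\ell_1+\sum_{n\geq 2}\Fr{1}{(n-1)!}\ell_n\big(\G,\dotsc,\G,\hbox{ }\big)$ is the twisted differential attached to $\G$. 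The flow equation itself rewrites compactly as $\dot\G=d_\G(\l)$, so that
\[
\dot F=d_\G\big(d_\G(\l)\big)=d_\G^2(\l).
\]

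The crux is then the off-shell curvature identity
\[
d_\G^2(x)=-\ell_2^\G\big(F,x\big),\qquad \ell_2^\G(y,x)\ceq\sum_{k\geq 0}\Fr{1}{k!}\ell_{k+2}\big(\G,\dotsc,\G,y,x\big),
\]
which expresses that twisting a flat $sL_\infty$-algebra by an arbitrary degree-zero element produces a curved $sL_\infty$-structure whose $0$-ary operation is exactly $F$; the relation with one free input is precisely $d_\G^2(x)+\ell_2^\G(F,x)=0$. This is the off-shell refinement of the on-shell fact $d_\g^2=0$ already invoked for genuine Maurer--Cartan elements, and I expect that verifying it---with the correct Koszul signs---directly from the defining $sL_\infty$-relations (reorganized over partitions of $[n]$ as in Definition \ref{l-alg}) will be the main obstacle. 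Granting it, we obtain the homogeneous linear equation $\dot F=-\ell_2^\G(F,\l)$ with $F(0)=0$. Finally I would rerun the filtration argument of the first stage: modulo $\mm^{k+1}$ the right-hand side is linear in $F$ and depends only on lower-level data, so $F(0)=0$ forces $F\equiv 0$ at every level. Therefore $\G(\t)\in MC_L(\mm)$ for all $\t$, which completes the proof.
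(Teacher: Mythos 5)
Your proposal is correct and takes essentially the same approach as the paper, whose entire proof of this lemma is the single sentence ``The proof is by Artinian induction'': your stage-one filtration argument on powers of $\mm$ is exactly that induction, and your stage two (the curvature $F$ satisfying the linear flow $\dot F=-\ell_2^\G(F,\l)$, killed level-by-level by the same induction) supplies the details the paper leaves implicit. The off-shell identity $d_\G^2(\l)=-\ell_2^\G(F,\l)$ that you defer does hold with precisely that sign here, since $\G$ and $\l$ both have degree $0$ so every Koszul sign in the reorganized $sL_\infty$-relations is $+1$; hence there is no gap.
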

The proof is by Artinian induction.

Let $\tilde\g = \G(1)$. We say $\tilde\g \ceq \G(1)$ is gauge equivalent to $\g$ by the gauge symmetry generated by $\l$. It can be checked that gauge equivalence is an equivalence relation. Define a set \[\hbox{Def}_L(\mm)\ceq MC(\mm)\big/\hbox{\text{gauge equivalence}}.\] Then it can be checked that $\hbox{Def}_L$ defines a (deformation) functor from the category $\category{Art}{\overk}$ of graded Artin local algebras to the category $\category{Set}$ of sets.

\noindent{\em MC solutions and $sL_\infty$-morphisms:}
Consider an $sL_\infty$-algebra $\big(V, \underline{\n}\big)$ such that $V$ is finite dimensional in each degree.
Let $V^*$ be the dual graded vector space.
Then the linear map $\n_n: S^n V\rightarrow V$ of degree $1$, $n\geq 1$, dualizes to a linear map $\d_n: V^* \rightarrow S^n V^*$ of degree $1$ which has a unique extension to a linear map $S(V^*)\rightarrow S(V^*)$ as a derivation of the free super-commutative and associative product $\wedge$ on $S(V^*)$. We will also use the notation $\d_n$ for this extension.
If, for example, we fix a basis $\{e_\a\}_{\a\in J}$ of $V$, the $sL_\infty$-structure $\underline{\n}$ is characterized by structure constants such that, $n\geq 1$,
\[
\n_n\big(e_{\a_1},\dotsc, e_{\a_n}\big)=\sum_\r f_{\a_1\cdots\a_n}{}^\r e_{\r}.
\]
Let $\{t^\a\}_{\a\in J}$ be the dual basis, $|e_\a|+|t^\a|=0$, and let $\fieldk[\![t^\a]\!]_{\a\in J}$ be the supercommutative formal power series ring (meaning the variables $t^\a$ are subject to the relations $t^{\a_1}t^{\a_2} =(-1)^{|t^{\a_1}||t^{\a_2}|}t^{\a_2}t^{\a_1}$).
Then $\fieldk[\![t^\a]\!]\simeq \fieldk\oplus \widehat{S}(V^*)$.
Let 
\[
\d_n = \Fr{1}{n!}\sum_{\a_1,\dotsc,\a_n,\r}(-1)^{|\a_1|+\cdots +|\a_n|}t^{\a_n}\cdots t^{\a_1}f_{\a_1\cdots\a_n}{}^\r\Fr{\rd}{\rd t^\r}
\]
and $\d\ceq {\d}_1+{\d}_2 +\cdots$, which is a derivation of degree $1$ on the free symmetric algebra $S(V^*)$.
Then $\underline{\n}$ being an $sL_\infty$-structure on $V$ implies that $\d^2=0$;
\[
\d_1^2=0,\quad \d_1\d_2+\d_2\d_1=0,\quad \d_2^2 +\d_1\d_3+\d_3\d_1=0,\quad \cdots
\]
The converse can be checked easily so that an $sL_\infty$-structure on $V$ is equivalent to a CDGA $\big(S(V^*), \d,\wedge\big)$.

Let $\big(W, \underline{\ell}\big)$ be another $sL_\infty$-algebra.
We consider the tensor product $\widehat S(V^*)\otimes W$, which is a graded vector space with the natural grading that is additive for the tensor product.
Both the derivation ${\d}$ and the $sL_\infty$-operations $\underline{\ell}$ have natural extensions to act on $\widehat S(V^*)\otimes W$ via
\begin{align*}
{\d}\big(\a\!\otimes\! x\big) &= {\d}a\!\otimes\! x
,\\
{\ell}_{\!n}\!\left(\a_1\!\otimes\! w_1, \dotsc, \a_n\!\otimes\! w_n\right)
&=\a_1\!\wedge\!\cdots \!\wedge\!\a_n \!\otimes \!\ell_{\!n}\!\left(w_1,\dotsc, w_n\right)(-1)^{\sum_{k=1}^n
|\a_k|(1+|w_1|+\cdots +|w_{k-1}|)},
\end{align*}
where $\a_1,\dotsc, \a_n$ and $w_1,\dotsc, w_n$ are elements in $\widehat S(V^*)$ and $W$, respectively. Consider the family of operations $\underline{\grave\ell}=\grave{\ell}_1,\grave{\ell}_2,\dotsc$, where $\grave{\ell}_1 = {\ell}_1- {\d}$ and $\grave{\ell}_n ={\ell}_n$ for all $n\geq 2$. It is trivial to show that $\underline{\grave\ell}$ gives $\left(\widehat S(V^*)\widehat\otimes W\right)$ the structure of an $sL_\infty$-algebra. We shall use the notation that $d=\ell_1$ and $\grave{d}=\grave{\ell}_1$.

Let $\underline{\phi}=\phi_1,\phi_2,\cdots$ be an $sL_\infty$-morphism from $(V,\underline{\n})$ to $(W,\underline{\ell})$.
Let $\mb{\phi}=\mb{\phi}_1+\mb{\phi}_2+\cdots$, where 
$$
\mb{\phi}_n=\Fr{1}{n!}\sum_{\a_1,\dotsc,\a_n}t^{\a_n}\cdots t^{\a_1}\phi_n(e_{\a_1},\dotsc, e_{\a_n}).
$$
Then, it is easy to check that $\mb{\phi}$ is in $\big(\widehat {S}(V^*)\widehat\otimes W\big)^0$ and satisfies
\eqn{mxa}{
d\mb{\phi}+\sum_{n=2}^\infty \Fr{1}{n!}{\ell}_n\big(\mb{\phi},\dotsc,\mb{\phi}\big) 
= {\d} \mb{\phi}.
}
The element $\mb{\phi}$ is sometimes called a \emph{twisting cochain}.

Let $ \underline{\tilde\phi}=\tilde\phi_1,\tilde\phi_2,\tilde\phi_3,\cdots$ be homotopic to $\underline{\phi}$, using the one-parameter family time-independent $sL_\infty$-homotopies $\underline{\eta}(\t)=\eta_1(\t),\eta_2(\t),\eta_3(\t),\dotsc$.
Recall that $\eta_n(\t)$, $n\geq 1$,
is a family of linear maps from $S^n(H)$ to $V$ of degree $-1$ so that 
$$
\mb{\etaup}_n \ceq \Fr{1}{n!}\sum_{\a_1,\dotsc,\a_n}(-1)^{|t^{\a_1}|+\cdots+|t^{\a_n}|}t^{\a_n}\cdots t^{\a_1}\eta_n(e_{\a_1},\dotsc, e_{\a_n})
$$
is an element in $(\widehat S^n(V^*)\widehat\otimes W)^{-1}$.
Let $\mb{\etaup} =\mb{\etaup}_1 +\mb{\etaup}_2+\mb{\etaup}_3+\cdots 
\in \big(\widehat {S}(V^*)\widehat\otimes W\big)^{-1}$.
Then, by {\bf Definition/Lemma \ref{l-hom} }and our notation, there is a smooth family $\mb{\Phi}(\t)$, parametrized by $\t \in [0,1]$, of elements in $\big(\widehat {S}(H^*)\hat\otimes V\big)^{0}$ uniquely defined by the following homotopy flow equation
\eqn{mxb}{
\dot{\mb{\Phi}}(\t) = d \mb{\etaup} 
+\sum_{n=2}^{\infty}\Fr{1}{(n-1)!}{\ell}_n\big(\mb{\Phi}(\t),\dotsc,\mb{\Phi}(\t),\mb{\etaup} \big)
-{\d}\mb{\etaup} 
}
such that $\mb{\Phi}(0) =\mb{\phi}$, $\mb{\Phi}(1)={\mb{\tilde\phi}}$ and $
{d}\mb{\Phi} + \sum_{n=2}^\infty \Fr{1}{n!}{\ell}_n\big(\mb{\Phi},\dotsc,\mb{\Phi}\big) 
= {\d} \mb{\Phi}$.
Equivalently, $\mb{\phi}$ solves the Maurer--Cartan equation of the $sL_\infty$-algebra $\big(\widehat{S}(V^*)\widehat{\otimes} W,\underline{\grave\ell}\big)$ and $\mb{\tilde\phi}$ is gauge equivalent to $\mb{\phi}$, since \eq{mxa} and \eq{mxb} can be rewritten as
\begin{align*}
\grave d\mb{\phi}+\sum_{n=1}^\infty \Fr{1}{n!}\grave{\ell}_n\big(\mb{\phi},\dotsc,\mb{\phi}\big) =0,
\\
\dot{\mb{\Phi}}(\t) =\grave d \mb{\etaup} 
+\sum_{n=2}^{\infty}\Fr{1}{(n-1)!}\grave{\ell}_n\big(\mb{\Phi}(\t),\dotsc,\mb{\Phi}(\t),\mb{\etaup} \big).
\end{align*}

\subsubsection{Minimal and Formal \texorpdfstring{$sL_\infty$}{sL-infinity}-algebras.}
The following well-known definitions and theorem have a long history in algebraic topology. One reasonable early reference is to the rational homotopy theory of Sullivan \cite{Su77}.
\begin{definition}[Minimal $sL_\infty$-algebras]
An $sL_\infty$-algebra $(V, \underline{\n})$ is called \emph{minimal} if $\n_1=0$.
\end{definition}
Assume that $(V, \underline{\n})$ is minimal and dualizable to the CDGA $\big(S(V^*), \d,\wedge\big)$.
It follows that $\d=\d_2+\d_3+\cdots$ and such a CDGA was called minimal in \cite{Su77}.

\begin{definition}[Formal $sL_\infty$-algebras]
An $sL_\infty$-algebra $(W, \underline{\ell})$ is called \emph{formal} if it is quasi-isomorphic to a graded sLie algebra.
\end{definition}
Assume that $(V, \underline{\n}=\n_2)$ is a graded sLie algebra which is dual to the CDGA $\big(S(V^*), \d=\d_2,\wedge\big)$. Recall that any $sL_\infty$-morphism $\underline{\phi}=\phi_1,\phi_2,\phi_3,\dotsc$ from $(V, \n_2)$ to $(W, \underline{\ell})$ correspond to a solution $\mb{\phi}$ to the MC equation
\[
d\mb{\phi}+\sum_{n=1}^\infty \Fr{1}{n!}{\ell}_n\big(\mb{\phi},\dotsc,\mb{\phi}\big) 
={\d}_{2}\mb{\phi}\Leftrightarrow
\left\{
\begin{array}{cl}
d\mb{\phi}_1 &=0,\\
d\mb{\phi}_2 +\Fr{1}{2}\ell_2\big(\mb{\phi}_1,\mb{\phi}_1\big)&={\d}_{2}\mb{\phi}_1,\\
\vdots &,\\
\end{array}\right.
\]
where $d=\ell_1$. 

Let $H$ denote the cohomology of the underlying cochain complex $(W, d)$. 
It follows that $\phi_1$, which is a cochain map from $(V,0)$ to $(W,d)$, induces a linear map $H(\phi_1)$ from the vector space $V$ to the vector space $H$. 

The map $H(\phi_1)$ can be more or less explicitly constructed as follows:
There always exists a cochain quasi-isomorphism $f:(H,0)\rightarrow (W,d)$, which induces the identity map $I_H$ on $H$,
constructed by linearly choosing representatives of each cohomology class. Note that $df=0$ and that for $x\in H$, the cohomology class of $f(x)\in W$ is $x$ by definition. 
Such a map is unique only up to cochain homotopy. Then there is a homotopy inverse $h: W\rightarrow H$ to $f$ such that 
\begin{align*}
h\circ f&=I_H
,\\
f\circ h&= I_W + d\b +\b d,
\end{align*}
for some cochain homotopy $\b$ from $W$ to $W$. 
Then $H(\phi_1)= h\circ \phi_1:V\rightarrow H$. 

Now assume there is an $sL_\infty$-quasi-isomorphism $\underline{\phi}$ from the graded sLie algebra $(V, \underline{\n}=\n_2)$ to $(W,\underline{\ell})$. Then $h\circ \phi_1$ is an isomorphism so that $V$ is isomorphic to $H$ as a graded vector space. Furthermore, the bracket $\ell_2^H$ gives $H$ the structure of a graded sLie algebra that is isomorphic to $(V,\n_2)$. It then follows that $(H, \ell_2^H)$ must be quasi-isomorphic to $(W, \underline{\ell})$ as an $sL_\infty$-algebra via $\underline{\phi}\bullet (h\circ\phi_1)^{-1}$.

In general, it is easy to see that the cohomology $H$ of the $sL_\infty$-algebra $(W, \underline{\ell})$ has the uniquely induced structure $\ell_2^H$ of a graded sLie algebra since $d$ is a derivation of $\ell_2$. Define $L_2: S^2 H \rightarrow W$ as $L_2(x_1,x_2)= \ell_2\big(f(x_1),f(x_2)\big)$. Then 
\[\ell_2^H\ceq h\circ L_2:S^2 H\rightarrow H\]
 is independent of the choice of $f$. It follows that 
 \eqn{minb}{
 f\circ \ell_2^H(x_1,x_2) = f\circ h\circ L_2(x_1,x_2)= \ell_2(f(x_1),f(x_2)) + d(\b\circ L_2)(x_1,x_2)
 } 
 so that $f$ is an sLie-algebra homomorphism up to the homotopy $\phi_2^H\ceq \b\circ L_2:S^2 H\rightarrow W$. The above equation together with the Jacobi-identity of $\ell_2$ up to homotopy $\ell_3$ implies that
\eqnalign{minc}{
f&\left(\ell^H_2\big(\ell^H_2(x_1, x_2),x_3\big)
+(-1)^{|x_1|} \ell^H_2\big(x_1, \ell^H_2 (x_2, x_3)\big)
+(-1)^{(|x_1|+1)|x_2|} \ell^H_2\big(x_2, \ell^H_2 (x_1, x_3)\big)\right)
\\
&
=-dL_3(x_1,x_2,x_3),
}

where $L_3$ is a certain linear map from $S^3 H$ to $W$ of degree $-1$ which is given explicitly in the proof of Lemma~\ref{coL}.

Applying $h$ to \eq{minc} together with the identities $h\circ f=I_H$ and $hd=0$, we have the Jacobi identity for $\ell^H_2$
\eqn{mindd}{
\ell^H_2\big(\ell^H_2(x_1, x_2),x_3\big)
+(-1)^{|x_1|} \ell^H_2\big(x_1, \ell^H_2 (x_2, x_3)\big)
+(-1)^{(|x_1|+1)|x_2|} \ell^H_2\big(x_2, \ell^H_2 (x_1, x_3)=0.
}
Hence $(H, \ell_2^H)$ is always a graded sLie algebra. This, however, does not imply that $(W,\underline{\ell})$ is formal.

From \eq{mindd}, the relation \eq{minc} implies that $d L_3=0$. We hence can have a non-zero map $\ell_3^H: S^3 H\rightarrow H$ of degree $1$ defined by $\ell_3^H\ceq h\circ L_3$, which implies that 
\[
f\circ \ell_3^H(x_1,x_2,x_3) = \ell_3(f(x_1),f(x_2),f(x_3)) + d(\b\circ L_3)(x_1,x_2,x_3).
\] Once we define $\phi^H_3 = \b\circ L_3$, we notice that $0, \ell_2^H, \ell_3^H$ constitute the leading parts of a minimal $sL_\infty$-structure from $H$ and $\phi_1^H=f, \phi^H_2,\phi^H_3$ constitute the leading parts of an $sL_\infty$-quasi-isomorphism from this minimal $sL_\infty$-algebra to $(W,\underline{\ell})$; this is stated in Lemma~\ref{coL}.
It follows that formality breaks down if there is an $\ell_3^H$ which cannot be made $0$ by an $sL_\infty$-isomorphism. So a non-trivial $\ell_3^H$ (in this sense) is the first obstruction to formality of $(W,\underline{\ell})$.


\begin{lemma}\label{coL}
On the cohomology $H$ of an $sL_\infty$-algebra $(V, \underline{\ell})$, there is the structure of a minimal $sL_\infty$-algebra $\big(H, \underline{\ell}^H=0,\ell^H_2, \ell^H_3,\cdots\big)$ together with an $sL_\infty$-quasi-isomorphism $\underline{\phi}$ to $V$ (which induces the {\em identity} on cohomology).
\end{lemma}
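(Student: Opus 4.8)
The statement is the homotopy transfer theorem for $sL_\infty$-algebras, and the plan is to transfer $\underline{\ell}$ along the explicit homotopy equivalence already set up above, continuing the construction of $\ell_2^H,\ell_3^H,\phi_2^H,\phi_3^H$ to all orders by induction on word length. Write $d=\ell_1$ for the differential on $V$, let $H$ be its cohomology, and keep the maps $f\colon(H,0)\to(V,d)$ and $h\colon(V,d)\to(H,0)$ together with the degree $-1$ homotopy $\b$ satisfying $hf=I_H$, $fh=I_V+d\b+\b d$, $hd=0$ and $df=0$. One may, if convenient, also arrange the side conditions $\b f=0$, $h\b=0$, $\b^2=0$, but they will not be needed below.

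Assume $\ell_k^H\colon S^kH\to H$ of degree $1$ and $\phi_k^H\colon S^kH\to V$ of degree $0$ have been defined for all $k<n$, with $\ell_1^H=0$ and $\phi_1^H=f$. Following the conventions, write $\phi^H(x_B)\ceq\phi^H_{|B|}(x_B)$ and $\ell^H(x_B)\ceq\ell^H_{|B|}(x_B)$. I define the degree $1$ map $L_n\colon S^nH\to V$ to collect exactly those terms of the would-be order-$n$ morphism relation of Definition \ref{l-mor} other than the two distinguished terms $d\phi_n^H$ and $f\ell_n^H$ (both coming from $|\pi|=1$):
\begin{align*}
L_n={}&\sum_{\substack{\pi\in P(n)\\|\pi|\geq 2}}\ep(\pi)\,\ell_{|\pi|}\big(\phi^H(x_{B_1}),\dotsc,\phi^H(x_{B_{|\pi|}})\big)\\
&-\sum_{\substack{\pi\in P(n),\ 2\leq|\pi|\leq n-1\\|B_i|=n-|\pi|+1}}\ep(\pi,i)\,\phi^H_{|\pi|}\big(x_{B_1},\dotsc,\ell^H(x_{B_i}),\dotsc,x_{B_{|\pi|}}\big),
\end{align*}
which depends only on the already-defined data and reduces to the earlier $L_2,L_3$. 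I then set $\ell_n^H\ceq h\circ L_n$ and $\phi_n^H\ceq\b\circ L_n$, extending $\ell_2^H=hL_2$, $\phi_2^H=\b L_2$.

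The heart of the argument is to show that $L_n$ is a cocycle. Applying $d$ to the formula for $L_n$ and rewriting each $d\,\ell_{|\pi|}(\cdots)$ by the $sL_\infty$-relations on $V$ (Definition \ref{l-alg}), replacing every internal $d\phi_k^H$ by the lower-order relation $d\phi_k^H=f\ell_k^H-L_k$ (equivalently, inserting $fh=I_V+d\b+\b d$ along internal edges via $\phi_k^H=\b L_k$), and using the inductive $sL_\infty$-relations for the $\ell_{<n}^H$, I expect that after reindexing the sums over partitions and their refinements and matching the Koszul signs $\ep(\pi,i)$ all internal contributions telescope, leaving
\[
dL_n=-f\big(\mJ_n\big),
\]
where $\mJ_n$ is the order-$n$ $sL_\infty$-Jacobiator of $\underline{\ell}^H$ from Definition \ref{l-alg}, a combination of $\ell_k^H$ with $2\leq k\leq n-1$ only (since $\ell_1^H=0$). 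This is the general form of \eqref{minc}. Applying $h$ and using $hd=0$, $hf=I_H$ gives $0=hdL_n=-\mJ_n$, so $\mJ_n=0$: this is precisely the order-$n$ $sL_\infty$-relation for $\underline{\ell}^H$, generalizing \eqref{mindd}. Feeding $\mJ_n=0$ back yields $dL_n=0$, so $\ell_n^H=hL_n$ is legitimate and, since $f\ell_n^H=fhL_n=(I_V+d\b+\b d)L_n=L_n+d\b L_n=L_n+d\phi_n^H$, the order-$n$ morphism relation $d\phi_n^H+L_n=f\ell_n^H$ holds.

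Completing the induction produces a minimal $sL_\infty$-structure $\underline{\ell}^H=0,\ell_2^H,\ell_3^H,\dotsc$ on $H$ (minimal since $\ell_1^H=0$) and an $sL_\infty$-morphism $\underline{\phi}^H=f,\phi_2^H,\phi_3^H,\dotsc$ from $(H,\underline{\ell}^H)$ to $(V,\underline{\ell})$; as $\phi_1^H=f$ induces the identity on cohomology, it is a quasi-isomorphism. The main obstacle is contained entirely in the key step: the telescoping that produces $dL_n=-f(\mJ_n)$ requires careful sign bookkeeping under the passage between partitions of $[n]$ and pairs (partition, distinguished block), and under the insertion of the homotopy identity on internal edges. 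The cases $n=2,3$ carried out above already display the mechanism, and the general step is a sign-tracking induction introducing no new conceptual ingredient; independence of the construction from the choices $f,h,\b$ is not needed here, since only existence is asserted.
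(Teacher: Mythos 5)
Your proposal is correct and takes essentially the same approach as the paper's own proof: the identical inductive definition of $L_n$ (your exclusion of $|\pi|=n$ from the second sum is vacuous since $\ell^H_1=0$), the same assignments $\ell^H_n\ceq h\circ L_n$ and $\phi^H_n\ceq \b\circ L_n$, the same key identity $dL_n=-f(\mJ_n)$ (which the paper likewise asserts, leaving the partition/sign bookkeeping as an exercise), followed by applying $h$ with $hd=0$, $hf=I_H$ to kill the Jacobiator and back-substituting through $fh=I_V+d\b+\b d$ to obtain the order-$n$ morphism relation. Your level of detail at the combinatorial step matches the paper's, so nothing further is needed.
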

\begin{proof}
Assume that we have constructed $\ell^H_{(n)}=0,\ell^H_2, \cdots, \ell^H_n$ and $\phi^H_{(n)}=\phi^H_1, \cdots, \phi^H_n$ satisfying the first $n$ relations to be an $sL_\infty$-algebra and morphism. For the base case we let $\phi^H_1=f$. Now define the linear map $L_{n+1}: S^{n+1} H\rightarrow \sC$ of degree $1$ as
\begin{align*}
L_{n+1}\big(x_1,\dotsc, x_{n+1}\big)
=
&
\sum_{\substack{\pi \in P(n+1)\\\color{red}|\pi|\neq 1}}
\ep(\pi)
\ell_{|\pi|}\Big(
{\phi}^H\big(x_{B_1}\big),
\cdots,{\phi}^H\big( x_{B_{|\pi|}}\big)
\Big)
\\
 &
 -\!\!\!\!\!
 \sum_{\substack{\pi \in P(n+1)\\|B_i|=n-|\pi|+2\\ \color{red}|\pi|\neq 1}}\!\!\!\!\ep(\pi,i) 
 \phi^H_{|\pi|}\Big(
x_{B_1},\dotsc, x_{B_{i-1}},\ell^H\big( x_{B_i}\big), x_{B_{i+1}}, \cdots, x_{B_{|\pi|}}
\Big).
\end{align*}
Define $\ell^H_{n+1}\ceq h\circ L_{n+1}$ and $\phi^H_{n+1}\ceq \beta\circ L_{n+1}$. 
Then one can show the following identity (the details are an exercise left to the reader):
\eqnalign{minf}{
dL_{n+1}&\big( x_1,\dotsc,x_{n+1}\big)
\\
=-&\!\!\!\!
\sum_{\substack{\pi \in P(n+1)\\|B_i|=n -|\pi|+2}}\!\!\!\!
\ep(\pi,i) f\left(
\ell^H_{|\pi|}\Big(
x_{B_1},\dotsc, x_{B_{i-1}},\ell^H\big( x_{B_i}\big), x_{B_{i+1}}, \cdots, x_{B_{|\pi|}}\Big)
\right).
}
Applying the map $h:\sC\rightarrow H$ to the above, we have
\eqnalign{ming}{
0=\!\!\!\!
\sum_{\substack{\pi \in P(n+1)\\|B_i|=n -|\pi|+2}}\!\!\!\!
\ep(\pi,i) \ell^H_{|\pi|}\Big(
x_{B_1},\dotsc, x_{B_{i-1}},\ell^H\big( x_{B_i}\big), x_{B_{i+1}}, \cdots, x_{B_{|\pi|}}\Big),
}
which implies that $\ell^H_{(n+1)}=0,\ell^H_2, \cdots, \ell^H_{n+1}$ satisfy the first $(n+1)$ relations to be an $sL_\infty$-algebra.
Next note that by definition,
%
\eqn{minh}{
L_{n+1}+d\phi^H_{n+1} = \phi^H_1\circ\ell^H_{n+1},
}
which is equivalent to the following;
\begin{align*}
\sum_{\substack{\pi \in P(n+1)}}
\ep(\pi)
\ell_{|\pi|}&\Big(
{\phi}^H\big(x_{B_1}\big),
\cdots,{\phi}^H\big( x_{B_{|\pi|}}\big)
\Big)
\\
&
=\!\!\!\!\!
\sum_{\substack{\pi \in P(n+1)\\|B_i|=n-|\pi|+2}}\!\!\!\!\ep(\pi,i) 
\phi^H_{|\pi|}\Big(
x_{B_1},\dotsc, x_{B_{i-1}},\ell^H\big( x_{B_i}\big), x_{B_{i+1}}, \cdots, x_{B_{|\pi|}}
\Big).
\end{align*}
Hence $\phi^H_{n+1}=\phi^H_1,\phi^H_2, \cdots, \phi^H_n$ satisfies the first $(n+1)$ relations to be an $sL_\infty$-morphism.

\hfill\qed\end{proof}
\begin{remark}
Many sources make the equivalent choice to define $L_n$ without the second summand; in general the two definitions for $L_n$ yield different formulas beginning with $\ell_3$ and $\phi_3$. However, the overall transfered structures $\underline{\ell}^H$ obtained using the two choices of formula are isomorphic. The situation is less clear for the morphism $\underline{\phi}^H$; see the following remark. Many sources make the additional assumptions that $\beta\circ f = \beta\circ\beta = h\circ\beta = 0$. Under these assumptions, the two choices for $L_n$ yield the same formulas.
\end{remark}

\begin{remark}
The $sL_\infty$-structure $\underline{\ell}^H$ is unique up to homotopy. Because $\ell^H_1=0$, this implies that the structure is unique up to $sL_\infty$-isomorphism. However, there are two points to note about this fact, both related to the fact that the automorphism group is almost never trivial.

First, the {\em structure constants} of $\underline{\ell}^H$ are not uniquely determined after $\ell_2^H$ because a nontrivial automorphism can perturb them.

Second, the homotopy type of the $sL_\infty$-morphism $\underline{\phi}^H$ is not determined uniquely. Let us be more precise. The map $\phi_2^H\ceq \b\circ L_2$ depends on $\b$ and $f$, but this dependence can be absorbed by a suitably chosen $sL_\infty$-homotopy. This is not the type of non-uniqueness we mean.

On the other hand, consider $W$ with the zero $sL_\infty$-structure $\underline{\ell}=\underline{0}$. Then $H=W$, again with the zero $sL_\infty$-structure. {\em Any} sequence $\underline{\phi}^H$ beginning with the identity map is an $sL_\infty$-morphism which induces the identity on cohomology between $H$ and $W$; however, two such $sL_\infty$-morphisms are $sL_\infty$-homotopic if and only if they are equal. One can construct counterexamples in less degenerate situations as well.
\hfill$\natural$
\end{remark}

\begin{remark}
Lemma \ref{coL} is generally attributed to Kadeishvili \cite{KadeA}, where the $A_\infty$-version of it was proven. By that time, various versions of this lemma had already appeared, if implicitly, in \cite{Su77,Chen,Hain}. 
\hfill$\natural$.
\end{remark}

\begin{remark}
The definitions of the category and the homotopy category of unital $sA_\infty$-algebras and associated Maurer--Cartan topics can be obtained by replacing symmetric products with tensor products, classical partitions with interval partitions, and $1/n!$ with one everywhere in section~\ref{subs:Linfty}.
\hfill$\natural$.
\end{remark}

\begin{remark}
The following is a philosophical remark. A mathematical model of physical phenomena may be deduced from a set of data, collected coherently and completely (in some sense), after a systematic process of removing redundancy such that every possible ambiguity caused by the removal of redundancy is under control. This process is clearly not unique, though the final outcome (``the law'') should be invariant of the choices involved. Then, this infinite sequence of ambiguities may be organizable by introducing an infinite sequence of homotopies such that ``the law'' is an invariant of the homotopy type.
\hfill$\natural$
\end{remark}

\section{Infinitesimal symmetries of the expectation and their applications}\label{sec:infsym}

By a {\em symmetry} of a classical algebraic probability space we mean a symmetry of the expectation morphism $\iota$.
In general, the expectation morphism $\iota$ of a classical algebraic probability space $\iota: A_{\classical} \rightarrow \fieldk$ has a large kernel. As a result, this expectation morphism has non-trivial symmetries. Exploiting the structure of these symmetries leads to a natural enrichment of the notion of a classical algebraic probability space to a (homotopy) probability space.
Ideally, one would like to factorize the expectation $\iota$ through the quotient map from $A$ to $A/\Ker \iota$. But this is not viable in practice since constructing this quotient map is equivalent to computating the expectation value of every random variable. 
Alternatively, one may work with a certain null-space $\sN \subset \Ker\iota$ with a more controllable algebraic structure on it. In this approach, one factorizes $\iota$ through a ``bigger'' quotient $A/\sN$, rather than $A/\Ker \iota$. One of the results of this paper is a method to determine the law of random variables up to finite ambiguity in the case that the quotient space $A/\sN$ is finite dimensional. The major technical and conceptual difficulty to carry out this program is that in general, neither $A/\Ker \iota$ nor the more reasonable quotient $A_\mg=A/\sN$ has a canonically defined algebraic structure induced from the algebra $A_{cl}$. This is because $\Ker \iota$ is an ideal of $A$ if and only if the expectation morphism $\iota$ is an algebra homomorphism, meaning that there are absolutely no correlations! We shall resolve these difficulties with a little help from $\infty$-homotopy theory after revising some of the basic notions in classical algebraic probability theory to rephrase the law of random variables as a homotopy invariant.

The choice of $\sN$ will not be arbitrary and will be based on certain carefully chosen kinds of symmetries of the expectation morphism. A particularly useful notion of symmetry for our purpose is that of an \emph{infinitesimal symmetry} of the expectation.
Let $L\!\Diff_\fieldk(A)$ denote the Lie algebra of linear algebraic differential operators on $A$. Then an infinitesimal symmetry is defined as a Lie algebra representation $\vr:\mg\rightarrow L\!\Diff_\fieldk(A)$ such that $\Im\vr(\mg) \in \Ker \iota$, that is, $\iota\big(\vr(g)(x) \big)=0$ for all $g\in \mg$ and $x\in A$. 
Then the corresponding null-space is $\sN =\vr(\mg)A$ so that $A/\sN$ is the coinvariants $A_\mg$ of the $\mg$-module $A$.
The presence of such an infinitesimal symmetry naturally leads to a generalization of an classical algebraic probability space in cochain complexes. This generalization will be the prototype of binary \padj{} probability spaces, and will recover the desired coinvariants as the degree $0$ cohomology.

 \subsection{Symmetries of the expectation}\label{subs:symmetries}
 Fix a classical algebraic probability space $\xymatrix{A_{\classical} \ar[r]^\iota &\fieldk}$.
 
Let $\End_\fieldk(A)$ denote the space of all linear endomorphisms of $A$. This space has the structure of a unital associative algebra via composition $\circ$ and the identity endomorphism $I_A$. Denote by $L\End_\fieldk(A)$ the same space viewed as a Lie algebra with the commutator bracket $[\hbox{ },\hbox{ }]_\circ$.
Let $\Aut_\fieldk(A)$ denote the space of all linear automorphisms of $A$---the space of all invertible linear endomorphisms, which forms a group with composition $\circ$ as the binary operation and the identity map $I_A$ as its unit. We will consider the left actions of $\End_\fieldk(A)$ and $\Aut_\fieldk(A)$ on $A$, which make it both a left $L\End_\fieldk(A)$-module and a left $\Aut_\fieldk(A)$-module. 

Recall that for $G$ a group, the coinvariants $V_G$ of a left $G$-module $V$ are defined to be the quotient $V/DV$, where $DV$ denotes the submodule of $V$ generated by $g.v- v$, for all $g\in G$ and $v\in V$.
Let $\mg$ be a Lie algebra. Then the coinvariants $V_\mg$ of a left $\mg$-module $V$ are defined to be the quotient $V/\mg.V$, where $\mg.V$ denotes the submodule of $V$ generated by $g.v$ for $g\in \mg$ and $v\in V$.

The symmetry group of the expectation morphism $\iota$, denoted by $\Aut_\fieldk(\iota)$, is defined to be the space of all linear automorphisms of $A$ fixing the expectation:
\[
\Aut_\fieldk(\iota)=\{\ma \in \Aut_\fieldk(A)\big| \iota\circ \ma =\iota\}.
\]
The symmetry group is a subgroup of $\Aut_\fieldk(A)$. 

The symmetry algebra of the expectation morphism $\iota$, denoted by $\End_\fieldk(\iota)$, is defined to be the space of all linear endomorphisms of $A$ whose images are contained in $\Ker\iota \in A$:
\[
\End_\fieldk(\iota)\ceq \{\r \in \End_\fieldk(A)\big| \iota\circ \r =0\}.
\]
The symmetry algebra is an associative algebra without unit. We denote by $L\!\End_\fieldk(\iota)$ the commutator Lie algebra of the symmetry algebra, a subalgebra of $L\!\End_\fieldk(A)$.

Consider $\mathfrak{e}\in \End_\fieldk(A)$ defined as $\mathfrak{e}(x)= x -\iota(x)\cdot 1_A$ for all $x\in A$.
Then $\mathfrak{e} \in \End_\fieldk(\iota)$ since $\iota\big(\mathfrak{e}(x)\big)= \iota(x) -\iota(x)\cdot\iota(1_A)=0$.
In general,
\[
\iota\Big( \prod_{j=1}^n \mathfrak{e}(x_j)\Big)
=\begin{cases}
0& n=1,
\\
\k_n(x_1,\dotsc, x_n)&n= 2,3.
\end{cases}
\]

Any element $x \in A$ can be decomposed as 
\[
x = \iota(x)\cdot 1_A + \mathfrak{e}(x),
\]
which implies that $x \sim \iota(x) \cdot 1_A$ in the quotient $A_{L\End_\fieldk(\iota)}$. It follows that $A_{L\End_\fieldk(\iota)}\simeq \fieldk$. 

Now we turn to $\Aut_\fieldk(\iota)$.
Let $\mathfrak{a}\in \Aut_\fieldk(\ker\iota)$. Then $\mathfrak{a}$ acts on the image of $\mathfrak{e}$. Moreover, $\mathfrak{e}$ restricts to the identity $I_{\ker\iota}$ on $\ker\iota$ so $\mathfrak{e}\mathfrak{a}=\mathfrak{a}$ for all $\mathfrak{a}$.

Now define $\vr(\mathfrak{a})\in \End_\fieldk(A)$ as 
\[\vr(\mathfrak{a}) = I_A + (\mathfrak{a}-I_{\ker\iota})\mathfrak{e}.\]
This operator preserves $\iota$. Now for $\mathfrak{a}$, $\mathfrak{a}'\in \Aut_\fieldk(\ker\iota)$, since $\mathfrak{e}\mathfrak{a}'=\mathfrak{a}'$, a quick calculation shows that
\[\vr(\mathfrak{a})\circ\vr(\mathfrak{a}') = \vr(\mathfrak{a}\circ\mathfrak{a}').\]
Note also that $\vr(\mathfrak{I_{\ker\iota}})=I_A$. Then $\vr(\mathfrak{a})\in\Aut_\fieldk(\iota)$ and we achieve the following.
\begin{lemma}
The map $\vr$ defined as above is a linear and faithful representation 
\[
\vr : \Aut_\fieldk(\ker\iota) \rightarrow \Aut_\fieldk(A)
\] 
which satisfies $\iota\circ\Im \vr=\iota$.
\end{lemma}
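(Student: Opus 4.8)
The plan is to treat $\mathfrak{e}(x)=x-\iota(x)\cdot 1_A$ as a projection and reduce every assertion to elementary operator identities on $\ker\iota$. First I would record the three structural facts about $\mathfrak{e}$ that drive the whole argument: that $\Im\mathfrak{e}\subseteq\ker\iota$ (since $\iota(\mathfrak{e}(x))=\iota(x)-\iota(x)\iota(1_A)=0$), that $\mathfrak{e}$ restricts to the identity on $\ker\iota$, and consequently that $\mathfrak{e}$ is idempotent, $\mathfrak{e}\circ\mathfrak{e}=\mathfrak{e}$; that is, $\mathfrak{e}$ is the projection onto $\ker\iota$ along $\fieldk\cdot 1_A$ coming from the splitting $A=\fieldk\cdot 1_A\oplus\ker\iota$ (valid because $\iota(1_A)=1$). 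The one identity I will lean on repeatedly is the \emph{collapse rule}: whenever an operator $T$ already lands in $\ker\iota$, one has $\mathfrak{e}\circ T=T$. Applied to $T=(\mathfrak{a}-I_{\ker\iota})\mathfrak{e}$ for $\mathfrak{a}\in\Aut_\fieldk(\ker\iota)$, this yields $\mathfrak{e}\circ(\mathfrak{a}-I_{\ker\iota})\circ\mathfrak{e}=(\mathfrak{a}-I_{\ker\iota})\circ\mathfrak{e}$.

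Next I would verify the four assertions in turn. For $\iota$-invariance: since $(\mathfrak{a}-I_{\ker\iota})\mathfrak{e}(x)\in\ker\iota$ for every $x$, applying $\iota$ to $\vr(\mathfrak{a})(x)=x+(\mathfrak{a}-I_{\ker\iota})\mathfrak{e}(x)$ annihilates the second term and gives $\iota\circ\vr(\mathfrak{a})=\iota$, hence $\Im\vr\subseteq\Aut_\fieldk(\iota)$ once invertibility is established. For the homomorphism law, expanding $\vr(\mathfrak{a})\circ\vr(\mathfrak{a}')=(I_A+(\mathfrak{a}-I_{\ker\iota})\mathfrak{e})\circ(I_A+(\mathfrak{a}'-I_{\ker\iota})\mathfrak{e})$ produces a cross term $(\mathfrak{a}-I_{\ker\iota})\mathfrak{e}(\mathfrak{a}'-I_{\ker\iota})\mathfrak{e}$, which collapses to $(\mathfrak{a}-I_{\ker\iota})(\mathfrak{a}'-I_{\ker\iota})\mathfrak{e}$ by the collapse rule; collecting the coefficient of $\mathfrak{e}$ and simplifying (writing $I$ for $I_{\ker\iota}$, as operators on $\ker\iota$) $(\mathfrak{a}'-I)+(\mathfrak{a}-I)+(\mathfrak{a}-I)(\mathfrak{a}'-I)=\mathfrak{a}\mathfrak{a}'-I$ gives exactly $\vr(\mathfrak{a}\circ\mathfrak{a}')$.

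With the homomorphism law in hand, the remaining points are automatic. Each $\vr(\mathfrak{a})$ is patently $\fieldk$-linear, being a sum and composite of linear maps; and $\vr(I_{\ker\iota})=I_A$ directly from the formula, so $\vr(\mathfrak{a})\circ\vr(\mathfrak{a}^{-1})=\vr(\mathfrak{a}\circ\mathfrak{a}^{-1})=I_A$ and likewise on the other side, exhibiting $\vr(\mathfrak{a})^{-1}=\vr(\mathfrak{a}^{-1})$ and placing $\vr(\mathfrak{a})$ in $\Aut_\fieldk(A)$; this makes $\vr$ a linear representation. For faithfulness, if $\vr(\mathfrak{a})=I_A$ then $(\mathfrak{a}-I_{\ker\iota})\mathfrak{e}=0$ on $A$; restricting to $\ker\iota$, where $\mathfrak{e}$ acts as the identity, forces $\mathfrak{a}=I_{\ker\iota}$, so the kernel of $\vr$ is trivial.

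The computations are all elementary, so the only genuine obstacle is bookkeeping: $\mathfrak{a}$ is an automorphism of $\ker\iota$, whereas $\vr(\mathfrak{a})$ and $\mathfrak{e}$ live on all of $A$, so I must be scrupulous about the implicit inclusion $\ker\iota\hookrightarrow A$ and about the distinction between $I_{\ker\iota}$ and $I_A$. The single place where something must actually be \emph{checked} rather than merely expanded is the collapse $\mathfrak{e}(\mathfrak{a}'-I_{\ker\iota})\mathfrak{e}=(\mathfrak{a}'-I_{\ker\iota})\mathfrak{e}$ used in the homomorphism step; it is precisely the idempotence of $\mathfrak{e}$ combined with the fact that $\mathfrak{a}'-I_{\ker\iota}$ preserves $\ker\iota$, and without it the multiplicativity of $\vr$ would fail.
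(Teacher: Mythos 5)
Your proof is correct and follows essentially the same route as the paper: both arguments hinge on the collapse identity $\mathfrak{e}\circ\mathfrak{a}=\mathfrak{a}$ (your ``collapse rule'') to make the cross term in $\vr(\mathfrak{a})\circ\vr(\mathfrak{a}')$ simplify, yielding multiplicativity, with $\iota$-invariance read off from $\Im\big((\mathfrak{a}-I_{\ker\iota})\mathfrak{e}\big)\subseteq\ker\iota$. You merely spell out what the paper leaves implicit (invertibility via $\vr(\mathfrak{a})^{-1}=\vr(\mathfrak{a}^{-1})$ and faithfulness by restricting $(\mathfrak{a}-I_{\ker\iota})\mathfrak{e}=0$ to $\ker\iota$), which is a welcome completion rather than a different method.
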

Moreover
\[
x = \iota(x)\cdot 1_A + \rho(-I_{\ker\iota})(y) -y \qquad y=-\Fr{x}{2}\in A, 
\]
which implies that $x \sim \iota(x) \cdot 1_A$ in the quotient $A_{\Aut_\fieldk(\iota)}$.
It follows that $A_{\Aut_\fieldk(\iota)}\simeq \fieldk$.  We thus have shown that both $\End_\fieldk(\iota)$ and $\Aut_\fieldk(\iota)$ are non-trivial and their coinvariant modules are isomorphic to $\fieldk$.

In fact, we have shown more. If $\ker\iota\ne 0$, the element $\rho(-I_{\ker\iota})$ generates a $\Z_2$ subgroup of $\Aut_\fieldk(\iota)$, which we call the fundamental symmetry of the classical algebraic probability space $A$, and the coinvariant module even of this fundamental symmetry subgroup is isomorphic to $\fieldk$.

On the other hand, it is extremely difficult to work with this representation since the explicit construction of the automorphism $\vr(-I_{\ker\iota})$ means that we must know the expectation value $\iota(x)$ of every element $x \in A$ to calculate it, since $\vr(-I_{\ker\iota})(x) = -x +2\iota(x)\cdot 1_A$. Nevertheless, we have learned an important lesson: {\em the expectation morphism of any classical algebraic probability space can be completely characterized by its symmetry}. 

In practice, we shall deal with a specific subalgebra $L\!\Diff_\fieldk(\iota)$ 
of Lie algebra symmetries $L\End_\fieldk(\iota)$ which does not contain $\mathfrak{e}$ and bypass the symmetry group $\Aut_\fieldk(\iota)$ of our classical algebraic probability spaces. We work with the Lie algebra than with the group because infinite-dimensional cases are easier to handle with Lie algebras.

\subsection{Infinitesimal symmetries of classical algebraic probability spaces}\label{subs:infsyms}

Now we are ready to consider infinitesimal symmetries of a classical algebraic probability space $\xymatrix{A_{\classical} \ar[r]^\iota &\fieldk}$. 

Consider an element $\r \in \End_\fieldk(A)$ as an operator acting on the left on the unital commutative algebra $A$. Then define a family $\underline{\ell}^\r=\ell^\r_1,\ell^\r_2,\cdots$ by
\[
\ell^\r_{j}(x_1,\dotsc, x_j) \ceq \left[\left[\left[\cdots\left[\r, L_{x_1}\right], L_{x_2}\right],\cdots\right],L_{x_j}\right](1_A)
\] 
where $L_x$ denotes the operator of left multiplication by $x \in A$. 
For example
\begin{align*}
\ell^\r_1(x)&= \r(x)- x\cdot \r(1_A)
,\\
\ell^\r_2(x,y)&=\r(x\cdot y) - x\cdot \r(y) - y\cdot\r(x) + y\cdot x \cdot \r(1_A).
\end{align*}
We say, following Grothendieck, that $\r$ is an algebraic differential operator of order $n$ if $\ell^\r_{n}\neq 0$ and $\ell^\r_{n+1}=0$.
In particular the condition $\ell_1^\r=0$ implies that $\r$ is right multiplication by $\r(1_A) \in A$. 

The space $\Diff_\fieldk(A)$ of all linear algebraic differential operators forms a subalgebra of the associative algebra $\End_\fieldk(A)$, and has the natural structure of a filtered algebra. Let $F_n\Diff_\fieldk(A)$ denote the space of all elements with order less or equal to $n\geq 0$. Then we have a increasing filtration 
\[
F_0\Diff_\fieldk(A) \subset F_1\Diff_\fieldk(A) \subset F_2\Diff_\fieldk(A) 
\subset\cdots
\]
which satisfies 
\begin{align*}
F_j\Diff_\fieldk(A)\circ F_k\Diff_\fieldk(A) &\subset F_{j+k}\Diff_\fieldk(A)
,\\
\left[F_j\Diff_\fieldk(A), F_k\Diff_\fieldk(A)\right]_\circ & \subset F_{j+k-1}\Diff_\fieldk(A).
\end{align*}
We denote by $L\!\Diff_\fieldk(A)$ the Lie subalgebra $\big( \Diff_\fieldk(A), [\hbox{ },\hbox{ }]_\circ\big)$ of $L\!\End_\fieldk(A)$ which inherits the structure of a filtered Lie algebra from the filtration $F_\bullet \Diff_\fieldk(A)$. Note that the distinguished endomorphism $\mathfrak{e}$ is {\em not} an element of $\Diff_\fieldk(A)$. 

The \emph{infinitesimal symmetry algebra} of the expectation morphism $\iota$, denoted by $\Diff_\fieldk(\iota)$, is the space of all linear algebraic differential operators on $A$ with image contained in $\Ker\iota \in A$:
\[
\Diff_\fieldk(\iota)\ceq \{\r \in \Diff_\fieldk(A)\big| \iota\circ \r =0\},
\]
The infinitesimal symmetry algebra is an associative algebra without unit via the composition product.
We denote by $L\!\Diff_\fieldk(\iota)$ the associated commutator Lie algebra, which is a subalgebra of $L\End_\fieldk(\iota)$.

\begin{definition}[Infinitesimal symmetry]
An \emph{infinitesimal symmetry} of a classical algebraic probability space $\xymatrix{A_{\classical} \ar[r]^\iota &\fieldk}$ is a Lie algebra $\mg$ equipped with a linear representation $\vr:\mg\rightarrow L\!\Diff_\fieldk(A)$ such that $\iota\circ\Im\vr=0$, i.e., $\Im\vr \subset \Diff_\fieldk(\iota)$.
\end{definition}

Any linear representation $\vr:\mg\rightarrow L\!\Diff_\fieldk(A)$ of a Lie algebra $\mg$ such that $\iota\circ\Im\vr=0$ induces a Lie algebra homomorphism $\check{\vr}:\mg \rightarrow L\!\Diff_\fieldk(\iota)$.

\begin{definition}[Faithful infinitesimal symmetry]
An infinitesimal symmetry $\vr:\mg\rightarrow L\!\Diff_\fieldk(A)$ of the classical algebraic probability space $\xymatrix{A_{\classical} \ar[r]^\iota &\fieldk}$ is called \emph{faithful} if the map $\check{\vr}$ is injective.
\end{definition}
We shall implicitly assume that all symmetries are faithful.

Assume that the classical algebraic probability space $\xymatrix{A_{\classical} \ar[r]^\iota &\fieldk}$ has an infinitesimal symmetry $\vr:\mg\rightarrow L\hbox{Diff}_\fieldk(A)$.
Then the expectation morphism $\iota$ induces a unique map $\iota_\mg$ from the coinvariants $A_{\mg}=A/\mg. A$ of the $\mg$-module $A$ to $\fieldk$ such that the following diagram commutes
\[
\xymatrix{
A \ar[r]^\iota \ar[d]_\mq & \fieldk
\\
A_{\mg}\ar[ru]_{\iota_\mg}&
}.
\]

Now we give four examples which demonstrate the usefulness of considering infinitesimal symmetries. 

\begin{example}\label{gaussian-ex1}
Set $A= \R[s]$ with $1_{A}=1$. Fix some $\s\in \R_+$. Define the expectation morphism $\iota: \R[s]\rightarrow \R$ for $O\in \R[s]$ as
\[
\iota\big(O\big)=\int^{+\infty}_{-\infty} O\cdot e^{-\fr{s^2}{2\s^2}} \text{d}s\left/
\int^{+\infty}_{-\infty} e^{-\fr{s^2}{2\s^2}} \text{d}s\right.
\]
The normalization ensures that $\iota(1_A)=1$. This expectation morphism corresponds to the Gaussian distribution with zero mean and variance $\s^2$. Note that the linear map 
\[
\r\ceq -\s^2\Fr{d}{ds} + L_s: \R[s]\rightarrow \R[s],
\]
is in the symmetry algebra of the expectation $\iota$ since, for all $O\in A$, we have
\[
\iota\circ\r(O)= \iota\left(-\s^2\Fr{d O}{ds} + s\cdot O\right)
\propto \int^{s=+\infty}_{s=-\infty}\text{d}\left( O\cdot e^{-\fr{s^2}{2\s^2}} \right)=0.
\]
Define $\mg$ to be the Lie algebra $\R\cdot e$ over $\R$. Define the linear map $\vr:\mg \rightarrow \Diff_{\R}(A)$ by $\vr(e)=\r$. Then we have an $\R$-linear representation $\vr:\R \rightarrow L\!\Diff_{\R}(A)$ which satisfies $\iota\circ\Im \vr=0$.
From
\begin{align*}
\r(1_A)=& s
,\\
\r(s^n)=& -\s^2 n s^{n-1} + s^{n+1}, \quad n\geq 1,
\end{align*}
we have $\mq\big(s^{2k}\big) =\s^{2k} (2k-1)!! \mq(1_A)$ and $\mq\big(s^{2k-1}\big)=0$. Thus $A_{\mg} =\mathbb{R}\cdot \mq(1_A)$. It follows that
\[
\iota(s^n) =
\begin{cases}
0 &\text{$n$ odd},\\
\s^{n}\cdot (n-1)!! &\text{$n$ even}.
\end{cases}
\]
Hence ``the law'' is determined completely by a symmetry argument alone.
\hfill$\natural$
\end{example}

\begin{example}\label{gaussian-ex2}
Consider the same Gaussian example.
From the identity $(2k-1)!! =\Fr{(2k)!}{2^k k!}$, $k\geq 1$, we obtain that
\[
Z(t)_s \ceq \iota\left(e^{t s}\right)= e^{\fr{1}{2}t^2 \s^2}.
\]
It follows that $\k_1(s)=0$, $\k_2(s,s) =\s^2$ and $\k_n(s,\dotsc, s)=0$ for all $n\geq 3$.
Note that $Z(t)_s$ satisfies the following differential equation;
\begin{align*}
\left(\Fr{d^2}{dt^2} - \s^2\big(1+ t^2\s^2\big)\right)Z(t)_s=0
\quad\hbox{\em where}\quad
Z(0)_s =1.
\end{align*}
This equation can also be determined directly by symmetry arguments alone.

Define a family of $\C$-linear maps $\iota_{t}: A \rightarrow \R[\![t]\!]$ as $\iota_{t}(O)\ceq \iota\left(e^{t s}\cdot O\right)$ so that $
Z(t)_s = \iota_t(1_A)$.
Then we have the one parameter family of representations $\vr_{t}: \mg\rightarrow L\!\Diff_\R( A)$:
\[
\vr_t(e) 
\ceq \r_t
\ceq e^{-ts}\r e^{ts}= -\s^2\Fr{d}{ds} + L_{s -\s^2 t}.
\]
The family $\r_t$ satisfies $\iota_t\circ \r_t =0$, since
\[
\iota_t\circ \r_t(O) =\iota\left(e^{ts}\cdot e^{-ts}\r e^{ts}(O)\right)
=\iota\left(\r \left(e^{ts}\cdot O\right)\right)=\iota\circ\r\left(e^{ts}\cdot O\right)=0.
\]
From $\r_t(1) = s - \s^2 t$ and $\r_t(s) = -\s^2 +s^2 -\s^2 t \cdot s$, we obtain that
\[
s^2 = \s^2(1+t^2\s^2)\cdot 1_A +\r_t(t \s^2 + s).
\]
From $\Fr{d^2}{dt^2}\iota_t(1_A) =\iota_t\left(s^2\right)$ and $\iota_t\circ\r_t=0$, it follows that
\[
\Fr{d^2}{dt^2} \iota_t(1_A)= \s^2(1+t^2\s^2)\cdot\iota_t(1_A)
\]
which is the desired differential equation for $Z(t)_s \equiv \iota_t(1_A)$.
\hfill$\natural$
\end{example}

\begin{example}\label{period-ex}
Consider $A \ceq \C[p, \underline{s}]$, where $\underline{s} = s_0,s_1,\dotsc, s_n$, which has the structure $A_{\classical} =(A,1_A,\cdot)$ of unital associative commutative $\C$-algebra with unit $1_A=1$.
Let 
\[S(p,\underline{s}) = p\cdot G(\underline{s}),\] where $G(\underline{s})$ is a generic homogeneous polynomial of degree $n+1$. Let $X_G$ be the smooth Calabi--Yau hypersurface of dimension $n-1$ in $\mathbb{CP}^n$ with homogeneous coordinates $\underline{s}$ defined by $G(\underline{s})$. 
Pick a middle dimensional homology cycle $\g$ in $X_G$ and define a linear map $\iota: A \rightarrow \C$ as
\[
\iota(O) =\int_{T(\g)}\left(\int_{0}^\infty O\cdot e^{-S} dp\right) \Omega\left/
\int_{T(\g)}\left(\int_{0}^\infty e^{-S} dp\right) \Omega,\right.
\]
where $T(\g)$ is a tubular neighborhood of $\g$ in $\mathbb{CP}^n-X_G$ and $ \Omega =\sum_{i=0}^n (-1)^i s_i ds_0\wedge\cdots\wedge\widehat{ds_i}\wedge\cdots ds_n$.
Note that 
\[
\int_{T(\g)}\left(\int_{0}^\infty e^{-S} dp\right) \Omega = \int_{T(\g)}\Fr{\Omega}{G(\underline{s})} =\C^*\cdot \int_\g \omega,
\]
where $\omega$ is the holomorphic $(n-1)$-form, unique up to multiplication by non-zero complex numbers, on the Calabi--Yau hypersurface $X_G$. For the second equality in the above we have used the residue map of Griffiths in the classical paper \cite{Gr}. It follows that $\iota(1_A)=1$ and $\xymatrix{A_{\classical}\ar[r]^\iota &\C}$ is a classical algebraic probability space.

Now we consider an infinitesimal symmetry of the expectation morphism $\iota$ and associated coinvariant module.
Let $\{e_\m\}$, $\m=-1,0,1,\dotsc, n$, be the standard basis on $\C^{n+2}$, $z_{-1}=p$ and $z_i = s_i$ for $i=0,1,\dotsc,n$.
Then the following lemma is proven in \cite{PP}.
\begin{lemma}
Let $\mg$ be the Abelian Lie algebra $\C^{n+2}$ let $\vr: \mg \rightarrow L\!\Diff_\C(A)$ be the linear representation given by
\[
\vr(e_\m)\ceq \r_\m = -\Fr{\rd}{\rd z^\m} +\Fr{\rd S}{\rd z^\m}.
\]
Then $\iota\circ \r_\m=0$ for all $\m$ and the coinvariant module $A_\mg$ is isomorphic to the middle dimensional primitive cohomology $PH^{n-1}(X_G)$ of $X_G$ as a complex vector space.
\end{lemma}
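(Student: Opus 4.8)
The plan is to reduce both assertions to Stokes' theorem together with Griffiths' residue calculus, by way of a single algebraic observation: since $S=pG$ plays the role of a ``superpotential'', the operators conjugate the de Rham differential,
\[
\r_\m(O)\,e^{-S}=-\rd_{z^\m}\big(O\,e^{-S}\big),\qquad \m\in\{-1,0,\dots,n\},
\]
for every $O\in A$, as one checks immediately from $\r_\m=-\rd_{z^\m}+\rd_{z^\m}S$. Equivalently $\r_\m=-e^{S}\circ\rd_{z^\m}\circ e^{-S}$, so each $\r_\m$ is, up to sign and the $e^{-S}$-twist, just the partial derivative $\rd_{z^\m}$.

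First I would prove $\iota\circ\r_\m=0$. Writing $N(O)=\int_{T(\g)}\big(\int_0^\infty O\,e^{-S}\,dp\big)\Omega$ for the numerator of $\iota$, the identity above turns $N(\r_\m O)$ into the integral of a total $z^\m$-derivative. For $\m=i\in\{0,\dots,n\}$ one pulls $\rd_{s_i}$ through the (convergent) $p$-integral to get $N(\r_i O)=-\int_{T(\g)}\rd_{s_i}F\cdot\Omega$, where $F=\int_0^\infty O\,e^{-S}\,dp$ is a rational function with poles along $X_G$; since $\rd_{s_i}F\cdot\Omega$ is an exact form on $\mathbb{CP}^n\setminus X_G$ and $T(\g)$ is a closed $n$-cycle, Stokes gives $N(\r_iO)=0$. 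For $\m=-1$ the $p$-integral of $\rd_p(O\,e^{-S})$ is a pure boundary term; it vanishes at $p=\infty$ by decay of $e^{-S}$ and at $p=0$ reduces to $\int_{T(\g)}O|_{p=0}\,\Omega$, which is the integral of a \emph{regular} (pole-free) form over the tube $T(\g)$ and hence vanishes because $T(\g)$ bounds in the region where $O|_{p=0}\,\Omega$ extends holomorphically across $X_G$. Thus $\iota\circ\r_\m=0$ and $\vr$ is an infinitesimal symmetry.

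For the coinvariants, I would first reinterpret $A_\mg=A/\sum_\m\r_\m A$ homologically. Identifying $A$ with top forms $O\,dp\wedge ds_0\wedge\cdots\wedge ds_n$ on $\C^{n+2}$ and matching the image of $\sum_\m\r_\m$ against the image of the twisted differential $d_S\ceq e^{S}\,d\,e^{-S}=d-dS\wedge$ on $(n+1)$-forms, one obtains a canonical isomorphism $A_\mg\cong H^{n+2}\big(\Omega^\bullet_{\C^{n+2}},d_S\big)$, the top algebraic twisted de Rham cohomology of the Landau--Ginzburg model $S=pG$. The crucial simplification is that $S$ is \emph{linear} in $p$: integrating out $p$ (the reduction $p^k\mapsto k!/G^{k+1}$ realized analytically by $\int_0^\infty p^k e^{-pG}\,dp$) is a quasi-isomorphism onto the complex of rational forms on $\mathbb{CP}^n\setminus X_G$ with poles along $X_G$. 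Under this reduction the relations from $\r_{-1}$ account for the passage from $p$-powers to pole orders, while the relations from the $\r_i$ become precisely the Griffiths--Dwork reductions $\sum_i A_i\rd_{s_i}G\cdot\Omega/G^{k+1}\equiv \tfrac1k\sum_i\rd_{s_i}A_i\cdot\Omega/G^{k}$ modulo exact forms. Consequently $A_\mg\cong H^{n}\big(\mathbb{CP}^n\setminus X_G\big)$, with the pole-order ($p$-power) filtration matching the Hodge filtration, so that in graded form $A_\mg\cong\bigoplus_{k=0}^{n-1}\big(\C[\underline s]/\sJ_G\big)_{k(n+1)}$, where $\sJ_G=\big(\rd_{s_0}G,\dots,\rd_{s_n}G\big)$ is the Jacobian ideal and the degree $k(n+1)=(k+1)(n+1)-(n+1)$ is forced by homogeneity of the period $\int_{T(\g)}P\,\Omega/G^{k+1}$.

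Finally I would invoke Griffiths' residue theorem \cite{Gr}: for the smooth degree-$(n+1)$ hypersurface $X_G\subset\mathbb{CP}^n$ the residue map $P\mapsto\operatorname{Res}\big(P\,\Omega/G^{k+1}\big)$ is an isomorphism $H^{n}(\mathbb{CP}^n\setminus X_G)\xrightarrow{\ \sim\ }PH^{n-1}(X_G)$ which on graded pieces identifies $\big(\C[\underline s]/\sJ_G\big)_{k(n+1)}$ with the Hodge component $F^{\,n-1-k}/F^{\,n-k}$ of $PH^{n-1}(X_G)$; summing over $k$ yields $A_\mg\cong PH^{n-1}(X_G)$. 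The main obstacle is exactly this last identification, not the vanishing $\iota\circ\r_\m=0$: because the critical locus of $S=pG$ is \emph{non-isolated} (it contains the cone over $X_G$), one cannot merely quote a Milnor-number count for the twisted cohomology, and one must genuinely exploit linearity in $p$ to reduce to the Griffiths situation and then use smoothness of $X_G$ to guarantee that $\C[\underline s]/\sJ_G$ is Artinian with the correct graded dimensions. This is the content verified in \cite{PP}, to which I would appeal for the degreewise check that the reduction is an isomorphism.
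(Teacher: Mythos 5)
First, a point of comparison: the paper does not prove this lemma at all; it introduces it with the sentence ``the following lemma is proven in \cite{PP},'' so there is no internal argument to measure your proposal against. What can be said is that your sketch reconstructs the strategy of \cite{PP} itself: the conjugation $\r_\m=-e^{S}\circ\rd_{z^\m}\circ e^{-S}$ and Stokes for the vanishing $\iota\circ\r_\m=0$; the identification of $A_\mg$ with top twisted de Rham cohomology of $S=pG$; reduction in the $p$-variable using linearity of $S$ in $p$; Griffiths--Dwork reduction; and Griffiths' residue isomorphism \cite{Gr}. You also, like the paper, defer the decisive dimension count to \cite{PP}. Your diagnosis of the genuine obstacle---the critical locus of $S$ is non-isolated, so no Milnor-ring shortcut exists and linearity in $p$ must be exploited---is exactly right.

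That said, two of your steps fail as stated, and for the same reason: the $\C^*$-homogeneity (charge) bookkeeping is suppressed. Assign $s_i$ charge $1$ and $p$ charge $-(n+1)$, so that $S$ has charge $0$ and $\Omega$ has charge $n+1$; the tube integral $\int_{T(\g)}$ is defined only on charge-zero integrands, since only those descend to $\mathbb{CP}^n\setminus X_G$, so $\iota$ must be read as pairing with the charge-zero component. (i) Your $p=0$ boundary term $\int_{T(\g)}O|_{p=0}\,\Omega$ then vanishes because $O|_{p=0}\,\Omega$ has charge $\deg(O|_{p=0})+(n+1)>0$, \emph{not} for the reason you give: $O|_{p=0}\,\Omega$ is not $\C^*$-basic, hence is not a form on $\mathbb{CP}^n$ at all, and it is not even closed on $\C^{n+2}$ restricted to $p=0$ (indeed $d(P\,\Omega)=(\deg P+n+1)\,P\,ds_0\wedge\cdots\wedge ds_n$), so ``extends holomorphically across $X_G$'' plus Stokes is not a valid argument; similarly, $\rd_{s_i}F\cdot\Omega$ is exact on $\mathbb{CP}^n\setminus X_G$ only for the charge-zero (degree $-n$) component of $F$, where Griffiths' explicit primitive applies. (ii) Integrating out $p$ by $p^k\mapsto k!/G^{k+1}$ identifies the Koszul complex of the $\r_\m$ with the algebraic de Rham complex of the \emph{affine} complement $\C^{n+1}\setminus\{G=0\}$, not with rational forms on $\mathbb{CP}^n\setminus X_G$; a priori this gives $A_\mg\cong H^{n+1}_{dR}(\C^{n+1}\setminus\{G=0\})$, which carries classes of all charges. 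To land on $H^{n}(\mathbb{CP}^n\setminus X_G)\cong PH^{n-1}(X_G)$ you must still prove that all nonzero-charge classes vanish---for instance by the Cartan homotopy for the Euler vector field (on a closed form of charge $m\neq 0$, the Lie derivative is simultaneously multiplication by $m$ and $d$ of the contraction, so the class is exact), or by the Gysin sequence of the $\C^*$-bundle $\C^{n+1}\setminus\{G=0\}\rightarrow\mathbb{CP}^n\setminus X_G$ together with the affine vanishing $H^{n+1}=H^{n+2}=0$ of the base. This is precisely the step hiding behind your phrase ``forced by homogeneity,'' and together with the finite-dimensionality of $\C[\underline{s}]/\sJ_G$ in the relevant degrees it is the content that \cite{PP} actually supplies.
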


Let $\{e_\a\}$ be a basis of $PH^{n-1}(X_G)$ and $\{\tilde e_\a\}$ be the corresponding basis of $A_\mg$. 
Let $\{\omega_\a\}$ be a set of representatives of $\{e_\a\}$ in the de Rham complex of $X_G$. Let $\{\varpi_\a\}$ be a set of representatives of $\{\tilde e_\a\}$ in $A$, i.e., $\mathfrak{q}(\varpi_\a)=\tilde e_\a$.
Then $\iota(\varpi_\a) = \int_\g \omega_\a$. It follows that the laws of any random variable can be determined in terms of
(up to the finite ambiguities given by)
the period integrals $\left\{ \int_\g \omega_\a\right\}$. For example consider a random variable $x \in A$ given by
\[
x = - p\cdot F(\underline{s})
\]
where $F(\underline{s})$ is a generic homogeneous polynomial of degree $n+1$. Then the moment generating function $Z_x(t)$ of the random variable $x$ is given by
\[
Z_x(t) =\iota\left(e^{t x}\right) \equiv \int_{T(\g)}\Fr{\Omega}{G(\underline{s}) + t F(\underline{s})}\left/\int_{T(\g)}\Fr{\Omega}{G(\underline{s})}\right.
.
\]
Hence $Z_x(t)$ governs the variation of the period integrals of the holomorphic $(n-1)$-forms of the $1$-parameter family of Calabi--Yau hypersurfaces $X_{G+ t F}$ in $\C\mathbb{P}^n$ defined by $G(\underline{s}) + t F(\underline{s})$. Then the finite dimensionality of $A_\mg\simeq PH^{n-1}(X_G)$ implies that $Z_x(t)$ satisfies an ordinary differential equation, known as the Picard--Fuchs equation, which can be used to determine the moment generating function of $x$ completely.
We refer to \cite{PP} for further discussion of the details and implications of this example.
\hfill$\natural$
\end{example}

\begin{example}
\label{matrix-ex}
In this example, we begin with an infinitesimal symmetry rather than with an explicitly defined expectation morphism. Fix a natural number $N$ and consider the complex valued polynomial algebra $A=\C[X_{i\!j}]_{i,j\in N}$ of $N^2$ indeterminates $X_{i\!j}$. Let $\mg\ceq \C^{N^2}$ be an Abelian Lie algebra with basis $\{e_{ij}\}$. Consider the representation $\vr: \mg=\C^{N^2}\rightarrow L\hbox{Diff}_\C( A)$ defined by
\[
\vr(e_{i\!j}) = -\Fr{\rd}{\rd X_{i\!j}} +\Fr{\rd S}{\rd X_{i\!j}}
,\qquad
S= -\Fr{\sqrt{-1}}{3!} \tr\left(X^3\right),
\]
where $X$ denotes a $N\times N$ matrix with $i\!j$ components $X_{ij}$. Assume that there is a $\C$-linear map $\iota: A \rightarrow \C$ such that $\iota\circ \vr(e_{i\!j}) =0$ for all $i\!j$. Consider a family of random variables $X_{11},\dotsc, X_{NN}$ and the associated generating function
\[
Y(T)=\iota\left(
e^{\fr{\sqrt{-1}}{2} \tr\left(T^2 X\right)}
\right),\qquad 
T=\left(\begin{array}{ccc}t_1&&0\\&\ddots\\0&& t_N\end{array}\right).\] 
Define a family of $\C$-linear maps $\iota_{\underline{t}}: A \rightarrow \C[\![t_1,\dotsc, t_N]\!]$ by, for $O\in A$,
\[
\iota_{\underline{t}}(O)= \iota\left(e^{\fr{\sqrt{-1}}{2} \tr\left(T^2 X\right)}\cdot O\right).
\] 
We also have an $N$-parameter family of representations $\vr_{\underline{t}}: \mg\rightarrow L\hbox{Diff}_\C( A)$ of the Abelian Lie algebra $\mg$ defined by
\[
\vr_{\underline{t}}(e_{i\!j}) = -\Fr{\rd}{\rd X_{i\!j}} +\Fr{\rd S_{\underline{t}}}{\rd X_{i\!j}}
,\qquad
S_{\underline{t}}=-\Fr{\sqrt{-1}}{2} \tr\left(\Fr{1}{3}X^3 + T^2 X\right),
\]
which satisfies $\iota_{\underline{t}}\circ \vr_{\underline{t}}(e_{i\!j}) =0$ for all $i\!j$, since
\begin{align*}
\vr_{\underline{t}}(e_{i\!j}) &= e^{-\fr{\sqrt{-1}}{2} \tr\left(T^2 X\right)}\vr(e_{i\!j})e^{\fr{\sqrt{-1}}{2} \tr\left(T^2 X\right)}
,\\
\iota_{\underline{t}}\left( \vr_{\underline{t}}(e_{i\!j})( O)\right)
&= \iota\left(\vr(e_{ij})\left(e^{\fr{\sqrt{-1}}{2} \tr\left(T^2 X\right)}\cdot O\right)\right).
\end{align*}

We claim without proof that the coinvariant module $A_\mg$, considered as a $\C$-vector space for generic $\underline{t}$, is spanned by the class of $1$, the classes of $X_{i\!i}$ for $i=1,\dotsc, N$, and the classes $X_{i\!i}\cdot X_{j\!j}$ for $i,j=1,\dotsc, N$ and $i\neq j$.
We also claim that 
\eqn{exafaf}{
\mq\left(X_{kk}\cdot X_{kk}\right)= 
-t_k^2\mq\left(1\right) + 2\sqrt{-1} \sum_{\ell\neq k}\Fr{1}{t_k^2 -t_\ell^2}\left(\mq(X_{kk}) -\mq(X_{\ell\ell})\right).
}
We note that
\[
 \sqrt{-1}\Fr{1}{t_k}\Fr{\rd }{\rd  t_k} Y(T)
 =-\iota\left(
e^{\fr{\sqrt{-1}}{2} \tr\left(T^2 X\right)}\cdot X_{kk}\right)
=-\iota_{\underline{t}}\left(X_{kk}\right).
\]
It follows by applying $\iota_\mg$ to \eq{exafaf} that $Y(T)$ satisfies the following system of differential equations for all $k=1,\dotsc, N$:
\[
\left(t_k^2 -\left(\Fr{1}{t_k}\Fr{\rd}{\rd t_k}\right)^2 
- 2 \sum_{\ell\neq k}\Fr{1}{t_k^2 -t_\ell^2}\left(\Fr{1}{t_k}\Fr{\rd}{\rd t_k} 
-\Fr{1}{t_\ell}\Fr{\rd}{\rd t_\ell}\right)\right)Y(T)=0.
\]
This is known as the matrix Airy differential equation.

In fact, there is an underlying measure theory for the family $\iota_{\underline{t}}$ of expectation morphisms.
Let $X$ be an $N\times N$ Hermitian matrix, let $T$ as above, and let $dX$ denote the Haar measure---$dX =\prod_{i=1}^N d X_{ii} \prod_{i < j}d(\hbox{Re} (X_{ij}))d(\hbox{Im}(X_{ij}))$.
Then
\[
\iota_{\underline{t}}(O)\ceq \int O\cdot e^{-S_{\underline{t}}}dX,
\]
so that $Y(T) = \iota_{\underline{t}}(1) =\int e^{\fr{\sqrt{-1}}{2} \tr\left(\fr{1}{3}X^3 +T^2 X\right)}dX$,
which is known to satisfy the matrix Airy differential equation~\cite{KM}.
\hfill$\natural$
\end{example}

In all of our examples so far the expectation maps have been translation invariant measures twisted by exponential functions.
Our last example is of a different kind.

\begin{example}\label{semicircle}
Set $A= \R[s]$ with $1_{A}=1$, and define an expectation morphism $\iota: \R[s]\rightarrow \R$ by, for all $O\in A$,
\[
\iota\big(O\big)=\int^{+2}_{-2} O(s)\cdot\sqrt{4-s^2} \,\text{d}s\left/
\int^{+2}_{-2} \sqrt{4-s^2}\, \text{d}s\right.
\]
This formula corresponds to the semi-circular distribution with zero expectation. Consider $\mg=\R\cdot e$ as a Lie algebra generated by one element $e$ and the representation $\vr:\mg \rightarrow L\!\Diff_{\R}(A)$ given by
\[
\vr(e)\ceq\r= (4-s^2)\Fr{d}{ds} -3 L_s: \R[s]\rightarrow \R[s].
\]
It can be checked that $\iota\circ \r =0$: 
for all $O\in A$
\[
\iota\circ\r(O)= \iota\left((4-s^2)\Fr{d O}{ds} -3 s\cdot O\right)
\propto \int^{s=2}_{s=-2}\text{d}\left( O\cdot (4 -s^2)^{\fr{3}{2}} \right)=0.
\]
Hence we have a $\R$-linear representation $\vr:\R \rightarrow L\!\Diff_{\R}(A)$ which satisfies $\iota\circ\Im \vr=0$.
For any $n\geq 1$, we have
\[
\r(s^{n-1})
= 4(n-1) s^{n-2} - (n+2) s^{n},
\]
so that $\iota(s)=0$ and, for all $n\geq 2$,
\[
\iota(s^n) =4 \Fr{(n-1)}{(n+2)}\iota(s^{n-2}).
\]
It follows that
\[
\iota(s^n) =
\begin{cases}
0 &\text{$n$ odd},\\
2^{n+1}\cdot \Fr{(n-1)!!}{(n+2)!!} &\text{$n$ even}.
\end{cases}
\]
Note that $2^{2k+1}\Fr{(2k-1)!!}{(2k+2)!!}=\Fr{(2k)!}{(k+1)! k!}=C_{k}$ where $C_k$ is the Catalan number.
It follows that the moment generating function is given by
\[
Z(t) = \sum_{n\geq 0}\Fr{t^{2n}}{(2n)!}C_n= \sum_{n\geq 0}\Fr{t^{2n}}{(n+1)!n!}
\]
Consider the family of representations
\[
\r_t = e^{-ts}\r e^{t s}= 
 (4-s^2)\Fr{d}{ds} -3 s + t(4-s^2)
\]
We can proceed as in the previous examples, defining $\iota_t$ and showing that $\iota_t\r_t=0$. Then we calculate that $\r_t(1)= -3 s + t(4-s^2) = -t s^2 -3s + 4t$. Writing $\iota_t(s)$ in terms of $\iota_t(1)=Z(t)$ we obtain the following differential equation:
\[
\left( t\Fr{d^2}{dt^2} + 3\Fr{d}{dt} - 4t\right)Z(t)=0.
\]
which is equivalent to the recursive relation  
$(n+2)C_{n+1}= 2(2n+1)C_n$ of the Catalan numbers.
With the initial condition $Z(0)=1$, the differential equation has the unique solution in $\R[\![t]\!]$ given above.
\hfill$\natural$.
\end{example}

\subsection{The homotopical realization of a classical algebraic probability space}\label{subs:realization}

Consider a classical algebraic probability space $\xymatrix{A_{\classical} \ar[r]^\iota &\fieldk}$ with a faithful infinitesimal symmetry $\vr:\mg\to L\hbox{Diff}_{\fieldk}(A)$ such that $\iota\circ \Im\vr=0$.
We recall that coinvariants is a right-exact functor from $\mg$-modules to vector spaces and the corresponding left derived functor is the {\em Lie algebra homology} $H_\bullet(\mg;A)$. The Lie algebra homology begins $H_0(\mg;A) = A_{\mg}$, and can be computed in general via the Chevalley--Eilenberg--Koszul chain complex $\big(A\otimes \wedge^\bullet \mg, K\big)$, where
\begin{align*}
K(a\otimes g_1\wedge\cdots\wedge g_n)
=
& \sum_{i=1}^n (-1)^{i-1}\vr(g_i)(a)\otimes g_1\wedge\cdots \wedge\widehat{g_i}\wedge\cdots\wedge g_n
\\
&+\sum_{i < j}(-1)^{i+j}a\otimes [g_i, g_j]\wedge g_1\wedge \cdots\wedge \widehat{g_i}\wedge \cdots \widehat{g_j}\wedge\cdots\wedge g_n.
\end{align*} 

By reversing the sign of the degree, we obtain a cochain complex $(\sA, K)$ with degree concentrated in non-positive integers:
\[
\xymatrix{\cdots \ar[r]^K & \sA^{-2}\ar[r]^K & \sA^{-1} \ar[r]^K & \sA^{0}\ar[r]^K&0,
}
\]
where $\sA^{-k} = A\otimes S^k(\mg[-1])$.
The notation $\mg[-1]$ refers to the graded vector space which is the the Lie algebra $\mg$ after shifting degree by $-1$ and $S^\ell(\mg[-1])$ is the $\ell$-fold graded (super)-symmetric product, which is isomorphic to the exterior algebra $\wedge^\ell \mg$.
The $-\ell$-th cohomology $H^{-\ell}_K$, $\ell=0,1,2,\cdots$ of the cochain complex $(\sA, K)$ is isomorphic to the $\ell$-th Lie algebra homology $H_{\ell}(\mg, A)$.
Hence the \nth{0} cohomology $H^0$ is isomorphic to the coinvariants $A_\mg$. We note that the graded vector space 
\[
\sA= A\otimes S(\mg[-1])
\] 
has a super-commutative and associative binary product, denoted by $\cdot$, with unit $1_\sA =1_A$, which is annihilated by $K$, i.e., $K 1_\sA=0$. 
We hence obtain the quartet $\sA_{\binarycomm}\ceq (\sA, 1_\sA, \cdot, K)$ such that the trio $(\sA, 1_\sA, \cdot)$ is a unital graded commutative algebra and the another trio $(\sA, 1_\sA,K)$ is a pointed cochain complex.
\begin{example}

Let $\{e_j\}_{j\in J}$ be a basis of the Lie algebra $\mg$ with structure constants determined by $[e_i, e_j]_\mg=\sum_k f_{ij}{}^k e_k$ and let $\r_i = \vr(e_i) \in \Diff_\fieldk(A)$. Introduce the corresponding basis $\{\eta_j\}$ of $\mg[-1]$ where $\eta_i$ has degree $-1$, $\eta_i \eta_j =-\eta_j \eta_i$, and $a\cdot \eta_j=\eta_j \cdot a$. 
Then $S(\mg[-1])\simeq \fieldk[\eta_j]_{j\in J}$,
$\sA=A\otimes S(\mg[-1])\simeq A[\eta_j]_{j\in J}$ and
\[
K = \sum_{j\in J} \r_j\otimes \Fr{\rd}{\rd\eta_i} 
+\Fr{1}{2} \sum_{i,j,k\in J}I_A\otimes f_{ij}^k\eta_k\Fr{\rd^2}{\rd\eta_i \rd\eta_j}.
\]
\hfill$\natural$
\end{example}

The original expectation morphism $\iota: A \rightarrow \fieldk$ can be uniquely extended to a pointed cochain map $\mc:(\sA, 1_\sA, K)\rightarrow (\fieldk, 1, 0)$, where the ground field is considered as a pointed cochain complex concentrated in degree zero. The extension is $\iota$ on $\sA^0=A$ and zero in negative degrees. Then $\mc(1_\sA)=\iota(1_A)=1$ and the condition $\iota \circ Im\vr= 0$ implies that $\mc K=0$.

The resulting algebraic structure on $\sA$, denoted by $\sA_{\binarycomm}=(\sA, 1_\sA, \cdot, K)$ is a prototype for our notion of \emph{binary \padj{} probability algebras}, which will be defined shortly. The map $\mc$ will be a morphism of binary \padj{} probability algebras to the initial object $\fieldk$ in the category $\category{\HProb}_{\binarycomm}{\overk}$ of binary \padj{} probability algebras. Such a morphism will be called a \emph{binary \padj{} probability space}. This morphism
\[
\xymatrix{\sA_{\binarycomm}\ar[r]^\mc &\fieldk},
\]
will be called a \emph{homotopical realization} of the classical algebraic probability space $\xymatrix{A_{\classical} \ar[r]^\iota &\fieldk}$ with infinitesimal symmetry $\vr:\mg\rightarrow \hbox{Diff}_{\fieldk}(A)$.

\begin{example}\label{hgaussian-ex2}
Consider the classical algebraic probability space from Example \ref{gaussian-ex1}.
Then $\sA = \R[s,\eta]=\sA^{-1}\oplus \sA^0=\R[s]\cdot\eta \oplus R[s]$, where $\eta$ has degree $-1$ with $\eta^2=0$ and $s\cdot \eta=\eta\cdot s$, and
\[
K = \r\Fr{\rd}{\rd\eta}= -\s^2\Fr{\rd^2}{\rd s \rd\eta} + s\Fr{\rd}{\rd\eta}.
\]
The map $c:\sA\to\fieldk$ is zero on $\sA^{-1}$ and $\iota$ on $\sA^{0}=A$.
It is obvious that $K^2=K1=0$ and $c\circ K=0$ since, for any $\La = \l(s)\eta \in \sA^{-1}$, we have $K\La=\r(\l(s))$.
From 
\begin{align*}
K\eta =&s
,\\
K\left(s^n\eta\right)=& -n\s^2 s^{n-1} +s^{n+1}\qquad n\geq 1
,
\end{align*} 
it follows that $\left[s^{2k}\right]_K= (2k-1)!! \left[ 1_{\sA}\right]_K$ and $\left[s^{2k-1}\right]_K=0$.
Hence
\[
c(s^{2k})\equiv\iota(s^{2k})= (2k-1)!!,\quad c(s^{2k-1})\equiv\iota(s^{2k-1})= 0.
\]
It can be shown that $H=H^0= \mathbb{R}\cdot [1_\sA]$.
 \hfill$\natural$
\end{example}

The homotopical realizations of examples \ref{period-ex}, \ref{matrix-ex} and \ref{semicircle} are also straightforward. Note that if there is an underlying translation invariant measure twisted by an exponential function such that the expectation morphism is integration against this measure, as in examples \ref{gaussian-ex1}, \ref{period-ex} and \ref{matrix-ex}, we have an Abelian infinitesimal symmetry of the expectation and obtain a Batalin--Vilkovisky algebra \cite{BV1,BV2} as a homotopical realization. 

\subsection{Binary \padj{} probability spaces}\label{subs:binary prob space}

\subsubsection{The homotopy category of binary \padj{} probability algebras}

To set the stage, we define a binary \padj{} probability space as a cochain enhancement of a classical algebraic probability space.

\begin{definition}
\label{bhpa}
A \emph{binary \padj{} probability algebra} is a tuple $\sC_{\binarycomm} = \big(\sC, 1_\sC, \cdot, K\big)$, where 
\begin{itemize}
\item the trio $\big(\sC, 1_\sC, \cdot\big)$ is a graded unital commutative associative algebra and 
\item the trio $(\sC, 1_\sC, K)$ is a pointed cochain complex.
\end{itemize}
A morphism between two binary \padj{} probability algebra $\sC_{\binarycomm}$ and $\sC^\pr_{\binarycomm}$ is a pointed cochain map $f: \sC\rightarrow \sC^\pr$. 
Two morphisms $f$ and $\tilde f$ are homotopic if they are pointed cochain homotopic.
\end{definition}
\begin{corollary}
A binary \padj{} probability algebra concentrated in degree zero is a unital commutative associative algebra, and vice versa.
\end{corollary}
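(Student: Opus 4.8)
The plan is to observe that both implications are immediate once the definitions are unwound, the only substantive ingredient being a degree count that forces the differential to vanish and the trivialization of all Koszul signs in degree zero.

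First I would handle the forward direction. Suppose $\sC_{\binarycomm}=(\sC,1_\sC,\cdot,K)$ is a binary \padj{} probability algebra with $\sC$ concentrated in degree zero, that is $\sC=\sC^0$ and $\sC^j=0$ for $j\neq 0$. Since the differential $K$ has degree $+1$, it carries $\sC^0$ into $\sC^1=0$, so $K=0$ is forced. The graded unital commutative associative algebra $(\sC,1_\sC,\cdot)$ then collapses to an ordinary unital commutative associative algebra, because each Koszul sign $(-1)^{|x||y|}$ that separates graded commutativity from plain commutativity equals $1$ when every element lies in degree zero. Hence the surviving data is exactly that of a unital commutative associative algebra.

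For the converse I would begin from a unital commutative associative algebra $(\sC,1_\sC,\cdot)$, view it as a graded vector space concentrated in degree zero (following the convention that an ungraded module is graded in degree zero), and equip it with the zero differential $K=0$. The trio $(\sC,1_\sC,\cdot)$ is then trivially a graded unital commutative associative algebra, and it remains only to check that $(\sC,1_\sC,K=0)$ is a pointed cochain complex: the relation $K^2=0$ holds vacuously, the unit lies in $\ker K=\sC$, and the unit fails to lie in $\operatorname{im}K=0$ precisely because $1_\sC\neq 0$. Thus we recover a binary \padj{} probability algebra concentrated in degree zero.

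The single point deserving any care is the pointed-cochain-complex requirement that the unit not be a boundary; this is automatic once one records that $1_\sC\neq 0$, which is part of the meaning of a unital algebra (the zero ring being excluded, or regarded as a degenerate edge case). Since the two constructions are patently inverse to one another, they establish the asserted bijective correspondence rather than merely a one-way implication in each direction. I expect no real obstacle here: the entire content is the degree bookkeeping that annihilates $K$ together with the vanishing of Koszul signs in degree zero.
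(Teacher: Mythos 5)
Your proposal is correct and is essentially the argument the paper leaves implicit: the corollary is stated as an immediate consequence of Definition \ref{bhpa}, and your unwinding (degree $+1$ forces $K=0$ on a complex concentrated in degree zero, Koszul signs trivialize, and conversely $K=0$ with $1_\sC\neq 0$ satisfies the pointed cochain complex conditions) is exactly the intended reasoning. The one point you flag — that the unit must not lie in $\Im K$ — is indeed the only condition worth recording, and your treatment of it matches the paper's conventions.
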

We denote by $\category{\HProb}_{\binarycomm}{\overk}$ the category of binary \padj{} probability algebras.
Morphisms in the category $\category{\HProb}_{\binarycomm}{\overk}$ come with the notion of homotopy defined above, which induces an equivalence relation on morphisms.
We define the homotopy category $\homotopycat\category{\HProb}_{\binarycomm}{\overk}$ of binary \padj{} probability algebra to have binary \padj{} probability algebras as objects and homotopy types of $\category{\HProb}_{\binarycomm}{\overk}$-morphisms. A morphism of binary \padj{} probability algebra is called a quasi-isomorphism if it induces an isomorphism on the cohomology of the underlying cochain complex.

We can consider the classical algebraic probability algebra $\fieldk$ as a binary \padj{} probability algebra by concentrating it in degree zero. We again omit decoration and call this object simply $\fieldk$. It is clear that $\fieldk$ is the initial object in both categories $\category{\HProb}_{\binarycomm}{\overk}$ and $\homotopycat\category{\HProb}_{\binarycomm}{\overk}$. 

\begin{definition}
A \emph{binary \padj{} probability space} is a diagram $\xymatrix{\sC_{\binarycomm} \ar[r]^{[c]} &\fieldk}$ in the homotopy category $\homotopycat\category{\HProb}_{\binarycomm}{\overk}$.
\end{definition}
In other words, a binary \padj{} probability space is an object $\sC_{\binarycomm}$ and a morphism $c$, defined up to homotopy, to the initial object $\fieldk$ in $\category{\HProb}_{\binarycomm}{\overk}$, that is, a linear map from $\sC$ to $\fieldk$ satisfying $c(1_\sC)=1$ and $c \circ K =0$. We call an element $x$ in $\sC$ a {\em cohomological random variable} if $K x =0$ and the value $c(x)$ is called the \emph{expectation} of $x$. It follows that $c(x)$ depends only on the homotopy type of $c$ and the cohomology class of $x$. 

\begin{corollary}
A classical algebraic probability space $\xymatrix{A_{\classical} \ar[r]^\iota &\fieldk}$ is a binary \padj{} probability space concentrated in degree zero, and vice versa.
\end{corollary}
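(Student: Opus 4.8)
The plan is to unwind the definitions and observe that all of the homotopical data degenerates in the presence of the degree constraint, so I would present the equivalence as a pair of mutually inverse constructions.

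Starting from a classical algebraic probability space $\xymatrix{A_{\classical}\ar[r]^\iota &\fieldk}$, I would concentrate $A$ in degree zero and set $\sC\ceq A$, $1_\sC\ceq 1_A$, keeping the commutative associative product $\cdot$. Since the differential of a cochain complex raises degree by one and $\sC^1=0$, necessarily $K=0$; the trio $(\sC,1_\sC,0)$ is then a pointed cochain complex, as $1_\sC\neq 0$ lies in the kernel but not the image of $K$. Thus $\sC_{\binarycomm}\ceq(\sC,1_\sC,\cdot,0)$ is a binary \padj{} probability algebra. Taking $c\ceq\iota$, the requirements $c(1_\sC)=1$ and $c\circ K=0$ hold, the first because $\iota(1_A)=1$ and the second because $K=0$, so $c$ is a pointed cochain map representing a morphism to the initial object $\fieldk$ in $\category{\HProb}_{\binarycomm}{\overk}$. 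This yields a binary \padj{} probability space $\xymatrix{\sC_{\binarycomm}\ar[r]^{[c]}&\fieldk}$.

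Conversely, given such a space concentrated in degree zero, I would invoke the corollary above, which identifies a binary \padj{} probability algebra concentrated in degree zero with a unital commutative associative algebra; here too $K=0$ is forced. A representative $c$ of $[c]$ is then a degree-zero pointed cochain map $\sC\to\fieldk$ with $c(1_\sC)=1$ and $c\circ K=0$, which is precisely a pointed linear functional $\iota\colon A\to\fieldk$ satisfying $\iota(1_A)=1$, i.e., a classical expectation. These two constructions are manifestly inverse to one another on the level of the underlying algebras and maps.

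The only point needing genuine verification is that passing to homotopy types loses no information in degree zero, so that the homotopy class $[c]$ is represented by a single map. I would check this directly: a pointed cochain homotopy is a degree $-1$ linear map $s\colon\sC\to\fieldk$, hence $s$ carries $\sC^0$ into $\fieldk^{-1}=0$ and so $s=0$; the homotopy relation $c-\tilde c=sK+d_{\fieldk}\,s$ then forces $c=\tilde c$. Consequently two representatives of $[c]$ coincide, and the correspondence is bijective. The argument is pure bookkeeping; the only thing to notice, and the closest thing to an obstacle, is that the degree-zero hypothesis simultaneously kills the differential $K$ and every cochain homotopy, collapsing the homotopy category down to the classical one.
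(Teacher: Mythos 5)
Your proof is correct and follows the same route the paper intends: the corollary is stated there without proof as a direct unwinding of the definitions, which is exactly what you carry out. Your explicit check that a pointed cochain homotopy $s$ must vanish in degree zero (so that each homotopy class $[c]$ contains a unique representative and the correspondence is genuinely bijective) is the one substantive point the paper leaves implicit, and you handle it correctly.
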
 

\begin{corollary}
The homotopical realization $\xymatrix{\sA_{\binarycomm} \ar[r]^\mc &\fieldk}$ of a classical algebraic probability space $\xymatrix{A_{\classical} \ar[r]^\iota &\fieldk}$ equipped with an infinitesimal symmetry is a binary \padj{} probability space concentrated in non-positive degree.
\end{corollary}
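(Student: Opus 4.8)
The plan is to unwind the two definitions---binary \padj{} probability algebra (Definition \ref{bhpa}) and binary \padj{} probability space---and check that every required piece has already been assembled during the construction of the homotopical realization in Section \ref{subs:realization}. There is essentially no new computation; the work is to organize the verification into the clauses the definitions demand and to observe that the grading takes care of the ``non-positive degree'' assertion.

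First I would establish that $\sA_{\binarycomm}=(\sA,1_\sA,\cdot,K)$ is a binary \padj{} probability algebra. The underlying graded vector space is $\sA=A\otimes S(\mg[-1])$ with $\sA^{-k}=A\otimes S^k(\mg[-1])$ for $k\geq 0$ and $\sA^{j}=0$ for $j>0$; the binary product $\cdot$ is the tensor product of the commutative associative product on $A$ with the free super-commutative product on $S(\mg[-1])$, so $(\sA,1_\sA,\cdot)$ is a unital graded commutative associative algebra with $1_\sA=1_A$. For the pointed cochain complex $(\sA,1_\sA,K)$ I would note that $K^2=0$ is exactly the statement that the Chevalley--Eilenberg--Koszul differential squares to zero, which is built into its identification with the complex computing Lie algebra homology and rests on $\vr$ being a representation. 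The remaining pointedness conditions are $K1_\sA=0$, recorded in the construction, together with $1_\sA\notin\Im K$.

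Next I would verify that $\mc$ is a morphism to the initial object $\fieldk$. By construction $\mc$ is the linear extension of $\iota$ by zero on negative degrees, hence a degree-zero pointed map with $\mc(1_\sA)=\iota(1_A)=1$. Since the differential on $\fieldk$ vanishes, being a cochain map is precisely the condition $\mc\circ K=0$, which holds because $\iota\circ\Im\vr=0$ forces $\mc$ to annihilate the image of the Koszul differential landing in $\sA^0=A$, while $\mc$ already vanishes in negative degrees. Thus $\xymatrix{\sA_{\binarycomm}\ar[r]^{[\mc]}&\fieldk}$ is a diagram in $\homotopycat\category{\HProb}_{\binarycomm}{\overk}$, i.e. a binary \padj{} probability space, and $\sA^{j}=0$ for $j>0$ gives concentration in non-positive degree.

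The only step carrying any genuine content is the pointedness clause $1_\sA\notin\Im K$; everything else is a direct reading of Section \ref{subs:realization}. I expect this to be the main (mild) obstacle, and it is settled by the observation that $H^0\cong A_\mg$ and the induced map $\iota_\mg$ on coinvariants satisfies $\iota_\mg([1_A])=\iota(1_A)=1\neq 0$, so $[1_\sA]$ is a nonzero class and $1_\sA$ cannot lie in the image of $K$. This is precisely the point at which the hypothesis that $\xymatrix{A_{\classical}\ar[r]^\iota&\fieldk}$ is an honest probability space enters.
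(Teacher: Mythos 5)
Your proposal is correct and follows essentially the same route as the paper, which states this corollary without separate proof because the construction in Sect.~\ref{subs:realization} already verifies each clause of Definition~\ref{bhpa} (the quartet $(\sA,1_\sA,\cdot,K)$, the pointed cochain map $\mc$ with $\mc(1_\sA)=1$ and $\mc\circ K=0$, and the non-positive grading). Your explicit check that $1_\sA\notin\Im K$ via $\iota_\mg([1_A])=1\neq 0$ is just a careful filling-in of a pointedness detail the paper leaves implicit, not a different argument.
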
 

\subsubsection{Homotopy functors to the category of unital \texorpdfstring{$sL_\infty$}{sL-infinity}-algebras}

In {\bf Definition~\ref{bhpa}}, neither the differential $K$ nor a morphism $f$ are required to satisfy any specific relation with respect to multiplication. In particular, the differential $K$ may fail to be a derivation of the multiplication and a morphism $f$ may fail to be an algebra map. These properties are crucial to capturing {\em non-trivial correlations} among random variables.
We organize these and successive failures by adopting the notion of classical independence from classical algebraic probability theory, which leads us to construct a family of functors $\Des_{\La}$ from the category $\category{\HProb}_{\binarycomm}{\overk}$ of binary \padj{} probability algebras to the category $\category{sL}_\infty{\overk}$ of unital $sL_\infty$-algebras.

\begin{definition}\label{bdesc}
(Descendant functor)
\begin{itemize}
\item
Let $\sC_{\binarycomm}=\left(\sC, 1_\sC, \cdot, {K}\right)$ be a binary \padj{} probability algebra. The {\em descendant} of $\sC_{\binarycomm}$ is the triple $\sC_{\Lie} = \left(\sC, 1_\sC, \underline{\ell}^{K}\right)$, where the family $\underline{\ell}^{K}= {\ell}^K_1, \ell^K_2,\cdots$ is recursively defined, for $n \geq 1$ and elements $x_1,\dotsc, x_n \in \sC$, by the equation 
\[
K (x_1\cdot \ldots \cdot x_n)=\sum_{\substack{\pi \in P(n)\\ |B_i|=n-|\pi|+1}}\!\!\!\! \epsilon(\pi,i)
 x_{B_1}\cdot\ldots\cdot x_{B_{i-1}}\cdot \ell^K(x_{B_i})\cdot x_{B_{i+1}}\cdot \ldots\cdot x_{B_{|\pi|}}.
\]
\item
Let $\xymatrix{\sC_{\binarycomm} \ar[r]^f & \sC^\pr_{\binarycomm}}$ be a morphism in $\category{\HProb}_{\binarycomm}{\overk}$. Let $\underline{\La}=\La_1,\La_2,\cdots$ be a sequence of linear maps $\La_n: S^n(\sC)\rightarrow \sC^\pr$ of degree $-1$ which satisfy the relations $\La_1(1_\sC)=0$ and $\La_{n+1}\big(x_1,\dotsc,x_{n}, 1_\sC\big)=\La_{n}\big(x_1,\dotsc,x_{n}\big)$ for $n\geq 1$. The {\em descendant of $f$ up to the homotopy $\underline{\La}$} is defined to be the family $\underline{\phi}^{f,\underline{\La}}= \phi_1^{f,\underline{\La}}, \phi_2^{f,\underline{\La}},\cdots$ determined for $n \geq 1$ by the following recursive relation valid for elements $x_1,\dotsc, x_n \in \sC$:
\begin{align*}
f(x_1\cdot \ldots\cdot x_n)=
&\sum_{\pi \in P(n)}\ep(\pi)\phi^{f,\underline{\La}}\!\left(x_{B_1}\right)\cdot^\pr\ldots \cdot^\pr 
\phi^{f,\underline{\La}}\!\left(x_{B_{|\pi|}}\right)
\\
&
+K^\pr \La_n(x_1,\dotsc, x_n)
\\
&
+\!\!\!\!\!\sum_{\substack{\pi \in P(n)\\ |B_i| = n-|\pi|+1}}\!\!\!\!\!\!\!\!
\ep(\pi,i)\La_{|\pi|} \left( x_{B_1},\dotsc, x_{B_{i-1}},\ell^K\left(x_{B_i}\right), x_{B_{i+1}},\dotsc, x_{B_{|\pi|}}\right)
.
\end{align*}
\end{itemize}

\end{definition}

Encoded in the above combinatorial and recursive definitions of the families $\underline{\ell}^K$ and $\underline{\phi}^{f,\underline{\La}}$ are the failures and higher failures of $K$ being a derivation of the multiplication and $f$ being an algebra homomorphism, respectively. 
For example:
\begin{align*}
\ell_1^K=
& K
,\\
\ell_2^K(x_1,x_2) = 
&K(x_1\cdot x_2)- Kx_1\cdot x_2 -(-1)^{|x_1|}x_1\cdot K x_2
,\\
\ell_3^K(x_1,x_2,x_3) =
&K(x_1\cdot x_2\cdot x_3)
\\
&
- Kx_1\cdot x_2 \cdot x_3
-(-1)^{|x_1|}x_1\cdot K x_2\cdot x_3
-(-1)^{|x_1|+|x_2|}x_1\cdot x_2\cdot Kx_3
\\
&
-\ell_2^K(x_1,x_2)\cdot x_3
-(-1)^{|x_1|}x_1\cdot\ell_2^K(x_2,x_3)
-(-1)^{(|x_1|+1)|x_2|}x_2\cdot\ell_2^K(x_1,x_3).
\end{align*}
Note that $\ell_3^K$ can be expanded entirely in terms of $K$; in this format it looks like the seven-term Batalin--Vilkovisky relation between $K$ and the product.
The first terms in the descendant of $f$ are:
\begin{align*}
\phi^{f,\underline{\La}}_1 
= & f - K^\pr \La_1 - \La_1 K
,\\
\phi^{f,\underline{\La}}_2(x_1,x_2) 
= 
& \phi^{f,\underline{\La}}_1(x_1\cdot x_2) - \phi^{f,\underline{\La}}_1(x_1)\cdot\phi^{f,\underline{\La}}_1(x_2)
\\
&+K'\Lambda_1(x_1\cdot x_2) + \Lambda_1 K(x_1\cdot x_2)
\\
&
-K^\pr\La_2(x_1, x_2) - \La_2(Kx_1, x_2) -(-1)^{|x_1|}\La_2(x_1, Kx_2)
,\\
\phi^{f,\underline{\La}}_3(x_1,x_2,x_3) 
= &
\phi^{f,\underline{\La}}_1(x_1\cdot x_2\cdot x_3) 
- \phi^{f,\underline{\La}}_1(x_1)\cdot \phi^{f,\underline{\La}}_2(x_2, x_3)
\\
&
- \phi^{f,\underline{\La}}_2(x_1, x_2)\cdot \phi^{f,\underline{\La}}_1(x_3)
 -(-1)^{|x_1||x_2|} \phi^{f,\underline{\La}}_1(x_2)\cdot \phi^{f,\underline{\La}}_2(x_1, x_3)
 \\
&
 -K^\pr\La_3(x_1, x_2,x_3) 
- \La_2(Kx_1, x_2,x_3)
-(-1)^{|x_1|}\La_2(x_1, Kx_2,x_3)
\\
&
-(-1)^{|x_1|+|x_2|}\La_2(x_1, x_2,Kx_3)
-\La_2(\ell_2(x_1, x_2),x_3)
\\
&
-(-1)^{|x_1|}\La_2(x_1, \ell_2(x_2,x_3))
-(-1)^{(|x_1|+1)(|x_2|}\La_2(x_2, \ell_2(x_1,x_3))
\\&+K'\Lambda_1(x_1\cdot x_2\cdot x_3) + \Lambda_1 K(x_1\cdot x_2\cdot x_3).
\end{align*}

Let $\Des_{\La}\big(\sC_{\binarycomm} \big)= \sC_{\Lie}$ and $\Des_{\La}\big(f) = \underline{\phi}^{f,\underline{\La}}$.
It can be checked that $\sC_{\Lie}$ is a unital $sL_\infty$-algebra\footnote{This $sL_\infty$-structure has been constructed before in \cite{BDA,Ak,Krav}.}. It can be also checked that $\underline{\phi}^{f,\underline{\La}}$ is an $sL_\infty$-morphism\footnote{See~\cite{DPT2,RS} for other equivalent characterizations of the descendent morphism with $\underline{\La}=\underline{0}$.} from $\sC_{\Lie}$ to $\sC^\pr_{\Lie}$ for any choice of $\underline{\La}$ and that $\underline{\phi}^{\tilde f,\underline{\tilde\La}}$ is homotopic to $\underline{\phi}^{f,\underline{\La}}$ if $\tilde f$ is cochain homotopic to $f$.\footnote{See \cite{PP} for a proof when $\underline{\La}=\underline{0}$.} 
\begin{theorem}\label{xmaina}
The assignment of descendants to objects and morphisms in the category $\category{\HProb}_{\binarycomm}{\overk}$ defines a family of functors $\Des_{\La}: \category{\HProb}_{\binarycomm}{\overk}\rightarrow \category{sL}_\infty{\overk}$, which induce a well defined functor $\text{ho}\!\Des :\homotopycat\category{\HProb}_{\binarycomm}{\overk}\rightarrow \homotopycat\category{sL}_\infty{\overk}$.
\end{theorem}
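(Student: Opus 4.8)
The plan is to route everything through exponential generating functions in the completed algebra $\widehat{S}(\sC^*)\widehat{\otimes}\,\sC$ of the Maurer--Cartan subsection, which keeps the Koszul signs automatic. For a formal degree-zero element $a=\sum_\a t^\a e_\a$ write $e^{a}=\sum_{n\ge0}\Fr{1}{n!}\,a\cdot\dotsm\cdot a$, and let $e^{\prime\,b}$ denote the analogous exponential formed with the product $\cdot^\pr$ of $\sC^\pr$. First I would observe that the recursion of {\bf Definition \ref{bdesc}} defining $\underline{\ell}^K$ is equivalent to the single closed identity $K(e^{a})=e^{a}\cdot\Psi(a)$, where $\Psi(a)\ceq\sum_{n\ge1}\Fr{1}{n!}\ell^K_n(a,\dotsc,a)$, together with its codomain analogue $K^\pr(e^{\prime\,b})=e^{\prime\,b}\cdot^\pr\Psi^\pr(b)$. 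Differentiating $e^{a}$ in a direction $v$ (with an auxiliary odd parameter when $v$ is odd) upgrades the first identity to $K(e^{a}\cdot v)=e^{a}\cdot\bigl(v\cdot\Psi(a)+\Psi_*(a)[v]\bigr)$, where $\Psi_*(a)[v]\ceq\sum_{n\ge1}\Fr{1}{(n-1)!}\ell^K_n(a,\dotsc,a,v)$.

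The object statement then follows from $K^2=0$: applying $K$ to $K(e^{a})=e^{a}\cdot\Psi(a)$ and using the upgraded identity with $v=\Psi(a)$ gives $0=K^2(e^{a})=e^{a}\cdot\bigl(\Psi(a)\cdot\Psi(a)+\Psi_*(a)[\Psi(a)]\bigr)$; since $\Psi(a)$ is odd its square vanishes in characteristic zero, leaving $\Psi_*(a)[\Psi(a)]=0$, which is exactly the diagonal form of the $sL_\infty$-relations of {\bf Definition \ref{l-alg}}. Polarizing---extracting multilinear components by substituting $a=\sum_i s_i a_i$---recovers all the relations, and the unit conditions $\ell^K_n(x_1,\dotsc,x_{n-1},1_\sC)=0$ follow by setting one argument to $1_\sC$ in the recursion exactly as in the remark after {\bf Definition \ref{promo}}. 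For a morphism $f$ with $\underline{\La}=\underline{0}$, the recursion of {\bf Definition \ref{bdesc}} collapses to the exponential identity $f(e^{a})=e^{\prime\,\Phi(a)}$ with $\Phi(a)\ceq\sum_{k\ge1}\Fr{1}{k!}\phi^{f,\underline{0}}_k(a,\dotsc,a)$; differentiating gives $f(e^{a}\cdot v)=e^{\prime\,\Phi(a)}\cdot^\pr\Phi_*(a)[v]$. Applying $K^\pr$, using the cochain-map identity $K^\pr f=fK$ on one side and the codomain identity on the other, and cancelling the invertible factor $e^{\prime\,\Phi(a)}$, yields $\Phi_*(a)[\Psi(a)]=\Psi^\pr(\Phi(a))$---precisely the diagonal form of the morphism relations of {\bf Definition \ref{l-mor}}. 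Thus $\Des_{\underline{0}}$ carries objects and morphisms to (unital) $sL_\infty$-algebras and $sL_\infty$-morphisms.

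Strict functoriality of $\Des_{\underline{0}}$ is then the combinatorial M\"obius inversion over the partition lattice already used for $\Des:\category{\Prob}_{\classical}{\overk}\to\category{\Prob}_{\Lie}{\overk}$: with $\underline{\La}=\underline{0}$ the recursion does not involve $K$ and is formally identical to {\bf Definition \ref{promo}}, so $\underline{\phi}^{g\circ f,\underline{0}}=\underline{\phi}^{g,\underline{0}}\bullet\underline{\phi}^{f,\underline{0}}$ and identities go to identities. To pass to homotopy categories I would prove two invariance statements. First, changing $\underline{\La}$ only moves the morphism within its $sL_\infty$-homotopy type: interpolating $\underline{\La}(\t)=\t\,\underline{\La}$, I would differentiate the full ($\underline{\La}\neq\underline{0}$) recursion of {\bf Definition \ref{bdesc}} in $\t$ and match the result against the homotopy-flow equation of {\bf Definition/Lemma \ref{l-hom}}, reading off a generator $\underline{\eta}(\t)$ built from $\underline{\La}$ and invoking the uniqueness clause of that lemma. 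Second, if $\tilde f=f+K^\pr s+sK$ for a pointed cochain homotopy $s$, then $f_\t\ceq f+\t(K^\pr s+sK)$ is a polynomial family of pointed cochain maps and the series $\Phi_\t$ of $\underline{\phi}^{f_\t,\underline{0}}$ satisfies the same flow equation with a generator determined by $s$, so $\underline{\phi}^{f,\underline{0}}\sim_\infty\underline{\phi}^{\tilde f,\underline{0}}$. Together these make $\text{ho}\!\Des$ well defined on objects (the assignment $\sC_{\binarycomm}\mapsto\sC_{\Lie}$ does not see $\underline{\La}$) and on morphisms ($[f]\mapsto[\underline{\phi}^{f,\underline{\La}}]_\infty$, independent of the representative $f$ and of $\underline{\La}$), and functoriality on homotopy categories follows from strict functoriality of $\Des_{\underline{0}}$ together with the well-definedness of $\bullet_{\!h}$ from {\bf Lemma \ref{compho}}.

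The main obstacle is exactly these two invariance steps: exhibiting the $sL_\infty$-homotopy generators explicitly and verifying that the candidate families really solve the homotopy-flow equations, Koszul signs included, as one threads a cochain-level homotopy $s$ (or a variation of $\underline{\La}$) through the nonlinear partition sums. I would contain this by working throughout inside $\widehat{S}(\sC^*)\widehat{\otimes}\,\sC^\pr$, where {\bf Definition/Lemma \ref{l-hom}} takes the compact twisting-cochain form $\dot{\Phi}=\grave{d}\,\eta+\sum_{n\ge2}\Fr{1}{(n-1)!}\grave{\ell}^{\,\pr}_n(\Phi,\dotsc,\Phi,\eta)$ of the Maurer--Cartan subsection, so that each invariance becomes a single generating-function identity rather than an infinite hierarchy, with the uniqueness half of {\bf Definition/Lemma \ref{l-hom}} pinning down the flow and hence the homotopy.
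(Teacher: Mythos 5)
Your exponential formalism is a genuinely different and, where you use it, correct route. The paper does not prove Theorem \ref{xmaina} directly: it derives it as a special case of Theorem \ref{maina}, whose proof (Lemmas \ref{bna}, \ref{bnb}, \ref{bnc} and the functoriality lemmas) must handle arbitrary correlation algebras $\underline{M}$ and therefore runs partition-combinatorial inductions on the diagonal element $\g=\sum_i t_ix_i$. Your use of $e^{\g}$ and its invertibility is available only because the binary product is associative and graded commutative, and it does make the object relations ($K^2=0$ forces $\Psi_*(a)[\Psi(a)]=0$) and the $\underline{\La}=\underline{0}$ morphism relations essentially immediate; your reduction of strict functoriality to $\underline{\La}=\underline{0}$ also agrees with the paper's argument.

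The genuine gap is in the logical ordering of the invariance steps, and it is not just a deferred computation. You establish that $\underline{\phi}^{f,\underline{\La}}$ is an $sL_\infty$-morphism only for $\underline{\La}=\underline{0}$, and you intend to get both the general-$\underline{\La}$ morphism statement and $\underline{\La}$-independence from the homotopy flow plus the uniqueness/preservation clause of Definition/Lemma \ref{l-hom}. But the flow equation cannot be verified for the family $\Phi_\t\ceq\underline{\phi}^{f,\t\underline{\La}}$ without already knowing that each $\Phi_\t$ is an $sL_\infty$-morphism. In your own notation, writing $L(a)\ceq\sum_{n\ge1}\Fr{1}{n!}\La_n(a,\dotsc,a)$ and $L_*(a)[v]\ceq\sum_{n\ge1}\Fr{1}{(n-1)!}\La_n(a,\dotsc,a,v)$, the recursion of Definition \ref{bdesc} reads
\[
f(e^{a})=e^{\prime\,\Phi_\t(a)}+\t\left(K^\pr L(a)+L_*(a)[\Psi(a)]\right),
\]
so that $\dot\Phi_\t=-e^{\prime\,-\Phi_\t}\cdot^\pr\left(K^\pr L+L_*[\Psi]\right)$, while the flow equation generated by $\underline{\eta}$ reads $\dot\Phi_\t=H_*[\Psi]+\Psi^\pr_*(\Phi_\t)[H]$ with $H(a)=\sum_n\Fr{1}{n!}\eta_n(a,\dotsc,a)$. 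For the natural read-off $H=-e^{\prime\,-\Phi_\t}\cdot^\pr L$ (which is exactly the paper's choice \eq{zxzc} in exponential form), the two expressions for $\dot\Phi_\t$ differ, up to Koszul signs, by
\[
e^{\prime\,-\Phi_\t}\cdot^\pr\left(\Phi_{\t*}[\Psi]-\Psi^\pr(\Phi_\t)\right)\cdot^\pr L,
\]
and the vanishing of this residual is precisely the $sL_\infty$-morphism identity for $\Phi_\t$ --- the statement you deferred. This is why the paper proves Lemma \ref{bnb} (descendant up to \emph{arbitrary} homotopy is a morphism) before Lemma \ref{bnc}, and cites it inside the proof of \ref{bnc}; no choice of generator avoids the obstruction, since solving for $\eta_n$ degree by degree requires $K^\pr$-exactness of exactly this term. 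The repair stays inside your framework: prove the morphism relations for arbitrary $\underline{\La}$ directly by polarizing the defining identity in the direction $\Psi(a)$ and comparing $K^\pr f(e^{a})$ with $f(Ke^{a})$; the $\La$-terms cancel using $K^{\pr 2}=0$, the vanishing of the second polarization $L_{**}[\Psi,\Psi]$ (an odd element inserted twice into a graded-symmetric map), and $L_*[\Psi_*[\Psi]]=0$ from the already-proven source relations. Separately, for the step $f\sim\tilde f$ you propose a second flow computation; the paper instead absorbs the pointed cochain homotopy $s$ into $\check\La_n\ceq s\circ M_n$, which makes the defining equations of $\underline{\phi}^{\tilde f,\underline{0}}$ and $\underline{\phi}^{f,\underline{\check\La}}$ literally identical. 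In your notation $\check L=s(e^{\g})$ and $\check L_*[\Psi]=s(Ke^{\g})$, so your interpolation $f_\t=f+\t(K^\pr s+sK)$ produces exactly the family $\underline{\phi}^{f,\t\underline{\check\La}}$; the second invariance is then an instance of the first rather than a new computation, and you should use that reduction.
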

We will not prove this theorem here; it is a special case of {\bf Theorem \ref{maina}}.
\begin{example}
Consider a classical algebraic probability space $\xymatrix{A_{\classical} \ar[r]^\iota &\fieldk}$,
where $A_{\classical}=(A,1_A,\cdot)$. Then both $A_{cl}$ and $\fieldk$ are binary \padj{} probability algebras with zero differential. It follows that the descendants of $A_{cl}$ and $\fieldk$ are unital $sL_\infty$-algebras with zero $sL_\infty$-structure. It also follows that the descendant of $\iota$ is independent of $\Lambda$; explicitly,
\[
\iota(x_1\cdot x_2\cdots x_n) 
=\sum_{\pi \in P(n)}
 \phi^\iota\big(x_{B_1}\big) \phi^\iota\big(x_{B_2}\big)\cdots\phi^{\iota}\big( x_{B_{|\pi|}}\big).
\]
Hence the descendant morphism $\underline{\phi}^\iota$ is exactly the cumulant morphism $\underline{\k}$.
 \hfill$\natural$
\end{example}

\begin{example} 
Consider an classical algebraic probability space $\xymatrix{A_{\classical} \ar[r]^\iota &\fieldk}$ with a faithful infinitesimal symmetry, that is, a representation $\vr:\mg \rightarrow L\hbox{Diff}_{\fieldk}(A)$ which satisfies $\iota\circ \Im\vr=0$. Assume further for this example that $\vr(\mg)$ annihilates the unit $1_A$.
Let $\{e_j\}_{j\in J}$ be a basis of the Lie algebra $\mg$ with structure constants $[e_i, e_j]_\mg=\sum_k f_{ij}{}^k e_k$ and let $\r_i$ denote $\vr(e_i) \in \End_{\fieldk}(A)$. Introduce the corresponding basis $\{\eta_j\}$ of $\mg[-1]$.
Let $\sA = A\otimes S(\mg[-1])\simeq A[\eta_j]_{j\in J}$. Equip $\sA$ with the unit $1_\sA = 1_A$, the product $(a\otimes \a)\cdot (a^\pr\otimes \a^\pr)\ceq (aa^\pr)\otimes \a\a^\pr$ and the differential
\[
K \ceq \sum_{j\in J} \r_j\otimes \Fr{\rd}{\rd\eta_j} 
+\Fr{1}{2} \sum_{i,j,k\in J}I_A\otimes f_{ij}^k\eta_k\Fr{\rd^2}{\rd\eta_j \rd\eta_i}.
\]
Then $\sA_{\binarycomm}\ceq \big(\sA, 1_\sA, \cdot, K\big)$ is a binary \padj{} probability algebra concentrated in nonpositive degrees.

We note that there is the natural structure of a current Lie algebra on $\sA^{-1}=A\otimes \mg[-1]$ with degree $1$ bracket $[a\otimes \a, a^\pr\otimes \a^\pr]\ceq (a\cdot a^\pr)\otimes [\a, \a^\pr]_\mg$.
On the other hand, let $\l_1 = \l_1^j \otimes\eta_j$ and likewise for $\l_2$. Then the descendant bracket $\ell_2^K$ on $\sA^{-1}$ is given by
\begin{align*}
\ell^K_2(\l_1,\l_2)
= &\Fr{1}{2}\left(\l_1^i \l_2^j -\l_2^i\l_1^j\right)\otimes f_{ij}^k \eta_k 
\\
&
+\left(\l^i_1\cdot\r_i(\l_2^j) -\l_2^i\cdot \r_i(\l_1^j) 
+\ell_2^{\r_i}(\l_1^i, \l_2^j) -\ell_2^{\r_i}(\l_1^j, \l_2^i)\right)\otimes\eta_j,
\\
\end{align*}
where the first term in the right hand side above is the current bracket $[\l_1,\l_2]$.
Assume that $\r_i$ is at most a \nth{1} order linear differential operator for each $i$, so that $\ell^{\r_i}_2=0$, for all $i$. Then $K$ is at most a second order differential, meaning that $\ell^K_{3}=\ell^K_{4}=\cdots=0$. 
It follows that the descendant algebra is a unital sDGLA $\left(\sA, 1_A,K, \ell_2^K\right)$; that is, $\ell_2^K$ is a derivation of the product. Note that
\begin{align*}
\ell^K_2(\l_1,\l_2)
=&
\Fr{1}{2}\left(\l_1^i \l_2^j -\l_2^i\l_1^j\right) \otimes f_{ij}^k \eta_k 
+\left(\l^i_1\cdot\r_i(\l_2^j) -\l_2^i\cdot \r_i(\l_1^j) 
\right)\eta_j.
\end{align*}
Then $\sA^{-1}$ is a Lie algebra with bracket $\ell^K_2$ which acts on $\sA^0=A$ as a derivation; 
\begin{align*}
&\sA^{-1}\times A \rightarrow A
&
[\d_{\!\l_1},\d_{\!\l_2}]=& \d_{\!\ell^K_2(\l_1,\l_2)},
\\
&(\l, x) \mapsto \d_{\!\l}(x)\ceq \ell_2^K(\l,x)
&
\d_{\!\l}(x\cdot y)= & \d_{\!\l} x\cdot y +x\cdot \d_{\!\l} y.
\end{align*}

Recall that $\sA^0$ is an commutative algebra. $\sA^0$ acts on $\sA^{-1}$;
\[
A\times \sA^{-1} \rightarrow \sA^{-1},
\qquad
(x,\l)\mapsto x\cdot \l.
\]
Note also that $\ell_2^K(\l_1, x\cdot \l_2) =x\cdot \ell_2^K(\l_1, \l_2) 
+\d_{\!\l_1}x\cdot \l_2$, which implies that the pair $\left(\sA^{-1}, \sA^0\right)$ is a Lie algebroid.
 \hfill$\natural$
\end{example}

 Recall that an $L_\infty$-morphism $\underline{\phi}=\phi_1,\phi_2,\cdots$ is a quasi-isomorphism if $\phi_1$ is a cochain quasi-isomorphism. The following  theorem, which is a special case of
 {\bf Theorem} \ref{lemc}, shall play an important role in what follows.
\begin{theorem}
Any descendant algebra is quasi-isomorphic to a unital Abelian graded sLie algebra as a unital $sL_\infty$-algebra.
\end{theorem}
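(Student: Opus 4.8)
The plan is to prove the statement by exhibiting an explicit unital $sL_\infty$-quasi-isomorphism from the Abelian unital $sL_\infty$-algebra $(H, 1_H, \underline{0})$ into the descendant $\sC_{\Lie} = (\sC, 1_\sC, \underline{\ell}^K)$; here $H$ is the cohomology of the underlying pointed cochain complex $(\sC, 1_\sC, K)$, $1_H = [1_\sC]$, and all brackets are zero, so $(H,1_H,\underline{0})$ is a unital Abelian graded sLie algebra. A first sanity check is that the induced bracket on cohomology already vanishes: for $K$-closed $x,y$ one has $\ell^K_2(x,y)=K(x\cdot y)$, which is $K$-exact. The clean way to promote this to a full quasi-isomorphism is via the Maurer--Cartan dictionary recalled above. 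Applied with \emph{zero} structure on the source (so the twisting term $\delta$ vanishes), a unital $sL_\infty$-morphism $\underline{\phi}\colon (H,\underline{0}) \to \sC_{\Lie}$ is the same datum as a degree-zero element $\mb{\phi} \in \bigl(\widehat{S}(H^*)\widehat\otimes \sC\bigr)^0$ solving $\sum_{n\ge 1}\tfrac{1}{n!}\ell^K_n(\mb{\phi},\dots,\mb{\phi}) = 0$, and it is a quasi-isomorphism exactly when its linear part is a cochain quasi-isomorphism.

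The key structural input is that the descendant operations $\underline{\ell}^K$ are the Koszul braces of the commutative product and hence admit a generating-function description. Feeding a single degree-zero element $a$ into every slot of the recursion of \textbf{Definition \ref{bdesc}} (all Koszul signs are then trivial), the partitions contributing are those with one block of size $s$ and the rest singletons, counted $\binom{k}{s}$ times, giving $K(a^{k}) = \sum_{s=1}^{k}\binom{k}{s}\,\ell^K_s(a,\dots,a)\, a^{k-s}$. Summing against $1/k!$ in a completion yields $K(e^{a}) = \bigl(\sum_{s\ge 1}\tfrac{1}{s!}\ell^K_s(a,\dots,a)\bigr)\,e^{a}$. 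Consequently the Maurer--Cartan equation for $\mb{\phi}$ is equivalent to the single closedness condition $K(e^{\mb{\phi}})=0$: Maurer--Cartan elements are exactly the $K$-closed elements of the form $e^{\mb{\phi}}$. I would record this identity as a preliminary lemma.

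Next I would construct such a solution by induction on word length in the formal coordinates $t^\alpha$ dual to a basis $\{e_\alpha\}$ of $H$, starting from a unital cochain quasi-isomorphism $f\colon (H,0)\to(\sC,K)$ with $f(1_H)=1_\sC$ and prescribed linear part $\mb{\phi}_1 = \sum_\alpha t^\alpha f(e_\alpha)$. The crucial observation is that $K$ preserves word length, since it acts only on the $\sC$-factor. Thus if $K(e^{\mb{\phi}})$ vanishes through word length $m$, its word-length-$(m+1)$ component is $K$ applied to the already-determined word-length-$(m+1)$ component of $e^{\mb{\phi}}$, hence manifestly $K$-exact; the obstruction to extending $\mb{\phi}$ one further order therefore always vanishes in cohomology and is cancelled by a suitable correction $\mb{\phi}_{m+1}$. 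To enforce the unit conditions $\phi_1(1_H)=1_\sC$ and $\phi_n(\dots,1_H)=0$ for $n\ge 2$, I would isolate the coordinate $t^0$ dual to $e_0=1_H$ and seek $\mb{\phi} = t^0\,1_\sC + \psi$ with $\psi$ independent of $t^0$; since $e^{\mb{\phi}} = e^{t^0}e^{\psi}$ with $e^{t^0}$ central and $K$-closed, closedness reduces to $K(e^{\psi})=0$, solved by the same induction, and all unit conditions then hold automatically.

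The linear part of the resulting $\mb{\phi}$ is $f$, a cochain quasi-isomorphism inducing the identity on $H$, so $\underline{\phi}$ is a unital $sL_\infty$-quasi-isomorphism $(H,1_H,\underline{0})\to\sC_{\Lie}$, which is the claim. The main obstacle I anticipate is bookkeeping rather than conceptual: pinning down the Koszul signs in the generating-function identity under the paper's conventions, and carrying the word-length induction through in the completed tensor product $\widehat{S}(H^*)\widehat\otimes\sC$ when $H$ is infinite-dimensional so that everything converges formally. A secondary point to verify carefully is that the $t^0$-splitting faithfully encodes the unit conditions on each $\phi_n$ and that the remaining freedom in the corrections $\mb{\phi}_{m+1}$ can always be chosen to preserve it.
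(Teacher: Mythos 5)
Your proposal is correct in substance, and it runs on exactly the mechanism that drives the paper's own argument, but packaged differently, so a comparison is worth recording. The paper proves this statement as a special case of Theorem~\ref{lemc}, by a component-wise induction on the number of arguments: having built $\w_1,\dots,\w_n$, the obstruction $L_{n+1}=\sum_{|\pi|\neq 1}\ep(\pi)\,\ell^K_{|\pi|}\big(\w(a_{B_1}),\dots,\w(a_{B_{|\pi|}})\big)$ is shown, via the defining recursion of the descendant structure, to equal $K$ applied to $\sum_{|\pi|\neq 1}\ep(\pi)\,M_{|\pi|}\big(\w(a_{B_1}),\dots,\w(a_{B_{|\pi|}})\big)$ --- so it is manifestly $K$-\emph{exact}, not merely closed --- and a chosen homotopy inverse pair $(h,\b)$ with $\b(1_\sC)=0$ converts exactness into the correction $\w_{n+1}=\b\circ L_{n+1}$, with the unit conditions checked inductively. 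Your word-length induction is the generating-function form of the same fact: your identity $K(e^{a})=\big(\sum_{s\geq 1}\tfrac{1}{s!}\ell^K_s(a,\dots,a)\big)e^{a}$ is the paper's exactness identity summed into a series, and your observation that the lower-order obstruction $KP_{m+1}$ is already exact is the same point in different clothing. What your route buys: you never need the homotopy inverse $h$ or the homotopy $\b$, since you may simply take $\mb{\phi}_{m+1}=-P_{m+1}$ --- which in fact collapses the induction into the closed-form solution $\mb{\phi}=t^0 1_\sC+\log(1+\psi_1)$ --- and the $t^0$-splitting is a clean, exact encoding of unitality. What the paper's route buys: it works verbatim for arbitrary correlation-algebra targets, where the $M_n$ are not iterated binary products and there is no honest exponential, which is why Theorem~\ref{lemc} covers all \padj{} probability algebras rather than only binary ones; and it never dualizes $H$, so nothing special is needed when $H$ fails to be finite dimensional in each degree --- the issue you flag yourself, which in your formulation forces one to read $\widehat{S}(H^*)\widehat{\otimes}\,\sC$ as pure bookkeeping for collections of maps $S^nH\to\sC$ rather than as an actual completed tensor product of duals. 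One small wording slip: the word-length-$(m+1)$ component of $e^{\mb{\phi}}$ is not ``already determined'' at the inductive step, since it contains the unknown $\mb{\phi}_{m+1}$; what is determined, and what your argument actually uses, is its part $P_{m+1}$ built from $\mb{\phi}_1,\dots,\mb{\phi}_m$, and with that reading the induction goes through.
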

In particular the cohomology $H$ of the cochain complex $(\sC,K)$ in $\sC_{\binarycomm}$ is a unital Abelian graded Lie algebra, or equivalently a unital $L_\infty$-algebra $H_{\Lie}=(H, 1_H,\underline{0})$ with the zero $sL_\infty$-structure $\underline{0}$ and the unit $1_H=[1_\sC]_K$. 
Furthermore,  $H_{\Lie}$
is $sL_\infty$-quasi-isomorphic to $\sC_{\Lie}$.

\subsubsection{Homotopical random variables and their laws}
 
Fix a binary \padj{} probability space $\xymatrix{\sC_{\binarycomm} \ar[r]^\mc &\fieldk}$ and let $\sC_{\Lie}=\big(\sC, 1_\sC,\underline{\ell}^K\big)$ and $\underline{\phi}^{\mc,\underline{\La}}$ be the descendants of $\sC_{\binarycomm}$ and $\mc$ up to the homotopy $\underline{\La}$, respectively. Thus we have a unital $sL_\infty$-morphism ${\underline{\phi}^{\mc,\underline{\La}}}:\xymatrix{\sC_{\Lie} \ar@{.>}[r] & \fieldk}$.
Forgetting the unit from $\sC_{\Lie}$, we have an $sL_\infty$-algebra $\big(\sC,\underline{\ell}^K\big).$

Recall that an element $x \in \sC$ is called a cohomological random variable if $Kx =0$. Let $x^\pr$ be another cohomological random variable in the same cohomology class. Then $x$ and $x^\pr$ have the same expectation $\mc(x)= \mc(x^\pr)$, which depends only on the cochain homotopy type of $\mc$. 
On the other hand, we {\em cannot} say that these two random variables $x$ and $x^\pr$ are equivalent. In particular, since $K$ is not a derivation of the product, in general $x^{\pr n} - x^{n} \notin \Im K$ so the distributions $\mc(e^{t x})$ and $\mc(e^{t x^\pr})$ may be different.
Even worse, the distribution $\mc(e^{t x})$ of the random variable $x$ may not even well defined,\footnote{
This phenomena is reminiscent of classical measure theoretic probability theory where the random variables are measurable functions, which do not always form an algebra.
This failure of algebraic structure in important classes of the examples is a common criticism of classical algebraic probability theory from the viewpoint of measure theoretic probability theory. 
But our homotopical version reflects precisely this issue and thus the criticism is not applicable here. This phenomenon indeed poses an interesting challenge to the theory in defining correlations (joint moments) among random variables, which will be addressed later.} since there is no guarantee that $K x^n =0$ for $n\geq 2$, i.e., $Kx =0$ does not imply that $K e^{tx}=0$. Hence the $K$-cohomology itself can not characterize random variables since there is no natural algebra structure on it as $K$ is not a derivation.
The failure, however, of $K$ being a derivation is crucial for existence of correlations; for example, the condition $x\in \Im K$, which implies $\mc(x)=0$, does not necessarily imply that $\mc(x^2)=0$ as well since $\mc(x^2) -\mc(x)^2 =\phi^{\mc}_2(x,x)$ (the variance of $x$). 
 
To further the discussion, we shall need the following important definition and proposition:
\begin{definition}\label{coradef}
Let $V$ be a graded vector space regarded as an $sL_\infty$-algebra $\big(V, \underline{0}\big)$ with zero $sL_\infty$-structure. 
For any $sL_\infty$-morphism $\underline{\w}:\xymatrix{\big(V, \underline{0}\big)\ar@{.>}[r]&\big(\sC,\underline{\ell}^K\big)}$ and any $\underline{\Lambda}$ as in Definition~\ref{bdesc}, we will associate the following three families of operators:
\begin{enumerate}
\item the operators $\underline{\Pi}^{\underline{\w}} =\Pi^{\underline{\w}} _1, \Pi^{\underline{\w}} _2,\Pi^{\underline{\w}} _3,\dotsc$, where $\Pi^{\underline{\w}} _n$ is a degree zero operator $S^nV\to\sC$ defined by
\begin{align*}
\Pi^{\underline{\w}} _n(v_1,\dotsc, v_n)&\ceq 
\sum_{\pi \in P(n)}\ep(\pi)\w\big(v_{B_1}\big)\cdots \w\big(v_{B_{|\pi|}}\big)
,
\end{align*}
\item the operators $\underline{\m} =\m _1, \m _2,\m_3,\dotsc$, where $\m _n$ is a degree zero operator $S^nV\to \fieldk$ defined by:
\begin{align*}
\m_n(v_1,\dotsc, v_n)&\ceq \mc\left(\Pi^{\underline{\w}} _n\big(v_1,\dotsc, v_n\big)\right),
\end{align*}
and
\item the operators $\underline{\k} =\k _1, \k _2,\k_3,\dotsc$, where $\k _n$ is a degree zero operator $S^nV\to\fieldk$ defined by:
\begin{align*}
{\k} _n(v_1,\dotsc, v_n)&\ceq \left( \underline{\phi}^{\mc,\underline{\La}}\bullet \underline{\w}\right)_n\big(v_1,\dotsc,v_n\big)
\end{align*}
\end{enumerate}
\end{definition}
\begin{proposition}\label{cora}
\begin{enumerate}
\item \label{baby Pi item: K} The collection of operators $\Pi^{\underline{\w}} _n$ satisfies $K\Pi^{\underline{\w}} _n=0$.

\item \label{baby Pi item: Sigma} Suppose $\underline{\w}$ and $\underline{\tilde\w}$ are $sL_\infty$-homotopic $sL_\infty$-morphisms $\xymatrix{\big(V, \underline{0}\big)\ar@{.>}[r]&\big(\sC,\underline{\ell}^K\big)}$. Then $\Pi^{\underline{\tilde\w}} _n -\Pi^{\underline{\w}} _n = K \S_n$ for some degree $-1$ map $\S_n$ from $S^n V$ to $\sC$.

\item \label{baby Pi item: mk} The operators $\m_n$ and $\k_n$ depend only on the cochain homotopy type of $\mc$ and the $sL_\infty$-homotopy type of $\underline{\w}$;

\item \label{baby Pi item: relation}the two families $\underline{\m}$ and $\underline{\k}$ satisfy the relation
\[
\m_n(v_1,\dotsc, v_n)=
\sum_{\pi \in P(n)}\ep(\pi)\k\big(v_{B_1}\big)\cdots \k\big(v_{B_{|\pi|}}\big).
\]
\end{enumerate}
\end{proposition}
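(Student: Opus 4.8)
The plan is to translate all four claims into the twisting-cochain (generating-function) language of the \emph{MC solutions and $sL_\infty$-morphisms} subsection, where they become short consequences of two exponential identities. Fix a basis $\{e_a\}$ of $V$ with dual coordinates $\{t^a\}$ and form
\[
\mb{\w}=\sum_{n\geq 1}\Fr{1}{n!}\sum_{a_1,\dotsc,a_n}t^{a_n}\cdots t^{a_1}\,\w_n(e_{a_1},\dotsc,e_{a_n})\in\big(\widehat{S}(V^*)\widehat\otimes\sC\big)^0.
\]
Since the source $sL_\infty$-structure is zero (so the dual differential $\d$ vanishes), the statement that $\underline{\w}$ is an $sL_\infty$-morphism into $\big(\sC,\underline{\ell}^K\big)$ is exactly the Maurer--Cartan equation $\theta(\mb{\w})=0$, where $\theta(y)\ceq\sum_{m\geq 1}\Fr{1}{m!}\ell^K_m(y,\dotsc,y)$. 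Writing $\Psi(y)\ceq\sum_{m\geq1}\Fr{1}{m!}\phi^{\mc,\underline{\La}}_m(y,\dotsc,y)$ and $\widehat{\La}(y;z)\ceq\sum_{k\geq 0}\Fr{1}{k!}\La_{k+1}(y,\dotsc,y,z)$ (linear in $z$), unwinding the recursions of Definition~\ref{bdesc} yields, for $|y|=0$, the two identities we call (A) and (B):
\[
K\,e^{y}=\theta(y)\cdot e^{y},
\qquad\qquad
\mc\big(e^{y}\big)=e^{\Psi(y)}+\widehat{\La}\big(y;\theta(y)\big),
\]
where $e^{y}$ is the exponential for the product $\cdot$ and $\mc$ acts on the $\sC$-factor. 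Finally, the exponential (moment--cumulant) formula identifies the generating series $\mb{\Pi}$ of $\underline{\Pi}^{\underline{\w}}$ with $e^{\mb{\w}}-1$, so that the generating series of $\underline{\m}$ is $\mb{\m}=\mc(\mb{\Pi})$, while the composition formula (Definition/Lemma~\ref{lmocom}) identifies the generating series of $\underline{\k}=\underline{\phi}^{\mc,\underline{\La}}\bullet\underline{\w}$ with $\mb{\k}=\Psi(\mb{\w})$.

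\textbf{Part (1)} and \textbf{Part (4)} are then immediate. By (A) and Maurer--Cartan, $K e^{\mb{\w}}=\theta(\mb{\w})\,e^{\mb{\w}}=0$; reading off word length $n$ gives $K\Pi^{\underline{\w}}_n=0$. For~(4), since $\theta(\mb{\w})=0$ and $\widehat{\La}$ is linear in its last slot, the correction term in (B) vanishes, so $\mc\big(e^{\mb{\w}}\big)=e^{\Psi(\mb{\w})}=e^{\mb{\k}}$. As $\mb{\m}=\mc(\mb{\Pi})=\mc\big(e^{\mb{\w}}\big)-1$, this says $1+\mb{\m}=e^{\mb{\k}}$, which is precisely the partition relation of~(4) read off word length by word length.

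\textbf{Part (2)}: if $\underline{\tilde\w}\sim_\infty\underline{\w}$, then in twisting-cochain form (Definition/Lemma~\ref{l-hom}) there is a polynomial family $\mb{\Phi}(\t)$ of Maurer--Cartan elements with $\mb{\Phi}(0)=\mb{\w}$, $\mb{\Phi}(1)=\mb{\tilde\w}$, solving
\[
\dot{\mb{\Phi}}=K\mb{\eta}+\sum_{m\geq 2}\Fr{1}{(m-1)!}\ell^K_m(\mb{\Phi},\dotsc,\mb{\Phi},\mb{\eta}).
\]
Since $\mb{\Phi}$ is even, $\Fr{d}{d\t}e^{\mb{\Phi}}=\dot{\mb{\Phi}}\,e^{\mb{\Phi}}$. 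Polarizing (A) in an odd direction gives $K\big(\mb{\eta}\,e^{\mb{\Phi}}\big)=\big(K\mb{\eta}+\sum_{m\geq2}\Fr{1}{(m-1)!}\ell^K_m(\mb{\Phi},\dotsc,\mb{\Phi},\mb{\eta})\big)e^{\mb{\Phi}}$ modulo a term proportional to $\theta(\mb{\Phi})=0$; hence $\Fr{d}{d\t}e^{\mb{\Phi}}=K\big(\mb{\eta}\,e^{\mb{\Phi}}\big)$. Integrating over $\t\in[0,1]$ yields $e^{\mb{\tilde\w}}-e^{\mb{\w}}=K\S$ with $\S=\int_0^1\mb{\eta}\,e^{\mb{\Phi}}\,d\t$ of degree $-1$; its word-length-$n$ component is~(2). \textbf{Part (3)}: applying $\mc$ to~(2) and using $\mc\circ K=0$ shows $\m_n$ is unchanged under $\underline{\w}\sim_\infty\underline{\tilde\w}$, while a cochain homotopy $\tilde{\mc}=\mc+sK$ (the target has zero differential) gives $\big(\tilde{\mc}-\mc\big)\Pi^{\underline{\w}}_n=sK\Pi^{\underline{\w}}_n=0$ by~(1); so $\m_n$ depends only on $[\mc]$ and $[\underline{\w}]$. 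For $\k_n$, Theorem~\ref{xmaina} and well-definedness of $\bullet_{\!h}$ replace $\underline{\phi}^{\mc,\underline{\La}}\bullet\underline{\w}$ by a \emph{homotopic} morphism $\big(V,\underline{0}\big)\to\big(\fieldk,\underline{0}\big)$ under any change of representatives (or of $\underline{\La}$); but both source and target carry the zero structure, so the flow equation of Definition/Lemma~\ref{l-hom} forces $\dot{\Phi}_n\equiv 0$, i.e.\ homotopic morphisms into $\big(\fieldk,\underline{0}\big)$ are \emph{equal}. Hence $\underline{\k}$ is genuinely invariant.

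The real work is in establishing (A) and (B) with correct Koszul signs. Each is proved by applying $K$, respectively $\mc$, term by term to $\Fr{1}{k!}y^k$ and invoking the defining recursion of Definition~\ref{bdesc}; the arithmetic identity $\Fr{1}{(m+k)!}\binom{m+k}{k}=\Fr{1}{m!\,k!}$ is exactly what merges the single $\ell^K$-block with the surrounding singleton factors into the series $\theta$ and $\widehat{\La}$. The same bookkeeping underlies $\mb{\Pi}=e^{\mb{\w}}-1$, $\mb{\k}=\Psi(\mb{\w})$, and the polarized Leibniz identity used for~(2). I expect this sign-and-coefficient accounting, rather than any conceptual point, to be the main obstacle; once (A) and (B) are secured, Parts~(1)--(4) follow from the Maurer--Cartan equation, the homotopy-flow equation, and the rigidity of $sL_\infty$-morphisms into the zero object $\fieldk$.
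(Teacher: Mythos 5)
Your proposal is correct, and it is essentially the paper's own argument in different packaging: the paper proves this proposition as a corollary of the general Lemmas~\ref{cobra} and~\ref{lemb} in Section~\ref{sec:maintheory}, whose proofs use exactly your four mechanisms --- vanishing of the obstruction series (your $\theta(\mb{\w})=0$ is the paper's $E_r=0$), the homotopy flow equation integrated against $\t$ to produce the chain homotopy (your $\tfrac{d}{d\t}e^{\mb{\Phi}}=K\big(\mb{\etaup}\,e^{\mb{\Phi}}\big)$ is word-for-word the paper's $\tfrac{d}{d\t}\hat\Pi^{\underline{\Phi}}_n=K\hat\Xi_n$), rigidity of $sL_\infty$-morphisms between trivial $sL_\infty$-algebras for the $\k_n$ half of~(3), and the nested-partition regrouping for~(4). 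The only substantive difference is scope: your identities (A) and (B) exploit the genuine exponential $e^{y}$ of the binary product, so your proof is specific to binary \padj{} probability spaces (which is all Proposition~\ref{cora} asserts), whereas the paper's Section~\ref{sec:maintheory} lemmas replace $e^{\mb{\w}}-1$ by the partition sums $\Pi^{\underline{\w}}_n$ built from a general family $\underline{M}$, for which no exponential exists; your (A) and (B) are the binary specializations of the paper's \eq{xxa}, \eq{xxc} and \eq{zza}, and your reduction of~(1), (3), (4) to $Ke^{\G(\underline{\w})}=0$ and $Z=e^{F}$ is precisely what the paper's remarks following the proposition sketch but do not carry out.
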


We shall not prove this proposition here, as it is a corollary of a more general proposition in Section~\ref{sec:maintheory}. Instead we will make several remarks about it (proving some but not all of it).

\begin{remark}
We have the following commutative diagram in the category of $sL_\infty$-algebras:
\[
\xymatrix{
\big(V,\underline{0}\big)\ar@{.>}[r]^{\underline{\w}}
\ar@{.>}@/_2pc/[rr]_{\underline{\k}=\underline{\phi}^{\mc,\underline{\La}}\bullet \underline{\w}}
&
\big(\sC,\underline{\ell}^K\big)\ar@{.>}[r]^{\underline{\phi}^{\mc,\underline{\La}}}&\big(\fieldk,\underline{0}\big)
}.
\]
Recall that the $sL_\infty$-homotopy type of the composition of two $sL_\infty$-morphisms depends only on the $sL_\infty$-homotopy types of each constituent $sL_\infty$-morphism. Hence the $sL_\infty$-homotopy type of $\underline{\k}$ depends only on the homotopy types of $\underline{\phi}^{\mc,\underline{\La}}$ and $\underline{\w}$. Recall also that the $sL_\infty$-homotopy type of the descendant $sL_\infty$-morphism $\underline{\phi}^{\mc,\underline{\La}}$ is independent of $\underline{\La}$ and depends only on the cochain homotopy type of $\mc$. It follows that the $sL_\infty$-homotopy type of $\underline{\k}$ depends only on the $sL_\infty$-homotopy type of $\underline{\w}$ and the cochain homotopy type of $\mc$.
On the other hand $\underline{\k}$ is an $sL_\infty$-morphism between trivial $sL_\infty$-algebras $(V,\underline{0})$ and $(\fieldk, \underline{0})$ so that two $sL_\infty$-morphisms are homotopic if and only if they are equal. Hence $\underline{\k}$ itself depends only on the $sL_\infty$-homotopy type of $\underline{\w}$ and the cochain homotopy type of $\mc$.
 \hfill$\natural$
\end{remark}

\begin{remark}
The explicit form of $\Pi^{\underline{\w}} _n$ for $n=1,2,3$ are
\begin{align*}
\Pi^{\underline{\w}} _1(v)=&\w_1(v)
,\\
\Pi^{\underline{\w}} _2(v_1,v_2)=&\w_1(v_1)\cdot \w_1(v_2) + \w_2(v_1,v_2)
,\\
\Pi^{\underline{\w}}_3(v_1,v_2,v_3)=&
\w_1(v_1)\cdot\w_1(v_2)\cdot\w_1(v_3) 
+\w_2(v_1, v_2)\cdot\w_1(v_3) 
+\w_1(v_1)\cdot\w_2(v_2, v_3)
\\
&
+(-1)^{|v_1||v_2|}\w_1(v_2)\cdot \w_2(v_1, v_3)
+\w_3(v_1, v_2, v_3).
\end{align*} \hfill$\natural$
\end{remark}
\begin{remark}
Let $\big(V, \underline{0}\big)$ be a finite dimensional graded vector space with zero $sL_\infty$-structure. Let $\{e_i\}_{i\in J}$ be a basis of $V$ and $t_V=\{t^i\}_{i\in J}$ be the dual basis. For any $sL_\infty$-morphism $\underline{\w}=\w_1,\w_2,\cdots$ from $V_{\Lie}$ to $\sC_{\Lie}$, set
\[
\G(\underline{\w}) = \sum_{n=1}^\infty \sum_{j_1,\dotsc, j_n}\Fr{1}{n!} t^{j_n}\cdots t^{j_1}\w_n\big(e_{j_1},\dotsc, e_{j_n}\big) 
\in\big( \fieldk[\![t_V]\!]\otimes \sC\big)^0.
\]
In this context, $e^{\G(\underline{\w})}$ is the generating function for the family $\underline{\Pi}^{\underline{\w}}$:
\[
e^{\G(\underline{\w})} = 1 +\sum_{n=1}^\infty \sum_{j_1,\dotsc, j_n}\Fr{1}{n!} t^{j_n}\cdots t^{j_1}\Pi^{\underline{\w}}_n(e_{j_1},\dotsc,e_{j_n})\in\big( \fieldk[\![t_V]\!]\otimes \sC\big)^0.
\]
Applying $\mc$ we get $Z(t_V)\ceq \mc \left(e^{\G(\underline{\w})}\right)\in \fieldk[\![t_V]\!]^0$, which generates the family $\underline{\m}$:
\begin{align*}
Z(t_V) &= 1 +\sum_{n=1}^\infty \sum_{j_1,\dotsc, j_n}\Fr{1}{n!} t^{j_n}\cdots t^{j_1}\m_n(e_{j_1},\dotsc,e_{j_n}).
\end{align*}
Similarly, we can define $F(t_V)$ as the generating function of the family $\underline{\k}$ by replacing $\m$ with $\k$ in the above. Now items~\ref{baby Pi item: K},~\ref{baby Pi item: mk}, and~\ref{baby Pi item: relation} of {\bf Proposition \ref{cora}} are equivalent to the following statements:
\begin{enumerate}
\item we have $K e^{\G(\underline{\w})}=0$, which is equivalent to the Maurer--Cartan equation of the descendant $sL_\infty$-algebra $\big(\sC,\underline{\ell}^K\big)$:
\[
K \G(\underline{\w}) \
+\sum_{n=2}^\infty \Fr{1}{n!}\ell_n^K\left(\G(\underline{\w}),\dotsc,\G(\underline{\w})\right)=0,
\]
\item the generating functions $Z(t_V)$ and $F(t_V)$ depend only on the cochain homotopy type of $\mc$ and the $sL_\infty$-homotopy type of $\underline{\w}$, and
\item the generating functions satisfy the relation $ Z(t_V) = e^{F(t_V)}$.
\end{enumerate}
 \hfill$\natural$
\end{remark}
\begin{remark}
Consider a binary \padj{} probability space $\xymatrix{\sC_{\binarycomm} \ar[r]^\mc &\fieldk}$ concentrated in non-positive degree. Consider an arbitrary set $\{x_1,x_2,\dotsc, x_k\}$ of $k$ elements in $\sC^0$. Any element in $\fieldk[x_1,x_2,\dotsc,x_k]$ belongs to $\Ker K$ so the joint moments are well-defined and the generating function $\mc\left(\exp\left(\sum_{i=1}^k t^i x_i\right)\right)$ depends only on the cochain homotopy type of $\mc$.
Consider a $k$-dimensional vector space $V$ with basis $\{e_i\}_{i=1,2,\dotsc,k}$ and define a linear map $\w_1:V\rightarrow \sC^0$ by $\w_1(e_i)=x_i$. Then $\underline{\w}=\w_1,0,0,0,\dotsc$ is an $sL_\infty$-morphism from $V_{\Lie}=(V,\underline{0})$ to $\sC_{\Lie}$ which satisfies
\[
\exp\left(\sum_{i=1}^k t^i x_i\right)=e^{\G(\underline{\w})}.
\] 
Note that any family $\underline{\tilde{\w}}=\tilde\w_1,\tilde\w_2,\cdots$, of linear maps $\tilde{\w}_n: S^nV\to\sC^0$, define an $sL_\infty$-morphism from $V_{\Lie}$ to $\sC_{\Lie}$.
Now the essential content of Proposition \ref{cora} is the following:
let $\underline{\tilde{\w}}=\tilde\w_1,\tilde\w_2,\cdots$ be an $sL_\infty$-morphism that is $sL_\infty$-homotopic to $\underline{\w}=\w_1$, then we have
\[
\mc\left(\exp\left(\sum_{i=1}^k t^i x_i \right) \right)=\mc\left(\exp\left(\sum_{n=1}^\infty\sum_{i_1,\dotsc, i_n}\Fr{1}{n!}
t^{i_n}\cdots t^{i_1}\tilde\w_n\big(e_{i_1},\dotsc,e_{i_n}\big)\right)\right).
\]
By this means we have obtained an infinite family of identities whose utility shall be demonstrated in the following example. 
 \hfill$\natural$
\end{remark}

\begin{example}\label{gaussian-ex3}
Consider the Gaussian Example \ref{hgaussian-ex2}: $\sC_{\binarycomm}=(\sC,K,\cdot)$, where $\sC=\R[s,\eta]$ and $K={-\s^2\cdot\Fr{\rd^2}{\rd s\rd\eta}} + L_s \Fr{\rd}{\rd \eta}$, with the descendant $sL_\infty$-algebra $\sC_{\Lie}=(\sC, K, \ell^K_2)$, where 
\[
\ell_2^K(\eta,\eta)=0,\quad \ell_2^K(\eta,s)=\ell_2^K(s,\eta)=-\s^2,\quad \ell_2^K(s,s)=0,
\] 
and $\ell_2^K$ is a derivation of the product. 
Let $V =\R=\R\cdot e$, regarded as a trivial $sL_\infty$-algebra $V_{\Lie}=(\R, \underline{0})$. 
Let $t$ be the dual basis of $V$. 

Consider the $sL_\infty$-morphism $\underline{\w}=\w_1,0,0,0,\dotsc$ from $V_{\Lie}$ to $\sC_{\Lie}=(\sC, K, \ell^K_2)$ defined by $\w_1(e) =s$.
Then the moment generating density is
\[
e^{\G(\underline{\w})}= e^{t \w_1(e)}= e^{ts}.
\]
We claim that the $sL_\infty$-morphism $\underline{\w}$ is $sL_\infty$-homotopic to $\underline{\tilde\w}=0,\tilde\w_2,0,0,0,\dotsc$,
where $\tilde\w_2(e,e)= \s^2\in \fieldk$. Then the corresponding moment generating density is 
\[
e^{\G(\underline{\tilde\w})}= e^{\fr{t^2}{2} \w_2(e,e)}= e^{\fr{t^2}{2}\s^2},
\]
and, from $\mc\left(e^{\G(\underline{\w})}\right)=\mc\left(e^{\G(\underline{\tilde\w})}\right)$, we conclude that
\[
 Z(t)_x=\mc\left(e^{ts}\right) = \mc\left( e^{\fr{t^2}{2\s^2}}\right) = e^{\fr{t^2}{2}\s^2}.
\]

Now we check the claim. Note that $s = K\eta$. Define a linear map $\varsigma: V\rightarrow \sC$ of degree $-1$ such that $\varsigma(e)=\eta$. Then $\w_1 = K\varsigma$.
Hence the cochain map $\w_1:(V,0)\rightarrow (\sC, K)$ is cochain homotopic to zero.
Recall that this does not necessarily imply that the $sL_\infty$-morphism $\underline{\w}=\w_1,0,0,0,\dotsc$ is $sL_\infty$-homotopic to zero
(See {\em Remark} \ref{irem}).
Now consider the $sL_\infty$-homotopy $\underline{\l}= \l_1,0,0,0,\dotsc$, where $\l_1=-\varsigma$.
Let $\underline{\tilde\w}=\tilde\w_1,\tilde\w_2,\tilde\w_3,\dotsc$ be the $sL_\infty$-morphism that is homotopic to $\underline{\w}$ by the homotopy $\underline{\l}$. By definition, there exists a family $\underline{\Phi}=\Phi_1,\Phi_2,\dotsc$, polynomial in the variable $\t$, such that
 \begin{align*}
\underline{\Phi}\big|_{\t=0}&=\underline{\w}
,\\
\underline{\Phi}\big|_{\t=1}&=\underline{\tilde\w}
,\\
\Fr{1}{n!}\dot{\Phi}_{n}(e,\dotsc,e)
&=
\Fr{1}{n!}K\l_{n}(e,\dotsc,e) +\sum_{j=1}^{n-1}\Fr{1}{j!(n-j)!}\ell^{K^\pr}_{2}\left(\Phi_{j}(e,\dotsc,e),\l_{n-j}(e,\dotsc,e)\right).
\end{align*}
From $\underline{\l}= \l_1,0,0,0,\dotsc$, we have
\eqn{oneho}{
\dot{\Phi}_{1}(e)=K\l_{1}(e),
}
and for all $n\geq 2$
\eqnalign{twoho}{
\Fr{1}{n!}\dot{\Phi}_{\ell}(e,\dotsc,e)
&=
\Fr{1}{(n-1)!}\ell^{K^\pr}_{2}\left(\Phi_{n-1}(e,\dotsc,e),\l_{1}(e)\right).
}
From the initial condition ${\Phi}_1\big|_{\t=0}=\w_1= K\varsigma$ and \eq{oneho}, we have $\Phi_{1}= \w_1 + \t K\l_{1}=(1-\t) K\varsigma$ so that the $n=2$ case of \eq{twoho} becomes
\[
\fr{1}{2}\dot{\Phi}_{2}(e,e)
=-(1-\t)\ell^{K^\pr}_{2}\left(K\varsigma(e),\varsigma(e)\right)
=-(1-\t)\ell^{K^\pr}_{2}\left(s,\eta\right)
=(1-\t)\s^2.
\]
Combined with the initial condition $\Phi_2\big|_{\t=0} =0$,
we obtain $\Phi_{2}(e,e)= 2\s^2\t -\s^2\t^2 \in \fieldk[\t]$. Since $\fieldk[\t]$ is annihilated by $\ell^{K^\pr}_2$, it follows that $\fr{1}{n!}\dot\Phi_{n}(e,\dotsc,e)=0$ for all $n\geq 3$. In turn, this implies $\Phi_n =0$ for all $n\geq 3$, since $\Phi_n\big|_{\t=0} =0$ for $n\geq 3$. 
Finally, we have
\begin{align*}
&\Phi_{1}(e)=(1-\t) s
,\\
&\Phi_{2}(e,e)= \s^2\left(2\t -\t^2\right)
,\\
&\Phi_{n}(e,\dotsc,e)= 0\qquad\qquad n\geq 3.
\end{align*}
Evaluation at $\t=1$ proves the claim.
 \hfill$\natural$
\end{example}

Proposition \ref{cora} together above remarks lead us to the following definitions.

\begin{definition}
Given a binary \padj{} probability space $\xymatrix{\sC_{\binarycomm}\ar[r]^c &\fieldk}$, a space $\sV$ of \emph{homotopical random variables} in $\sC$ is a finite dimensional graded vector space $|\sV|=V$ regarded as an $sL_\infty$-algebra $(V,\underline{0})$ equipped with a homotopy type of $sL_\infty$-morphisms $[\underline{\w}^V]$ between $(V,\underline{0})$ and $(\sC,\underline{\ell}^K)$.

The \emph{moment} and \emph{cumulant} morphisms $\underline{\m}^\sV$ and $\underline{\k}^\sV$ on $\sV$ are $\underline{\m}^\sV\ceq \mc\circ\underline{\Pi}^{\underline{\w}^V}$ and $\underline{\k}^\sV\ceq \underline{\phi}^{\mc,\underline{\La}}\bullet\underline{\w}^{\!V}$ for any representative $\underline{\w}^{\!V}$ of $[\underline{\w}^{\!V}]$. 

A \emph{set of homotopical random variables} is a set $v_1,\dotsc, v_k\in V$ and given such a set its families of \emph{joint moments} and \emph{joint cumulants} are 
\begin{align*}
\left\{ \m^\sV _n\left(v_{j_1},\dotsc, v_{j_n}\right)
\big| n\geq 1, 1\leq j_1,\dotsc, j_n \leq k\right\}
,\\
\left\{ \k^\sV _n\left(v_{j_1},\dotsc, v_{j_n}\right)
\big| n\geq 1, 1\leq j_1,\dotsc, j_n \leq k\right\}
.
\end{align*}
The \emph{joint distributions} (the \emph{law}) of the homotopical set of random variables $\{v_1,\dotsc, v_k\}$ is the linear map 
\[
\hat\m^\sV: \fieldk[t^1,\dotsc, t^k]\rightarrow \fieldk,
\]
where $\{t^1,\dotsc, t^k\}$ is a set of super-commutative indeterminates with $|t^i|=-|v_i|$, such that, for a polynomial $P =\sum a_{j_1\cdots j_{n}}t^{j_1}\cdots t^{j_n}\in \fieldk[t_1,\dotsc, t_k]$,
\[
\hat\m^\sV(P) = \sum_{j_1,\dotsc, j_n} a_{j_1\cdots j_{n}}\m^V_n\big(v_{j_1},\dotsc, v_{j_n}\big).
\]
\end{definition}
By construction, for $v_1,\dotsc, v_n \in V$,
\[ 
 \m^\sV_n(v_1,\dotsc, v_n)=\!\sum_{\pi \in P(n)}\!\ep(\pi)\k^{\sV}\!\!\big(v_{B_1}\big)\cdots \k^{\sV}\!\!\big(v_{B_{|\pi|}}\big).
\]

Also by construction, the law of a set of homotopical random variables is itself an invariant of the homotopy types of $\mc$ and is independent of the choice of representative $\underline{\w}^{\!V}$. 

The space $V$ of random variables is assumed, in general, to have no algebraic structure beyond being a graded vector space. 

Note that the descendant unital $sL_\infty$-algebra $\sC_{\Lie}$ of a binary \padj{} probability space has the following special property.
\begin{lemma}\label{lemma:baby complete}
The cohomology $H$ of $\sC_{\Lie}$ has the structure of a unital $sL_\infty$-algebra $H_{\Lie}=(H,\underline{0})$ with zero $sL_\infty$-structure $\underline{0}$ and unit $1_H$ which is the cohomology class of $1_\sC$. There is a unital $sL_\infty$-quasi-isomorphism $H_{\Lie}\to \sC^{\Lie}$ that is not necessarily defined uniquely, even up to homotopy.
\end{lemma}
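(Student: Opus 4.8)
The plan is to derive the whole statement from the homotopy transfer theorem for $sL_\infty$-algebras, namely Lemma~\ref{coL}, applied to the descendant $sL_\infty$-algebra $(\sC,\underline{\ell}^K)$, and then to show that in this particular case the transferred structure is the \emph{zero} structure. Concretely, Lemma~\ref{coL} furnishes on the cohomology $H$ of $(\sC,K)$ a minimal $sL_\infty$-structure $\underline{\ell}^H=0,\ell^H_2,\ell^H_3,\dotsc$ together with an $sL_\infty$-quasi-isomorphism $\underline{\phi}^H$ to $(\sC,\underline{\ell}^K)$ inducing the identity on cohomology, where $\phi^H_1=f$ is a choice of cocycle representatives, $h$ is the homotopy inverse with $hf=I_H$ and $hK=0$, and $\b$ is a cochain homotopy with $fh=I_\sC+K\b+\b K$. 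So the real content is that $\ell^H_n=0$ for all $n\ge 2$.

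I would prove the vanishing $\underline{\ell}^H=\underline{0}$ by induction on $n$. The base case $\ell^H_2=0$ is the ``covariance is a coboundary'' observation: for $K$-closed $x_1,x_2$ the descendant relation of Definition~\ref{bdesc} collapses to $\ell^K_2(x_1,x_2)=K(x_1\cdot x_2)$, so $\ell^H_2=h\circ L_2$ is $h$ applied to a coboundary, hence $0$. For the inductive step, assuming $\ell^H_2=\dotsb=\ell^H_n=0$, the definition of $L_{n+1}$ in Lemma~\ref{coL} simplifies: every term of its second summand carries a factor $\ell^H(x_{B_i})$ and therefore vanishes, leaving $L_{n+1}=\sum_{\pi\in P(n+1),\,|\pi|\neq 1}\ep(\pi)\,\ell^K_{|\pi|}\big(\phi^H(x_{B_1}),\dotsc,\phi^H(x_{B_{|\pi|}})\big)$. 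Identity \eq{minf} then gives $KL_{n+1}=0$, and by \eq{minh} one has $L_{n+1}=f\circ\ell^H_{n+1}-K\phi^H_{n+1}$; thus $\ell^H_{n+1}=[L_{n+1}]$ and the claim reduces to showing that the cocycle $L_{n+1}$ is a coboundary.

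The hard part is exactly this last reduction: exhibiting $L_{n+1}$ as $K$ of an explicit element built from the products $f(x_i)$ and the lower $\phi^H_j$. This is where the commutative product on $\sC$ must genuinely be used, and it is the place where formality of descendant algebras lives, rather than being a formal consequence of transfer. The cleanest route I see is to recast the induction in the Maurer--Cartan picture of the subsection on MC solutions and $sL_\infty$-morphisms: the twisting cochain $\mb{\phi}=\sum_{m}\tfrac{1}{m!}\sum t^{\a_m}\dotsm t^{\a_1}\phi^H_m(e_{\a_1},\dotsc,e_{\a_m})$ attached to $\underline{\phi}^H$, under the hypothesis $\underline{\ell}^H=\underline{0}$, satisfies $Ke^{\mb{\phi}}=0$, so that $\ell^H_{n+1}$ is precisely the obstruction to extending a Maurer--Cartan solution by one order. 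The mechanism that annihilates this obstruction is the closedness $K\Pi^{\underline{\w}}_n=0$ of Proposition~\ref{cora}(\ref{baby Pi item: K}): it says that symmetrized products of the images of any morphism out of $(H,\underline{0})$ are automatically $K$-closed. Leveraging this, I would construct $\underline{\w}$ directly with $\w_1=f$ by solving $Ke^{\G(\underline{\w})}=0$ order by order inside the commutative algebra $\fieldk[\![t_H]\!]\widehat\otimes\sC$, the point being that because the operations $\ell^K_m$ are \emph{inner}, generated by $K$ and the product, the order-$(n+1)$ obstruction is forced to lie in $\Im K$. Establishing this coboundary property uniformly at all orders is the main obstacle; everything else is bookkeeping.

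Finally I would attend to the unit and to non-uniqueness. Since $1_\sC$ is $K$-closed but not a coboundary, by the pointed cochain complex axiom, it represents a nonzero class, and I may take $f(1_H)=1_\sC$ with $1_H\ceq[1_\sC]_K$; choosing the homotopy data $(h,\b)$ to be pointed, i.e.\ $h(1_\sC)=1_H$ and $\b(1_\sC)=0$, makes the recursion of Lemma~\ref{coL} preserve the unit conditions $\phi^H_1(1_H)=1_\sC$ and $\phi^H_m(\dotsc,1_H)=0$ for $m\ge 2$, so that $\underline{\phi}^H$ is a \emph{unital} $sL_\infty$-quasi-isomorphism into $\sC_{\Lie}$, while the zero structure $(H,\underline{0})$ is trivially unital with unit $1_H$. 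The asserted non-uniqueness, even up to homotopy, then follows from the remark after Lemma~\ref{coL}: morphisms into a target whose transferred bracket vanishes need not be homotopic even when both induce the identity on cohomology, a failure already visible in the three-dimensional example of Remark~\ref{irem}.
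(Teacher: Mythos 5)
Your proposal correctly reduces the lemma to a single claim---that the inductively defined obstruction cocycle $L_{n+1}$ lies in $\Im K$---but it then stops exactly there: you write that ``establishing this coboundary property uniformly at all orders is the main obstacle; everything else is bookkeeping,'' and you never establish it. That is a genuine gap, not a deferral of routine detail, because both halves of the statement hang on it: $\ell^H_{n+1}=h\circ L_{n+1}$ vanishes only if $L_{n+1}$ is a coboundary (since $hK=0$), and $\w_{n+1}$ exists only if you can solve $L_{n+1}+K\w_{n+1}=0$. Moreover, the mechanism you invoke to force the obstruction into $\Im K$ is circular as stated: Proposition~\ref{cora}(\ref{baby Pi item: K}) (equivalently Lemma~\ref{cobra}) asserts $K\Pi^{\underline{\w}}_n=0$ for a \emph{full} $sL_\infty$-morphism $\underline{\w}$, whereas at stage $n+1$ of the induction you possess only the truncated family $\w_1,\dotsc,\w_n$, and the existence of an extension to a morphism is precisely what is in question. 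Recasting the induction as solving $Ke^{\G(\underline{\w})}=0$ order by order does not change this; the order-$(n+1)$ equation is the same equation, with the same unproven solvability condition.

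What closes the gap---and this is the entire content of the paper's proof of Theorem~\ref{lemc}, of which this lemma is a special case---is the explicit identity
\[
L_{n+1}\big(a_1,\dotsc,a_{n+1}\big)
= K\!\!\!\sum_{\substack{\pi\in P(n+1)\\ |\pi|\neq 1}}\!\!\!\ep(\pi)\,
M_{|\pi|}\Big(\w\big(a_{B_1}\big),\dotsc,\w\big(a_{B_{|\pi|}}\big)\Big),
\]
which exhibits $L_{n+1}$ as $K$ of a concrete element built from products of the $\w_j$, $j\le n$. It is a direct consequence of the defining recursion of the descendant structure $\underline{\ell}^K$ (Definition~\ref{GDesA}) together with the inductive hypothesis \eq{tec}: expanding the right-hand side by that recursion, every term carries a morphism-failure factor $E_r$ with $r\le n$, which vanishes by induction, except the terms reassembling $L_{n+1}$; this is just the truncated form of the computation \eq{gaf} inside the proof of Lemma~\ref{cobra}. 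With it in hand, $h\circ L_{n+1}=0$ gives the vanishing of the transferred bracket, and $\w_{n+1}\ceq\b\circ L_{n+1}$ gives $L_{n+1}+K\w_{n+1}=0$ via $\w_1\circ h=I_\sC+K\b+\b K$, completing the induction. Note that your base case, $\ell^K_2\big(f(x_1),f(x_2)\big)=K\big(f(x_1)\cdot f(x_2)\big)$, is precisely the $n=1$ instance of this identity, so the uniform statement is the same mechanism rather than a harder one; the step you flagged as the analytic core is in fact a finite combinatorial expansion. Your treatment of the unit (pointed choices of $f$, $h$, $\b$) and of non-uniqueness up to homotopy is fine and matches the remarks following Lemma~\ref{coL} and Theorem~\ref{lemc}.
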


This observation is a special case of {\bf Theorem \ref{lemc}} and will not be proven here. It will motivate the definition of the following special type of space of homotopical random variables with more structure, which we will consider for the remainder of the subsection.

\begin{definition}
Given a binary \padj{} probability space $\xymatrix{\sC_{\binarycomm}\ar[r]^c &\fieldk}$, a \emph{complete space $\comprandvars$ of homotopical random variables} in $\sC$ is a pointed graded vector space $\sS$ regarded as a unital $sL_\infty$-algebra $(\sS,\underline{0})$ equipped with a homotopy type of unital $sL_\infty$-quasi-isomorphisms $[\underline{\w}^V]$ between $(\sS,\underline{0})$ and $(\sC,\underline{\ell}^K)$
\end{definition}

Lemma~\ref{lemma:baby complete} ensures that complete spaces of homotopical random variables exist and shows that the graded vector space isomorphism type of such a complete space must be that of $H$. Then we can identify the moduli space of all complete spaces of homotopical random variables with the space of homotopy types of unital $sL_\infty$-quasi-isomorphisms from $H_{\Lie}$ to $\sC_{\Lie}$.

There are two descriptions above of the moment morphism. It is defined in terms of the family $\underline{\Pi}^{\underline{\w}^\comprandvars}$ and it satisfies an identity in terms of $\underline{\k}^{\comprandvars} = \underline{\phi}^{\mc}\bullet \underline{\w}^{\sS}$.

There is a third description that will also be useful.
\begin{theorem}
Let $\comprandvars$ be a complete space of homotopical random variables in a binary probability space. Define $\iota^{\comprandvars}:\sS\rightarrow\fieldk$ as $\m_1^{\comprandvars}=\mc\circ \w_1^{\sS}$.

Then there exists a family $\underline{M}^{\comprandvars}=M_1^{\comprandvars}, M_2^{\comprandvars},\dotsc$ of linear degree zero maps $M_n^{\comprandvars}:S^n\sS\rightarrow\sS$ which satisfy the following properties:
\begin{itemize}
\item
$\m^{\comprandvars}_n = \iota^{\comprandvars}\circ M^{\comprandvars}_n,
$
\item $M_1^{\comprandvars}$ is the identity, and
\item for all $n\ge 1$ and $s_1,\dotsc, s_n \in \sS$, we have the identity
$M^{\comprandvars}_{n+1}\big(s_1,\dotsc, s_n, 1_\sS\big)=M^{\comprandvars}_n\big(s_1,\dotsc, s_n\big)$.
\end{itemize}
\end{theorem}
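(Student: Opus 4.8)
The plan is to construct the maps $M_n^\comprandvars$ by transferring the cocycle-valued operators $\Pi_n^{\underline{\w}^\sS}$ back along the quasi-isomorphism defining $\comprandvars$. First I would exploit completeness: the first component $\w_1^\sS$ of a chosen representative unital $sL_\infty$-quasi-isomorphism is a cochain quasi-isomorphism from $(\sS,0)$ to $(\sC,K)$. Since the source carries the zero differential, the construction of homotopy inverses of cochain quasi-isomorphisms recalled in Section~\ref{subs:Linfty} furnishes a degree-zero map $h:\sC\to\sS$ with $h\circ\w_1^\sS = I_\sS$ and $\w_1^\sS\circ h = I_\sC + K\b + \b K$ for some degree $-1$ map $\b$; because $\sS$ has zero differential, $h$ is a cochain map and hence $hK=0$. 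I would then simply set $M_n^\comprandvars \ceq h\circ \Pi_n^{\underline{\w}^\sS}$, which is a degree-zero map $S^n\sS\to\sS$ since $\Pi_n^{\underline{\w}^\sS}$ has degree zero by Definition~\ref{coradef}.

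The easy properties fall out directly. The normalization $M_1^\comprandvars = I_\sS$ is immediate from $\Pi_1^{\underline{\w}^\sS}=\w_1^\sS$ and $h\circ\w_1^\sS=I_\sS$. For the moment identity, using $\iota^\comprandvars=\mc\circ\w_1^\sS$ I would expand
\[
\iota^\comprandvars\circ M_n^\comprandvars = \mc\circ\w_1^\sS\circ h\circ \Pi_n^{\underline{\w}^\sS} = \mc\circ\bigl(I_\sC + K\b + \b K\bigr)\circ \Pi_n^{\underline{\w}^\sS},
\]
and observe that $\mc\circ K\b\circ\Pi_n^{\underline{\w}^\sS}=0$ because $\mc K=0$, while $\mc\circ\b K\circ\Pi_n^{\underline{\w}^\sS}=0$ because $K\Pi_n^{\underline{\w}^\sS}=0$ by Proposition~\ref{cora}; what remains is $\mc\circ\Pi_n^{\underline{\w}^\sS}=\m_n^\comprandvars$, as required.

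The heart of the argument, and the step I expect to be the main obstacle, is the unit condition $M_{n+1}^\comprandvars(s_1,\dotsc,s_n,1_\sS)=M_n^\comprandvars(s_1,\dotsc,s_n)$. Here I would analyze $\Pi_{n+1}^{\underline{\w}^\sS}(s_1,\dotsc,s_n,1_\sS)$ block by block, invoking the unitality of the $sL_\infty$-morphism $\underline{\w}^\sS$: in each partition $\pi\in P(n+1)$ the factor attached to the block $B$ containing the index $n+1$ is $\w_{|B|}^\sS(\dotsc,1_\sS)$, which equals $1_\sC$ when $B=\{n+1\}$ and vanishes when $|B|\ge 2$. Thus only partitions in which $\{n+1\}$ is a singleton survive; in the strictly ordered representation this block is last, contributes the unit factor $1_\sC$ for the product in $\sC$, and deleting it puts the surviving partitions in bijection with $P(n)$. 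The delicate point is the Koszul sign bookkeeping: I would verify that $\ep(\pi)=\ep(\pi\setminus\{n+1\})$, which holds because $1_\sS$ has degree zero so moving it into a terminal singleton costs no sign. The surviving sum is then exactly $\Pi_n^{\underline{\w}^\sS}(s_1,\dotsc,s_n)$, and applying $h$ gives the claim.

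Finally, although the statement asserts only existence, I would record that the construction is independent of the auxiliary data, thereby recovering the \emph{canonical} \GCCorAname{} structure on $\comprandvars$. The map $h$ is uniquely determined on cocycles, since any two homotopy inverses differ by a map factoring through $K$ and $\Pi_n^{\underline{\w}^\sS}$ takes values in $\ker K$; and independence of the representative $\underline{\w}^\sS$ within its homotopy type follows from Proposition~\ref{cora}, which gives $\Pi_n^{\underline{\tilde\w}}-\Pi_n^{\underline{\w}}=K\S_n$, whence $h\bigl(\Pi_n^{\underline{\tilde\w}}-\Pi_n^{\underline{\w}}\bigr)=hK\S_n=0$. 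This identifies $\underline{M}^\comprandvars$ as the binary case of the \GCCorAname{} structure announced earlier and completes the third description of the moment morphism.
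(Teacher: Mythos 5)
Your proposal is correct and takes essentially the same route as the paper: the paper proves this statement (as a special case of the theorem on complete spaces in Section~\ref{subs:complete space}) by choosing a homotopy inverse $h$ of $f=\w_1^{\sS}$ (so that $h\circ f=I_{\sS}$, $f\circ h=I_{\sC}+K\b+\b K$, $hK=0$) and setting $M^{\comprandvars}_n\ceq h\circ \Pi^{\underline{\w}^{\sS}}_n$, exactly as you do, with $M_1^{\comprandvars}=I_{\sS}$, the unit identity, and $\m^{\comprandvars}_n=\iota^{\comprandvars}\circ M^{\comprandvars}_n$ all following from $K\Pi^{\underline{\w}^{\sS}}_n=0$ and $\mc\circ K=0$. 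Your block-by-block verification that $\Pi^{\underline{\w}^{\sS}}_{n+1}(s_1,\dotsc,s_n,1_{\sS})=\Pi^{\underline{\w}^{\sS}}_n(s_1,\dotsc,s_n)$ simply fills in a step the paper records as a direct consequence of the defining formula, and your closing canonicity remark matches the paper's use of the homotopy invariance of $\underline{\Pi}^{\underline{\w}^{\sS}}$.
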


Again, we will not prove this theorem, which is a special case of a theorem in Section~\ref{sec:maintheory}. Instead, we briefly discuss an implication of the theorem.

Assume that $\sS\simeq H$ is finite dimensional. Choose a basis $\{e_\a\}_{\a\in J}$ containing $e_0 = 1_\sS$. Let $\{t^\a\}_{\a \in J}$ be the dual basis so that we have the moment generating function of $\comprandvars$ given by
\[
Z_\comprandvars\ceq 1 +\sum_{n=1}^\infty \Fr{1}{n!}\sum_{\a_1,\dotsc,\a_n \in J} t^{\a_n}\cdots t^{\a_1}
\m^\comprandvars_n\big(e_{\a_1},\dotsc, e_{\a_n}\big) \in \fieldk[\![t^\a]\!]_{\a\in J}
\]
Define a $1$-tensor $\{T_\comprandvars^\g\}$ in the formal power series ring $\fieldk[\![t^\a]\!]_{\a\in J}$ by
\[
T_\comprandvars^\g = \sum_{n=1}^\infty \Fr{1}{n!}\sum_{\a_1,\dotsc,\a_n \in J} t^{\a_n}\cdots t^{\a_1} M^\comprandvars_{\a_1\cdots\a_n}{}^\g
\]
where $\{M^\comprandvars_{\a_1\cdots\a_n}{}^\g\}$, $n\geq 1$, are the structure constants of $M^\comprandvars_n$:
\[
M^\comprandvars_n\big(e_{\a_1},\dotsc, e_{\a_n}\big) =\sum_{\g \in J} M^\comprandvars_{\a_1\cdots\a_n}{}^\g e_\g.
\]
Then we have 
\[
Z_\comprandvars= 1 +\sum_{\g\in J} T_\comprandvars^\g \iota^\comprandvars(e_\g).
\]
From $M_1^\comprandvars$ being the identity map on $\sS$, we have $\Fr{\rd}{\rd t^\b}T_\comprandvars^\g = \d_\b{}^\g$ up to the variables $t^\a$, which implies that the matrix $\sG$ with $\b\g$-entry $\sG_\b{}^\g : =\Fr{\rd}{\rd t^\b}T_\comprandvars^\g$ is invertible.
Define a $(2,1)$-tensor $\{A^\comprandvars_{\a\b}{}^\g\}$ in the formal power series ring $\fieldk[\![t^\a]\!]_{\a\in J}$ by
\[
A^\comprandvars_{\a\b}{}^\g\ceq \sum_{\r\in J}\Fr{\rd\sG_{\b}{}^\r}{\rd t^\a} \sG^{-1}_\r{}^\g.\]
We see directly that 
\[
\left(\Fr{\rd^2}{\rd t^\a\rd t^\b}- \sum_{\r\in J} A^\comprandvars_{\a\b}{}^\r\Fr{\rd}{\rd t^\r}\right) T_\comprandvars^\g=0.
\]
Then, it can easily be checked that the $(2,1)$-tensor $\{A^\comprandvars_{\a\b}{}^\g\}$ has the following properties:
\begin{align*}
A^\comprandvars_{0\b}{}^\g -\d_{\b}{}^\g &=0
,\\
A^\comprandvars_{\a\b}{}^\g - (-1)^{|e_\a||e_\b|} A^\comprandvars_{\b\a}{}^\g &=0
,\\
\Fr{\rd A^\comprandvars_{\b\g}{}^\s}{\rd  t^\a} -(-1)^{|e_\a||e_\b|}\Fr{\rd A^\comprandvars_{\a\g}{}^\s}{\rd t^\b}
+\sum_{\r\in J} \left(A^\comprandvars_{\b\g}{}^\r A^\comprandvars_{\a\r}{}^\s-(-1)^{|e_\a||e_\b|}A^\comprandvars_{\a\g}{}^\r A^\comprandvars_{\b\r}{}^\s\right)
&=0.
\end{align*}
One immediate consequence is that the moment generating function $Z_\comprandvars$ satisfies the following system of differential equations:
\begin{align*}
\left(\Fr{\rd^2}{\rd  t^\a \rd t^\b} - \sum_{\g\in J} A^\comprandvars_{\a\b}{}^\g \Fr{\rd}{\rd t^\g}\right)Z_\comprandvars=0
.\\
\left(\Fr{\rd}{\rd t^0}-1\right)Z_\comprandvars=0.\\
\end{align*}

\begin{remark}\label{remark: doesnt have to be finite superselection sector}
It should noted that a space $\sV$ of homotopical random variables does not need to be complete or a finite super-selection sector of a complete space.
Such incomplete spaces of homotopical random variables are also an important ingredient of homotopy probability theory. In general the moment generating function of the space $\sV$ satisfies a system of higher order, $n\geq 2$, formal PDEs if the cohomology of the target binary \padj{} probability space is finite dimensional (ODEs if $\sV$ is one dimensional). For examples of this phenomenon, see \cite{PP}. In some cases, we may even obtain a system of differential equations with infinite dimensional cohomology.
 \hfill$\natural$
\end{remark}

\section{Intermission}\label{sec:intermission}
\begin{quote}\it\footnotesize
 If we begin with certainties, we will end in doubts, but
 \\
 if we begin with doubts and bear them patiently
 \\
 we may end in certainty. --Francis Bacon.
 \end{quote}

We begin this section with some doubts about a fundamental assumption underlying the definition of classical algebraic probability spaces, namely that there is a binary multiplication $\cdot$ that determines every joint moment via the expectation morphism.
A family of joint moments of random variables could be viewed as a collection of observational data of joint events.
The corresponding family of joint cumulants could be viewed as the same collections of observational data but is organized coherently with a particular notion of independence of events in mind so that we may be able to figure out the underlying probability law of the system we are interested in. It would have been a ``big discovery'' if there were always a binary multiplication $\cdot$ in the space $A$ of random variables such that every joint moment could be determined by applying a single expectation morphism to a suitable iteration of binary products of random variables.

In Sect.~\ref{subs: pre-alg}, we will define a pre-algebraic probability space. The defining data of such a space shall be a pointed vector space $A$ and a family $\underline{\m}=\m_1,\m_2,\dotsc$ of linear maps $\m_n:S^n A \rightarrow \fieldk$, so that $1_A$ ``behaves as a unit''. Here we regard $A$ as the space of random variables and $\m_n(x_1,\cdots, x_n)$ as $n$-th joint moments. We then introduce the cumulant morphism $\underline{\k}$ defined in terms of the moment morphism $\underline{\m}$ and the binary product in $\fieldk$ via a combinatorial formula involving classical partitions. Independence of random variables is defined as usual in terms of cumulants and any classical algebraic probability space is an example of a pre-algebraic probability space.
We then show that our theory has the same central limit as its classical counterpart.

In Sect.~\ref{subs: when assoc}, we add an assumption to our pre-algebraic probability spaces---that there exist a family $\underline{M}=M_1,M_2,\dotsc$,
such that $\m_n =\m_1\circ M_n$. We then examine conditions which imply that there is an underlying classical algebraic probability space by repackaging $\underline{M}$ as a new family $\underline{m}=m_2,m_3,\dotsc$, via a combinatorial formula involving classical partitions. The family $\underline{m}$ will be a sequence of obstructions that will vanish after $m_2$ when $M_2$ is a commutative associative multiplication and $M_n$ is iterated multiplication using $M_2$.

In Sect.~\ref{subs: comm prob space}, we turn the table around by defining {\CorAnames{}} and \algebraic{} probability spaces as the natural generalization of unital commutative associative algebras and classical algebraic probability spaces. Then, in Sect ~\ref{subs:formal geo I}, we examine the formal geometric picture of finite dimensional probability spaces in this context.

\subsection{Commutative pre-algebraic probability spaces and the central limit theorem}\label{subs: pre-alg}

It is reasonable to assume that the space of all relevant random variables forms a vector space over the ground field $\fieldk$ in which every joint moment has a value.
\begin{definition}\label{defiprealg}
A \emph{pre-algebraic probability space} is a pair $\big(A, \underline{\m}\big)$ where $A$ is a pointed vector space and $\underline{\m}=\m_1,\m_2,\dotsc$ is a family of linear maps $\m_n:S^n A\to \fieldk$ called the \emph{moment morphism} which satisfy the unit constraints  
$\m_1(1_A)=1$ and for $n\geq 1$ and $x_1,\dotsc, x_n \in A$,
\[
\m_{n+1}\big(x_1,\dotsc, x_{n}, 1_A\big)= \m_{n}\big(x_1,\dotsc, x_{n}\big).
\] 
From the family $\underline{\m}$ and {\em the multiplication in $\fieldk$}, we define another family $\underline{\k}=\k_1,\k_2,\dotsc$, called the \emph{cumulant morphism}, recursively
 by the following relation valid for $n\geq 1$ and $x_1,\dotsc, x_n \in A$:
\[
\k_n\big(x_1,\dotsc, x_n\big)\ceq \m_n\big(x_1,\dotsc, x_n\big)-
\sum_{\substack{\pi \in P(n)\\ \color{red}|\pi|\neq 1}} \k\big(x_{B_1}\big) \cdots\k\big( x_{B_{|\pi|}}\big).
\]
\end{definition}

It is trivial to check that
\begin{itemize}
\item $\k_n$ is a linear map from $S^n(A)$ to $\fieldk$ for all $n\geq 1$,
\item $\k_1(1_A)=1$ and, for $n\geq 2$ and $x_1,\dotsc, x_n \in A$,
$\k_{n}\big(x_1,\dotsc, x_{n}, 1_A\big)=0$. 
\end{itemize}
Equivalently, we have
\eqn{zutr}{
\m_n\big(x_1,\dotsc, x_n\big)=\sum_{\pi \in P(n)} \k\big(x_{B_1}\big) \cdots\k\big( x_{B_{|\pi|}}\big),
}
so that the family $\underline{\k}$ also determines the family $\underline{\m}$ completely using {\em the multiplication in $\fieldk$}.

For a set of random variables $\{x_1,\dotsc, x_k\}$, one associates the family of joint moments
\[
\left\{ \m_n\big(x_{j_1},\dotsc, x_{j_n}\big)\big| n\geq 1, 1\leq j_1,\dotsc, j_n \leq k\right\},
\]
and the family of joint cumulants
\[
\left\{ \k_n\big(x_{j_1},\dotsc, x_{j_n}\big)\big| n\geq 1, 1\leq j_1,\dotsc, j_n \leq k\right\}.
\]

Let $\g =\sum_{j=1}^k t_j x_j$ in $A[\![t_1,\dotsc, t_k]\!]$. Then we define joint moment and cumulant \emph{generating functions} $Z(t_1,\dotsc, t_k)$ and $F(t_1,\dotsc, t_k)$, respectively, by
\begin{align*}
Z(t_1,\dotsc, t_k) &= 1 + \m_1(\g) +\sum_{n=2}^\infty \Fr{1}{n!} \m_n(\g,\dotsc, \g)\in\fieldk[\![t_1,\dotsc, t_k]\!]
,\\
F(t_1,\dotsc, t_k) &= \k_1(\g) +\sum_{n=2}^\infty \Fr{1}{n!} \k_n(\g,\dotsc, \g)\in\fieldk[\![t_1,\dotsc, t_k]\!]/\fieldk
,\\
\end{align*}
Now $F(t_1,\dotsc, t_k)$ is the formal logarithm of $Z(t_1,\dotsc, t_k)$.

The \emph{joint distributions} (the \emph{law}) of the set $\{x_1,\dotsc, x_k\}$ of random variables is the linear map 
\[
\hat\m: \fieldk[t_1,\dotsc, t_k]\rightarrow \fieldk
\]
such that, for a polynomial $P =\sum a_{j_1\cdots j_{n}}t_{j_1}\cdots t_{j_n}$,
\[
\hat\m(P) = \sum a_{j_1\cdots j_{n}}\m_n\big(x_{j_1},\dotsc, x_{j_n}\big).
\]
It is clear that the law of the set $\{x_1,\dotsc, x_k\}$ of random variables is determined completely by the family of joint moments, and vice versa.

Now we are going to show that pre-algebraic probability spaces belong to the same universality class (have the same limiting behavior) as classical algebraic probability spaces, adopting essentially the same arguments.\footnote{This should be obvious to the experts since the various central limit theorems are consequences
of  the combinatorics of  relevant cumulants (independence), defined from moments and multiplication in the ground field,
and are independent to how the moments are defined -- although this trivial observation may never has pronounced before.}

Let $t$ be a formal parameter. For any $z\in A$, let 
\begin{align*}
Z(t)_z &\ceq 1 + \sum_{n=1}^\infty\Fr{1}{n!} t^n \m_n(z,\dotsc,z) \in\fieldk[\![t]\!]
,\\
F(t)_z &\ceq \sum_{n=1}^\infty\Fr{1}{n!} t^n \k_n(z,\dotsc,z) \in\fieldk[\![t]\!]
\end{align*}
Then as before $F(t)_z$ is the formal logarithm of $Z(t)_z$.
It is obvious, by definition, that $Z(t)_{az}=Z(at)_z$ and $F(t)_{az}=F(at)_z$ if $a \in\fieldk$.

\begin{corollary}\label{crucib}
Let $x,y \in A$ such that $\k_n(x+y,\dotsc, x+y)=\k_n(x,\dotsc, x) + \k_n(y,\dotsc,y)$ for all $n\geq 1$.
Then 
\[
\left\{
\begin{array}{rl}
F(t)_{x+y}&= F(t)_x + F(t)_y
,\\
Z(t)_{x+y}&= Z(t)_x Z(t)_y.
\end{array}
\right.
\]
\end{corollary}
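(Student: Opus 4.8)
The plan is to deduce both identities from the already-established fact that, for a single random variable $z$, the cumulant generating function $F(t)_z$ is the formal logarithm of the moment generating function $Z(t)_z$; equivalently $Z(t)_z=\exp\bigl(F(t)_z\bigr)$ in $\fieldk[\![t]\!]$, which is the one-variable specialization of the moment--cumulant relation \eqref{zutr}. Granting this, neither identity requires any new combinatorics, so I would prove the additive statement first and then exponentiate.

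First I would treat the cumulant generating function. Unwinding the definition, $F(t)_{x+y}=\sum_{n\geq 1}\Fr{1}{n!}t^n\,\k_n(x+y,\dotsc,x+y)$, and the hypothesis asserts exactly that every coefficient splits as $\k_n(x+y,\dotsc,x+y)=\k_n(x,\dotsc,x)+\k_n(y,\dotsc,y)$. Substituting this term by term and separating the resulting series into two pieces yields $F(t)_{x+y}=F(t)_x+F(t)_y$ at once. This step is purely formal — it merely transports the coefficientwise additivity hypothesis through the sum — and presents no obstacle.

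Then I would pass to the moment generating function by exponentiating. Each of $F(t)_x$, $F(t)_y$, and $F(t)_{x+y}$ lies in the maximal ideal $t\fieldk[\![t]\!]$ (the defining sums start at $n=1$), so the formal exponential is defined on them and satisfies $\exp(a+b)=\exp(a)\exp(b)$ for commuting $a,b$. Combining $Z(t)_w=\exp\bigl(F(t)_w\bigr)$ for $w=x+y,x,y$ with the additivity just proved gives $Z(t)_{x+y}=\exp\bigl(F(t)_x+F(t)_y\bigr)=\exp\bigl(F(t)_x\bigr)\exp\bigl(F(t)_y\bigr)=Z(t)_x\,Z(t)_y$, as desired. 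The only genuine content in the whole argument is the relation $Z(t)_z=\exp\bigl(F(t)_z\bigr)$ itself, which is assumed here; were a self-contained derivation wanted, the sole obstacle would be the classical exponential formula showing that \eqref{zutr} is equivalent, after dividing by $n!$ and summing, to $Z=e^F$. Concretely one rewrites $\sum_{\pi\in P(n)}\prod_{B}\k_{|B|}$ by grouping set partitions with $k$ blocks into $\frac{1}{k!}$ times a sum over ordered tuples of blocks, matching the multinomial expansion of $\exp\bigl(\sum_m \Fr{t^m}{m!}\k_m\bigr)$; this is routine bookkeeping of the symmetry factors of set partitions, valid because $\fieldk$ has characteristic zero.
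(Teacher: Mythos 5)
Your proof is correct and is essentially the argument the paper intends: the paper's own proof of this corollary is literally the single word ``Trivial,'' and your coefficientwise splitting of $F(t)_{x+y}$ followed by exponentiation is exactly that trivial argument spelled out. The relation $Z(t)_z=e^{F(t)_z}$ that you rely on is indeed already recorded in the paper immediately before the corollary ($F(t)_z$ is the formal logarithm of $Z(t)_z$), so nothing further is needed.
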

\begin{proof}
Trivial.
\hfill\qed\end{proof}

We arrive at the following definition for independence.
\begin{definition} Two random variables $x$ and $y$ are independent if $\k_n(x+y,\dotsc, x+y)=\k_n(x,\dotsc, x) + \k_n(y,\dotsc,y)$ for all $n\geq 1$.
\end{definition}

In the remainder of this subsection, assume that $\sqrt{N} \in \fieldk$ for every natural number $N$.

\begin{proposition}\label{procent}
Consider a set of jointly independent random variables $\{x_1,\dotsc, x_N\}$ such that $\m_n(x_i,\dotsc,x_i)=\m_n(x_j,\dotsc,x_j)$ for all $n\geq 1$ and $i,j\in 1,\dotsc, N$. 
Assume further that  $\m_1(x_i)=0$ and that $\m_2(x_i, x_i)$ is $\s^2$. 
Let
\[
X = \Fr{x_1+\cdots + x_N}{\sqrt{N}}.
\]
Then, for all $N\geq 1$, we have
\begin{align*}
&\k_1(X)=0,\\
&\k_2(X,X)=\s^2,\\
&\k_n(X,\dotsc, X)=\k_n(x_1,\dotsc,x_1)\cdot N^{1-\fr{n}{2}}.
\end{align*}
\end{proposition}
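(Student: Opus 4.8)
The plan is to reduce everything to three elementary facts about the cumulant morphism and then combine them. The three ingredients are: (i) the multilinearity of $\k_n$, which gives the scaling relation $\k_n(aX,\dotsc,aX)=a^n\k_n(X,\dotsc,X)$ for $a\in\fieldk$ (equivalently the identity $F(t)_{az}=F(at)_z$ recorded just before the statement); (ii) the additivity of cumulants under joint independence, which is the $N$-fold version of Corollary~\ref{crucib}; and (iii) the observation that random variables with identical moment sequences have identical cumulant sequences. Since we are working in the remainder of the subsection under the standing assumption $\sqrt{N}\in\fieldk$, the element $X$ is well defined.

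First I would set $S\ceq x_1+\cdots+x_N$, so that $X=S/\sqrt{N}$, and record ingredient (iii): because $\k_n(x_i,\dotsc,x_i)$ is built recursively from the moments $\m_k(x_i,\dotsc,x_i)$ for $k\le n$ and from multiplication in $\fieldk$ alone (Definition~\ref{defiprealg}), the hypothesis $\m_n(x_i,\dotsc,x_i)=\m_n(x_j,\dotsc,x_j)$ for all $n$ forces $\k_n(x_i,\dotsc,x_i)=\k_n(x_1,\dotsc,x_1)$ for every $i$. Joint independence, read as the additivity relation $\k_n\big(\sum_i x_i,\dotsc,\sum_i x_i\big)=\sum_{i=1}^N\k_n(x_i,\dotsc,x_i)$ for all $n$ (the natural $N$-variable strengthening of Corollary~\ref{crucib}, phrased via $F(t)_S=\sum_i F(t)_{x_i}$), then gives $\k_n(S,\dotsc,S)=N\,\k_n(x_1,\dotsc,x_1)$. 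Applying the scaling relation with $a=N^{-1/2}$ yields the main computation:
\begin{align*}
\k_n(X,\dotsc,X)
&= N^{-\fr{n}{2}}\k_n(S,\dotsc,S)
= N^{-\fr{n}{2}}\sum_{i=1}^N \k_n(x_i,\dotsc,x_i)\\
&= N^{-\fr{n}{2}}\cdot N\cdot\k_n(x_1,\dotsc,x_1)
= N^{1-\fr{n}{2}}\k_n(x_1,\dotsc,x_1),
\end{align*}
which is exactly the third asserted identity.

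The two remaining formulas I would obtain by evaluating the right-hand side in the cases $n=1,2$. For $n=1$ we have $\k_1(x_1)=\m_1(x_1)=0$ by hypothesis, so $\k_1(X)=N^{1/2}\cdot 0=0$. For $n=2$, the recursion of Definition~\ref{defiprealg} gives $\k_2(x_1,x_1)=\m_2(x_1,x_1)-\k_1(x_1)^2=\s^2$, whence $\k_2(X,X)=N^{0}\cdot\s^2=\s^2$. Alternatively, both special cases drop out of the generating-function identity $F(t)_X=N\,F(t/\sqrt{N})_{x_1}$ by reading off the coefficient of $\fr{1}{n!}t^n$, which is perhaps the cleanest packaging if one prefers to stay with formal power series throughout.

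The computation is otherwise routine bookkeeping; the one step that genuinely requires care is pinning down ingredient (ii), namely the precise meaning of \emph{jointly} independent and the verification that it delivers full additivity of $\k_n$ (equivalently of $F$) across all $N$ summands simultaneously. The independence notion as stated is only a two-variable condition, and pairwise independence is \emph{not} enough to iterate Corollary~\ref{crucib}, since that would require $x_1+\cdots+x_{N-1}$ to be independent of $x_N$. I would therefore either adopt the additivity of the cumulant generating function as the working definition of joint independence, or check that the intended definition of joint independence supplies this additivity directly; once that is settled, the rest of the proof is immediate.
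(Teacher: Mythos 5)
Your proof is correct and follows essentially the same route as the paper's: identical moments force identical cumulants, joint independence gives additivity of the cumulant generating functions, and the scaling $F(t)_{az}=F(at)_z$ with $a=N^{-1/2}$ yields $\k_n(X,\dotsc,X)=N^{1-\fr{n}{2}}\k_n(x_1,\dotsc,x_1)$, with the $n=1,2$ cases read off from the hypotheses exactly as in the paper. Your closing caveat about ``jointly independent'' is well taken but is not a divergence: the paper likewise (implicitly) reads joint independence as full additivity $F(t)_{x_1+\cdots+x_N}=F(t)_{x_1}+\cdots+F(t)_{x_N}$, the $N$-fold form of Corollary~\ref{crucib}, which is precisely the working definition you propose.
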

\begin{proof}
By the assumption that each $x_i$ has the same law,
$\m_n(x_i,\dotsc, x_i) =\m_n(x_j,\dotsc, x_j)$ for all $n=1,2,3\cdots$ and $i,j=1,\dotsc, N$, we also have $\k_n(x_i,\dotsc, x_i) =\k_n(x_j,\dotsc, x_j)$. 
From {\bf Corollary \ref{crucib}},
we have
\[
F(t)_X = F_{x_1}\left(\Fr{t}{\sqrt{N}}\right)+\cdots + F_{x_N}\left(\Fr{t}{\sqrt{N}}\right) \in \fieldk[\![t]\!].
\]
By taking the $n$-th derivative with respect to $t$ and evaluating at $t=0$, we obtain for all $n\geq 1$ that
\[
\k_n(X,\dotsc, X) = \Fr{\k_n(x_1,\dotsc, x_1)+\cdots +\k_n(x_N,\dotsc, x_N)}{N^{\fr{n}{2}}}=\k_n(x_1,\dotsc,x_1)\cdot N^{1-\fr{n}{2}}.
\]
For $n=1$, we have $\k_1(X)=0$ since $\k_1(x_i)=\m_1(x_i)=0$ by assumption.
For $n=2$, we have $\k_2(X,X) =\s^2$ since $\k_2(x_i, x_i)=\m_2(x_i,x_i)=\s^2$ for all $i=1,\dotsc, N$.
\hfill\qed\end{proof}

The law of a random variable $x$ in a \algebraic{} probability space is called normal with zero mean and variance $\s^2 \in \fieldk$ if $\iota(x)\equiv\m_1(x)\equiv\k_1=0$, $\k_2(x,x)=\s^2$ and $\k_n(x,\dotsc,x)=0$ for all $n\geq 3$. Equivalently, $\m_{2k+1}(x,\dotsc, x)=0$, for all $k\geq 0$, and $\m_{2k}=\s^{2k}(2k-1)!!$,
for all $k\geq 1$. Then the moment generating function is $Z(t)_x = e^{\fr{t^2}{2}\s^2}$. 

The central limit theorem in classical algebraic probability spaces has a straightforward generalization to any pre-algebraic probability space. From {\bf Proposition \ref{procent}}, we have

\begin{theorem}\label{centrallimit}
Let $x_1,x_2,x_3,\cdots \in A$ be a sequence of identically distributed and jointly independent random variables such that $\m_1(x_i)=0$ and $\m_2(x_i, x_i)=\s^2$ for all $i\geq 1$.
Let
\[
X = \Fr{x_1+\cdots + x_N}{\sqrt{N}}.
\]
Then, the law of $X$ converges to normal distribution with zero mean and the the variance $\s^2$ in the large $N$ limit:
\[
\lim_{N\rightarrow \infty} \k_1(X)=0,\qquad \lim_{N\rightarrow \infty} \k_2(X,X)=\s^2,\qquad \lim_{N\rightarrow \infty} \k_n(X,\dotsc, X)=0
\hbox{ for }n\geq 3.
\]
\end{theorem}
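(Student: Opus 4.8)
The plan is to derive all three limiting statements directly from the exact finite-$N$ formulas supplied by \textbf{Proposition \ref{procent}}, so that no new combinatorics is needed and the only remaining work is to pass to the limit $N \to \infty$. In other words, the central limit theorem here is not an independent computation but a limit of the closed-form cumulant identities already established for finite families of independent, identically distributed random variables.

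First I would fix $N$ and observe that the finite subset $\{x_1, \dotsc, x_N\}$ of the given sequence inherits the hypotheses of \textbf{Proposition \ref{procent}}: the variables are jointly independent, they are identically distributed so that $\m_n(x_i, \dotsc, x_i) = \m_n(x_j, \dotsc, x_j)$ for all $i,j$ and all $n \geq 1$, and the normalizations $\m_1(x_i) = 0$ and $\m_2(x_i, x_i) = \s^2$ hold. Applying the proposition to $X = X_N = (x_1 + \cdots + x_N)/\sqrt{N}$ then yields, for every $N$,
\[
\k_1(X) = 0, \qquad \k_2(X, X) = \s^2, \qquad \k_n(X, \dotsc, X) = \k_n(x_1, \dotsc, x_1)\cdot N^{1 - \fr{n}{2}}.
\]

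Next I would take the limit termwise. The first two quantities are independent of $N$, so their limits are $0$ and $\s^2$ respectively. For the third, the crucial point is that $\k_n(x_1, \dotsc, x_1)$ is a fixed scalar in $\fieldk$, independent of $N$, since it is the $n$-th cumulant of the single random variable $x_1$ and is determined by its law alone. All $N$-dependence is concentrated in the factor $N^{1 - \fr{n}{2}}$, whose exponent satisfies $1 - \fr{n}{2} \leq -\fr{1}{2} < 0$ for every $n \geq 3$; hence $N^{1 - \fr{n}{2}} \to 0$ and the product tends to $0$.

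The main (and essentially only) obstacle is bookkeeping rather than mathematics: one must confirm that the constant $\k_n(x_1, \dotsc, x_1)$ genuinely does not vary with $N$, which rests on the identical-distribution hypothesis together with the fact that cumulants are built from moments and the product in $\fieldk$ alone (\textbf{Definition \ref{defiprealg}}), and that the additivity of cumulants under independence (\textbf{Corollary \ref{crucib}}, already invoked inside \textbf{Proposition \ref{procent}}) applies uniformly across the growing family $\{x_1, \dotsc, x_N\}$. Once these points are noted, the three limits above are immediate.
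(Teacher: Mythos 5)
Your proposal is correct and follows essentially the same route as the paper: the theorem is stated there as an immediate consequence of \textbf{Proposition \ref{procent}}, whose finite-$N$ formulas $\k_1(X)=0$, $\k_2(X,X)=\s^2$, and $\k_n(X,\dotsc,X)=\k_n(x_1,\dotsc,x_1)\cdot N^{1-\fr{n}{2}}$ yield the three limits exactly as you argue. Your added bookkeeping remarks (that $\k_n(x_1,\dotsc,x_1)$ is $N$-independent by identical distribution, and that the exponent $1-\fr{n}{2}$ is negative for $n\geq 3$) are precisely what the paper leaves implicit.
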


\subsection{When does a space of random variables form an associative algebra?}\label{subs: when assoc}
\begin{definition}\label{deficomplete}
We say that a pre-algebraic probability space $\big(A; \underline{\m}\big)$ is \emph{complete} if there is a family $\underline{M}=M_1, M_2, \cdots$, where $M_n:S^n(A)\to A$ such that $M_1$ is the identity map on $A$ and for $x_1,\dotsc, x_n \in A$,
\begin{align*}
\m_n(x_1,\dotsc, x_n) &= \m_1\big(M_n(x_1,\dotsc, x_n)\big)
,\\
 M_{n+1}\big(x_1,\dotsc, x_{n}, 1_A\big)
&=M_{n}\big(x_1,\dotsc, x_{n}\big).
\end{align*}
\end{definition}
The idea motivating this definition of completeness is that in a complete space, any joint event is itself also an event represented by a random variable. In that case, we can perform the following reorganization.

\begin{definition}\label{cruciadef}
Let $\big(A;\underline{\m}\big)$ be a complete pre-algebraic probability space. We define the family $\underline{m}=m_2, m_3, \cdots$ of linear maps $m_n:T^n(A)\to A$ 
by $m_2=M_2$ and, for $n\geq 3$ and $x_1,\dotsc, x_n \in A$, recursively,
\begin{align*}
m_n(x_1,\dotsc, x_n) \ceq 
& M_n(x_1,\dotsc, x_n)
-\sum_{\substack{\pi \in P(n)\\ |B_{|\pi|}|= n-|\pi|+1\\ n-1\sim_\pi n\\\color{red}|\pi|\neq 1}}
M_{|\pi|}\left(x_{B_1},\dotsc, x_{B_{|\pi|-1}}, m\big(x_{B_{|\pi|}}\big)\right)
.\\
\end{align*}
\end{definition}

\begin{proposition}\label{crucia}
Let $\big(A;\underline{\m}\big)$ be a complete pre-algebraic probability space. Then for $n\geq 2$ the cumulant morphism satisfies the relation
\begin{align*}
\k_{n}\big(x_1,\dotsc, x_{n}\big)=
&
\!\!\!\!\sum_{\substack{\pi \in P(n)\\ |B_{|\pi|}|= n-|\pi|+1\\ n-1\sim_\pi n}}\!\!\!\!
\k\left(x_{B_1},\dotsc, x_{B_{|\pi|-1}}, m\big(x_{B_{|\pi|}}\big)\right)
-\sum_{\substack{\pi \in P(n)\\|\pi|=2\\ n-1\nsim_\pi n}}
\k\left(x_{B_1}\right)\k\left(x_{B_2}\right).
\end{align*}
\end{proposition}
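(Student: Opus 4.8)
The plan is to prove the identity by strong induction on $n$, bootstrapping from a moment-level form of the recursion. First I would rewrite Definition~\ref{cruciadef}, moving every term to the left and restoring the $|\pi|=1$ block as $m_n$ itself, so that it reads
\[
M_n(x_1,\dots,x_n)=\sum_{\substack{\pi\in P(n)\\ |B_{|\pi|}|=n-|\pi|+1\\ n-1\sim_\pi n}} M_{|\pi|}\bigl(x_{B_1},\dots,x_{B_{|\pi|-1}},m(x_{B_{|\pi|}})\bigr).
\]
Applying $\m_1$ to both sides and using completeness in the form $\m_k=\m_1\circ M_k$ then yields the \emph{moment recursion}
\[
\m_n(x_1,\dots,x_n)=\sum_{\substack{\pi\in P(n)\\ |B_{|\pi|}|=n-|\pi|+1\\ n-1\sim_\pi n}} \m_{|\pi|}\bigl(x_{B_1},\dots,x_{B_{|\pi|-1}},m(x_{B_{|\pi|}})\bigr),
\]
which is the engine of the argument.

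The base case $n=2$ is immediate: the first sum of the proposition reduces to its $|\pi|=1$ term $\k_1(m_2(x_1,x_2))=\m_2(x_1,x_2)$, the second sum is $\k_1(x_1)\k_1(x_2)$, and their difference is $\k_2$ by the defining recursion of $\underline{\k}$. For the inductive step I would substitute the moment recursion into $\k_n=\m_n-\sum_{|\pi'|\geq 2}\k(x_{B'_1})\cdots\k(x_{B'_{|\pi'|}})$ and expand each moment $\m_{|\pi|}(\dots,m(x_{B_{|\pi|}}))$ appearing there by the moment--cumulant relation \eqref{zutr}. Its single-block term is $\k_{|\pi|}(x_{B_1},\dots,m(x_{B_{|\pi|}}))$, and these assemble exactly into the first sum $(I)$ of the proposition. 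Writing $(\mathrm A)$ for the totality of the remaining multi-block terms, I obtain $\k_n=(I)+(\mathrm A)-\sum_{|\pi'|\geq 2}\prod\k$, so it remains to prove $(\mathrm A)-\sum_{|\pi'|\geq 2}\prod\k=-(II)$.

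This is where the induction feeds back. Every term of $(\mathrm A)$ records a partition $\rho$ of $[n]$ with $|\rho|\geq 2$ and $n-1\sim_\rho n$, whose last block $C$ (the block of $n$, hence also of $n-1$) carries a contracted argument $m(x_S)$ for some $\{n-1,n\}\subseteq S\subseteq C$, all other blocks carrying ordinary cumulants. For fixed $\rho$ the inner sum over $S$ is precisely the first sum of the proposition written for the smaller index set $C$; this is legitimate because $n$ is the global maximum, so $n-1,n$ remain the two largest elements of $C$, and because $\m,\k,m,M$ are symmetric, so the statement applies verbatim. Since $|C|<n$, the inductive hypothesis rewrites this inner sum as $\k(x_C)+(II)^{C}$, with $(II)^{C}$ the two-block correction for $C$. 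The $\k(x_C)$ contributions recombine with the remaining blocks into $\sum_{|\rho|\geq 2,\,n-1\sim_\rho n}\prod\k$, cancelling the $n-1\sim n$ portion of $\sum_{|\pi'|\geq 2}\prod\k$. For the $(II)^{C}$ contributions, splitting $C$ into its $(n-1)$-part and its $n$-part sets up a bijection between pairs $(\rho,\text{split})$ and partitions $\pi'$ with $n-1\nsim_{\pi'}n$ and $|\pi'|\geq 3$ (recombine the blocks of $\pi'$ through $n-1$ and through $n$ to recover $C$); under it these contributions become $\sum_{n-1\nsim n,\,|\pi'|\geq 3}\prod\k$, cancelling the $n-1\nsim n$ portion of $\sum_{|\pi'|\geq 2}\prod\k$ down to its two-block terms. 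Hence $(\mathrm A)-\sum_{|\pi'|\geq 2}\prod\k$ equals the negative of the surviving two-block terms, namely $-(II)$, giving $\k_n=(I)-(II)$.

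The conceptual core is thus the self-similar structure: the proposition for $[n]$ is reduced, block by block, to the proposition for proper subsets together with one partition-lattice bijection. The step I expect to be the real labour is the Koszul sign bookkeeping---showing that the factors $\ep(\pi)$ and $\ep(\pi,i)$ match across both the reindexing by $\rho$ and the block-splitting bijection. I would control these by fixing, at each stage, the permutation that reorders $x_1,\dots,x_n$ into block order and checking that its sign is preserved under each identification; the super-commutativity of all four families $\m,\k,m,M$ is what makes this consistent.
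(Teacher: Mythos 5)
Your proof is correct and is essentially the paper's own: the paper disposes of this proposition with the single line ``The proof is an easy induction using Definitions~\ref{defiprealg},~\ref{deficomplete}, and~\ref{cruciadef},'' and your argument---rewriting Definition~\ref{cruciadef} as a moment recursion, expanding each moment by \eq{zutr}, isolating the single-block terms as the first sum, and cancelling the multi-block terms against $\sum_{|\pi'|\geq 2}\prod\k$ via the inductive hypothesis on each block $C\ni n-1,n$ and the merge/split bijection between partitions with $n-1\sim n$, $|\rho|\ge 2$ and partitions with $n-1\nsim n$, $|\pi'|\ge 3$---is exactly that induction carried out in full. One remark: the Koszul-sign bookkeeping you anticipate as the main labour is vacuous here, since Sect.~\ref{subs: when assoc} works with \emph{ungraded} pointed vector spaces and none of the formulas involved (the defining recursions of $\underline{\k}$ and $\underline{m}$, or the statement itself) carries any sign factor $\ep(\pi)$.
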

The proof is an easy induction using Definitions~\ref{defiprealg},~\ref{deficomplete}, and~\ref{cruciadef}.
\begin{remark}
This proposition implies that obstructions for a complete pre-algebraic probability space $\big(A, \underline{\m}\big)$ to be a classical algebraic probability space are $m_3,m_4,m_5,\dotsc$.

More specifically, note that $m_2=M_2$ and
\[
m_3(x,y,z) = M_3(x,y,z) - m_2(x, m_2(y,z)).
\]
Assume that $m_3=0$. Then $M_3(x,y,z) = m_2(x, m_2(y,z))$, which implies that $(A,\cdot)$ is an commutative associative algebra, where $x\cdot y \ceq m_2(x,y)$. Hence $m_3$ is the obstruction to $(A, 1_A, m_2)$ being a unital commutative associative algebra. 
Fix $n\geq 4$ and assume that $m_3=\cdots = m_{n-1}=0$. Then by induction the sum in the definition of $m_n$ is equal to $x_1\cdot x_2\cdots x_n$, so we have
\[
m_n(x_1,\dotsc, x_n) = M_n(x_1,\dotsc, x_n) 
- x_1\cdot x_2\cdots x_n.
\]
Thus $m_n$ is the obstruction to $M_n(x_1,\dotsc, x_n)$ being  equal to $x_1\cdot x_2\cdots x_n$. That is, \[\m_n(x_1, x_2,\dotsc, x_n) = \m_1(x_1\cdot x_2\cdots x_n\big)\] if $m_n=0$.
\hfill$\natural$
\end{remark}

\begin{corollary}
Assume that $\underline{m}=m_2,0,0,0,\dotsc$. Set $x\cdot y = m_2(x,y)$ and $\iota =\m_1$. Then the quartet $\big(A, 1_A, \cdot, \iota\big)$ is a classical algebraic probability space, where for $n\geq 2$ and $x_1,\dotsc, x_n$ in $A$, we have:
\begin{align*}
\m_n(x_1,\dotsc, x_n)&=\iota(x_1\cdots x_n)
,\\
\k_{n}\big(x_1,\dotsc, x_{n}\big)&=
\k_{n-1}\left(x_1,\dotsc, x_{n-2}, x_{n-1}\cdot x_n\right)
-\sum_{\substack{\pi \in P(n)\\|\pi|=2\\ n-1\nsim_\pi n}}
\k\left(x_{B_1}\right)\k\left(x_{B_2}\right).
\end{align*}
\end{corollary}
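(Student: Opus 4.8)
The plan is to treat the hypothesis $m_n = 0$ for all $n \geq 3$ as the single structural input and to read off each assertion from the machinery already set up for complete pre-algebraic probability spaces. Concretely, I will first extract the unital commutative associative algebra structure from $m_2$ together with the vanishing of $m_3$; then identify the higher moments with iterated products by an induction on word length; and finally specialize the general cumulant recursion of Proposition~\ref{crucia} to the case at hand.

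For the algebra structure, commutativity is automatic, since $\cdot \ceq m_2 = M_2$ is by construction defined on $S^2(A)$ and hence symmetric. Unitality follows from the completeness axiom of Definition~\ref{deficomplete}: the case $n=1$ reads $M_2(x, 1_A) = M_1(x) = x$, i.e. $x \cdot 1_A = x$, and commutativity promotes this to a two-sided unit. Associativity is where $m_3 = 0$ enters: from Definition~\ref{cruciadef} one has $m_3(x,y,z) = M_3(x,y,z) - m_2\big(x, m_2(y,z)\big)$, so $m_3 = 0$ gives $M_3(x,y,z) = x \cdot (y \cdot z)$. Because $M_3$ is defined on $S^3(A)$ and is therefore fully symmetric, $M_3(x,y,z) = M_3(z,x,y) = z \cdot (x \cdot y) = (x \cdot y)\cdot z$, whence $(x\cdot y)\cdot z = x\cdot (y\cdot z)$. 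Together with $\iota(1_A) = \m_1(1_A) = 1$, this certifies that $(A, 1_A, \cdot, \iota)$ is a classical algebraic probability space.

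The moment identity $\m_n(x_1,\dotsc,x_n) = \iota(x_1\cdots x_n)$ is then an induction on $n$. The base cases $n=1,2$ are immediate from $\iota = \m_1$, the completeness relation $\m_2 = \m_1 \circ M_2$, and $M_2 = m_2 = \cdot$. For the inductive step I will use the observation recorded in the remark following Proposition~\ref{crucia}: once $m_3 = \cdots = m_{n-1} = 0$, the corrective sum in Definition~\ref{cruciadef} reduces to $x_1\cdots x_n$, so that $m_n = 0$ forces $M_n(x_1,\dotsc,x_n) = x_1\cdots x_n$; applying $\m_n = \m_1 \circ M_n$ from completeness yields $\m_n = \iota(x_1\cdots x_n)$.

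It remains to specialize the cumulant recursion. Substituting $m_r = 0$ for $r \geq 3$ into Proposition~\ref{crucia}, every term of the first sum carrying a factor $m(x_{B_{|\pi|}})$ with $|B_{|\pi|}| \geq 3$ vanishes; the constraint $n-1 \sim_\pi n$ already forces $|B_{|\pi|}| \geq 2$, so only last blocks of size exactly $2$ survive. Combined with $|B_{|\pi|}| = n - |\pi| + 1$, this pins down $|\pi| = n-1$, the last block $\{n-1, n\}$, and all other blocks as the singletons $\{1\},\dotsc,\{n-2\}$, collapsing the first sum to the single term $\k_{n-1}(x_1,\dotsc,x_{n-2}, x_{n-1}\cdot x_n)$; the second sum carries over verbatim. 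The main obstacle is precisely this bookkeeping step: one must check that the three conditions on $\pi$ (the block-size constraint, $n-1\sim_\pi n$, and the vanishing of $m_r$ for $r\geq 3$) together isolate exactly this one partition, so that no spurious terms survive. Everything else is a direct assembly of the cited results.
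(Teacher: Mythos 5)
Your proof is correct and follows essentially the same route as the paper's: commutativity and unitality from the symmetry of $M_2$ and the unit constraint, associativity from $m_3=0$ plus the symmetry of $M_3$, the identification $M_n(x_1,\dotsc,x_n)=x_1\cdots x_n$ by induction on $n$, and the cumulant recursion by specializing Proposition~\ref{crucia}. You merely make explicit two steps the paper leaves terse (the induction giving $M_n=x_1\cdots x_n$ and the partition bookkeeping showing that only $\pi$ with last block $\{n-1,n\}$ and singletons elsewhere survives), which is fine.
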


\begin{proof}
Recall that $M_2(x,y) =m_2(x,y)$,
which implies that $x\cdot y=y\cdot x$ and $x\cdot 1_A=1_A\cdot x=x$ for all $x,y \in A$.
Note that $M_3(x,y,z) = m_2(x, m_2(y,z))$ for all $x,y,z \in A$, since $m_3=0$,
which implies that $m_2(x, m_2(y,z))= m_2(x, m_2(z,y))=m_2(z, m_2(x,y))= m_2(m_2(x, y), z)$.
Then $x\cdot(y\cdot z)= (x\cdot y)\cdot z$.
Hence $\big(A, 1_A, \cdot\big)$ is a unital commutative associative algebra. Note that the symmetric properties of $M_n$ for $n\geq 3$, impose no further condition on $m_2$. Then we have, for $n\geq 1$, 
 \[
 M_n\big(x_1,\dotsc, x_n) = x_1\cdots x_n,
\]
since $m_k=0$ for $k\geq 3$. 
Hence, for $n\geq 1$,
\[
\m_n \big(x_1,\dotsc, x_n) = \m_1\big(x_1\cdots x_n\big).
\]
Using \eq{zutr}, this relation implies that for $n\geq 1$, we have
\begin{align*}
 \m_1\big(x_1\cdots x_n) &= \sum_{\pi\in P(n)}\k\big(x_{B_1}\big)\cdots \k\big(x_{B_{|\pi|}}\big)
 .
\end{align*}
From $\iota=\m_1$, we have $\iota(1_A)=0$ so that $\big(A, 1_A, \cdot, \iota\big)$ is a classical algebraic probability space and the family $\underline{\k}$ is its cumulant morphism. The second relation follows from {\bf Proposition \ref{crucia}}.
\hfill\qed\end{proof}

\subsection{\Algebraic{} probability spaces}\label{subs: comm prob space}
We begin to axiomatize the explorations begun in the previous subsection. For the remainder of this section, all the lemmas and propositions are either straightforward combinatorial inductions or special cases of lemmas or propositions in Section~\ref{sec:maintheory}. In all cases the proofs are omitted.

\begin{definition}
A \emph{\CorAname{}} is a pair $(V, \underline{M})$, where $V$ is a pointed vector space and $\underline{M}=M_1, M_2, \cdots$ is a family of linear maps $M_n:S^nV\to V$ where $M_1$ is the identity and for $n>1$, $M_{n+1}\big(v_1,\dotsc, v_n, 1_V\big)=M_n\big(v_1,\dotsc, v_n\big)$.

We recursively define the family $\underline{m}=m_2,m_3,\ldots$ of linear maps $m_n:T^n V\to V$ by the following relation:
\begin{align*}
M_n(v_1,\dotsc, v_n) =
&
\sum_{\substack{\pi \in P(n)\\|B_{|\pi|}|=n-|\pi|+1\\ n-1\sim_\pi n}} 
M_{|\pi|}\left( v_{B_1}, \cdots, v_{B_{|\pi|-1}},
m\big(v_{B_{|\pi|}}\big)\right).
\end{align*}
\end{definition}
\begin{remark}
The condition $|B_{|\pi|}| = n-|\pi|+1$ in the sum implies that the blocks $B_1, \cdots, B_{|\pi|-1}$ are singletons. The other condition $ n-1\sim_\pi n$ in the sum implies that both $n-1$ and $n$ belong to the last block $B_{|\pi|}$.
\hfill$\natural$
\end{remark}
\begin{remark}
It would also be possible (and in fact more natural from a geometric point of view) to define a \CorAname{} as the pair $(V,\underline{m})$ and construct the family $\underline{M}$ recursively. However, the relations necessary to ensure that $\underline{M}$ is symmetric and satisfies the appropriate unit constraints are not as easily expressed. Nevertheless, we will sometimes implicitly take this approach in what follows.
\end{remark}

\begin{lemma}\label{oflato}
Let $(V, \underline{M})$ be a \CorAname{}. Then the family $\underline{m}=m_2,m_3,\cdots$ has the following properties:
\begin{enumerate}
\item for all $x \in V$, $m_2(x, 1_V)=x$;
\item for all $n\geq 3$, $m_n(x_1,\dotsc, x_n)=0$ whenever $x_j =1_V$ for some $j$;
\item \label{property:invariant}for $n\geq 0$ the operation $m_{n+2}:T^{n+2}V\to V$ is invariant under the action of $\hbox{Perm}_n\times\hbox{Perm}_2$ on $T^{n+2}V$.
\end{enumerate}
\end{lemma}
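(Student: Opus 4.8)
The plan is to prove all three properties by induction on $n$, reading the defining recursion backwards to solve for $m_n$ in terms of $M_n$ and lower $m_k$. The key structural fact to extract first is an explicit inversion formula: since the unique partition in the sum with $|\pi|=1$ is the full block $\{1,\dotsc,n\}$ (which contributes $M_1(m_n(v_1,\dotsc,v_n)) = m_n(v_1,\dotsc,v_n)$ because $M_1$ is the identity), I can isolate
\[
m_n(v_1,\dotsc,v_n) = M_n(v_1,\dotsc,v_n) - \!\!\!\sum_{\substack{\pi \in P(n)\\|B_{|\pi|}|=n-|\pi|+1\\ n-1\sim_\pi n\\ |\pi|\neq 1}}\!\!\! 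M_{|\pi|}\left(v_{B_1},\dotsc,v_{B_{|\pi|-1}},m\big(v_{B_{|\pi|}}\big)\right),
\]
which matches the formula in Definition~\ref{cruciadef} and lets every statement about $m_n$ be reduced to statements about $M_k$ and $m_j$ with $j<n$.

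For property (1), I would simply evaluate the $n=2$ instance of the recursion: the only admissible partition of $[2]$ is the single block, so $M_2(v_1,v_2)=M_1(m_2(v_1,v_2))=m_2(v_1,v_2)$, whence $m_2=M_2$ and $m_2(x,1_V)=M_2(x,1_V)=M_1(x)=x$ by the unit constraint on $\underline{M}$. For property (2), the induction hypothesis is that $m_k$ kills any argument equal to $1_V$ for $3\le k\le n-1$; I would set some $v_j=1_V$ in the inversion formula and argue that the terms on the right cancel in pairs. The blocks $B_1,\dotsc,B_{|\pi|-1}$ are singletons, so if $v_j=1_V$ sits in one of them the corresponding $M_{|\pi|}$ term vanishes by the unit constraint on $M$; if $v_j=1_V$ lies inside the last block $B_{|\pi|}$, then either $m(v_{B_{|\pi|}})$ already vanishes by induction (when $|B_{|\pi|}|\ge 3$), or $|B_{|\pi|}|=2$ and $m_2(\cdot,1_V)$ reduces the block, producing a cancellation against the $M_n(\dots,1_V,\dots)=M_{n-1}(\dots)$ term on the left. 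Organizing this cancellation cleanly, pairing each surviving $M$-term with exactly one other, is where I expect the bookkeeping to be most delicate.

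Property (3), the $\mathrm{Perm}_n\times\mathrm{Perm}_2$ invariance of $m_{n+2}$, is the genuinely substantive claim and I expect it to be the main obstacle. The $\mathrm{Perm}_2$ part — symmetry in the last two slots — should follow directly because the constraint $n-1\sim_\pi n$ treats the last two indices symmetrically and $M_{|\pi|}$ is itself symmetric (it descends from $S^{|\pi|}V$); swapping the last two arguments permutes nothing in the sum. The harder $\mathrm{Perm}_n$ part — full symmetry in the first $n$ slots — I would again prove by induction, exploiting that $M_n$ is fully symmetric as a map on $S^nV$ while the summation condition singles out the last two positions. The strategy is to show that the asymmetry introduced by privileging indices $n-1,n$ in the recursion is illusory: rewriting the sum after a transposition of two of the first $n$ arguments and invoking the symmetry of all lower $m_k$ (induction hypothesis) together with the total symmetry of $M_k$, the two expressions should coincide term by term. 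The crux will be verifying that the constraint set of partitions is stable under the relevant reindexing and that the Koszul signs $\ep(\pi)$ track correctly; this is precisely the kind of combinatorial identity that the paper defers to Section~\ref{sec:maintheory}, so I would present the invariance as following from the same partition-sum manipulations, checking the base cases $n=0,1$ (where $m_2$ and $m_3$ are symmetric by (1) and a direct computation) explicitly and leaving the inductive step as the combinatorial heart of the argument.
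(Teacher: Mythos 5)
The paper never writes this proof out: Lemma~\ref{oflato} is among the statements declared to be ``straightforward combinatorial inductions or special cases'' of the graded Lemma~\ref{flato}, whose proof is left as an exercise. So the benchmark is exactly the induction you outline, and your architecture is the intended one: the inversion formula isolating $m_n$ (using $M_1=\mathrm{id}$ and $|\pi|\neq 1$), property (1) from the $n=2$ case, and properties (2), (3) by induction on arity. Your treatment of (1) is correct, and your treatment of (3) is essentially correct once one notes two small points: the $\mathrm{Perm}_2$ part does \emph{not} follow ``directly'' but also needs the induction hypothesis, since swapping $y,z$ changes the arguments $m(v_{\varsigma^c},y,z)$ of every inner $m$; and the Koszul signs you worry about are vacuous here, because the ungraded defining recursion carries no $\ep(\pi)$ at all. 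The $\mathrm{Perm}_n$ part via the bijection $\varsigma\mapsto\tau(\varsigma)$ on the index set of admissible partitions is exactly the right mechanism.

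The genuine gap is in property (2). Your claim that when $1_V$ sits in a singleton block ``the corresponding $M_{|\pi|}$ term vanishes by the unit constraint on $M$'' is false: the unit constraint $M_{k+1}(v_1,\dotsc,v_k,1_V)=M_k(v_1,\dotsc,v_k)$ \emph{reduces arity}, it never annihilates a term, and nothing in the axioms makes $M_{|\pi|}\bigl(\dotsc,1_V,\dotsc,m(v_{B_{|\pi|}})\bigr)$ equal to zero. With your bookkeeping, for $v_j=1_V$ with $j\le n-2$ every term on the right would be discarded (singleton terms ``vanish,'' last-block terms vanish by induction since that block then has size $\ge 3$), leaving $m_n=M_n(\dotsc,1_V,\dotsc)=M_{n-1}\neq 0$, so the proof fails precisely where you predicted the bookkeeping would be delicate. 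The correct accounting: index the admissible partitions by the set $\varsigma\subseteq[n-2]$ of singletons. For $j\in\varsigma$, the unit constraint and the symmetry of $M$ collapse the term to $M_{|\varsigma|}\bigl(v_{\varsigma\setminus\{j\}},m(v_{\varsigma^c},v_{n-1},v_n)\bigr)$, and the sum of these over all $\varsigma\ni j$ (reindexed by $\varsigma'=\varsigma\setminus\{j\}$) reproduces exactly the defining recursion of $M_{n-1}$ evaluated on $(v_1,\dotsc,v_n)$ with $v_j$ omitted; it is this entire reassembled sum, not any single term, that cancels $M_n(\dotsc,1_V,\dotsc)=M_{n-1}$. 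The terms with $j\in\varsigma^c$ vanish by the induction hypothesis. Your size-two-block cancellation is correct but only handles the case $j\in\{n-1,n\}$, where $1_V$ can never occupy a singleton.
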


\begin{corollary} A unital commutative associative algebra $(A, 1_A,\cdot)$ is a \CorAname{} $(A,\underline{m})$ with $\underline{m}=m_2,0,0,0,\dotsc$ and $x\cdot y= m_2(x,y)$, for all $x,y\in A$, and vice versa.
\end{corollary}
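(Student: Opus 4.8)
The plan is to establish the two implications separately, in both cases leaning on the bijective correspondence $\underline{M}\leftrightarrow\underline{m}$ underlying the definition of a \CorAname{}. This correspondence is triangular: isolating the single-block ($|\pi|=1$) summand of the defining recursion, namely $M_1\big(m_n(v_1,\dotsc,v_n)\big)=m_n(v_1,\dotsc,v_n)$, expresses $m_n$ in terms of $M_n$ and the lower $M_{|\pi|}$ (this is precisely Definition~\ref{cruciadef}), and conversely expresses $M_n$ in terms of $m_n$ and the $M_{|\pi|}$ with $|\pi|<n$. Hence $\underline{M}$ and $\underline{m}$ determine each other and I may pass freely between them.

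For the passage from an algebra to a \CorAname{}, given $(A,1_A,\cdot)$ unital commutative and associative I would set $M_n(v_1,\dotsc,v_n)\ceq v_1\cdot v_2\cdots v_n$, the iterated product. This is unambiguous by associativity and descends to a map $S^nA\to A$ by commutativity; moreover $M_1$ is the identity and $M_{n+1}(v_1,\dotsc,v_n,1_A)=M_n(v_1,\dotsc,v_n)$ since $1_A$ is a unit, so $(A,\underline{M})$ is a \CorAname{}. It then remains to compute the derived family $\underline{m}$ and confirm it equals $m_2,0,0,\dotsc$ with $m_2=\cdot$. The decisive combinatorial point is that the constraint $|B_{|\pi|}|=n-|\pi|+1$ forces every block but the last to be a singleton. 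An induction on $n$ then shows $m_n=0$ for $n\ge 3$: by the inductive hypothesis $m_{\ge 3}=0$, so in the recursion of Definition~\ref{cruciadef} the only surviving summand is the $|\pi|=n-1$ term $M_{n-1}(v_1,\dotsc,v_{n-2},v_{n-1}\cdot v_n)=v_1\cdots v_n=M_n(v_1,\dotsc,v_n)$, whence $m_n=M_n-M_n=0$; the base case is $m_2=M_2=\cdot$.

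For the reverse direction, given a \CorAname{} $(A,\underline{m})$ with $\underline{m}=m_2,0,0,\dotsc$, I would set $x\cdot y\ceq m_2(x,y)$ and recover the algebra axioms exactly as in the Corollary of Section~\ref{subs: when assoc}. Unitality is immediate from Lemma~\ref{oflato}(1). For commutativity I would invoke the $n=2$ instance of the recursion, which gives $M_2=m_2$, together with the fact that $M_2$ factors through $S^2A$, so that $m_2(x,y)=m_2(y,x)$. For associativity I would evaluate the $n=3$ recursion: with $m_3=0$ it collapses to $M_3(v_1,v_2,v_3)=M_2\big(v_1,m_2(v_2,v_3)\big)=m_2\big(v_1,m_2(v_2,v_3)\big)$, and the symmetry of $M_3$ as a map on $S^3A$, combined with the commutativity just established, yields $m_2\big(v_1,m_2(v_2,v_3)\big)=m_2\big(m_2(v_1,v_2),v_3\big)$.

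I do not anticipate a serious obstacle: each step is bookkeeping of which partitions survive the block-size and $n-1\sim_\pi n$ constraints in the recursion. The one point deserving genuine care is the interplay between the symmetry of the symmetric-power maps $M_n$ and the algebra axioms --- that symmetry of $M_2$ encodes exactly commutativity and symmetry of $M_3$ (given commutativity) encodes exactly associativity --- together with the check that the derived $\underline{m}$ in the first implication is literally $m_2,0,0,\dotsc$, which rests on the triangularity noted above and so needs only to be stated rather than recomputed.
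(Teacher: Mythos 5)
Your proposal is correct and follows essentially the route the paper intends: the paper leaves this corollary as a ``straightforward combinatorial induction'' (cf.\ the lemma in Section~\ref{subs:corralg} whose proof is ``Exercise''), and your two halves reproduce exactly the arguments the paper does spell out nearby --- the induction in the remark after Proposition~\ref{crucia} showing $m_n=M_n-x_1\cdots x_n$ collapses to zero, and the symmetry-of-$M_3$ plus commutativity derivation of associativity in the proof of the Corollary of Sect.~\ref{subs: when assoc}. Nothing further is needed.
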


\begin{definition}
An \emph{\algebraic{} probability space} is a \CorAname{} $A_{\Comm}=(A, \underline{M})$ together with a linear map $\iota: A\rightarrow \fieldk$ such that $\iota(1_A)=1$.
A \emph{random variable} is an element of $A$.
The \emph{moment morphism} $\underline{\m}=\m_1,\m_2,\cdots$ is defined as 
\[
\m_n=\iota\circ M_n.
\]
The \emph{cumulant morphism} $\underline{\k}=\k_1,\k_2,\cdots$ is defined recursively by the relation
\[
\iota\big(M_n(x_1,\dotsc, x_n)\big)=\sum_{\pi \in P(n)} \k\big(x_{B_1}\big) \cdots\k\big( x_{B_{|\pi|}}\big).
\]
We say that two random variables $x$ and $y$ are \emph{independent} if $\k_n(x+y,\dotsc,x+y)=\k_n(x,\dotsc,x)+\k_n(y,\dotsc, y)$ for all $n\geq 1$.
\end{definition}

\begin{remark} 
It is obvious that this definition coincides with that of a complete pre-algebraic probability space. It follows that the central limit theorem, {\bf Theorem \ref{centrallimit}}, also holds of \algebraic{} probability spaces over an appropriate ground field $\fieldk$.
\end{remark}

\begin{example}
Consider a binary \padj{} probability space $\xymatrix{\sC_{\binarycomm}\ar[r]^c &\fieldk}$ concentrated in non-positive degree. The only linear map of negative degree from $S^n\sC$ to $\fieldk$ is zero, so there is a unique descendant $\underline{\phi}^\mc=\phi^\mc_1,\phi^\mc_2,\cdots$ of the expectation $\mc$. We have $\phi^\mc_1=\mc$ and $\phi^\mc_n:S^n\sC^0\rightarrow \fieldk$. 
Consider a complete space $\comprandvars$ of homotopical random variables.
A priori the underlying vector space $\sS$ is also concentrated in non-positive degree, while both the moment morphism $\underline{\m}^\comprandvars$ and the cumulant morphisms $\underline{\k}^\comprandvars$ are non-zero maps only on $S^n(\sS^0)$ for all $n\geq 1$. Hence we have the structure of an \algebraic{} probability algebra on $\sS^0\simeq H^0$.
Note that every element in $\sS\simeq H$ with negative degree is independent of everything and invisible to both moments and cumulants. Nevertheless, such elements form an integral part of our definition of $\comprandvars$ because they are used in the unital $sL_\infty$-quasi-isomorphisms between $\sS$ and $\sC_{\binarycomm}$. If our binary \padj{} probability space is a homotopical realization of the classical algebraic probability space $\xymatrix{A_{\classical}\ar[r]^\iota &\fieldk}$ with infinitesimal symmetry $\vr:\mg \rightarrow L\!\Diff_\fieldk(A)$ then the \algebraic{} probability space structure on $\sS^0$ is that on $A_\mg\simeq \sS^0$.
\hfill$\natural$
\end{example}

Denote by $\category{\Prob}_{\Comm}{\overk}$ the category of {\CorAnames{}} whose objects are {\CorAnames{}} and whose morphisms are unit preserving linear maps. Then the ground field $\fieldk$ is the initial object of $\category{\Prob}_{\Comm}{\overk}$. 
Define the descendant $\underline{\phi}^f$ of a morphism $f:A_{\Comm}\rightarrow A^\pr_{\Comm}$ in $\category{\Prob}_{\Comm}{\overk}$ by the recursive relation
\[
f\big(M_n(x_1,\dotsc,x_n)\big)=\sum_{\pi\in P(n)}M^\pr_{|\pi|}\left(\phi^f(x_{B_1}),\dotsc, \phi^f(x_{B_{|\pi|}})\right),
\]
There is also the following equivalent exercise, which is proven by a straightforward, tedious induction.
\begin{lemma}\label{altmor}
For all $n\geq 2$ and for all $x_1,\dotsc,x_n \in A$,
\begin{align*}
\phi^f_{n}\big(&x_1,\dotsc, x_{n}\big)
\\
=
&
\!\!\!\!\!\!\sum_{\substack{\pi \in P(n)\\ |B_{|\pi|}|= n-|\pi|+1\\ n-1\sim_\pi n}}\!\!\!\!\!\!\!
\phi^f\!\!\left(x_{B_1},\dotsc, x_{B_{|\pi|-1}}, m\big(x_{B_{|\pi|}}\big)\right)
-\sum_{\substack{\pi \in P(n)\\ n-1\nsim_\pi n}}\!\!
m^\pr_{|\pi|}\left(\phi^f(x_{B_1}),\dotsc,\phi^f(x_{B_{|\pi|}})\right).
\end{align*}
\end{lemma}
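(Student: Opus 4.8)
\textbf{Plan of proof for Lemma \ref{altmor}.}

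The plan is to prove the alternative formula for the descendant $\underline{\phi}^f$ by induction on $n$, comparing two recursive characterizations of $\phi^f_n$: the defining relation
\[
f\big(M_n(x_1,\dotsc,x_n)\big)=\sum_{\pi\in P(n)}M^\pr_{|\pi|}\left(\phi^f(x_{B_1}),\dotsc, \phi^f(x_{B_{|\pi|}})\right),
\]
and the claimed ``$m$-sorted'' formula in the statement. First I would establish the base case $n=2$ directly: since $M_2=m_2$ and $M^\pr_1$ is the identity, the defining relation for $n=2$ reads $f(m_2(x_1,x_2)) = \phi^f_2(x_1,x_2) + M^\pr_2(\phi^f(x_1),\phi^f(x_2))$, which rearranges to $\phi^f_2(x_1,x_2)=f(m_2(x_1,x_2)) - m^\pr_2(\phi^f(x_1),\phi^f(x_2))$, matching the claimed formula (the first sum having only the trivial partition $\{1,2\}$ and the second sum only $\{1\}\sqcup\{2\}$).

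For the inductive step, the key tool is the recursive definition of $\underline{m}$ from $\underline{M}$ (Definition \ref{cruciadef}, equivalently the defining relation for a \GCCorAname{}), which expresses $M_n$ as a sum over partitions whose last block $B_{|\pi|}$ contains both $n-1$ and $n$, with the last block fed through $m$. The strategy is to substitute this expansion of $M_n$ inside $f\big(M_n(x_1,\dotsc,x_n)\big)$, use the defining relation for $\underline{\phi}^f$ applied to each of the resulting $M_{|\pi|}$ terms (which involve fewer than $n$ arguments once the last block is grouped), and then reorganize the double sum over partitions. Simultaneously I would expand the right-hand side $\sum_{\pi}M^\pr_{|\pi|}(\dotsc)$ using the recursive definition of $m^\pr$ from $M^\pr$ to express each $M^\pr_{|\pi|}$ in terms of iterated $m^\pr$'s. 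The goal is to show that after cancellation, the terms collect exactly into the two sums appearing in the claimed identity: the sum with $n-1\sim_\pi n$ and last block passed through $m$, and the correction sum with $n-1\nsim_\pi n$ passed through $m^\pr_{|\pi|}$.

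The main obstacle will be the combinatorial bookkeeping of signs and partition refinements: one must carefully track how a partition $\pi$ of $[n]$ with $n-1$ and $n$ in separate blocks versus the same block contributes, and verify that the Koszul signs $\ep(\pi)$ and $\ep(\pi,i)$ align correctly when passing a block through $m$ versus $m^\pr$. Since we are in the ungraded-looking commutative setting here but the general statement must be compatible with the graded case, I would organize the induction so that the partition-sorting argument factors through the symmetry properties of $m_n$ and $m^\pr_n$ established in Lemma \ref{oflato}, in particular property \ref{property:invariant}, to guarantee that regrouping arguments into blocks is well-defined. Because the statement is asserted to be a ``straightforward, tedious induction,'' the conceptual content is modest; the work is entirely in matching the two partition expansions term by term, and I expect the only genuine subtlety to be confirming that the partitions counted with $n-1\nsim_\pi n$ on the two sides correspond bijectively after the $M^\pr$-to-$m^\pr$ conversion.
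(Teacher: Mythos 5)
Your proposal is correct and takes essentially the same route as the paper, which gives no details at all and merely declares the lemma ``proven by a straightforward, tedious induction''---precisely the induction you describe, with the base case $n=2$ checked correctly and the inductive step consisting of substituting the $M$-to-$m$ recursion into the defining relation for $\underline{\phi}^f$, converting $M^\pr$ to $m^\pr$, and reorganizing the double partition sum. One minor simplification: Lemma \ref{altmor} is stated for \CorAnames{}, i.e.\ in the ungraded setting, so no Koszul signs arise and that part of your bookkeeping is vacuous.
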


Recall the category $\category{\Prob}_{\Lie}{\overk}$, which was defined in Sect.~\ref{subs:cats for algprob}. We can express $\Des$ as a functor from $\category{\Prob}_{\Comm}{\overk}$ to $\category{\Prob}_{\Lie}{\overk}$. The functor $\Des$ takes the object $A_{\classical}=\big(A,1_{A},\underline{m}\big)$ to $A_{\Lie}=\big(A, 1_{A}\big)$ and takes the morphism $f$ to its descendant $\underline{\phi}^f$.

Now we may regard \algebraic{} probability theory as the study of the category $\category{\Prob}_{\Comm}{\overk}$ and the functor $\Des$ into the category $\category{\Prob}_{\Lie}{\overk}$, where a morphism $\iota$ to the initial object $\fieldk$ in $\category{\Prob}_{\Comm}{\overk}$ has a special interpretation as an expectation map whose descendant $\underline{\phi}^\iota$ is the cumulant morphism $\underline{\k}$.

\subsection{Formal geometry of \algebraic{} probability spaces}\label{subs:formal geo I}

The purpose of this section is to examine some geometrical aspects of finite dimensional \algebraic{} probability spaces.

Let $V_{\Comm}=(V,\underline{m})$ be a finite dimensional \CorAname{}.
Choose a basis $\{e_\a\}_{\a \in J}$ of $V$ with the distinguished element $e_0= 1_{V}
\in V$. Let $t_{V}=\{t^\a\}_{\a\in J_{}}$ be the dual basis which defines a coordinate system on $V$. 
Let $\rd_\a =\fr{\rd}{\rd  t^\a}$ be the linear derivation on $\fieldk[\![t_{V}]\!]$ defined by $\rd_\a t^\b=\d_\a{}^\b$ and $\rd_\a (X\cdot Y) = \rd_\a X\cdot Y +X\cdot \rd_\a Y$ for all $X,Y \in \fieldk[\![t_{V}]\!]$.
Consider the family of structure constants $m_{\a_1\a_2}{}^\b, m_{\a_1\a_2\a_3}{}^\b,\dotsc$ for $\underline{m}=m_2,m_3,\dotsc$
 so that, for all $n\geq 2$,
\[
m_n\left(e_{\a_1},\dotsc,e_{\a_n}\right) =\sum_{\b\in J} m_{\a_1\cdots \a_n}{}^\b e_\b.
\]
From property~\ref{property:invariant} in {\bf Lemma \ref{oflato}}, we can form a formal power series $A_{\a\b}{}^\g \in \fieldk[\![t_{V}]\!]$ defined by 
\[
 A_{\a\b}{}^\g \ceq m_{\a\b}{}^\g +\sum_{n\geq 1}\Fr{1}{n!}\sum_{\r_1,\dotsc,\r_n \in J}
 t^{\r_n}\cdots t^{\r_1}m_{\r_1\cdots \r_n\a\b}{}^\g,
\]
representing a $(2,1)$-tensor field on ${V}$. 
\begin{lemma}\label{oflatcoor}
The formal power series $A_{\a\b}{}^\g \in \fieldk[\![t_{V}]\!]$ satisfy the following equations for all $\a,\b,\g,\s \in J$:
\begin{align*}
A_{0\b}{}^\g &=\d_\b{}^\g;
\\
A_{\a\b}{}^\g &=A_{\b\a}{}^\g;
\\\rd_{\a}{A}_{\b\g}{}^\s -\rd_{\b}{A}_{\a\g}{}^\s 
+\sum_{\r\in J}\left({A}_{\b\g}{}^\r A_{\a\r}{}^\s-{A}_{\a\g}{}^\r A_{\b\r}{}^\s\right)
&=0.
\end{align*}
\end{lemma}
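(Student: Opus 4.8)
The three equations in Lemma~\ref{oflatcoor} are exactly the statements that $A_{\a\b}{}^\g$ is the connection one-form of a torsion-free flat affine connection with $e_0=1_V$ behaving as a unit direction. My strategy is to prove all three by reducing everything to the defining recursion for $\underline{M}$ in terms of $\underline{m}$ (the displayed relation in the definition of \GCCorAname{}) together with the symmetry/unit properties collected in Lemma~\ref{oflato}. The conceptual key is to recognize $A_{\a\b}{}^\g$ as the second partial derivative of a potential-like object: if I set $T^\g \ceq t^\g + \sum_{n\geq 1}\frac{1}{n!}\sum t^{\r_n}\cdots t^{\r_1} M_{\r_1\cdots\r_n}{}^\g$ (the analogue of $T_\comprandvars^\g$ but built from the structure constants of $\underline M$), then the recursion relating $\underline M$ to $\underline m$ should give a clean identity of the schematic form $\rd_\a\rd_\b T^\g = \sum_\r A_{\a\b}{}^\r\,\rd_\r T^\g$, i.e. $A$ is literally the Christoffel symbol expressing the non-flatness of the frame $\{\rd_\a\}$ relative to the flat frame $\{\tilde\rd_\a\}$ coming from the coordinates $T^\g$. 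Once $A$ is exhibited this way, flatness and torsion-freeness become formal consequences of the equality of mixed partials.

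\textbf{Order of steps.} First I would unwind the generating-function definition of $A_{\a\b}{}^\g$ into its meaning term by term, writing $A_{\a\b}{}^\g = \sum_{n\ge 0}\frac{1}{n!}\sum t^{\r_n}\cdots t^{\r_1} m_{\r_1\cdots\r_n\a\b}{}^\g$ and noting that $\rd_\g A_{\a\b}{}^\s$ simply shifts the count, attaching $e_\g$ as an extra lowered index. Second, for the unit equation $A_{0\b}{}^\g=\d_\b{}^\g$, I would substitute $\a=0$ (so $e_\a=1_V$) and invoke the unit constraints on $\underline m$ from Lemma~\ref{oflato}(1)--(2): $m_2(x,1_V)=x$ forces the $n=0$ term to be $\d_\b{}^\g$, while $m_n(\dots,1_V,\dots)=0$ for $n\ge 3$ kills every higher term. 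Third, symmetry $A_{\a\b}{}^\g=A_{\b\a}{}^\g$ follows directly from the $\hbox{Perm}_n\times\hbox{Perm}_2$-invariance of $m_{n+2}$ recorded in Lemma~\ref{oflato}(\ref{property:invariant}), which lets me swap the last two indices $\a,\b$ inside each $m_{\r_1\cdots\r_n\a\b}{}^\g$. Fourth, and last, the flatness/zero-curvature equation is the substantial one: I would expand $\rd_\a A_{\b\g}{}^\s - \rd_\b A_{\a\g}{}^\s$ explicitly and show that the antisymmetric combination cancels against the quadratic term $\sum_\r(A_{\b\g}{}^\r A_{\a\r}{}^\s - A_{\a\g}{}^\r A_{\b\r}{}^\s)$.

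\textbf{The main obstacle.} The hard part will be the flatness equation. Neither derivative term nor the quadratic correction vanishes individually, so I cannot argue termwise; the cancellation is genuinely an identity among the structure constants $m_{\r_1\cdots\r_k}{}^\b$. The honest way to produce it is to differentiate the recursion defining $\underline M$ from $\underline m$ twice and extract the integrability condition, which amounts to a reassociation identity for the $m_n$ encoded in the \GCCorAname{} axioms. Concretely, I expect the cleanest route is to prove the auxiliary potential identity $\rd_\a\rd_\b T^\g=\sum_\r A_{\a\b}{}^\r\rd_\r T^\g$ first (this is a direct consequence of the $\underline M$--$\underline m$ recursion and the invertibility of the matrix $\rd_\b T^\g = \d_\b{}^\g + O(t)$), and then obtain flatness for free: applying $\rd_\a\rd_\b\rd_\g$ to $T^\s$ two ways and using $[\rd_\a,\rd_\b]=0$ yields exactly the stated zero-curvature relation after multiplying by the inverse of $\{\rd_\b T^\g\}$. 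So the real work is establishing that potential identity, i.e. checking that the recursively defined $A_{\a\b}{}^\g$ is precisely $\sum_\r(\rd_\a\rd_\b T^\r)(\rd T)^{-1}{}_\r{}^\g$; this is a bookkeeping induction on word length that I would set up carefully but not grind out here, since the paper signals that such combinatorial inductions are routine in this context.
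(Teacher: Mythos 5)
Your proposal matches the paper's own proof: the paper derives this lemma as the ungraded special case of Lemma \ref{finitec}, whose proof gets the unit and symmetry equations from Lemma \ref{flato} (the graded form of Lemma \ref{oflato}) and obtains flatness exactly as you describe, by differentiating the potential identity $\rd_\b\rd_\g T^\s = \sum_{\r\in J} A_{\b\g}{}^\r \rd_\r T^\s$ and using $\rd_\a\rd_\b-\rd_\b\rd_\a=0$ together with invertibility of the matrix $\sG_\b{}^\g = \rd_\b T^\g$. The bookkeeping induction you defer is precisely the content of the paper's Lemma \ref{flatcoor}, which is carried out there by extracting Taylor coefficients via the (graded) Fa\'{a} di Bruno formula and matching them against the $\underline{M}$--$\underline{m}$ recursion of Definition \ref{GCCorA}.
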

This lemma is a special case of {\bf Lemma} \ref{finitec}.

Let $M_{\a_1}{}^\b, M_{\a_2\a_2}{}^\b,\dotsc$ be the family of structure constants such that, for all $n\geq 1$,
\[
M_n\left(e_{\a_1},\dotsc,e_{\a_n}\right) =\sum_{\b\in J} M_{\a_1\cdots \a_n}{}^\b e_\b.
\]
Recall that the families $\underline{m}$ and $\underline{M}$ uniquely determine one another.
\begin{lemma}\label{gfff}
There is a $1$-tensor field $T^\g$ in the power series ring $\fieldk[\![t_{V}]\!]$ uniquely determined by the following system of formal differential equations, valid for all $\b,\g,\s \in J$:
\begin{align*}
\left(\rd_\b\rd_\g -\sum_{\r\in J} A_{\b\g}{}^\r\rd_\r\right)T^\s&=0;
\\
\rd_\g T^\s\big|_{\underline{t}=\underline{0}}&=\d_\g{}^\s;
\\
T^\s\big|_{\underline{t}=\underline{0}}&=0;
\\
\left(\Fr{\rd}{\rd t^0} -1\right)T^\s &=\d_0{}^\s.
\end{align*}
More specifically, for all $n\geq 1$ and $\a_1,\dotsc,\a_n,\s \in J$,
\[
\rd_{\a_1}\cdots\rd_{\a_n} T^\s \Big|_{\underline{t}=\underline{0}}
=M_{\a_1\cdots\a_{n}}{}^\s.
\]
\end{lemma}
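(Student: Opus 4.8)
The plan is to prove Lemma~\ref{gfff} by showing that the formal power series
\[
T^\s \ceq \sum_{n=1}^\infty\Fr{1}{n!}\sum_{\a_1,\dotsc,\a_n\in J}t^{\a_n}\cdots t^{\a_1}M_{\a_1\cdots\a_n}{}^\s
\]
satisfies the four stated equations, and that these equations determine $T^\s$ uniquely. The essential structural input is that $\underline{M}$ determines $\underline{m}$ and vice versa, so that $A_{\a\b}{}^\g$ (built from the $m$-structure constants) and $T^\s$ (built from the $M$-structure constants) are linked by exactly the recursive relation defining a \CorAname{}.

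First I would verify the three boundary/normalization conditions, which are immediate from the Taylor-coefficient formula. Since $M_1$ is the identity, $M_\a{}^\s=\d_\a{}^\s$, giving the constant term zero ($T^\s|_{\underline t=\underline 0}=0$) and the linear term $\rd_\g T^\s|_{\underline t=\underline 0}=M_\g{}^\s=\d_\g{}^\s$. For the equation $(\rd_0-1)T^\s=\d_0{}^\s$, I would use the unit constraint $M_{n+1}(v_1,\dotsc,v_n,1_V)=M_n(v_1,\dotsc,v_n)$ from the definition of a \CorAname{}, which in structure constants reads $M_{\a_1\cdots\a_n 0}{}^\s=M_{\a_1\cdots\a_n}{}^\s$ together with the symmetry of $M_n$ (all indices permutable since $M_n:S^n V\to V$). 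Differentiating $T^\s$ by $t^0$ lowers word length by one and, via this unit relation, reproduces $T^\s$ itself up to the order-one discrepancy encoded by $\d_0{}^\s$; this is the standard computation showing $\rd_0$ acts as an Euler-type shift adjusted by the unit.

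The central equation is the second-order PDE $\big(\rd_\b\rd_\g-\sum_\r A_{\b\g}{}^\r\rd_\r\big)T^\s=0$. Here I would compute $\rd_\b\rd_\g T^\s$ as a sum over word length, whose general coefficient is $M_{\r_1\cdots\r_k\g\b}{}^\s$ (after using symmetry of $M$ to place $\b,\g$ last), and compute $\sum_\r A_{\b\g}{}^\r\rd_\r T^\s$ by multiplying the two power series $A_{\b\g}{}^\r$ and $\rd_\r T^\s$ and collecting terms of each word length. Matching coefficients reduces the PDE to precisely the recursive relation
\[
M_n(v_1,\dotsc,v_n)=\!\!\!\sum_{\substack{\pi\in P(n)\\|B_{|\pi|}|=n-|\pi|+1\\ n-1\sim_\pi n}}\!\!\!M_{|\pi|}\big(v_{B_1},\dotsc,v_{B_{|\pi|-1}},m(v_{B_{|\pi|}})\big)
\]
that defines the \CorAname{}, once one identifies $A_{\b\g}{}^\r$ with the generating function of the $m$-structure constants and reads off that the blocks $B_1,\dotsc,B_{|\pi|-1}$ are singletons while the last block carries the two distinguished indices. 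The main obstacle I anticipate is bookkeeping: correctly tracking which partitions arise from the product $A\cdot\rd T$ and confirming that the combinatorial factors $1/n!$ and the singleton-block constraints align exactly with the \CorAname{} recursion, rather than any subtlety of convergence or uniqueness. Uniqueness itself is then formal: the conditions $T^\s|_{\underline 0}=0$ and $\rd_\g T^\s|_{\underline 0}=\d_\g{}^\s$ fix word lengths zero and one, and the PDE expresses each higher Taylor coefficient $\rd_{\a_1}\cdots\rd_{\a_n}T^\s|_{\underline 0}=M_{\a_1\cdots\a_n}{}^\s$ in terms of lower-order data, so induction on word length yields the final displayed formula and pins down $T^\s$ completely.
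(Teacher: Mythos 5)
Your proposal is correct and takes essentially the same route as the paper: the paper proves this lemma (as the ungraded special case of its graded Lemmas \ref{flatcoor} and \ref{finited}) by the identical coefficient-matching induction, applying $\rd_{\a_1}\cdots\rd_{\a_{n-2}}$ to the PDE, expanding the product $A_{\b\g}{}^\r\,\rd_\r T^\s$ by the (Fa\'a di Bruno) product rule, and setting $\underline{t}=\underline{0}$ to identify the resulting relation with the defining recursion of a \CorAname{}, which forces every Taylor coefficient of any solution to equal $M_{\a_1\cdots\a_n}{}^\s$. The only minor deviation is your derivation of $\left(\rd_0-1\right)T^\s=\d_0{}^\s$ directly from the unit constraint $M_{n+1}(v_1,\dotsc,v_n,1_V)=M_n(v_1,\dotsc,v_n)$, whereas the paper deduces it from the PDE combined with $A_{0\b}{}^\g=\d_\b{}^\g$; both are one-line arguments and equally valid.
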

This lemma is a special case of {\bf Lemma} \ref{flatcoor} and \ref{finited}.

\begin{remark}\label{remark:intro to affine}
A smooth real manifold $X$ is called \emph{affinely flat} or simply \emph{affine} if there is a torsion-free flat affine connection on the tangent bundle $TX$. On a coordinate neighborhood $U$ with local coordinates $\{x^1,\dots,x^n\}$, an affine connection $\nabla$ is represented by
\[
\nabla_{\!\rd_i} \rd_j =\sum_k \G_{ij}{}^k \rd_k,
\]
where we have set $\rd_j=\fr{\rd}{\rd x^j}$.
The connection is torsion-free if $\G_{ij}{}^k=\G_{ji}{}^k$, and flat if
\[
\rd_i \G_{jk}{}^\ell -\rd_j \G_{ik}{}^\ell -\sum_m\left(\G_{ik}{}^m \G_{jm}{}^\ell -\G_{jk}{}^m \G_{im}{}^\ell\right)=0. 
\]
Then, one basic result of Cartan geometry \cite{KN} is that there is a flat coordinate system $\{y^1(\underline{x}),\dots, y^n(\underline{x})\}$ unique up to translation such that
\[
\nabla_{\tilde\rd_j} \tilde\rd_k =0,
\]
where $\tilde\rd_j=\fr{\rd}{\rd y^j}$. Then let $g_{j}{}^k =\fr{\rd y^k}{\rd x^j}$. We have $\tilde\rd_j=\sum_k (g^{-1})_j{}^k\rd_k$ such that
\[
\nabla_{\tilde\rd_i}\tilde\rd_j = \sum_m (g^{-1})_i{}^m\left( \rd_m (g^{-1})_j{}^k +\sum_{\ell} (g^{-1})_j{}^\ell \G_{m\ell}{}^k\right)\rd_k
\]
Hence $\nabla_{\tilde\rd_i}\tilde\rd_j =0$ is equivalent to the following.
\[
\rd_m (g^{-1})_j{}^k + \sum_\ell (g^{-1})_j{}^\ell \G_{m\ell}{}^k=0.
\]
Using matrix multiplication to move coefficients around and distributing using the Leibniz rule, we get
\[
\G_{m\ell}{}^k=-\sum_j g_{\ell}{}^j\rd_m(g^{-1})_j{}^k=\sum_j \left(\rd_m g_\ell{}^j\right)(g^{-1})_j{}^k
\]
which implies by inverting $(g^{-1})_j{}^k$ that
\[ 
\left(\rd_m \rd_\ell -\sum_k\G_{m\ell}{}^k \rd_k\right) y^j =0.
\]
\hfill$\natural$
\end{remark}

Hence a finite dimensional \CorAname{} has the following geometric interpretation. Consider a formal based manifold $\sM$ whose algebra of functions is the topological algebra $\fieldk[\![t_V]\!]$ and whose tangent space at the base point $o$ is isomorphic to the affine space $V$ after forgetting the origin.
Then the formal $(2,1)$-tensor $\{A_{\a\b}{}^\g\}$ can be interpreted as the connection one-form for a flat torsion-free affine connection $\nabla$ on $T\sM$ in a coordinate neighborhood of $o \in \sM$, i.e., $\nabla_{\rd_\a} \rd_\b =\sum_\g A_{\a\b}{}^\g\rd_\g$ and $A_{0\b}{}^\g =\d_{\b}{}^\g$. The tensor field $\{T^\g\}$ constitutes affine flat coordinates satisfying the last three conditions in {\bf Lemma \ref{gfff}}.

That is, define $\sG_{\b}{}^\g\ceq \rd_\b T^\g$. Then $\sG$, regarded as a square matrix, is invertible and we can define $\tilde\rd_\a = \sum_\g (\sG^{-1})_\a{}^\g\rd_\g$. Then $\nabla_{\tilde\rd_\a}\tilde \rd_\b =0$.

\section{Homotopy Probability Spaces}\label{sec:maintheory}

We have seen that \algebraic{} probability spaces constitute more than an idle generalization of classical algebraic probability spaces. The theory has the same universality as its classical counterpart and, on the space of coinvariants $A_\mg$ of a classical algebraic probability space with an infinitesimal symmetry, there is the structure of an \algebraic{} probability space. In this final section we revisit the theory in full generality, including the homotopy theory of probability spaces.

\subsection{Correlation algebras}\label{subs:corralg}

A \GCCorAname{} is the natural graded version of a \CorAname{}.

\begin{definition}\label{GCCorA}
A \emph{\GCCorAname{}} is a pair $(V, \underline{M})$, where $V$ is a pointed graded vector space
and $\underline{M}$ is a family $\underline{M}=M_1, M_2, \dotsc$,
where $M_n$ is a linear map from $S^n V$ to $V$ of degree $0$ for all $n\geq 1$ such that $M_{n+1}\big(v_1,\dotsc, v_n, 1_V\big)=M_n\big(v_1,\dotsc, v_n\big)$ for all $n\geq 1$ and $M_1$ is the identity on $V$.

We recursively define the family of operators $\underline{m}=m_2,m_3,\ldots$, where $m_n:T^n V\to V$, as follows:
\eqnalign{iterm}{
M_n(v_1,\dotsc, v_n) =
&
\sum_{\substack{\pi \in P(n)\\|B_{|\pi|}|=n-|\pi|+1\\ n-1\sim_\pi n}} 
\ep(\pi) M_{|\pi|}\left( v_{B_1}, \cdots, v_{B_{\pi-1}},
m\big(v_{B_{|\pi|}}\big)\right).
}
\end{definition}

\begin{remark}\label{Faad}
The conditions on the summation imply that the blocks $B_1, \cdots, B_{|\pi|-1}$ are singletons and that both $n-1$ and $n$ belong to the last block $B_{|\pi|}$. It follows that
\begin{align*}
M_n(v_1\otimes\cdots\otimes v_n) =
&
\sum_{\varsigma \subset [n-2]}\ep(\varsigma)
M_{|\varsigma|+1}\left( v_{\varsigma}\otimes m_{|\varsigma|+2}\big(v_{\varsigma^c}\otimes v_{n-1}\otimes v_n\big)\right),
\end{align*}
where the sum is over every subset $\varsigma$ of the ordered set $[n-2]=\{1,2,\cdots, n-2\}$ and 
$v_{\varsigma} = v_{j_1}\otimes\cdots\otimes v_{j_{|\varsigma|}}$
 if $\varsigma=\{j_1,\dotsc, j_{|\varsigma|}\}$. We use the notation
 $\varsigma^c$ for the subset of $[n-2]$ complimentary to $\varsigma$
and $\e(\varsigma)$ for the Koszul sign for $v_{\varsigma}\otimes v_{\varsigma^c}$ with respect to $v_1\otimes\cdots\otimes v_{n-2}$.
\hfill$\natural$
\end{remark}

\begin{example} Here are the explicit relations for $n=2,3,4$:
\begin{align*}
M_2(v_1,v_2) =&m_2(v_1,v_2)
,\\
M_3(v_1, v_2, v_3)=
&m_3(v_1,v_2,v_3)+ M_2(v_1, m_2(v_2, v_3))
\\
=
&m_3(v_1,v_2,v_3)+ m_2(v_1, m_2(v_2, v_3))
,\\
M_4(v_1, v_2, v_3,v_4)=&
m_4(v_1, v_2, v_3,v_4) 
\\
&
+M_2(v_1, m_3(v_2,v_3,v_4)
+(-1)^{|v_1||v_2|}M_2(v_2, m_3(v_1,v_3,v_4)
\\
&
+M_3(v_1,v_2,m_2(v_3,v_4)
\\
=&
m_4(v_1, v_2, v_3,v_4) 
\\
&
+m_2\big(v_1, m_3(v_2, v_3,v_4)\big)
+(-1)^{|v_1||v_2|}m_2\big(v_2, m_3(v_1, v_3,v_4)\big)
\\
&
+m_3\big(v_1, v_2, m_2(v_3, v_4)\big)
+m_2\big(v_1, m_2(v_2, m_2(v_3, v_4))\big)
.
\end{align*}
\end{example}

\begin{remark}
Note that the families $\underline{m}$ and $\underline{M}$  determine one another uniquely. That is, the recurrence relation above is equivalent to the following:
\begin{itemize}
\item $m_2=M_2$,
\item for $n\geq 3$ and $x_1,\dotsc, x_n\in \sC$,
\[
m_n(x_1,\dotsc, x_n)
=
M_n(v_1,\dotsc, v_n) 
-\sum_{\substack{\pi \in P(n)\\|B_{|\pi|}|=n-|\pi|+1\\ n-1\sim_\pi n\\|\pi|\neq 1}} 
\ep(\pi) M_{|\pi|}\left( v_{B_1}, \dotsc, v_{B_{\pi-1}},
m\big(v_{B_{|\pi|}}\big)\right).
\]
\end{itemize}
\hfill$\natural$
\end{remark}

\begin{lemma}\label{flato}
Let $(V, \underline{M})$ be a \GCCorAname{}. Then the family $\underline{m}=m_2,m_3,\cdots$ has the following properties:
\begin{enumerate}
\item\label{flato:unit2} for any $x \in V$, $m_2(x, 1_V)=x$;
\item\label{flato:unithigh} for all $n\geq 3$, $m_n(x_1,\dotsc, x_n)=0$ whenever $x_j =1_V$ for some $j$, $1\leq j\leq n$;
\item\label{flato:S2} for all $n\geq 0$ and $x_1,\dotsc, x_n, y,z \in V$, we have
\[
m_{n+2}\big(x_1,\dotsc, x_n, y,z\big)= (-1)^{|y||z|}m_{n+2}\big(x_1,\dotsc, x_{n}, z, y\big);
\]
\item\label{flato:Sn} for all $n\geq 2$ and $x_1,\dotsc, x_n, y,z \in V$,
we have
\[
m_{n+2}\big(x_1,\dotsc, x_n, y,z\big)= \e(\s)m_{n+2}\big(x_{\s(1)},\dotsc, x_{\s(n)}, y, z\big)
\]
for any $\s \in \hbox{Perm}_n$.
\end{enumerate}
\end{lemma}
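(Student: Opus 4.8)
The plan is to prove all four statements together by a single induction on the arity $N$ of $m_N$, reading everything off from the solved form of the defining recurrence. Isolating the $\varsigma=\emptyset$ term of the identity in Remark~\ref{Faad} and abbreviating $y=x_{N-1}$, $z=x_N$ gives, for every $N\geq 3$,
\[
m_N(x_1,\dots,x_N)=M_N(x_1,\dots,x_N)-\!\!\!\sum_{\emptyset\neq\varsigma\subseteq[N-2]}\!\!\!\e(\varsigma)\,M_{|\varsigma|+1}\!\left(x_\varsigma,\,m_{N-|\varsigma|}(x_{\varsigma^c},y,z)\right),
\]
where every inner factor $m_{N-|\varsigma|}$ has arity strictly smaller than $N$ and is thus controlled by the inductive hypothesis. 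The base case $N=2$ is immediate: $m_2=M_2$ factors through $S^2V$, which is property~\ref{flato:S2} at $n=0$, and $m_2(x,1_V)=M_2(x,1_V)=M_1(x)=x$ is property~\ref{flato:unit2}; properties~\ref{flato:unithigh} and~\ref{flato:Sn} are vacuous at this arity.

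In the inductive step I would first establish the two symmetry statements, since they are needed to finish the unit statements. Property~\ref{flato:S2} follows at once: $M_N$ is graded-symmetric, and since each surviving inner factor $m_{N-|\varsigma|}(x_{\varsigma^c},y,z)$ has arity below $N$, the inductive hypothesis lets me swap $y$ and $z$, producing precisely the Koszul sign $(-1)^{|y||z|}$ on each summand. Property~\ref{flato:Sn}, graded symmetry in the first $N-2$ slots, is the hard part. The point is that $M_{|\varsigma|+1}$ is symmetric in all of its arguments while, by the inductive hypothesis applied to property~\ref{flato:Sn} at lower arity, $m_{N-|\varsigma|}(-,y,z)$ is symmetric in the $x_{\varsigma^c}$ entries; hence the subtracted expression is a sum over all subset-decompositions $[N-2]=\varsigma\sqcup\varsigma^c$ of a quantity depending only on the graded-symmetric content of each block. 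Such an unshuffle sum is graded cocommutative, so the whole right-hand side factors through $S^{N-2}V$ in its first $N-2$ slots, and combined with the symmetry of $M_N$ this is property~\ref{flato:Sn}. The only genuine labor is the Koszul-sign bookkeeping checking that the split terms ($i\in\varsigma,\ i+1\in\varsigma^c$ against the reverse) pair with matching signs; this is the standard cocommutativity-of-deconcatenation computation, which I expect to be the main obstacle.

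Finally I would derive the unit properties from the same recurrence. Setting $x_N=1_V$, the term $M_N(\dots,1_V)$ collapses to $M_{N-1}(\dots)$ by the unit axiom for $\underline{M}$; every inner factor $m_{N-|\varsigma|}(x_{\varsigma^c},x_{N-1},1_V)$ of arity $\geq 3$ vanishes by the inductive hypothesis (property~\ref{flato:unithigh}), and the unique survivor $\varsigma=[N-2]$, with $m_2(x_{N-1},1_V)=x_{N-1}$, reassembles $M_{N-1}(x_1,\dots,x_{N-1})$, so the two contributions cancel and $m_N(\dots,1_V)=0$. A parallel computation with $x_1=1_V$, splitting the sum by whether $1\in\varsigma$ or $1\in\varsigma^c$ and using $M_{k+1}(1_V,\dots)=M_k(\dots)$ together with the vanishing of inner factors containing a unit, telescopes against the recurrence for $m_{N-1}(x_2,\dots,x_N)$ and again yields $0$. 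Every remaining position of a unit is reduced to one of these two cases by the symmetries~\ref{flato:S2} and~\ref{flato:Sn} just proved for $m_N$, which completes property~\ref{flato:unithigh}; property~\ref{flato:unit2} having been settled in the base case, the induction closes.
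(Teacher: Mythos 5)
Your proof is correct. There is nothing in the paper to compare it against: the author disposes of Lemma~\ref{flato} with the single word ``Exercise,'' so your argument is a genuine filling-in rather than a variant of a published proof. The organization you chose --- solving the recurrence of Remark~\ref{Faad} for $m_N$ by isolating the $\varsigma=\emptyset$ term, then running one induction on arity in which the two symmetry statements are proved first and then feed into the unit statements --- is the right one, and each step does what you claim. Two points are worth making explicit in a final write-up. First, in the step for property~\ref{flato:Sn}, your sum runs only over \emph{nonempty} $\varsigma$ (the $\varsigma=\emptyset$ term is the unknown $m_N$ itself), so one cannot appeal to cocommutativity of the full coproduct in one stroke; the argument nevertheless goes through because symmetry holds size-by-size: for each fixed $k=|\varsigma|\geq 1$ the $(k,N-2-k)$-unshuffle with Koszul signs already descends to $S^{N-2}V$, and it is composed with a map that is symmetric in each block separately ($M_{k+1}$ by definition of a \GCCorAname{}, and $m_{N-k}(-,y,z)$ in its first $N-k-2$ slots by the inductive hypothesis), so each size class is invariant and hence so is their sum. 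Second, in the unit computation with $x_1=1_V$, the cancellation is even cleaner than ``telescoping'': every term with $1\in\varsigma^c$ has inner arity $N-|\varsigma|\geq 3$ (since $1\in\varsigma^c$ forces $\varsigma\neq[N-2]$) and dies by the inductive hypothesis, while the terms with $1\in\varsigma$, after applying $M_{|\varsigma|+1}(1_V,\cdots)=M_{|\varsigma|}(\cdots)$ and substituting $\varsigma'=\varsigma\setminus\{1\}$ (no Koszul signs, as $|1_V|=0$), reassemble into the \emph{complete} defining sum for $M_{N-1}(x_2,\dotsc,x_N)$ --- including its $\varsigma'=\emptyset$ term, which is available precisely because $\varsigma=\{1\}$ is nonempty in the original sum --- so the right-hand side collapses to $M_{N-1}-M_{N-1}=0$ on the nose, closing the induction.
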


\begin{proof}
Exercise.
\hfill\qed\end{proof}

Here is the standard example of a \GCCorAname{}.
\begin{lemma}
Let $V$ be a pointed graded vector space and $m_2:V\otimes V\to V$ an operator. The following are equivalent.
\begin{enumerate} 
\item $(V,m_2)$ is a unital graded commutative associative algebra.
\item Let $M_n(v_1,\dotsc,v_n)$ be $m_2(v_1,\dotsc m_2(v_{n-1},v_n))$. Then $(V,\underline{M})$ is a \GCCorAname{}.
\item Let $\underline{m}=m_2,0,0,\dotsc$ and define $\underline{M}$ recursively in terms of $\underline{m}$ as above. Then $M_n$ is an operator $S^nV\to V$ (not just $T^n V\to V$) and $(V,\underline{M})$ is a \GCCorAname{}.
\end{enumerate}
\end{lemma}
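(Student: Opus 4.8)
The plan is to prove the equivalence as a cycle $(1)\Rightarrow(2)\Rightarrow(3)\Rightarrow(1)$, with the understanding that almost all of the content sits in the two directions relating $(1)$ and $(2)$; the passage between $(2)$ and $(3)$ is a bookkeeping identification that makes the two into literally the same assertion. So I would first dispose of that identification, then treat $(1)\Leftrightarrow(2)$.

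First I would show that the family $\underline{M}$ produced in $(3)$ coincides with the iterated right product of $(2)$. Starting from $\underline{m}=m_2,0,0,\dotsc$, I unwind the recursion \eqref{iterm} of Definition~\ref{GCCorA}: the constraint $|B_{|\pi|}|=n-|\pi|+1$ together with the vanishing of $m_k$ for $k\geq 3$ forces the last block to have size $2$, hence $|\pi|=n-1$; since $n-1\sim_\pi n$ and the strictly ordered convention places $n$ in the last block, the unique surviving partition is $\{1\},\dots,\{n-2\},\{n-1,n\}$, whose Koszul sign is $+1$. Thus the recursion collapses to $M_n(v_1,\dots,v_n)=M_{n-1}\big(v_1,\dots,v_{n-2},m_2(v_{n-1},v_n)\big)$, and by induction $M_n=m_2\big(v_1,m_2(v_2,\dots,m_2(v_{n-1},v_n))\big)$, exactly the iterated product of $(2)$. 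Hence $(2)$ and $(3)$ are the same statement about the same $\underline{M}$, and $(2)\Leftrightarrow(3)$ is immediate.

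Next, $(1)\Rightarrow(2)$ is the standard fact that iterated multiplication in a unital graded commutative associative algebra is a \GCCorAname{}: associativity makes $M_n$ unambiguous, the degree-$0$ and unit conditions are immediate, $M_1$ is the identity, and graded commutativity promotes each $M_n$ to a graded-symmetric map $S^nV\to V$, since adjacent transpositions generate $\hbox{Perm}_n$ and act on the product with precisely the Koszul sign.

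The real work is $(2)\Rightarrow(1)$, recovering the algebra axioms from the \GCCorAname{} axioms applied to the iterated product. From $M_2=m_2$ being degree $0$ and the $n=1$ unit condition $M_2(v,1_V)=M_1(v)=v$ I get $m_2(v,1_V)=v$, and graded symmetry of $M_2$ (using $|1_V|=0$) upgrades this to a two-sided unit; graded symmetry of $M_2$ is exactly graded commutativity. The crux is associativity, which I extract from the graded symmetry of $M_3=m_2(v_1,m_2(v_2,v_3))$ under the transposition of the first two slots, namely $m_2(v_1,m_2(v_2,v_3))=(-1)^{|v_1||v_2|}\,m_2(v_2,m_2(v_1,v_3))$. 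Combining this relation with graded commutativity through a short computation (commute the outer product of $m_2(m_2(v_1,v_2),v_3)$, apply the relation, then commute the inner product back) yields $m_2(m_2(v_1,v_2),v_3)=m_2(v_1,m_2(v_2,v_3))$, the two accumulated Koszul signs $(-1)^{(|v_1|+|v_2|)|v_3|}$ cancelling. Finally $(3)\Rightarrow(1)$ is subsumed by the first identification together with $(2)\Rightarrow(1)$. I expect the only delicate point to be this Koszul-sign bookkeeping in the associativity derivation; conceptually the content is simply that graded commutativity plus the Jacobi-type ``$M_3$-symmetry'' relation forces associativity.
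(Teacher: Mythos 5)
Your proof is correct. The paper gives no argument for this lemma (its proof reads simply ``Exercise''), so there is nothing to compare against; your solution --- collapsing the recursion defining $\underline{M}$ from $\underline{m}=m_2,0,0,\dotsc$ to the single partition $\{1\},\dotsc,\{n-2\},\{n-1,n\}$ to identify (2) with (3), and then recovering associativity in (2)$\Rightarrow$(1) from graded commutativity together with the $S^3$-symmetry of $M_3$ in its first two slots, where the accumulated Koszul sign $(-1)^{2|v_1||v_3|+2|v_2||v_3|}$ is indeed $+1$ --- is a complete and correct solution along the evidently intended lines.
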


\begin{proof}
Exercise.
\hfill\qed\end{proof}

Before leaving this subsection, we examine finite dimensional \GCCorAnames{}.

\subsubsection{The finite dimensional case.}\label{subsubs:fdcase}
The purpose of this section is to examine some aspects of finite dimensional \GCCorAnames.

Fix notation first. Let $\sS$ be a finite dimensional pointed vector space.
Choose a basis $\{e_\a\}_{\a \in J_{\dia}}$ of $\sS$ with the unit $e_0= 1_{\sS}
\in \sS^0$. Let $t_{\sS}=\{t^\a\}_{\a\in J_{\dia}}$ be the dual basis, which defines a coordinate system on $\sS$. 
Let $\rd_\a =\fr{\rd}{\rd  t^\a}$ be the graded linear derivation on $\fieldk[\![t_{\sS}]\!]$ determined by $\rd_\a t^\b=\d_\a{}^\b$ and $\rd_\a (X\cdot Y) = \rd_\a X\cdot Y +(-1)^{|X||t^\a|}X\cdot \rd_\a Y$, for all $X,Y \in \fieldk[\![t_{\sS}]\!]$.
(Recall that $t^\a t^\b = (-1)^{|e_\a||e_\b|}t^\b t^\a$, for all $\a,\b \in J$).

Now suppose $({\sS}, \underline{M}^{\sS})$ is a \GCCorAname{}. Consider the family of structure constants $m_{\a_1\a_2}{}^\b, m_{\a_1\a_2\a_3}{}^\b,\dotsc$ for $\underline{m}^{\sS}=m^{\sS}_2,m^{\sS}_3,\dotsc$
 such that, for all $n\geq 2$,
\[
m^{\sS}_n\left(e_{\a_1},\dotsc,e_{\a_n}\right) =\sum_{\b\in J} m_{\a_1\cdots \a_n}{}^\b e_\b.
\]
From property~\ref{flato:Sn} in {\bf Lemma \ref{flato}}, we can form a formal power series $A_{\a\b}{}^\g \in \fieldk[\![t_{\sS}]\!]$ defined by 
\eqn{flatcon}{
 A_{\a\b}{}^\g \ceq m_{\a\b}{}^\g +\sum_{n\geq 1}\Fr{1}{n!}\sum_{\r_1,\dotsc,\r_n \in J}
 t^{\r_n}\cdots t^{\r_1}m_{\r_1\cdots \r_n\a\b}{}^\g,
}
representing a $(2,1)$-tensor field on ${\sS}$. 
Let $\underline{M}^{\sS}$ be the family of iterated multiplications generated by $\underline{m}^{\sS}$.
Similarly, let $M_{\a_1}{}^\b, M_{\a_2\a_2}{}^\b,\dotsc$ be the family of structure constants such that, for all $n\geq 1$,
\[
M^{\sS}_n\left(e_{\a_1},\dotsc,e_{\a_n}\right) =\sum_{\b\in J} M_{\a_1\cdots \a_n}{}^\b e_\b.
\]
\begin{lemma}\label{flatcoor}
There is a $1$-tensor field $T^\g$ in the formal power series ring $\fieldk[\![t_{\sS}]\!]$ uniquely determined by the following system of formal differential equations:
for all $\b,\g,\s \in J$,
\begin{align*}
\left(\rd_\b\rd_\g -\sum_{\r\in J} A_{\b\g}{}^\r\rd_\r\right)T^\s&=0
,\\
\rd_\g T^\s\big|_{t_{\sS}=\underline{0}}&=\d_\g{}^\s
,\\
T^\s\big|_{t_{\sS}=\underline{0}}&=0
.
\end{align*}
Moreover, for all $n\geq 1$ and $\a_1,\dotsc,\a_n,\s \in J_{\dia}$,
\[
\rd_{\a_1}\cdots\rd_{\a_n} T^\s \Big|_{t_{\sS}=\underline{0}}
=M_{\a_1\cdots\a_{n}}{}^\s.
\]

\end{lemma}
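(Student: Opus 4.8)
The plan is to exhibit the solution explicitly, verify that it solves the system, and then show the system admits at most one formal power-series solution. First I would define the candidate
\[
T^\s \ceq \sum_{n\geq 1}\Fr{1}{n!}\sum_{\a_1,\dotsc,\a_n\in J} t^{\a_n}\cdots t^{\a_1}\,M_{\a_1\cdots\a_n}{}^\s \in\fieldk[\![t_\sS]\!],
\]
which is a legitimate element of the power-series ring precisely because the structure constants $M_{\a_1\cdots\a_n}{}^\s$ of $M_n\colon S^n\sS\to\sS$ are graded-symmetric in $\a_1,\dotsc,\a_n$, matching the graded-commutativity of the $t^\a$. By construction $T^\s\big|_{t_\sS=\underline0}=0$, and since $M_1$ is the identity we get $\rd_\g T^\s\big|_{t_\sS=\underline0}=M_\g{}^\s=\d_\g{}^\s$; so the two initial conditions hold automatically, and the ``Moreover'' identity $\rd_{\a_1}\cdots\rd_{\a_n}T^\s|_{\underline0}=M_{\a_1\cdots\a_n}{}^\s$ is nothing but the reading-off of these coefficients.

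Next I would verify the second-order equation by comparing Taylor coefficients at the origin. Applying $\rd_{\a_1}\cdots\rd_{\a_n}$ to $\rd_\b\rd_\g T^\s$ and evaluating at $\underline0$ gives $M_{\a_1\cdots\a_n\b\g}{}^\s$. For the right-hand side, the graded Leibniz rule distributes the $n$ derivations between the two factors of $\sum_\r A_{\b\g}{}^\r\,\rd_\r T^\s$, producing a sum over subsets $\varsigma\subseteq\{1,\dotsc,n\}$ (those derivatives hitting $A$, the complement $\varsigma^c$ hitting $\rd_\r T^\s$) weighted by Koszul signs. Using $\rd_\varsigma A_{\b\g}{}^\r\big|_{\underline0}=m_{\varsigma\,\b\g}{}^\r$ from \eq{flatcon} and $\rd_{\varsigma^c}\rd_\r T^\s\big|_{\underline0}=M_{\varsigma^c\,\r}{}^\s$, the evaluated right-hand side becomes (after relabelling the subset hitting $A$ as the complementary block) exactly the sum over subsets in the expansion of $M_{n+2}$ recorded in Remark~\ref{Faad}, with the block containing the last two slots $\b,\g$ serving as the $m$-factor. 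Thus $\rd_\b\rd_\g T^\s=\sum_\r A_{\b\g}{}^\r\rd_\r T^\s$ is, coefficient by coefficient, precisely the defining recursion \eq{iterm} relating $\underline M$ and $\underline m$; hence $T^\s$ solves the system.

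Finally, for uniqueness I would observe that $\rd_\b\rd_\g T^\s=\sum_\r A_{\b\g}{}^\r\rd_\r T^\s$ expresses every second and higher mixed partial of an arbitrary solution at $\underline0$ recursively in terms of strictly lower-order partials (through the Taylor coefficients of $A$ and the lower partials of $T^\s$), while the two initial conditions fix the zeroth- and first-order partials. An induction on the order of differentiation then determines all Taylor coefficients of any formal solution, so the solution is unique and must coincide with the candidate, which simultaneously settles the ``Moreover'' claim.

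The step I expect to be the main obstacle is the bookkeeping of Koszul signs in the Leibniz expansion: one must check that the sign attached to each subset $\varsigma$ agrees with the sign $\e(\varsigma)$ of Remark~\ref{Faad}, accounting both for the graded derivation convention $\rd_\a(X\cdot Y)=\rd_\a X\cdot Y+(-1)^{|X||t^\a|}X\cdot\rd_\a Y$ and for the reordering of the $t^\a$ implicit in extracting coefficients. Once these signs are reconciled against the symmetry properties \ref{flato:S2}--\ref{flato:Sn} of Lemma~\ref{flato}, the combinatorial identity is literally \eq{iterm} and the remaining verifications are routine.
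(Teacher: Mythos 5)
Your proposal is correct and follows essentially the same route as the paper: both arguments reduce the equation $\rd_\b\rd_\g T^\s=\sum_\r A_{\b\g}{}^\r\rd_\r T^\s$, Taylor coefficient by Taylor coefficient at the origin, to the graded Fa\'{a} di Bruno/Leibniz expansion over subsets $\varsigma\subset[n-2]$, whose evaluation at $t_\sS=\underline{0}$ is precisely the recursion of Remark~\ref{Faad} defining $\underline{M}$ in terms of $\underline{m}$, and both settle uniqueness by induction on the order of differentiation with the two initial conditions fixing the base case. The only difference is organizational: the paper starts from an arbitrary solution $\check T^\s$ and shows its coefficients are forced to equal $M_{\a_1\cdots\a_n}{}^\s$ (so existence and uniqueness come out of one computation), whereas you exhibit the candidate first, verify it, and then argue uniqueness separately; the underlying computation, including the relabelling $\varsigma\leftrightarrow\varsigma^c$ and the Koszul-sign bookkeeping you flag, is identical.
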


\begin{proof}
We are looking for a solution $\check T^\s\in \fieldk[\![t_{\sS}]\!]$ of the first equation:
for all $\b,\g \in J$,
\eqn{accm}{
\rd_\b\rd_\g \check T^\s 
=\sum_{\r\in J} A_{\b\g}{}^\r \rd_\r \check T^\s
}
with initial conditions given by the second and third equation. It follows that
\[
\check T^\s = t^\s +\sum_{n\geq 2}\Fr{1}{n!}\sum_{\a_1,\dotsc,\a_n\in J}t^{\a_n}\cdots t^{\a_1}\check M_{\r_1\cdots \r_n}{}^\s
\]
for some family $\check M_{\a_1}{}^\s,\check M_{\a_2\a_2}{}^\s, \check M_{\a_1\a_2\a_3}{}^\s,\dotsc \in\fieldk$,
which are totally graded symmetric for all the lower indices.
Our goal is to show that $\check T^\g$ is determined uniquely and $\check T^\g =T^\g$, for all $\g \in J$, i.e.,
$\check M_{\a_1\dotsc\a_n}{}^\s= M_{\a_1\dotsc\a_n}{}^\s$,
for all $n\geq 1$ and $\a_1,\dotsc,\a_n,\s\in J$.

By setting $t_{\sS}=\underline{0}$ in \eq{accm}, we have
\[
\check M_{\b\g}{}^\s ={m}_{\b\g}{}^\s = M_{\b\g}{}^\s,
\]
where we have used $\rd_\r \check T^\s\big|_{t_{\sS}=\underline{0}}=\d_\r{}^\s$.
Fix $n\geq 3$ and assume that $\check{M}_{\a_1\dotsc\a_k}{}^\b = {M}_{\a_1\dotsc\a_k}{}^\b$, for $2\leq k \leq n-1$.
By applying $\rd_{\a_1}\cdots \rd_{\a_{n-2}}$ to \eq{accm},
 we obtain that 
\eqnalign{aconnx}{
\rd_{\a_1}\cdots\rd_{\a_n} \check T^\s 
=&\sum_{\r\in J}\rd_{\a_1}\cdots\rd_{\a_{n-2}}
\left( A_{\a_{n-1}\a_n}{}^\r \cdot\rd_\r \check T^\s\right)
\\
=&\sum_{\r\in J}
\sum_{\varsigma \subset [n-2]} \ep(\varsigma)
\left(D_{\varsigma} A_{\a_{n-1}\a_n}{}^\r \right)\cdot\left(D_{\varsigma^c}\rd_\r\check T^\s\right),
}
where the second summation in the last line is the sum over every subset $\varsigma$ of the ordered set $[n-2]=\{1,2,\dotsc,n-2\}$; $D_\varsigma =\rd_{\a_{j_1}}\cdots \rd_{\a_{j_{|\varsigma|}}}$ if $\varsigma=\big\{j_1,\dotsc, j_{|\varsigma|}\big\}$; $\varsigma^c$ is the subset of $[n-2]$ that is complimentary to $\varsigma$; and $\e(\varsigma)$ is the Koszul sign for $e_{\varsigma}\otimes e_{\varsigma^c}$ with respect to $e_1\otimes\cdots\otimes e_{n-2}$, where $e_{\varsigma}
= e_{j_1}\otimes\cdots\otimes e_{j_{|\varsigma|}}$ if $\varsigma=\big\{j_1,\dotsc, j_{|\varsigma|}\big\}$. 
For the second equality, we have used the graded version of multivariate Fa\'{a} di Bruno's formula (see Proposition 5 in \cite{MH}).
Recall that
\[
\check M_{\a_1\cdots\a_{n}}{}^\s
=\rd_{\a_1}\cdots\rd_{\a_n} \check T^\s \Big|_{t_{\sS}=\underline{0}},
\]
while, if $\varsigma=\big\{j_1,\dotsc, j_{|\varsigma|}\big\} \subset [n-2]$ such that $\varsigma^c=\big\{k_1,\dotsc, k_{|\varsigma^c|}\big\}$,
\begin{align*}
 m_{\a_{j_1}\cdots\a_{j_{|\varsigma|}}\a_{n-1}\a_n}{}^\r
&=D_{\varsigma} A_{\a_{n-1}\a_n}{}^\r\Big|_{t_{\sS}=\underline{0}}
\text{ and}\\
M_{\a_{k_1}\cdots\a_{k_{|\varsigma^c|}}\r}{}^\s
&=D_{\varsigma^c}\rd_\r \check T^\s\Big|_{t_{\sS}=\underline{0}}.
\end{align*}
Then, by setting $ t_{\sS}=\underline{0}$ in \eq{aconnx}, we have the following relations for all $n\geq 2$ and $\a_1,\dotsc,\a_n,\s \in J$:
\[
\check M_{\a_1\cdots\a_{n}}{}^\s
=
\sum_{\r\in J}
\sum_{\varsigma=\big\{j_1,\dotsc, j_{|\varsigma|}\big\} \subset [n-2]} 
\e(\varsigma)
m_{\a_{j_1}\cdots\a_{j_{|\varsigma|}}\a_{n-1}\a_n}{}^\r
M_{\a_{k_1}\cdots\a_{k_{|\varsigma^c|}}\r}{}^\s.
\]
Equivalently, we have
\begin{align*}
\check M_n(v_1\otimes\cdots\otimes v_n) =
&
\sum_{\r\in J}
\sum_{\varsigma \subset [n-2]}\ep(\varsigma)
M^{\sS}_{|\varsigma|+1}
\left( v_{\varsigma}\otimes m^{\sS}_{|\varsigma|+2}\big(v_{\varsigma^c}\otimes v_{n-1}\otimes v_n\big)\right).
\end{align*}
From {\bf Definition \ref{GCCorA}} (see {\em Remark \ref{Faad}}),
it follows that $\check M_{\a_1\cdots\a_{n}}{}^\s= M_{\a_1\cdots\a_{n}}{}^\s$. We have shown by induction that $\check{T}^\g = T^\g$, for all $\g \in J$.
\hfill\qed\end{proof}

\begin{lemma}\label{finitec}
The formal power series $A_{\a\b}{}^\g \in \fieldk[\![t_{\sS}]\!]$ has the following properties:
\begin{enumerate}

\item\label{finitec:unit} $
A_{0\b}{}^\g =\d_\b{}^\g;
$ 

\item\label{finitec:comm} $
A_{\a\b}{}^\g =(-1)^{|t^\a||t^\b|}A_{\b\a}{}^\g;
$

\item\label{finitec:derivs} $\rd_{\a}{A}_{\b\g}{}^\s -(-1)^{|t^\a||t^\b|}\rd_{\b}{A}_{\a\g}{}^\s 
+\sum_{\r\in J}\left({A}_{\b\g}{}^\r A_{\a\r}{}^\s-(-1)^{|t^\a||t^\b|} {A}_{\a\g}{}^\r A_{\b\r}{}^\s\right)
=0$.
\end{enumerate}
\end{lemma}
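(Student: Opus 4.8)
The plan is to handle the three properties separately: properties (1) and (2) are read off termwise from the series \eq{flatcon} together with Lemma~\ref{flato}, while the flatness identity (3) is the real content, which I would obtain by exhibiting $A$ as a ``pure gauge'' connection through the flat coordinates produced by Lemma~\ref{flatcoor}.

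For property (1), I would note that the constant term $m_{0\b}{}^\g$ is the structure constant of $m_2(1_\sS,e_\b)$; by part~(\ref{flato:unit2}) of Lemma~\ref{flato} one has $m_2(e_\b,1_\sS)=e_\b$, and the $n=0$ case of part~(\ref{flato:S2}) gives $m_2(1_\sS,e_\b)=(-1)^{|e_\b|\cdot 0}m_2(e_\b,1_\sS)=e_\b$, so $m_{0\b}{}^\g=\d_\b{}^\g$. Each higher term $m_{\r_1\cdots\r_n0\b}{}^\g$ is a structure constant of an $m_{n+2}$ with $n+2\geq 3$ having $1_\sS$ as an argument, hence vanishes by part~(\ref{flato:unithigh}); summing the series leaves $A_{0\b}{}^\g=\d_\b{}^\g$. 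For property (2), part~(\ref{flato:S2}) gives the termwise identity $m_{\r_1\cdots\r_n\a\b}{}^\g=(-1)^{|e_\a||e_\b|}m_{\r_1\cdots\r_n\b\a}{}^\g$, and since $|e_\a|=-|t^\a|$ one has $(-1)^{|e_\a||e_\b|}=(-1)^{|t^\a||t^\b|}$, so summing the defining series yields $A_{\a\b}{}^\g=(-1)^{|t^\a||t^\b|}A_{\b\a}{}^\g$.

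For property (3) I would invoke Lemma~\ref{flatcoor}, which supplies a $1$-tensor $T^\s$ satisfying $\rd_\b\rd_\g T^\s=\sum_\r A_{\b\g}{}^\r\,\rd_\r T^\s$ with $\rd_\g T^\s\big|_{t_\sS=\underline{0}}=\d_\g{}^\s$. Setting $\sG_\g{}^\s\ceq\rd_\g T^\s$, the normalization shows $\sG=I+(\text{higher order})$, hence $\sG$ is invertible over $\fieldk[\![t_\sS]\!]$, and the defining relation reads $\rd_\b\sG_\g{}^\s=\sum_\r A_{\b\g}{}^\r\sG_\r{}^\s$, equivalently $A_{\b\g}{}^\s=\sum_\tau(\rd_\b\rd_\g T^\tau)(\sG^{-1})_\tau{}^\s$. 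Thus $A$ is of pure-gauge form, exactly the shape appearing in Remark~\ref{remark:intro to affine}. I would then differentiate the relation once more by $\rd_\a$, use the relation again to re-express $\rd_\a\sG_\r{}^\s$, and compare with the expression obtained by first commuting the mixed partials via $\rd_\a\rd_\b=(-1)^{|t^\a||t^\b|}\rd_\b\rd_\a$. Equating the two expressions for $\rd_\a\rd_\b\sG_\g{}^\s$ and cancelling the invertible $\sG$ on the right produces precisely the curvature identity in (3).

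The main obstacle is the Koszul sign bookkeeping in this last step. Each $A_{\b\g}{}^\r$ is homogeneous of the (generally nonzero) degree $|t^\r|-|t^\b|-|t^\g|$, so passing $\rd_\a$ across $A_{\b\g}{}^\r$ via the graded Leibniz rule injects a sign that depends on the summed index $\r$, and the real work is to verify that these signs recombine into the single factor $(-1)^{|t^\a||t^\b|}$ displayed in (3) rather than leaving residual index-dependent signs in the quadratic terms. This is the super-analogue of the purely even computation already carried out in Remark~\ref{remark:intro to affine}; I expect it to go through by the identical manipulation once all degrees are tracked consistently, so that no idea beyond disciplined sign accounting is needed.
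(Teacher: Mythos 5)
Your proposal is correct and follows essentially the same route as the paper: properties (1) and (2) are read off termwise from Lemma~\ref{flato}, and property (3) is obtained exactly as in the paper's proof, by differentiating the flatness equation of Lemma~\ref{flatcoor} once more, substituting it back in, inverting $\sG_\b{}^\g=\rd_\b T^\g$, and invoking the graded commutativity $\rd_\a\rd_\b=(-1)^{|t^\a||t^\b|}\rd_\b\rd_\a$. The Koszul-sign bookkeeping you flag as the main obstacle is handled at the same level of explicitness in the paper itself, so nothing further is missing.
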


\begin{proof}
It is straightforward to prove~(\ref{finitec:unit}) and~(\ref{finitec:comm}) from properties~(\ref{flato:unit2})--(\ref{flato:S2}) in {\bf Lemma \ref{flato}}.
We now proceed to prove~(\ref{finitec:derivs}).
From
\[
\left(\rd_\b\rd_\g -\sum_{\r\in J} A_{\b\g}{}^\r\rd_\r\right)T^\s=0,
\]
we have
\begin{align*}
\rd_\a\rd_\b \rd_\g T^\s 
&=\sum_{\r\in J}\rd_\a A_{\b\g}{}^\r\rd_\r T^\s
+\sum_{\r\in J} A_{\b\g}{}^\r\rd_\a\rd_\r T^\s 
\\
&=\sum_{\r\in J}\left(\rd_\a A_{\b\g}{}^\r
+\sum_{\r^\pr\in J} A_{\b\g}{}^{\r^\pr} A_{\a\r^\pr}{}^{\r}\right)\rd_\r T^\s.
\end{align*}
Define a formal series $\sG_{\b}{}^\g\ceq \rd_\b T^\g\in \fieldk[\![t_{\sS}]\!]$ representing a $(1,1)$-tensor on ${\sS}$.
Regard $\sG$ as a square matrix with entries in $\fieldk[\![t_{\sS}]\!]$. By the third equation in {\bf Lemma \ref{flatcoor}} 
we have $\sG_\b{}^\g\big|_{t_{\sS}=0}=\d_\b{}^\g$, so $\sG$ is invertible with inverse $\{\sG^{-1}\}_{\b}{}^\g=\sG^{-1}_{\b}{}^\g$, i.e., $\sum_\r \sG_\b{}^\r\sG^{-1}_\r{}^\g =\d_\b{}^\g$.
Hence, the above equation is equivalent to the following:
\begin{align*}
\sum_{\r\in J}\left(\rd_\a\rd_\b \sG_\g{}^\r \right)\sG^{-1}_\r{}^\s
&=\rd_\a A_{\b\g}{}^\s
+\sum_{\r\in J} A_{\b\g}{}^{\r} A_{\a\r}{}^{\s}.
\end{align*}
Then property~(\ref{finitec:derivs}) follows from $\rd_\a\rd_\b -(-1)^{|e_\a||e_\b|}\rd_\b\rd_\a=0$.
\hfill\qed\end{proof}

\begin{lemma}\label{finited}
For all $\g\in J$, we have $
\left(\fr{\rd}{\rd t^0} -1\right)T^\g =\d_0^\g.
$
\end{lemma}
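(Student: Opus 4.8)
The plan is to establish the identity $(\rd_0 - 1)T^\g = \d_0^\g$ by comparing Taylor coefficients of both sides in the formal power series ring $\fieldk[\![t_{\sS}]\!]$, since a formal power series is determined by its iterated derivatives at the origin. Two facts already available suffice: the explicit Taylor data of $T^\g$ recorded in {\bf Lemma \ref{flatcoor}}, namely $T^\g\big|_{t_{\sS}=\underline 0}=0$ together with $\rd_{\a_1}\cdots\rd_{\a_n}T^\g\big|_{t_{\sS}=\underline 0} = M_{\a_1\cdots\a_n}{}^\g$ for all $n\geq 1$; and the unit axiom from {\bf Definition \ref{GCCorA}}, $M_{n+1}\big(v_1,\dotsc,v_n,1_\sS\big)=M_n\big(v_1,\dotsc,v_n\big)$, which in terms of structure constants reads $M_{\a_1\cdots\a_n 0}{}^\g = M_{\a_1\cdots\a_n}{}^\g$. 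I will also use $M_1=\mathrm{Id}$, so that the single-index structure constant is $M_0{}^\g = \d_0^\g$.

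First I would note that, because $e_0=1_\sS$ has degree zero, the derivation $\rd_0$ has degree zero and hence graded-commutes with every $\rd_\a$. Therefore, for any tuple $\a_1,\dotsc,\a_n$ I may slide $\rd_0$ to the end and apply the derivative formula to the extended tuple $(\a_1,\dotsc,\a_n,0)$: for $n\geq 1$ this gives $\rd_{\a_1}\cdots\rd_{\a_n}\big(\rd_0 T^\g\big)\big|_{t_{\sS}=\underline 0} = M_{\a_1\cdots\a_n 0}{}^\g = M_{\a_1\cdots\a_n}{}^\g$, where the last equality is exactly the unit condition, and for $n=0$ it gives $\rd_0 T^\g\big|_{t_{\sS}=\underline 0} = M_0{}^\g = \d_0^\g$.

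Next I would read off the matching coefficients of the right-hand side $T^\g+\d_0^\g$. Its value at the origin is $0+\d_0^\g=\d_0^\g$, and for $n\geq 1$ its $n$-th derivative at the origin is $\rd_{\a_1}\cdots\rd_{\a_n}T^\g\big|_{t_{\sS}=\underline 0}=M_{\a_1\cdots\a_n}{}^\g$, the additive constant $\d_0^\g$ contributing nothing. Comparing with the previous paragraph, every Taylor coefficient of $\rd_0 T^\g$ agrees with the corresponding coefficient of $T^\g+\d_0^\g$; since both lie in $\fieldk[\![t_{\sS}]\!]$, they are equal, and $(\rd_0-1)T^\g=\d_0^\g$ follows.

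The argument is essentially bookkeeping; the only point needing care — and the closest thing to an obstacle — is the correct translation of the unit axiom into $M_{\a_1\cdots\a_n 0}{}^\g = M_{\a_1\cdots\a_n}{}^\g$, and the observation that placing the distinguished index $0$ in the \emph{last} slot is precisely what commuting the degree-zero derivation $\rd_0$ past the others produces. As an alternative avoiding coefficient comparison, one could differentiate the closed-form series $T^\g=\sum_{n\geq 1}\tfrac{1}{n!}\sum t^{\a_n}\cdots t^{\a_1}M_{\a_1\cdots\a_n}{}^\g$ directly: for each $n$ the operator $\rd_0$ contributes a factor $n$ from the symmetric choice of which $t^0$ to strike, a relabelling together with the unit condition identifies the resulting term with the $(n-1)$-st term of $T^\g$, and the isolated $n=1$ term reproduces $\d_0^\g$, giving the same conclusion.
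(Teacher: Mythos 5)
Your proof is correct, but it takes a different route from the paper's. The paper proves the lemma by specializing the differential equation of {\bf Lemma \ref{flatcoor}} to $\b=0$: using $A_{0\g}{}^\r=\d_\g{}^\r$ (item~(1) of {\bf Lemma \ref{finitec}}) one gets $\rd_\g\big(\rd_0 T^\s\big)=\sum_\r A_{0\g}{}^\r\rd_\r T^\s=\rd_\g T^\s$ for every $\g$, so $\rd_0 T^\s-T^\s$ is a constant, which is then pinned down to $\d_0^\s$ by evaluating at the origin with the initial conditions $\rd_0 T^\s|_{\underline{0}}=\d_0{}^\s$ and $T^\s|_{\underline{0}}=0$. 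You instead bypass the PDE and {\bf Lemma \ref{finitec}} entirely, working from the Taylor-coefficient formula $\rd_{\a_1}\cdots\rd_{\a_n}T^\g|_{\underline{0}}=M_{\a_1\cdots\a_n}{}^\g$ (the ``Moreover'' clause of {\bf Lemma \ref{flatcoor}}) together with the unit axiom $M_{\a_1\cdots\a_n 0}{}^\g=M_{\a_1\cdots\a_n}{}^\g$ and $M_0{}^\g=\d_0^\g$; since $|t^0|=0$ the derivation $\rd_0$ commutes with all $\rd_\a$ without signs, and comparing all coefficients settles the identity. Both arguments are sound; yours is more elementary and makes transparent that the lemma is nothing but the unit axiom of the correlation algebra read off at the level of the generating series $T^\g$, while the paper's version keeps the statement tied to the geometric package --- the connection one-form $A$ and its unit property --- which is the framing it wants for the affine-flat-coordinates theorem that follows.
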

\begin{proof}
From the $1$st relation in {\bf Lemma \ref{flatcoor}}, we have
\[
\rd_0\rd_\g T^\s=\rd_\g\rd_0 T^\s=\sum_{\r\in J} A_{0\g}{}^\r\rd_\r T^\s = \rd_\g T^\s,
\]
where we have used $A_{0\g}{}^\r=\d_\g{}^\r$ (the $1$st relation in {\bf Lemma \ref{finitec}}) for the second equality.
It follows that $\rd_0 T^\s = T^\s + a^\s$, where $a^\s$ are some constants. From $\rd_0 T^\s|_{t_{\sS}=\underline{0}}=\d_0{}^\s$ and $T^\s|_{t_{\sS}=\underline{0}}=0$,
we have $a^\s =\d_0{}^\s$, so that $\rd_0 T^\s = T^\s + \d_0^\s$.
\hfill\qed\end{proof}

 \subsubsection{Property Q}
 In this subsection, we briefly consider \GCCorAnames{} with a special property.

 \begin{definition}
 A \GCCorAname{} $(V, \underline{M})$ has \emph{property Q} if the family $\underline{m}=m_2,m_3,\cdots$ is a collection of symmetric maps. That is to say, the $\GCCorAname{}$ has property Q if each map $m_n:T^n V\to V$ descends to the quotient by the symmetric group to a map $S^n V\to V$.
 \end{definition}

 Assume that $(\sS, \underline{M}^{\sS})$ is a finite dimensional \GCCorAname{} with property $Q$. Borrowing notation from Section~\ref{subsubs:fdcase}, consider the two formal power series $A_{\b\g}{}^\s, T^\g \in \fieldk[\![t_{\sS}]\!]$:
 \begin{align*}
 A_{\b\g}{}^\s &\ceq m_{\b\g}{}^\s 
 +\sum_{n\geq 1}\sum_{\r_1,\dotsc,\r_n\in J}\Fr{1}{n!}t^{\r_n}\cdots t^{\r_1}m_{\r_1\cdots \r_n\b\g}{}^\s
 ,\\
 T^\g &\ceq t^\g +\sum_{n\geq 2}\Fr{1}{n!}\sum_{\r_1,\dotsc,\r_n\in J}t^{\r_n}\cdots t^{\r_1}M_{\r_1\cdots \r_n}{}^\g
 ,
 \end{align*}
 representing a $(2,1)$-tensor and a $1$-tensor field, respectively, on ${\sS}$. Then it is trivial to check the following.
 \begin{lemma}\label{finitef}
 The formal power series $A_{\a\b}{}^\g \in \fieldk[\![t_{\sS}]\!]$ and $T^\g \in \fieldk[\![t_{\sS}]\!]$ satisfy the following properties for all $\a,\b,\g,\s \in J$:
 \begin{enumerate}

 \item\label{item: A0} 
 $
 A_{0\b}{}^\g =\d_\b{}^\g;
 $ 

 \item\label{item: Aabg} $
 A_{\a\b}{}^\g =(-1)^{|t^\a||t^\b|}A_{\b\a}{}^\g;
 $

 \item\label{item: T0} $
 \left(\Fr{\rd}{\rd t^0} -1\right)T^\g =\d_0^\g;
 $

 \item\label{item: TbgA} $
  \left(\rd_\b\rd_\g -\sum_{\r\in J} A_{\b\g}{}^\r\rd_\r\right)T^\s=0;
 $

 \item\label{item:Asym} $\rd_{\a}{A}_{\b\g}{}^\s -(-1)^{|t^\a||t^\b|}\rd_{\b}{A}_{\a\g}{}^\s =0$;

 \item\label{item:Aass} $\sum_{\r\in J}\left({A}_{\b\g}{}^\r A_{\a\r}{}^\s-(-1)^{|t^\a||t^\b|} {A}_{\a\g}{}^\r A_{\b\r}{}^\s\right)=0$.

 \end{enumerate}
 \end{lemma}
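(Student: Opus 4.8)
The statement splits naturally: properties~\ref{item: A0}, \ref{item: Aabg}, \ref{item: T0} and~\ref{item: TbgA} hold for an arbitrary finite dimensional \GCCorAname{} and make no use of property Q, whereas~\ref{item:Asym} and~\ref{item:Aass} are exactly where property Q enters. For the first group I would simply recognise that the two power series written here are the objects already studied in Section~\ref{subsubs:fdcase}: the series $A_{\a\b}{}^\g$ is \eq{flatcon}, and by Lemma~\ref{flatcoor} the unique solution of the flat-coordinate equations has Taylor coefficients $M_{\r_1\cdots\r_n}{}^\g$, hence coincides with the $T^\g$ defined here. Consequently properties~\ref{item: A0} and~\ref{item: Aabg} are parts~(\ref{finitec:unit}) and~(\ref{finitec:comm}) of Lemma~\ref{finitec}, property~\ref{item: TbgA} is the defining equation of $T^\g$ in Lemma~\ref{flatcoor}, and property~\ref{item: T0} is Lemma~\ref{finited}. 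In particular the full flatness identity~(\ref{finitec:derivs}),
\[
\rd_{\a}{A}_{\b\g}{}^\s -(-1)^{|t^\a||t^\b|}\rd_{\b}{A}_{\a\g}{}^\s
+\sum_{\r\in J}\left({A}_{\b\g}{}^\r A_{\a\r}{}^\s-(-1)^{|t^\a||t^\b|} {A}_{\a\g}{}^\r A_{\b\r}{}^\s\right)=0,
\]
is available to us without any appeal to property Q.

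The heart of the matter is therefore property~\ref{item:Asym}, and the plan is to prove it directly and then read off~\ref{item:Aass} by subtracting~\ref{item:Asym} from~(\ref{finitec:derivs}). To establish~\ref{item:Asym} I would pass to an invariant, generating-function form of $A_{\b\g}{}^\s$. Introduce the degree-zero element $\mb{\g}=\sum_{\r\in J}t^\r e_\r \in \big(\fieldk[\![t_{\sS}]\!]\otimes\sS\big)^0$; using the full graded symmetry of the $m_n$ supplied by property Q to symmetrise the summation, the definition of $A_{\b\g}{}^\s$ repackages as
\[
\sum_{\s\in J}A_{\b\g}{}^\s e_\s = \sum_{n\geq 0}\Fr{1}{n!}\,m_{n+2}\big(\mb{\g},\dotsc,\mb{\g},e_\b,e_\g\big).
\]
Since $\rd_\a\mb{\g}=e_\a$ and each $m_{n+2}$ is symmetric in its first $n$ arguments, applying $\rd_\a$ replaces one copy of $\mb{\g}$ by $e_\a$ and the $n$ identical contributions collapse the factorial; because $\mb{\g}$ has degree zero, pulling $e_\a$ to the front costs no sign, giving
\[
\sum_{\s\in J}\big(\rd_\a A_{\b\g}{}^\s\big)e_\s = \sum_{k\geq 0}\Fr{1}{k!}\,m_{k+3}\big(e_\a,\mb{\g},\dotsc,\mb{\g},e_\b,e_\g\big).
\]

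The final step is the sign computation, which is clean precisely because $\mb{\g}$ has degree zero. In the last display the arguments $e_\a$ and $e_\b$ are separated only by spectator copies of $\mb{\g}$, each of degree $0$; hence interchanging $e_\a$ and $e_\b$ by the full graded symmetry of $m_{k+3}$ costs only the Koszul sign $(-1)^{|e_\a||e_\b|}=(-1)^{|t^\a||t^\b|}$ and nothing from passing the $\mb{\g}$'s. This yields $\rd_\a A_{\b\g}{}^\s=(-1)^{|t^\a||t^\b|}\rd_\b A_{\a\g}{}^\s$, which is property~\ref{item:Asym}, and then~\ref{item:Aass} is immediate from~(\ref{finitec:derivs}). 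I expect the only genuine obstacle to be the bookkeeping behind the invariant formula for $A_{\b\g}{}^\s$: matching the descending monomial order $t^{\r_n}\cdots t^{\r_1}$ of the definition against the order produced by expanding $m_{n+2}(\mb{\g},\dotsc,\mb{\g},e_\b,e_\g)$ requires tracking Koszul signs through a reversal of the $\mb{\g}$-slots, and it is here that one must use both the degree-$0$ condition on the $m_n$ and property Q to see that the reindexing is sign-consistent. Once that identity is in place, the degree-zero nature of $\mb{\g}$ makes the decisive symmetry step essentially automatic.
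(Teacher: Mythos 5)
Your proposal is correct and follows essentially the same route as the paper: items (1)--(4) are quoted from Lemmas \ref{flatcoor}, \ref{finitec}, and \ref{finited}, item (5) is verified by a direct differentiation of the power series with property Q used exactly at the swap of the indices $\a$ and $\b$ (with Koszul sign $(-1)^{|t^\a||t^\b|}$), and item (6) is obtained by subtracting (5) from the flatness identity (\ref{finitec:derivs}). The only difference is notational: the paper performs the computation for (5) directly on structure constants, where $\rd_\a$ inserts the index $\a$ adjacent to $\b\g$ in $m_{\r_1\cdots\r_n\a\b\g}{}^\s$, while you repackage the same computation via the degree-zero generating element $\mb{\g}=\sum_\r t^\r e_\r$.
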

 \begin{proof}
 Properties~(\ref{item: A0}) and~(\ref{item: Aabg}) follow from those in Proposition \ref{finitec}. Property~(\ref{item: T0}) follows from Lemma \ref{finited} and Property~(\ref{item: TbgA}) follows from Lemma~\ref{flatcoor}. Property~\ref{item:Asym} can be checked directly as follows:
 \begin{align*}
 \rd_\a A_{\b\g}{}^\s 
 &=\sum_{n\geq 0}\sum_{\r_1,\dotsc,\r_n\in J}\Fr{1}{n!}t^{\r_n}\cdots t^{\r_1}m_{\r_1\cdots \r_n{\color{red}\a\b}\g}{}^\s
 \\
 &=(-1)^{|t^\a||t^\b|}\sum_{n\geq 0}\sum_{\r_1,\dotsc,\r_n\in J}\Fr{1}{n!}t^{\r_n}\cdots t^{\r_1}m_{\r_1\cdots \r_n{\color{red}\b\a}\g}{}^\s
 \\
 &=(-1)^{|t^\a||t^\b|}\rd_{\b}{A}_{\a\g}{}^\s.
 \end{align*}
 Then property~(\ref{item:Aass}) also follows from property~(\ref{finitec:derivs}) in Proposition \ref{finitec}.
 \hfill\qed\end{proof}

 \begin{lemma}
 Let ${\sS}$ be finite dimensional as a graded vector space. 
 Choose a basis $\{e_\a\}_{\a \in J}$ of $\sS$ with the unit $e_0= 1_{\sS}
 \in \sS^0$. Let $t_{\sS}=\{t^\a\}_{\a\in J}$ be the dual basis, which defines a coordinate system on $\sS$. 
 Let $A_{\a\b}{}^\g \in \fieldk[\![t_{\sS}]\!]$,
 be a formal power series representing a $(2,1)$-tensor field on ${\sS}$ with the following properties for all $\a,\b,\g,\s \in J_{\sS}$:
 \begin{enumerate}

 \item $A_{0\b}{}^\g =\d_\b{}^\g$;

 \item $A_{\a\b}{}^\g =(-1)^{|t^\a||t^\b|}A_{\b\a}{}^\g$;

 \item $\rd_{\a}{A}_{\b\g}{}^\s -(-1)^{|t^\a||t^\b|}\rd_{\b}{A}_{\a\g}{}^\s =0$;

 \item $\sum_{\r\in J}\left({A}_{\b\g}{}^\r A_{\a\r}{}^\s-(-1)^{|t^\a||t^\b|} {A}_{\a\g}{}^\r A_{\b\r}{}^\s\right)
 =0$.
 \end{enumerate}
 Then $\sS$ has the structure of a \GCCorAname{} with property Q.
 \end{lemma}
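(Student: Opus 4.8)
The plan is to run the constructions of Lemmas~\ref{flatcoor}--\ref{finitef} in reverse: from the tensor $A$ I will manufacture the affine flat coordinate $T^\g$, read off the family $\underline{M}$ from its Taylor coefficients, and then verify that the recursively associated family $\underline{m}$ is graded symmetric. To begin, since $A_{\b\g}{}^\s$ is a formal power series I would take its Taylor coefficients $m_{\r_1\cdots\r_n\b\g}{}^\s\ceq \rd_{\r_1}\cdots\rd_{\r_n}A_{\b\g}{}^\s\big|_{t_\sS=\underline 0}$ (for $n\geq 0$) as candidate structure constants, assembling operators $m_k:T^kV\to V$ for $k\geq 2$; by construction $A$ is their generating series in the sense of \eq{flatcon}. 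Property~(1), $A_{0\b}{}^\g=\d_\b{}^\g$, immediately fixes the unit behaviour: evaluating at $t_\sS=\underline 0$ gives $m_2(1_V,x)=x$, while for $k\geq 3$ the constancy of $A_{0\b}{}^\g$ annihilates all of its derivatives, so $m_k$ vanishes whenever one of its slots is $1_V$.

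Next comes the heart of the argument, the existence of $T^\g$. I would build the unique formal solution $T^\s$ of the system of Lemma~\ref{flatcoor} order by order, using $\rd_\b\rd_\g T^\s=\sum_\r A_{\b\g}{}^\r\rd_\r T^\s$ to express every coefficient $M_{\a_1\cdots\a_n}{}^\s\ceq \rd_{\a_1}\cdots\rd_{\a_n}T^\s\big|_{t_\sS=\underline 0}$ ($n\geq 2$) in terms of lower-order data via the graded Fa\`a di Bruno expansion, exactly as in \eq{aconnx}. The only thing requiring proof is \emph{consistency}: the value produced must be graded symmetric in $\a_1,\dotsc,\a_n$, independently of which adjacent pair the equation singles out. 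At the level of three derivatives this is precisely the demand that $\rd_\a A_{\b\g}{}^\s-(-1)^{|t^\a||t^\b|}\rd_\b A_{\a\g}{}^\s+\sum_\r\big(A_{\b\g}{}^\r A_{\a\r}{}^\s-(-1)^{|t^\a||t^\b|}A_{\a\g}{}^\r A_{\b\r}{}^\s\big)=0$, which is the \emph{sum} of properties~(3) and~(4) (equivalently, condition~(\ref{finitec:derivs}) of Lemma~\ref{finitec}); an induction on $n$ then propagates the symmetry to every order. I expect this integrability step to be the main obstacle, since it is the formal Frobenius/Cartan theorem underlying Remark~\ref{remark:intro to affine} and is exactly where conditions~(3) and~(4) are consumed.

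Granting $T$, the rest is bookkeeping. The coefficients $M_{\a_1\cdots\a_n}{}^\s$ are totally graded symmetric because mixed partials graded-commute, and $M_\a{}^\s=\d_\a{}^\s$ gives $M_1=\mathrm{id}$; rerunning the computation of Lemma~\ref{finited} (which uses only property~(1) and the equation for $T$) yields $\rd_0 T^\s=T^\s+\d_0^\s$, hence $M_{n+1}(v_1,\dotsc,v_n,1_V)=M_n(v_1,\dotsc,v_n)$. Thus $(V,\underline M)$ satisfies all the axioms of Definition~\ref{GCCorA} and is a \GCCorAname{}. Setting $t_\sS=\underline 0$ in the Fa\`a di Bruno expansion reproduces exactly \eq{iterm} in the form of Remark~\ref{Faad}, with the very coefficients $m_{\r_1\cdots\r_n\b\g}{}^\s$ defined above, so the family $\underline m$ attached to this $\underline M$ is the one already read off from $A$. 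Finally, to obtain property Q I would check that these $m_{\a_1\cdots\a_n}{}^\s$ are graded symmetric: interior derivative indices commute up to Koszul sign, the last two indices may be transposed by property~(2), and a derivative index may be interchanged with the leading index of $A$ by property~(3); as these transpositions generate the full symmetric group, each $m_k$ descends to $S^kV\to V$, which is precisely property Q.
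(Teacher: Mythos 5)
Your proposal is correct and is essentially the argument the paper intends (the paper leaves the proof as an exercise): you invert the constructions of Lemmas~\ref{flatcoor}, \ref{finitec} and \ref{finited}, with the formal Frobenius integrability of $\rd_\b\rd_\g T^\s=\sum_{\r\in J} A_{\b\g}{}^\r\rd_\r T^\s$ (whose obstruction is exactly the sum of properties~(3) and~(4)) as the key step, then read off $\underline{M}$ from the Taylor coefficients of $T^\s$, get the unit condition from $\rd_0 T^\s=T^\s+\d_0{}^\s$, and obtain property Q from the transposition-generation argument using properties~(2) and~(3). One cosmetic point: the vanishing of $m_k$ when $1_V$ sits in a \emph{derivative} slot does not follow from property~(1) alone (it needs property~(3), e.g.\ via $\rd_0A_{\b\g}{}^\s=\rd_\b A_{0\g}{}^\s=0$, or your later symmetry argument), but this does not affect the proof, since the unit axioms of Definition~\ref{GCCorA} concern $\underline{M}$ and are supplied as you indicate.
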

 \begin{proof}
 Exercise.
 \hfill\qed\end{proof}

\begin{remark}
Property Q will be relevant in the context of quantum field theory~\cite{P}, where many of the results in this paper will be background material.
\hfill$\natural$
\end{remark}

\subsection{The homotopy category of \padj{} probability algebras}\label{subs:prob alg theory}

\begin{definition}
A\emph{ \padj{} probability algebra} is a trio $\sC_{\Comm}=(\sC, \underline{M}, K)$,
where the pair $(\sC, \underline{M})$ is a \GCCorAname{} and the pair $(\sC, K)$ is a pointed cochain complex. A \emph{morphism} of \padj{} probability algebras from $\sC_{\Comm}$ to $\sC^\pr_{\Comm}$ is a pointed cochain map and two morphisms $f$ and $\tilde f$ are \emph{homotopic} if they are pointed cochain homotopic.
\end{definition}

Let $\category{\HProb}_{\Comm}{\overk}$ be the category of \padj{} probability algebras and let $\homotopycat\category{\HProb}_{\Comm}{\overk}$ be the homotopy category of $\category{\HProb}_{\Comm}{\overk}$: its objects are the same as the those of $\category{\HProb}_{\Comm}{\overk}$ and its morphisms are homotopy types of morphisms in $\category{\HProb}_{\Comm}{\overk}$.

\begin{example}
A \padj{} probability algebra concentrated in degree zero is a \CorAname{}, and vice versa.
 A \padj{} probability algebra with zero differential is a \GCCorAname{}, and vice versa.
 \hfill$\natural$
\end{example}

\begin{example}
A binary \padj{} probability algebra $(\sC, \cdot, K)$ is a \padj{} probability algebra $(\sC, \underline{M}, K)$ such that $\underline{m}=m_2,0,0,0,\dotsc$ with $m_2(x,y) = x\cdot y$, and vice versa. The category $\category{\HProb}_{\binarycomm}{\overk}$ of binary \padj{} probability algebras is a full subcategory of $\category{\HProb}_{\Comm}{\overk}$.
 \hfill$\natural$
\end{example}

\begin{example}
The ground field $\fieldk$ is a \padj{} probability algebra, which is the initial object in both the category $\category{\HProb}_{\Comm}{\overk}$ and the homotopy category $\homotopycat\category{\HProb}_{\Comm}{\overk}$ of \padj{} probability algebras.
 \hfill$\natural$
\end{example}
\begin{definition}
A \emph{\padj{} probability space} is an object $\sC_{\Comm}$ together with a morphism $[\mc]$ to the initial object $\fieldk$ in the homotopy category $\homotopycat\category{\HProb}_{\Comm}{\overk}$;
\[
\xymatrix{\sC_{\Comm}\ar[r]^{[\mc]} &\fieldk},
\] 
\end{definition}
In practice, we study a \padj{} probability space in the category $\category{\HProb}_{\Comm}{\overk}$ by choosing a representative $\mc$ of the homotopy type $[\mc]$;
\[
\xymatrix{\sC_{\Comm}\ar[r]^\mc &\fieldk}
\] 
so that $\mc$ is a pointed cochain map from $\sC$ to $\fieldk$. The morphism $\mc$ shall be called the expectation and is only defined up to pointed cochain homotopy. Note that another representative $\tilde \mc$ of $[\mc]$ is homotopic to $\mc$ if $\tilde \mc = \mc+ r K$ for some pointed cochain homotopy $r$. In what follows, then, we are only allowed to consider structures and quantities on $\sC$ that are constant on all expectation morphisms in the same pointed homotopy type.

\begin{example} 
A binary \padj{} probability space $\xymatrix{(\sC, \cdot, K)\ar[r]^-\mc&\fieldk}$ is the same thing as a \padj{} probability space $\xymatrix{(\sC, \underline{M}, K)\ar[r]^-\mc&\fieldk}$ such that 
\begin{align*}
&\underline{m}=m_2,0,0,0,\dotsc
, \\
&m_2(x,y) = x\cdot y \quad \text{for all }x,y\in\sC.
\end{align*}
\hfill$\natural$
\end{example}

\begin{example} 
A \algebraic{} probability space is a \padj{} probability space concentrated in degree zero, and vice versa.
A classical algebraic probability space is a binary \padj{} probability space concentrated in degree zero, and vice versa.
\hfill$\natural$
\end{example}

\subsection{Homotopy functors to the category of unital \texorpdfstring{$sL_\infty$}{sL-infinity}-algebras}\label{subs:descendant functor}

In this subsection we construct a family of functors $\Des_{\La} : \category{\HProb}_{\Comm}{\overk}\Longrightarrow \category{\UsL}_\infty{\overk}$ from the category $\category{\HProb}_{\Comm}{\overk}$ of \padj{} probability algebras to the category $\category{\UsL}_\infty{\overk}$ of unital $sL_\infty$-algebras, each of which induces a well-defined functor $\text{ho}\!\Des_{\La} : \homotopycat\category{\HProb}_{\Comm}{\overk}\Longrightarrow \homotopycat\category{\UsL}_\infty{\overk}$ from the homotopy category $\homotopycat\category{\HProb}_{\Comm}{\overk}$ to the homotopy category $\homotopycat\category{\UsL}_\infty{\overk}$.

\begin{definition}[Descendant Algebra]
\label{GDesA}
The \emph{descendant} of an object $\sC_{\Comm}=\left(\sC, \underline{M}, {K}\right)$ in the category $\category{\HProb}_{\Comm}{\overk}$ is the pair $\sC_{\Lie} = \left(\sC, \underline{\ell}^{K}\right)$,
where $\underline{\ell}^{K}= {\ell}^K_1, \ell^K_2, \ell^K_3,\dotsc$ is a family of linear maps $\ell^K_n:S^n\sC\to \sC$ recursively defined by the relations
\[
K M(x_1, \dotsc, x_n)=\!\!\!\! \sum_{\substack{\pi \in P(n)\\ |B_i|=n-|\pi|+1}}\!\!\!\! \epsilon(\pi,i)
M_{|\pi|}\left(
 x_{B_1},\dotsc, x_{B_{i-1}},\ell^K(x_{B_i}), x_{B_{i+1}}, \dotsc, x_{B_{|\pi|}}\right).
\]
\end{definition}

\begin{remark}
The sequence $\underline{\ell}^K=\ell^K_1, \ell^K_2,\ell^K_3,\dotsc$ is well-defined since its defining equation can be rewritten as follows:
\begin{align*}
\ell^K_n(x_1,\dotsc,x_n)
=& KM_n(x_1,\dotsc,x_n)
\\
&
-\sum_{\substack{\pi \in P(n)\\ |B_i| = n-|\pi|+1\\ {\color{red}|\pi|\neq 1}}}\ep(\pi,i)
M_{|\pi|}\left( x_{B_1}, \dotsc,x_{B_{i-1}}, \ell^K(x_{B_i}), x_{B_{i+1}},\dotsc, x_{B_{|\pi|}}\right).
\end{align*}
We record the calculation of $\ell^K_1$,$\ell^K_2$, and $\ell^K_3$ explicitly: $\ell^K_1 =K$,
and
\begin{align*}
\ell^K_2(x_1,x_2)=& KM_2(x_1,x_2)
- M_2(Kx_1,x_2)-(-1)^{|x|_1}M_2(x_1,Kx_2)
,\\
\ell^K_3(x_1,x_2,x_3)=&
KM_3(x_1,x_2,x_3)
- M_3(Kx_1,x_2,x_3)
-(-1)^{|x_1|}M_3(x_1,Kx_2,x_3)
\\
&
-(-1)^{|x_1|+|x_2|}M_3(x_1,x_2,Kx_3)
- M_2\big(\ell^K_2(x_1,x_2),x_3\big)
\\
&
-(-1)^{|x_1|}M_2\big(x_1,\ell^K_2(x_2,x_3)\big)
-(-1)^{(|x_1|+1)|x_2|}M_2\big(x_2,\ell^K_2(x_1,x_3)\big)
.
\end{align*}\hfill$\natural$
\end{remark}

\begin{definition}[Descendant Morphisms]
\label{GDesM}
Let $\sC_{\Comm}=\left(\sC, \underline{M}, {K}\right)$ and $\sC^\pr_{\Comm}=\left(\sC^\pr, \underline{M}^\pr, {K}^\pr\right)$ be two \padj{} probability algebras and let $f$ be a morphism $\xymatrix{\sC_{\Comm} \ar[r]^f & \sC^\pr_{\Comm}}$ between them. Let $\underline{\La}=\La_1,\La_2,\La_3,\dotsc$ be a family of degree $-1$ linear maps $\La_n: S^n(\sC)\rightarrow \sC^\pr$ which satisfy $\La_1(1_\sC)=0$ and, for all $n\geq 1$, \[
\La_{n+1}\big(x_1,\dotsc,x_{n}, 1_\sC\big)=\La_{n}\big(x_1,\dotsc,x_{n}\big).
\]
Then the \emph{descendant of $f$ up to the homotopy $\underline{\La}$} is the sequence $\underline{\phi}^{f,\underline{\La}}= \phi_1^{f,\underline{\La}}, \phi_2^{f,\underline{\La}}, \phi_3^{f,\underline{\La}},\dotsc$ of linear maps $\phi_n^{f,\underline{\La}}:S^n \sC\to \sC^\pr$ defined by the recursive relation
\begin{align*}
f\left(M(x_1, \cdots, x_n)\right)=
&\sum_{\pi \in P(n)}\ep(\pi)M^\pr\left(\phi^{f,\underline{\La}}\!\left(x_{B_1}\right),\dotsc,
\phi^{f,\underline{\La}}\!\left(x_{B_{|\pi|}}\right)\right)
\\
&
+K^\pr \La_n(x_1,\dotsc, x_n)
\\
&
+\!\!\!\!\!\sum_{\substack{\pi \in P(n)\\ |B_i| = n-|\pi|+1}}\!\!\!\!\!\!\!\!
\ep(\pi,i)
\La_{|\pi|} \left( x_{B_1},\dotsc, x_{B_{i-1}},\ell^K\left(x_{B_i}\right), x_{B_{i+1}},\dotsc, x_{B_{|\pi|}}\right)
.
\end{align*}
\end{definition}

\begin{remark}
It is clear that sequence $\underline{\phi}^{f,\underline{\La}}={\phi}^{f,\underline{\La}}_1, {\phi}^{f,\underline{\La}}_2, {\phi}^{f,\underline{\La}}_3,\dotsc$ in the above definition is determined uniquely.
For example,
\begin{align*}
\phi^{f,\underline{\La}}_1=&{f} -K^\pr \La_1 -\La_1 K
,
\\
\phi^{f,\underline{\La}}_2(x_1,x_2)
= 
&
f\big(M_2(x_1,x_2)\big)
- M_2^\pr\big(\phi^{f,\underline{\La}}_1(x_1),\phi^{f,\underline{\La}}_1(x_2)\big)
\\
&
-K^\pr\La_2(x_1,x_2) - \La_2(Kx_1,x_2) -(-1)^{|x_1|}\La_2(x_1, Kx_2)
\\
&-\La_1(\ell_2^K(x_1,x_2)).
\\
=
&
\phi^{f,\underline{\La}}_1\big(M_2(x_1,x_2)\big)
- M_2^\pr\big(\phi^{f,\underline{\La}}_1(x_1),\phi^{f,\underline{\La}}_1(x_2)\big)
\\
&
-K^\pr\La_2(x_1,x_2) - \La_2(Kx_1,x_2) -(-1)^{|x_1|}\La_2(x_1, Kx_2)
\\
&
+K^\pr\La_1(M_2(x_1,x_2))
+\La_1(M_2(Kx_1,x_2))
+(-1)^{|x_1|}\La_1(M_2(x_1,Kx_2)).
\end{align*}
\hfill$\natural$
\end{remark}

We are going to show that the descendant of a \padj{} probability algebra is a unital $sL_\infty$-algebra, that the descendant of a morphism of \padj{} probability algebra up to arbitrary homotopy is a unital $sL_\infty$-morphism between descendant unital $sL_\infty$-algebras, and that the descendants of a morphism of \padj{} probability algebra up to different homotopies are homotopic as unital $sL_\infty$-morphisms.
For each object $\sC_{\Comm}$ in the homotopy category $\homotopycat\category{\HProb}_{\Comm}{\overk}$, let $\text{ho}\!\Des\big(\sC_{\Comm}\big)$ be its descendant algebra.
For each morphism $[f]$ in the homotopy category $\homotopycat\category{\HProb}_{\Comm}{\overk}$, let $\text{ho}\!\Des\big(\sC_{\Comm}\big)$ be the homotopy type of its descendant morphism up to arbitrary homotopy. We check the functoriality of $\text{ho}\!\Des$ to arrive at the following main theorem of this subsection:
\begin{theorem}\label{maina}
$\text{ho}\!\Des$ is a functor from the homotopy category $\homotopycat\category{\HProb}_{\Comm}{\overk}$ of \padj{} probability algebras to the homotopy category $\homotopycat\category{\UsL}_\infty{\overk}$ of unital $sL_\infty$-algebras. 
\end{theorem}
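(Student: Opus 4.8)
The plan is to verify that $\text{ho}\!\Des$ is a functor by establishing four ingredients in order: (i) each descendant $\sC_\Lie=(\sC,\underline{\ell}^K)$ is a unital $sL_\infty$-algebra; (ii) each descendant $\underline{\phi}^{f,\underline{\La}}$ is a unital $sL_\infty$-morphism; (iii) its $sL_\infty$-homotopy type is independent of the auxiliary homotopy $\underline{\La}$ and depends only on the pointed cochain homotopy class of $f$; and (iv) the assignment preserves identities and composition. The organizing device throughout will be the Maurer--Cartan/generating-function formalism of Section~\ref{subs:Linfty}: I package each family into a single series evaluated on a formal degree-zero element $\gamma$ (over a graded Artinian base, or in formal power series, so that all sums converge), writing $Z_M(\gamma)=\sum_{n\ge 0}\tfrac1{n!}M_n(\gamma^{\otimes n})$ with $M_0:=1_\sC$, its one-slot insertion $Z_M(\gamma;w)=\sum_{n\ge 0}\tfrac1{n!}M_{n+1}(\gamma^{\otimes n},w)$, and similarly $L(\gamma)=\sum_{n\ge 1}\tfrac1{n!}\ell^K_n(\gamma^{\otimes n})$, $\mb{\Phi}(\gamma)=\sum_{n\ge 1}\tfrac1{n!}\phi^{f,\underline{\La}}_n(\gamma^{\otimes n})$, and $\mb{\La}(\gamma)=\sum_{n\ge1}\tfrac1{n!}\La_n(\gamma^{\otimes n})$. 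This converts the partition-indexed recursions into identities among single series, taming the sign bookkeeping.

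For (i), I would first check that the defining recursion for $\underline{\ell}^K$ is equivalent to the single identity $K\,Z_M(\gamma)=Z_M(\gamma;L(\gamma))$; the constraint $|B_i|=n-|\pi|+1$ in the recursion is exactly what forces the right-hand side to have this shape, with one distinguished block carrying $\ell^K$ and all others singletons. Applying $K$ a second time and using $K^2=0$ gives $K\,Z_M(\gamma;L(\gamma))=0$; re-expanding this through the same recursion (a Leibniz-type expansion of $K$ across the insertion slot, followed by substitution of $K Z_M=Z_M(\gamma;L(\gamma))$ and of $KL(\gamma)$) yields precisely the Maurer--Cartan/Jacobi relations that constitute the $sL_\infty$-identities for $\underline{\ell}^K$. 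The unit conditions $\ell^K_n(\dots,1_\sC)=0$ follow from the \GCCorAname{} unit conditions $M_{n+1}(\dots,1_\sC)=M_n(\dots)$ together with $K1_\sC=0$.

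Step (ii) is one layer more involved but structurally identical. Summing the defining relation for $\underline{\phi}^{f,\underline{\La}}$ over $n$ gives $f\big(Z_M(\gamma)\big)=Z_{M^\pr}\big(\mb{\Phi}(\gamma)\big)+K^\pr\mb{\La}(\gamma)+R(\gamma)$, where $Z_{M^\pr}(\mb{\Phi}(\gamma))$ is the target exponential substituted at $\mb{\Phi}(\gamma)$ and $R(\gamma)$ collects the $\La$-terms carrying $\ell^K$. Applying $K^\pr$, invoking the cochain property $K^\pr f=fK$, and substituting the generating identities of (i) on both the source and target sides, the $\underline{\La}$-dependent contributions cancel against $K^\pr\mb{\La}$, and what survives is exactly the Maurer--Cartan equation certifying $\mb{\Phi}$ as an $sL_\infty$-morphism $(\sC,\underline{\ell}^K)\to(\sC^\pr,\underline{\ell}^{K^\pr})$. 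For (iii), given two homotopies I interpolate $\underline{\La}(\t)=(1-\t)\underline{\La}+\t\,\underline{\tilde\La}$, differentiate the defining relation in $\t$, and recognize $\tfrac{d}{d\t}\underline{\phi}^{f,\underline{\La}(\t)}$ as the unique solution of the homotopy flow equations of Definition/Lemma~\ref{l-hom} for an explicit generator $\underline{\eta}(\t)$ built from $\dot{\underline{\La}}=\underline{\tilde\La}-\underline{\La}$; this produces the required $sL_\infty$-homotopy. Independence from the cochain representative of $f$ then reduces to this case, since replacing $f$ by $f+K^\pr s+sK$ for a pointed cochain homotopy $s$ can be absorbed into a shift of the homotopy data, giving $\underline{\phi}^{f+K^\pr s+sK,\underline{\La}}=\underline{\phi}^{f,\underline{\La}^\pr}$ for a suitable $\underline{\La}^\pr$.

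Finally, for (iv): with $\underline{\La}=\underline{0}$ the identity morphism descends on the nose to $(\mathrm{id},0,0,\dots)$, because setting $\phi_{\ge 2}=0$ collapses the defining relation to the tautology $M_n=M_n$; and for a composable pair $f,g$ I take $\underline{\La}=\underline{\La}^\pr=\underline{0}$ and verify through the generating identity that $\underline{\phi}^{g,\underline{0}}\bullet\underline{\phi}^{f,\underline{0}}$ satisfies the descendant defining relation for $g\circ f$ up to a determined homotopy $\underline{\La}^\ppr$, so that by (iii) $\text{ho}\!\Des([g]\circ[f])=[\underline{\phi}^{g}]_\infty\bullet_{\!h}[\underline{\phi}^{f}]_\infty$; well-definedness of $\bullet_{\!h}$ on homotopy types was already secured in Lemma~\ref{compho}. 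I expect the main obstacle to lie in (i) and (ii): confirming that re-expanding $K^2Z_M=0$ (respectively $K^\pr f(Z_M(\gamma))$) through the distinguished-block recursion reproduces the signed partition sums of the $sL_\infty$-relations with every Koszul sign $\ep(\pi,i)$ accounted for. The generating-function packaging is precisely the mechanism intended to make this bookkeeping tractable, replacing nested sums over classical partitions by a single differentiated series identity.
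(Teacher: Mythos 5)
Your proposal is correct and follows essentially the same route as the paper: the paper proves Theorem \ref{maina} through exactly this sequence of lemmas (descendant algebra is a unital $sL_\infty$-algebra via the generating-function identity and $K^2=0$; descendant morphism is a unital $sL_\infty$-morphism via $K^\pr f=fK$; independence of $\underline{\La}$ via a $\t$-interpolation recognized as a homotopy flow; absorption of a pointed cochain homotopy of $f$ into a shift $\underline{\La}\mapsto\underline{\La}+\varsigma\circ\underline{M}$; and identity/composition checked on the nose at $\underline{\La}=\underline{0}$), using the same formal-parameter packaging $\g=\sum_i t_i x_i$ that you phrase as series identities. The only cosmetic difference is notational: you bundle the binomial-coefficient identities into $Z_M(\g)$ and its one-slot insertion, while the paper writes them out termwise.
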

Our proof of the above theorem consists of several lemmas, which show that
\begin{itemize}
\item the descendant algebra $\sC_{\Lie}=\Des_\La \big(\sC_{\Comm}\big)$ is a unital $sL_\infty$-algebra,
\item the descendant morphism $\underline{\phi}^{f,\underline{\La}}$, up to homotopy $\underline{\La}$, of a \padj{} probability algebra morphism $f$ is a unital $sL_\infty$-morphism between the respective descendant algebras,
\item the homotopy type of the descendant morphism does not depend on $\underline{\La}$,
\item if $f$ and $\tilde{f}$ are homotopic probability algebra morphisms, their descendants up to arbitrary homotopy $\underline{\phi}^{\tilde f,\underline{ \La}}$ and $\underline{\phi}^{f,\underline{\La}}$ are homotopic as unital $sL_\infty$-morphisms, and
\item if $f$ and $f^\pr$ are composable morphisms of \padj{} probability algebras, then composition of the descendant morphisms of $f$ and $f^\pr$, each up to homotopy, is homotopic to the descendent of $f^\pr\circ f$ up to homotopy.
\end{itemize}
For the purposes of proving these lemmas, we shall repeatedly use the following construction. Fix $x_1,\ldots, x_n$, and let $t_i$ be a formal graded commutative parameter such that $|t_i| = -|x_i|$. Let $\g =\sum_{i=1}^n t_i x_i$. Rewriting various formulas dependent on the $x_i$ in terms of $\g$ will make the simplifications easier.

\begin{lemma}\label{bna}
Let $\sC_{\Lie} = \left(\sC, \underline{\ell}^{K}\right)$ be the descendant of the \padj{} probability algebra $\sC_{\Comm}=(\sC, \underline{M}, K)$.
Then $\sC_{\Lie}$ is a unital ${sL}_\infty$-algebra.
\end{lemma}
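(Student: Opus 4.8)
The cleanest route is to package the entire family $\underline{\ell}^K$ into a single generating operator and deduce the $sL_\infty$-relations from the nilpotency $K^2=0$ together with the unital correlation algebra structure. First I would fix elements $x_1,\dotsc,x_n$ and pass to the formal variable $\g=\sum_i t_i x_i$ as suggested in the setup preceding the lemma. The recursive definition of $\ell^K_n$ in Definition~\ref{GDesA} is exactly the statement that $K$ commutes with the ``symmetrized multiplication'' $\underline{M}$ in the following sense: writing $e^{\g}_M \ceq 1_\sC + \sum_{n\ge 1}\frac{1}{n!}M_n(\g,\dotsc,\g)$ for the exponential-type generating function of $\underline{M}$ and $\ell^K(\g)\ceq \sum_{n\ge 1}\frac{1}{n!}\ell^K_n(\g,\dotsc,\g)$, the defining equations assemble into the single identity
\[
K\, e^{\g}_M = D_{\ell^K(\g)}\, e^{\g}_M,
\]
where $D_{\ell^K(\g)}$ denotes the operator inserting $\ell^K(\g)$ into one slot of $\underline{M}$. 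This is the generating-function shadow of the combinatorial recursion, and verifying it is just a rewriting of Definition~\ref{GDesA}.

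\textbf{Key steps.} With this in hand, applying $K$ a second time and using $K^2=0$ gives
\[
0 = K\bigl(D_{\ell^K(\g)} e^{\g}_M\bigr).
\]
The right-hand side is computed by the graded Leibniz-type behaviour of $K$ across the correlation-algebra multiplication (again governed by Definition~\ref{GDesA}), and it reorganizes precisely into the generating function of the $sL_\infty$-relations of Definition~\ref{l-alg}(1): one term where $K$ lands on the inserted $\ell^K(\g)$, producing $\ell^K(\ell^K(\g))$-type contributions, and one where $K$ hits a remaining $M$-factor, reproducing $\ell^K$ inserted again. Collecting by word length $n$ recovers
\[
\sum_{\substack{\pi\in P(n)\\ |B_i|=n-|\pi|+1}}\ep(\pi,i)\,
\ell^K_{|\pi|}\bigl(x_{B_1},\dotsc,\ell^K(x_{B_i}),\dotsc,x_{B_{|\pi|}}\bigr)=0,
\]
which is exactly condition (1). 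Second, I would check the unit condition (2), namely $\ell^K_n(x_1,\dotsc,x_{n-1},1_\sC)=0$ for $n\ge 2$. This follows by setting one argument to $1_\sC$ and invoking the unital property $M_{n+1}(\dots,1_\sC)=M_n(\dots)$ of the \GCCorAname{} together with $K1_\sC=0$; an induction on $n$ using the rewritten recursion in the Remark after Definition~\ref{GDesA} shows the insertion of $1_\sC$ causes a telescoping cancellation. These two verifications together establish that $(\sC,1_\sC,\underline{\ell}^K)$ satisfies Definition~\ref{l-alg}.

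\textbf{The main obstacle.} The genuinely delicate point is bookkeeping the Koszul signs $\ep(\pi,i)$ when translating between the generating-function identity $K^2 e^\g_M=0$ and the partition-indexed relations, since the degree of $K$ is $+1$ and each insertion shifts signs. The partition sum on the right of Definition~\ref{GDesA} is constrained to $|B_i|=n-|\pi|+1$ (all other blocks singletons), and after applying $K$ twice one must confirm that the two resulting families of terms carry matching signs so that they assemble into the single closed $sL_\infty$-relation rather than into something weaker. I expect that the most efficient way to control this is to work entirely with $\g$ (so that symmetrization and signs are absorbed into the formal commutativity $t_i t_j=(-1)^{|t_i||t_j|}t_j t_i$) and only translate back to the partition form at the very end, where the combinatorial factor $\frac{1}{n!}$ unpacks into the sum over $P(n)$ with the singleton-block constraint. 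The unit condition's induction is routine by comparison, so essentially all the real work is the sign-consistent expansion of $K^2e^\g_M=0$.
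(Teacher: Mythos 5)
Your overall plan---pass to the formal element $\g=\sum_i t_i x_i$, read Definition~\ref{GDesA} as a single generating-function identity, apply $K$ a second time and use $K^2=0$, then treat the unit condition by induction from the unitality of $\underline{M}$ and $K1_\sC=0$---is exactly the strategy of the paper's proof. However, there is one concrete gap in your key step. When you expand $0=K\bigl(D_{\ell^K(\g)}e^{\g}_M\bigr)$ using the recursion again, the outcome is \emph{not} the generating function of the $sL_\infty$-relations; it is that generating function \emph{inserted back into} $\underline{M}$. Writing $F_n$ for the left-hand side of the $sL_\infty$-relation (the paper's \eq{xxy}), what $K^2=0$ actually yields is
\[
\sum_{k=1}^{n}\binom{n}{k-1}\,M_{k}\bigl(\g,\dotsc,\g,\,F_{n-k+1}(\g,\dotsc,\g)\bigr)=0,
\]
which is the paper's \eq{xxd}, equivalently the claim \eq{xxz}. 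At word length $n$ this sum contains $F_n$ (the $k=1$ term, since $M_1$ is the identity) \emph{plus} lower failures $F_{<n}$ inserted into $M_{\geq 2}$, so ``collecting by word length'' does not by itself produce $F_n=0$. You need the closing induction: $F_1=K^2=0$, and if $F_1=\dotsb=F_{n-1}=0$ then the displayed identity collapses to $F_n=0$. This unipotence-of-insertion argument is precisely how the paper finishes, and without it your step as stated would fail.

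A second, smaller point: the sign bookkeeping you flag as ``the main obstacle'' is resolved in the paper by a specific observation you will need. In deriving the analogue of \eq{xxd}, the dangerous terms are those in which \emph{two} factors of the form $\ell^K_{j}(\g,\dotsc,\g)$ land inside the same $M$; since each such factor has odd degree (degree $1$, as $\g$ has degree $0$), these terms occur in pairs that cancel by the graded symmetry of $M_k$. Once you supply this cancellation and the word-length induction above, your argument closes and coincides with the paper's proof of Lemma~\ref{bna}; your treatment of the unit condition, where you do invoke an induction, matches the paper's second half as well.
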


\begin{proof}
We first show that $\left(\sC, \underline{\ell}^{K}\right)$ is a ${sL}_\infty$-algebra.
From the symmetric property of $\underline{M}$, it is trivial that $\ell^K_n$ is a linear map from $S^n(\sC)$ to $\sC$ of degree $1$. Define another family $\underline{F}=F_1, F_2,\dotsc$ of degree $2$ linear maps $F_n :S^n(\sC)\rightarrow \sC$ by
\eqn{xxy}{
{F}_n(x_1,\dotsc, x_n)\ceq 
\sum_{\substack{\pi \in P(n)\\ |B_i| = n-|\pi|+1}}\ep(\pi,i)
\ell^K_{|\pi|}\left( x_{B_1}, \dotsc, x_{B_{i-1}}, \ell^K(x_{B_i}), x_{B_{i+1}},\dotsc, x_{B_{|\pi|}}\right).
}
We claim that, for all $n\geq 1$,
\eqn{xxz}{
\sum_{\substack{\pi \in P(n)\\ |B_i| = n-|\pi|+1}}\ep(\pi)
M_{|\pi|}\left( x_{B_1}, \dotsc, x_{B_{i-1}}, {F}(x_{B_i}), x_{B_{i+1}},\dotsc, x_{B_{|\pi|}}\right)
=0,
}
which can be rewritten as follows;
\begin{align*}
{F}_n(x_1,\dotsc,x_n)
=
-\sum_{\substack{\pi \in P(n)\\ |B_i| = n-|\pi|+1\\{\color{red}|\pi|\neq 1}}}\ep(\pi)
M_{|\pi|}\left( x_{B_1}, \dotsc,x_{B_{i-1}}, {F}(x_{B_i}), x_{B_{i+1}},\dotsc, x_{B_{|\pi|}}\right).
\end{align*}
Note that $F_1=K^2=0$. Note also that $F_n=0$ if $F_1=F_2=\cdots = F_{n-1}=0$.
By induction, we deduce that ${F}_n=0$ for all $n\geq 1$, which implies that $\left(\sC, \underline{\ell}^{K}\right)$ is an ${sL}_\infty$-algebra.

Now we prove the claimed identity \eq{xxz}. The formula for the descendant algebra in {\bf Definition} \ref{GDesA} is equivalent to the following:
\eqnalign{xxa}{
 K M_n(\g,\dotsc,\g)
=\sum_{k=1}^n\binom{n}{k-1}M_{k}\big(\g,\dotsc,\g, \ell_{n-k+1}(\g,\dotsc,\g)\big).
}
Similarly, the formula \eq{xxy} is equivalent to the following:
\begin{align*}
 {F}_n (\g,\dotsc,\g)
=\sum_{k=1}^n\binom{n}{k-1} \ell^K_{k}\big(\g,\dotsc,\g, \ell_{n-k+1}(\g,\dotsc,\g)\big).
\end{align*}
By applying $K$ to \eq{xxa}, we have
\eqn{xxb}{
K^2 M_n(\g,\dotsc,\g)
=\sum_{k=1}^n\binom{n}{k-1}KM_{k}\big(\g,\dotsc,\g, \ell_{n-k+1}(\g,\dotsc,\g)\big).
}
From \eq{xxa}, we also deduce that, for any $\b$,
\eqnalign{xxc}{
K M_{k}(\b,\g,\dotsc,\g)
=
&
(-1)^{|\b|}\sum_{j=2}^{k}\binom{k-1}{j-2} M_{j}\big(\b,\g,\dotsc,\g, \ell^K_{k-j+1}(\g,\dotsc,\g)\big)
\\
&
+\sum_{j=1}^{k}\binom{k-1}{j-1}M_{j}\big(\g,\dotsc,\g, \ell^K_{k-j+1}(\b,\g,\dotsc,\g)\big).
}
Combining \eq{xxb} with \eq{xxc}, we have
\begin{align*}
&K^2 M_{n}(\g,\dotsc,\g)
\\
=&-\sum_{k=1}^n\binom{n}{k-1}
\sum_{j=2}^{k}\binom{k-1}{j-2}M_{j}\big(\ell^K_{n-k+1}(\g,\dotsc,\g),\g,\dotsc,\g, \ell^K_{k-j+1}(\g,\dotsc,\g)\big)
\\
+&
\sum_{k=1}^n\binom{n}{k-1}
\sum_{j=1}^{k}\binom{k-1}{j-1}M_{j}\big(\g,\dotsc,\g, \ell^K_{k-j+1}(\ell^K_{n-k+1}(\g,\dotsc,\g),\g,\dotsc,\g)\big).
\end{align*}
Note that $\ell_k(\g,\dotsc,\g)$ has degree $1$ for all $k\geq 1$.
Hence the terms in the first line on the right hand side of the above equation cancel due to the graded commutativity of $M_k$ for $k\geq 2$. After a resummation of the remaining terms, we obtain
\eqnalign{xxd}{
K^2 & M_{n}(\g,\dotsc,\g)
=\sum_{k=1}^n\binom{n}{j-1}M_{j}\big(\g,\dotsc,\g, {F}_{n-j+1}(\g,\dotsc,\g)\big),
}
which is equivalent to the claimed identity \eq{xxz} since $K^2=0$.

Now we are going to show that $\ell_n(1_\sC, x_2,\dotsc, x_n)=0$ for all $n\geq 1$ and for any $ x_2,\dotsc, x_n\in\sC$. From \eq{xxc},
we have
\begin{align*}
K M_{k}(1_\sC,\g,\dotsc,\g)
=
&
\sum_{j=2}^{k}\binom{k-1}{j-2}M_{j}\big(1_\sC,\g,\dotsc,\g, \ell^K_{k-j+1}(\g,\dotsc,\g)\big)
\\
&
+\sum_{j=1}^{k}\binom{k-1}{j-1}M_{j}\big(\g,\dotsc,\g, \ell_{k-j+1}(1_\sC,\g,\dotsc,\g)\big).
\end{align*}
Using the property that $M_n(1_\sC, x_2,\dotsc, x_n)=M_{n-1}(x_2,\dotsc, x_n)$ for all $n\geq 2$ and the identity \eq{xxa}, we have
\[
\sum_{j=1}^{k}\binom{k-1}{j-1}M_{j}\big(\g,\dotsc,\g, \ell_{k-j+1}(1_\sC,\g,\dotsc,\g)\big)=0.
\]
This relation together with the initial condition $\ell^K_1(1_\sC)=0$, since $K=\ell^K_1$, imply by induction that $\ell^K_n(1_\sC, x_2,\dotsc, x_n)=0$ for all $n\geq 1$ and any $ x_2,\dotsc, x_n\in\sC$.
\hfill\qed\end{proof}

\begin{lemma}\label{bnb}
Let $\sC_{\Comm}=(\sC, \underline{m}, K)$ and $\sC^\pr_{\Comm}=(\sC^\pr, \underline{m^\pr}, K^\pr)$ be \padj{} probability algebras with descendant algebras $\sC_{\Lie} = \left(\sC, \underline{\ell}^{K}\right)$ and $\sC^\pr_{\Lie} = \left(\sC^\pr, \underline{\ell}^{K^\pr}\right)$, respectively.
Let $f$ be a morphism from $\sC_{\Comm}$ to $\sC^\pr_{\Comm}$ in the category $\category{\HProb}_{\Comm}{\overk}$. Then the descendant $\underline\phi^{f,\underline{\La}}$ of $f$ up to an arbitrary homotopy $\underline{\La}$ is a unital ${sL}_\infty$-morphism from $\sC_{\Lie}$ to $\sC^\pr_{\Lie}$.
 \end{lemma}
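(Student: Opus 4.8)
The plan is to follow the template of Lemma~\ref{bna}, recasting all partition sums as binomial sums through the substitution $\g=\sum_{i=1}^n t_i x_i$ and then inducting on word length. Two inputs drive the argument: first, that $f$ is a pointed cochain map, so $K^\pr f = f K$; second, that by Lemma~\ref{bna} both $\sC_{\Lie}$ and $\sC^\pr_{\Lie}$ are genuine unital $sL_\infty$-algebras, meaning the curvature families $\underline F$ and $\underline F^\pr$ assembled from $\underline\ell^K$ and $\underline\ell^{K^\pr}$ vanish identically. To organize the verification of the morphism relation of Definition~\ref{l-mor}, I would introduce the obstruction family $\underline G=G_1,G_2,\dotsc$,
\[
G_n(x_1,\dotsc,x_n)\ceq\sum_{\pi\in P(n)}\ep(\pi)\ell^{K^\pr}_{|\pi|}\!\big(\phi^{f,\underline\La}(x_{B_1}),\dotsc,\phi^{f,\underline\La}(x_{B_{|\pi|}})\big)-\!\!\!\sum_{\substack{\pi\in P(n)\\|B_i|=n-|\pi|+1}}\!\!\!\ep(\pi,i)\phi^{f,\underline\La}_{|\pi|}\!\big(x_{B_1},\dotsc,\ell^K(x_{B_i}),\dotsc,x_{B_{|\pi|}}\big),
\]
whose vanishing for all $n$ is exactly the assertion that $\underline\phi^{f,\underline\La}$ is an $sL_\infty$-morphism.

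In the $\g$-variable the defining recursion of Definition~\ref{GDesM} becomes
\[
f\!\Big(\sum_{k\ge1}\fr{1}{k!}M_k(\g,\dotsc,\g)\Big)=\sum_{k\ge1}\fr{1}{k!}M^\pr_k(\mb{\phi},\dotsc,\mb{\phi})+K^\pr\mb{\La}+\sum_{k\ge1}\fr{1}{(k-1)!}\La_k\big(\g,\dotsc,\g,\mb{\ell^K}\big),
\]
where $\mb{\phi}=\sum_{m\ge1}\fr{1}{m!}\phi^{f,\underline\La}_m(\g,\dotsc,\g)$, and $\mb{\La}$, $\mb{\ell^K}$ are the analogous series built from $\underline\La$ and $\underline\ell^K$. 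The crux is to prove the single generating-function identity
\[
\sum_{k\ge1}\fr{1}{(k-1)!}M^\pr_k(\mb{\phi},\dotsc,\mb{\phi},\mb{G})=0,\qquad \mb{G}=\sum_{n\ge1}\fr{1}{n!}G_n(\g,\dotsc,\g).
\]
Because $M^\pr_1$ is the identity, the word-length-$n$ component of this reads $G_n+\sum_{k\ge2}(\text{terms involving only }G_1,\dotsc,G_{n-1})=0$; together with $G_1=0$ this forces $G_n=0$ for all $n$ by induction. Thus the entire morphism statement reduces to this one identity, and crucially $G_n$ appears bare, not wrapped in $K^\pr$.

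To establish the identity I would apply $K^\pr$ to the $\g$-form of the recursion. On the left, $K^\pr f = f K$ and the generating-function form \eq{xxa} of the descendant relation on $\sC$ turn $K^\pr f\big(\sum_k\fr{1}{k!}M_k\big)$ into $f\big(\sum_k\fr{1}{(k-1)!}M_k(\g,\dotsc,\g,\mb{\ell^K})\big)$, which I then re-expand through the recursion. On the right, $K^\pr$ annihilates the $K^\pr\mb{\La}$ term by $K^{\pr}{}^2=0$; it passes through the products $M^\pr_k(\mb{\phi},\dotsc,\mb{\phi})$ via the descendant relation on $\sC^\pr$, producing $\ell^{K^\pr}$-insertions of $\mb{\phi}$; and it crosses the final $\La$-term using the auxiliary identity \eq{xxc}. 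Resumming the binomial coefficients and discarding the terms that cancel by $\underline F^\pr=0$ (Lemma~\ref{bna}) collects precisely the $\ell^{K^\pr}$- and $\ell^K$-contributions into $\mb{G}$, yielding the desired identity. The base case $n=1$ is independent: $\phi^{f,\underline\La}_1=f-K^\pr\La_1-\La_1K$ satisfies $K^\pr\phi^{f,\underline\La}_1=\phi^{f,\underline\La}_1K$ (again by $fK=K^\pr f$ and $K^2=K^{\pr}{}^2=0$), so $G_1=K^\pr\phi^{f,\underline\La}_1-\phi^{f,\underline\La}_1K=0$.

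It remains to check the unit axioms. Specializing one argument of the recursion to $1_\sC$ and using $M_{n+1}(\dotsc,1_\sC)=M_n(\dotsc)$, $M^\pr_{n+1}(\dotsc,1_{\sC^\pr})=M^\pr_n(\dotsc)$, $\La_{n+1}(\dotsc,1_\sC)=\La_n(\dotsc)$ and $\ell^K_n(\dotsc,1_\sC)=0$ (the last from Lemma~\ref{bna}), an induction gives $\phi^{f,\underline\La}_1(1_\sC)=1_{\sC^\pr}$ (from $f$ being pointed together with $\La_1(1_\sC)=0$ and $K1_\sC=0$) and $\phi^{f,\underline\La}_k(x_1,\dotsc,x_{k-1},1_\sC)=0$ for $k\ge2$. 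I expect the one genuine obstacle to be the sign- and binomial-bookkeeping in the resummation after applying $K^\pr$: forcing the algebra-side cross-terms to cancel against one another and against the $\underline F^\pr$-terms relies, just as in Lemma~\ref{bna}, on the graded commutativity of the $M^\pr_k$ and on every inner $\ell$-insertion having degree $1$, and this is where essentially all of the work sits.
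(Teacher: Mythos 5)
Your plan reproduces the paper's proof in all structural respects, so this is not a different route: your $\underline G$ is exactly the paper's obstruction family $\underline E$ of \eq{kxy}, your generating-function identity is the paper's claim \eq{zzw}, the induction (using $M^\pr_1=\mathrm{id}$ and $G_1=K^\pr\phi^{f,\underline\La}_1-\phi^{f,\underline\La}_1K=0$) is the paper's closing step, and the derivation by applying $K^\pr$ to the recursion \eq{zza} while using $K^\pr f=fK$ together with \eq{xxa} is the paper's computation \eq{zzb}.

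There is, however, a concrete error in the one step you yourself identify as carrying all the work, and as written that step would stall. The target curvature $\underline F^\pr$ never enters the argument: applying $K^\pr$ to $\sum_r\fr{1}{r!}M^\pr_r(\mb{\phi},\dotsc,\mb{\phi})$ via Definition~\ref{GDesA} on $\sC^\pr$ produces \emph{only single} $\ell^{K^\pr}$-insertions, and these are wanted terms (they constitute the $\ell^{K^\pr}$-half of $\mb{G}$), not terms to be discarded; likewise no $M^\pr$-terms with two insertions ever appear, so there is no analogue here of the graded-commutativity cancellation of $M^\pr_k$ that you import from Lemma~\ref{bna}. The terms that genuinely must be killed arise on the other side, when you re-expand $f\bigl(\sum_k\fr{1}{(k-1)!}M_k(\g,\dotsc,\g,\mb{\ell^K})\bigr)$, $\mb{\ell^K}\ceq\sum_m\fr{1}{m!}\ell^K_m(\g,\dotsc,\g)$, through Definition~\ref{GDesM}: besides the product terms (the $\ell^K$-half of $\mb{G}$) and the $K^\pr\La$-terms (which cancel against the $\La$-term already present), the recursion spits out $\La$-terms carrying two degree-one entries. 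Those in which $\mb{\ell^K}$ lies inside the inserted block assemble into $\sum_r\fr{1}{(r-1)!}\La_r(\g,\dotsc,\g,\mb{F})$ with $\mb{F}=\sum_k\fr{1}{(k-1)!}\ell^K_k(\g,\dotsc,\g,\mb{\ell^K})$, which vanishes by the \emph{source} relations $\underline F=\underline 0$, i.e.\ by \eq{xxz} of Lemma~\ref{bna} applied to $\sC$, not to $\sC^\pr$; those in which $\mb{\ell^K}$ lies outside vanish because the graded-symmetric $\La_r$ is being evaluated with the same odd element $\mb{\ell^K}$ in two slots, hence equals its own negative. (Your appeal to \eq{xxc} for ``crossing the final $\La$-term'' is also misplaced: \eq{xxc} concerns $KM_k$ with a distinguished argument in the source algebra, whereas $K^\pr$ applied to $\La_k(\g,\dotsc,\g,\mb{\ell^K})$ needs no identity at all.) With the cancellation re-attributed this way your argument closes and coincides with the paper's; the unit conditions you sketch are handled as in the paper.
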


\begin{proof}
From the symmetric property of $\underline{M}$ and $\underline{M}^\pr$, it is trivial that $\phi^f_n$ is a linear map from $S^n(\sC)$ to $\sC^\pr$ of degree $0$ for all $n\geq 1$. Define another family $\underline{{E}}={E}_1, {E}_2,\dotsc$ of linear maps ${E}_n :S^n(\sC)\rightarrow \sC^\pr$ of degree $1$ for all $n\geq 1$ by
\eqnalign{kxy}{
{E}_n(x_1,\dotsc, x_n)\ceq &
\sum_{{\pi \in P(n)}}\ep(\pi)
\ell_{|\pi|}^{K^\pr} \left(\phi^{f,\underline{\La}}\left( x_{B_1}\right), \dotsc, \phi^{f,\underline{\La}}\big( x_{B_{|\pi|}}\big)\right)
\\
&
-\!\!\!\!\sum_{\substack{\pi \in P(n)\\ |B_i| = n-|\pi|+1}}\ep(\pi,i)
\phi^{f,\underline{\La}}_{|\pi|}\left( x_{B_1}, \dotsc, x_{B_{i-1}}, \ell^K(x_{B_i}), x_{B_{i+1}},\dotsc, x_{B_{|\pi|}}\big)\right).
}
We claim that, for all $n\geq 1$,
\eqnalign{zzw}{
\sum_{\substack{\pi \in P(n)}}&\ep(\pi)
M_{|\pi|}\left( \phi^{f,\underline{\La}}(x_{B_1}), \dotsc, \phi^{f,\underline{\La}}(x_{B_{i-1}}), {E}(x_{B_i}),
\phi^{f,\underline{\La}}(x_{B_{i+1}}),\dotsc, \phi^{f,\underline{\La}}(x_{B_{|\pi|}})\right)
\\
&=0,
}
which can be rewritten as follows:
\begin{align*}
E_n&(x_1,\dotsc, x_n)
\\
=
&
-\!\!\!\sum_{\substack{ \pi \in P(n)\\ {\color{red}|\pi|\neq 1}}}\ep(\pi)
M_{|\pi|}\left( \phi^{f,\underline{\La}}(x_{B_1}), \dotsc, \phi^{f,\underline{\La}}(x_{B_{i-1}}), {E}(x_{B_i}),
\phi^{f,\underline{\La}}(x_{B_{i+1}}),\dotsc, \phi^{f,\underline{\La}}(x_{B_{|\pi|}})\right).
\end{align*}
Note that $E_1=K^\pr\circ {f}- 
{f}\circ K=0$. Note also that $E_n=0$ if we assume that $E_1=E_2=\cdots = E_{n-1}=0$.
By induction we deduce that $E_n=0$ for all $n\geq 1$, which implies that $\underline{\phi}^f$ is an ${sL}_\infty$-morphism.
It remains to prove the claim \eq{zzw}.

We begin by noting that the formula in {\bf Definition} \ref{GDesM} is equivalent to the following;
\eqnalign{zza}{
 {f}\big( M_n(\g,&\dotsc,\g)\big)
 =
 \sum_{j_1+\cdots+j_r=n}\Fr{n!}{j_1! \cdots j_r! r!}M^\pr_{r}\left(
\phi^{f,\underline{\La}}_{j_1}(\g,\dotsc,\g), \dotsc, \phi^{f,\underline{\La}}_{j_r}(\g,\dotsc,\g)\right)
\\
&
+K^\pr \La_n(\g,\dotsc, \g)
+\sum_{k=1}^n\binom{n}{k-1}\La_{k}\left(\g,\dotsc,\g, \ell^K_{n-k+1}(\g,\dotsc,\g)\right),
}
Similarly, the formula \eq{kxy} is equivalent to the following;
\begin{align*}
 {E}_n (\g,\dotsc,\g)
=
&
\sum_{j_1+\cdots+j_r=n}\Fr{n!}{j_1! \cdots j_r! r!}\ell_{r}^{K^\pr}\left(
\phi^{f,\underline{\La}}_{j_1}(\g,\dotsc,\g), \dotsc, \phi^{f,\underline{\La}}_{j_r}(\g,\dotsc,\g)\right)
\\
&
-\sum_{k=1}^n\binom{n}{k-1} \phi^{f,\underline{\La}}_{k}\left(\g,\dotsc,\g, \ell^K_{n-k+1}(\g,\dotsc,\g)\right).
\end{align*}
By applying $K^\pr$ to \eq{zza} and ${f}$ to \eq{xxa}, we have
\eqnalign{zzb}{
 K^\pr {f}\left(M_n(\g,\dotsc,\g)\right)
 =
 &
 \sum_{j_1+\cdots+j_r=n}\Fr{n!}{j_1! \cdots j_r!r!}
 K^\pr M_{r}^\pr\left(
\phi^{f,\underline{\La}}_{j_1}(\g,\dotsc,\g), \dotsc, \phi^{f,\underline{\La}}_{j_r}(\g,\dotsc,\g)\right)
\\
&
+\sum_{k=1}^n\binom{n}{k-1} K^\pr \La_{k}\big(\g,\dotsc,\g, \ell^K_{n-k+1}(\g,\dotsc,\g)\big);
\\
{f}\left( K M_n(\g,\dotsc,\g)\right)
=
&\sum_{k=1}^n\binom{n}{k-1}{f}\left(M_{k}\big(\g,\dotsc,\g, \ell^K_{n-k+1}(\g,\dotsc,\g)\big)\right).
}
Hence
\begin{align*}
(K^\pr\circ {f}- &{f}\circ K)\big(M_n(\g,\dotsc, \g_n)\big)
\\
 =
 &
 \sum_{j_1+\cdots+j_r=n}\Fr{n!}{j_1! \cdots j_r!r!}
 K^\pr M_{r}^\pr\left(
\phi^{f,\underline{\La}}_{j_1}(\g,\dotsc,\g), \dotsc, \phi^{f,\underline{\La}}_{j_r}(\g,\dotsc,\g)\right)
\\
&
+\sum_{k=1}^n\binom{n}{k-1} K^\pr \La_{k}\big(\g,\dotsc,\g, \ell^K_{n+k-1}(\g,\dotsc,\g)\big)
\\
&
-\sum_{k=1}^n\binom{n}{k-1} {f}\left(M_{k}\big(\g,\dotsc,\g, \ell^K_{n+k-1}(\g,\dotsc,\g)\big)\right).
\end{align*}
Applying the formulas in {\bf Definition} \ref{GDesA} and {\bf Definition} \ref{GDesM} to the above, we obtain that
\begin{align*}
(K^\pr\circ {f}- &{f}\circ K)\big(M_n(\g,\dotsc, \g_n)\big)
\\
&=
\sum_{j_1+\cdots+j_r=n}\Fr{n!}{j_1! \cdots j_r!r!}
M_{r}^\pr\left({E}_{j_1}(\g,\dotsc,\g),
\phi^{f,\underline{\La}}_{j_2}(\g,\dotsc,\g), \dotsc, \phi^{f,\underline{\La}}_{j_{r}}(\g,\dotsc,\g)\right),
\end{align*}
which is equivalent to the claimed identity \eq{zzw}, since $K^\pr{f}- {f} K=0$.

Recall that $\phi^{f,\underline{\La}}_1=f -K^\pr \La_1 -\La_1 K$, which implies that $\phi^{f,\underline{\La}}_1(1_\sC)= 1_{\sC^\pr}$.
It remains to show that $\phi^{f,\underline{\La}}_{n+1}(1_\sC, x_1,\dotsc, x_n)=0$ for all $n\geq 1$ and for any $ x_1,\dotsc, x_n\in \sC$, which is left as an exercise. 
\hfill\qed\end{proof}

\begin{lemma}\label{bnc}
The descendant $\underline{\phi}^{f,\underline{\La}}$ of $f$ up to a homotopy $\underline{\La}=\La_1,\La_2,\La_3,\dotsc$ is $sL_\infty$-homotopic to the descendant $\underline{\phi}^{f,\underline{\tilde\La}}$ of $f$ up to another homotopy $\underline{\tilde\La}=\tilde\La_1,\tilde\La_2,\tilde\La_3,\dotsc$.
\end{lemma}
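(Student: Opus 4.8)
The plan is to join $\underline{\phi}^{f,\underline{\La}}$ and $\underline{\phi}^{f,\underline{\tilde\La}}$ by a polynomial path of unital $sL_\infty$-morphisms and to exhibit that path as a homotopy flow in the sense of Definition/Lemma~\ref{l-hom}. First I would interpolate the homotopy data linearly, setting $\underline{\La}(\t)\ceq(1-\t)\underline{\La}+\t\,\underline{\tilde\La}$ for $\t\in[0,1]$. The constraints on homotopy data in Definition~\ref{GDesM}, namely $\La_1(1_\sC)=0$ and $\La_{n+1}(x_1,\dotsc,x_n,1_\sC)=\La_n(x_1,\dotsc,x_n)$, are affine and hence preserved under convex combination, so each $\underline{\La}(\t)$ is admissible. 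Writing $\underline{\Phi}(\t)\ceq\underline{\phi}^{f,\underline{\La}(\t)}$, Lemma~\ref{bnb} guarantees that each $\underline{\Phi}(\t)$ is a unital $sL_\infty$-morphism from $\sC_{\Lie}$ to $\sC^\pr_{\Lie}$; by construction it is polynomial in $\t$ and interpolates $\underline{\Phi}(0)=\underline{\phi}^{f,\underline{\La}}$ and $\underline{\Phi}(1)=\underline{\phi}^{f,\underline{\tilde\La}}$. Its $\t$-derivative is governed by the constant gauge datum $\underline{\Psi}\ceq\underline{\tilde\La}-\underline{\La}$, a degree $-1$ family again satisfying $\Psi_1(1_\sC)=0$ and $\Psi_{n+1}(x_1,\dotsc,x_n,1_\sC)=\Psi_n(x_1,\dotsc,x_n)$.

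Second, I would differentiate the recursion of Definition~\ref{GDesM}. Since $f$, $\underline{M}$, $\underline{M}^\pr$, $K$, $K^\pr$ and $\underline{\ell}^K$ do not depend on $\t$, applying $\frac{d}{d\t}$ and the Leibniz rule yields, with $\Phi=\Phi(\t)$,
\begin{align*}
0=&\sum_{\pi\in P(n)}\ep(\pi)\sum_{i=1}^{|\pi|}M^\pr_{|\pi|}\big(\Phi(x_{B_1}),\dotsc,\dot\Phi(x_{B_i}),\dotsc,\Phi(x_{B_{|\pi|}})\big)\\
&+K^\pr\Psi_n(x_1,\dotsc,x_n)+\!\!\!\!\sum_{\substack{\pi\in P(n)\\|B_i|=n-|\pi|+1}}\!\!\!\!\ep(\pi,i)\,\Psi_{|\pi|}\big(x_{B_1},\dotsc,\ell^K(x_{B_i}),\dotsc,x_{B_{|\pi|}}\big).
\end{align*}
This closed recursion determines $\dot{\underline{\Phi}}$ from $\underline{\Phi}$ and $\underline{\Psi}$; it is the linearisation of the Maurer--Cartan-type relation that Lemma~\ref{bnb} shows $\underline{\Phi}(\t)$ to satisfy.

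The heart of the argument is to realise this $\dot{\underline{\Phi}}$ as the right-hand side of the homotopy flow equations of Definition/Lemma~\ref{l-hom} for a suitable generating homotopy $\underline{\eta}(\t)$. Because $\eta_n$ has degree $-1$, the only degree-consistent way it can enter is through $K^\pr\eta_n$ and the $\underline{\ell}^{K^\pr}$-brackets, so $\underline{\eta}$ must be built recursively from the flow equations themselves. I would set $\eta_1\ceq-\Psi_1$ and, inductively, $\eta_n\ceq-\Psi_n+\xi_n$, where $\xi_n$ is an explicit graded-symmetric degree $-1$ correction assembled from $\underline{M}^\pr$-products of the lower maps $\eta_1,\dotsc,\eta_{n-1}$ and of $\Phi(\t)$ (for instance $\xi_1=0$, and $\xi_2$ is a linear combination of $M^\pr_2(\eta_1(x_1),\Phi_1(x_2))$ and $M^\pr_2(\Phi_1(x_1),\eta_1(x_2))$). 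The content to verify is then that the discrepancy
\begin{align*}
D_n\ceq\dot\Phi_n&-K^\pr\eta_n-\!\!\!\!\sum_{\substack{\pi\in P(n)\\|B_i|=n-|\pi|+1}}\!\!\!\!\ep(\pi,i)\,\eta_{|\pi|}\big(x_{B_1},\dotsc,\ell^K(x_{B_i}),\dotsc,x_{B_{|\pi|}}\big)\\
&-\sum_{\substack{\pi\in P(n)\\|\pi|\neq1}}\sum_{i=1}^{|\pi|}\ep(\pi,i)\,\ell^{K^\pr}_{|\pi|}\big(\Phi(x_{B_1}),\dotsc,\eta(x_{B_i}),\dotsc,\Phi(x_{B_{|\pi|}})\big)
\end{align*}
vanishes for all $n$. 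Exactly as in the proofs of Lemmas~\ref{bna} and~\ref{bnb}, I would pass to the generating variable $\g=\sum_i t_ix_i$, convert the partition sums into binomial sums (the analogues of \eqref{xxa} and \eqref{zza}), and use the descendant relation $K^\pr M^\pr=\sum M^\pr(\dotsc,\ell^{K^\pr},\dotsc)$ of Definition~\ref{GDesA} to rewrite the $\underline{M}^\pr$-expressions occurring in $\dot{\underline{\Phi}}$ in terms of the $\underline{\ell}^{K^\pr}$-brackets of the flow equation. This identifies the correction $\xi_n$ and shows that $D_n$ depends only on $D_1,\dotsc,D_{n-1}$; since $D_1=0$ by the level-one identity $\dot\Phi_1=-K^\pr\Psi_1-\Psi_1K=K^\pr\eta_1+\eta_1K$, induction gives $D_n=0$ throughout.

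Finally, I would verify that $\underline{\eta}(\t)$ is a genuine unital $sL_\infty$-homotopy, i.e. that $\eta_n(x_1,\dotsc,x_{n-1},1_\sC)=0$: the co-unitality $\Psi_{n+1}(\dotsc,1_\sC)=\Psi_n(\dotsc)$ is compensated precisely by $\xi_n$, by the same inductive bookkeeping that yields $\ell^K_n(1_\sC,\dotsc)=0$ in Lemma~\ref{bna} and $\phi^{f,\underline{\La}}_n(\dotsc,1_\sC)=0$ at the end of Lemma~\ref{bnb}; moreover $\underline{\eta}(\t)$ is polynomial in $\t$. With these facts, $\underline{\Phi}(\t)$ solves the homotopy flow equations generated by $\underline{\eta}(\t)$, so Definition/Lemma~\ref{l-hom} gives $\underline{\phi}^{f,\underline{\La}}=\underline{\Phi}(0)\sim_\infty\underline{\Phi}(1)=\underline{\phi}^{f,\underline{\tilde\La}}$. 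The main obstacle is the third step: constructing $\xi_n$ and proving $D_n=0$, which is the ``descent'' from $\underline{M}^\pr$-expressions to $\underline{\ell}^{K^\pr}$-expressions for the $\t$-derivative, carried out while simultaneously tracking the Koszul signs and the unit constraints—the genuinely new combinatorial input beyond Lemmas~\ref{bna} and~\ref{bnb}.
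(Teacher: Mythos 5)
Your proposal is correct and follows essentially the same route as the paper's proof: interpolate the homotopy data by a polynomial (here linear) family $\underline{\Theta}(\t)$, differentiate the descendant recursion of Definition~\ref{GDesM}, define the generating unital $sL_\infty$-homotopy $\underline{\eta}(\t)$ implicitly as $\dot{\underline{\Theta}}$ corrected by $\underline{M}^\pr$-products of lower $\eta$'s and $\Phi$'s, and verify the homotopy flow equation of Definition/Lemma~\ref{l-hom} by an inductive generating-function computation using Definition~\ref{GDesA}, before invoking Lemma~\ref{l-hom} to conclude. The only deviations are cosmetic: the paper allows an arbitrary polynomial interpolation rather than the convex one, and your $\underline{\eta}$ differs from the paper's by an overall sign (absorbed by the linearity of the flow equation in the generator).
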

\begin{proof}
Let $\underline{\Theta(\t)}=\Theta(\t)_1,\Theta(\t)_2,\dotsc$, be a polynomial family in $\t$ of linear maps from $\Theta(\t)_n: S^n(\sC)$ to $\sC^\pr$ of degree $-1$ satisfying $\Theta(\t)_1\big(1_\sC\big)=0$ and 
 $\Theta(\t)_{n+1}\big(x_1,\dotsc,x_{n}, 1_\sC\big)=\Theta(\t)_n\big(x_1,\dotsc,x_{n}\big)$ for all $n\geq 1$, such that
\[
\Theta(0)_n=\La_n,\qquad \Theta(1)_n=\tilde\La_n.
\]
It follows that the descendant $\underline{\phi}^{f,\underline{\Theta(\t)}}$ of $f$ up to the one-parameter family of homotopies $\underline{\Theta(\t)}$ is a one-parameter family of $sL_\infty$-morphisms $\underline{\Phi}(\t)\ceq \underline{\phi}^{f,\underline{\Theta(\t)}}$ such that, for all $n\geq 1$,
\eqn{zxza}{
\left\{
\begin{array}{lr}
\Phi(0)_n &=\phi^{f,\underline{\La}}_n
,\\
\Phi(1)_n &=\phi^{f,\underline{\tilde\La}}_n.
\end{array}\right.
}
By the definition of a descendant morphism, we have
\begin{align*}
f\big(M_n(x_1, \dotsc, x_n)\big)=
\!\!\sum_{\pi \in P(n)}\!\!\ep(\pi)M^\pr_{|\pi|}\left(\Phi\!\big(x_{B_1}\big),\dotsc,
\Phi\!\big(x_{B_{|\pi|}}\big)\right)
+K^\pr \Theta_n(x_1,\dotsc, x_n)
\\
+\!\!\!\!\!\sum_{\substack{\pi \in P(n)\\ |B_i| = n-|\pi|+1}}\!\!\!\!\!\!\!\!
\ep(\pi,i)
\Theta_{|\pi|} \left( x_{B_1},\dotsc, x_{B_{i-1}},\ell^K\left(x_{B_i}\right), x_{B_{i+1}},\dotsc, x_{B_{|\pi|}}\right),
\end{align*}
for all $n\geq 1$ and and $x_1,\dotsc, x_n \in \sC$.
Applying $\Fr{d}{d\t}$ to the above, we obtain
\eqnalign{zzzx}{
\dot{\Phi}_n(x_1,\dotsc,x_n)
=&
-\!\!\sum_{\substack{\pi \!\in P(n)\\ \color{red}|\pi|\neq 1}}
\ep(\pi)\fr{d}{d\t} M^\pr_{|\pi|}\left(\Phi\!\big(x_{B_1}\big),\dotsc,\Phi\!\big(x_{B_{|\pi|}}\big)\right)
\\
&
-K^\pr \dot{\Theta}_n(x_1,\dotsc, x_n)
\\
&
-\!\!\!\!\!\sum_{\substack{\pi \in P(n)\\ |B_i| = n-|\pi|+1}}\!\!\!\!\!\!\!\!
\ep(\pi,i)
\dot{\Theta}_{|\pi|} \left( x_{B_1},\dotsc, x_{B_{i-1}},\ell^K\big(x_{B_i}\big), x_{B_{i+1}},\dotsc, x_{B_{|\pi|}}\right)
.}
Define the family $\underline{\eta}(\t)=\eta(\t)_1,\eta(\t)_2,\dotsc$ such that, for all $n\geq 1$ and $x_1,\dotsc,x_n \in \sC$,
\eqnalign{zxzc}{
\dot{\Theta}_n(&x_1,\dotsc, x_n)
\\
=
&
\!\!\!\!\sum_{\substack{\pi \in P(n)}}\sum_{i=1}^{|\pi|}\ep(\pi,i)
M^\pr_{|\pi|}\!\left({\Phi}\big( x_{B_1}\big), \dotsc, {\Phi}\big(x_{B_{i-1}}\big)
 ,\eta\big(x_{B_i}\big), {\Phi}\big( x_{B_{i+1}}\big),\dotsc, {\Phi}\big( x_{B_{|\pi|}}\big)\right).
}
Equivalently
\begin{align*}
\eta_n(x_1,&\dotsc, x_n)
=
\dot{\Theta}_n(x_1,\dotsc, x_n)
\\
&
-\sum_{\substack{\pi \in P(n)\\\color{red}|\pi|\neq 1}}\sum_{i=1}^{|\pi|}\ep(\pi,i)
M^\pr_{|\pi|}\!\left({\Phi}\big( x_{B_1}\big), \dotsc, {\Phi}\big(x_{B_{i-1}}\big)
 ,\eta\big(x_{B_i}\big), {\Phi}\big( x_{B_{i+1}}\big),\dotsc, {\Phi}\big( x_{B_{|\pi|}}\big)\right).
\end{align*}
It is trivial to check that $\eta(\t)_n$ is a polynomial family in $\t$ of degree $-1$ linear maps $\eta(\t)_n:S^n(\sC)\to\sC^\pr$ satisfying $\eta(\t)_n\big(x_1,\dotsc,x_{n-1}, 1_\sC\big)=0$ for all $n\geq 1$ and for all $\t \in[0,1]$.
\begin{claim}
The identity \eq{zzzx} together with \eq{zxzc} imply that, for all $n\geq 1$ and $x_1,\dotsc,x_n \in \sC$,
\begin{align*}
&\dot\Phi_n \big(x_1,\dotsc, x_n\big)
\\
=
&
-\!\!\!\!\!\sum_{\substack{\pi \in P(n)}}\sum_{i=1}^{|\pi|}\ep(\pi,i)
\ell^{K^\pr}_{|\pi|}\!\left({\Phi}\big( x_{B_1}\big), \dotsc, {\Phi}\big(x_{B_{i-1}}\big)
 ,\eta\big(x_{B_i}\big), {\Phi}\big( x_{B_{i+1}}\big),\dotsc, {\Phi}\big( x_{B_{|\pi|}}\big)\right)
\\
&
-\!\!\!\!\!\!\!\!\sum_{\substack{\pi \in P(n)\\ |B_i| = n-|\pi|+1}}\ep(\pi,i)
\eta_{|\pi|}\left( x_{B_1}, \dotsc, x_{B_{i-1}}, \ell^K(x_{B_i}), x_{B_{i+1}},\dotsc, x_{B_{|\pi|}}\big)\right)
.\\
\end{align*}
\end{claim}
From {\bf Definition \ref{l-hom}} for homotopic $sL_\infty$-morphisms, this claim, along with the boundary condition \eq{zxza} means that the unital $sL_\infty$-morphism $\underline{\phi}^{f,\underline{\La}}$ is homotopic to the $sL_\infty$-morphism $\underline{\phi}^{f,\underline{\tilde\La}}$.

What remains is to check the claim, which is equivalent to the following identity, for all $n\geq 1$:
\eqnalign{zxa}{
\Fr{d}{d\t}\hat\Phi_{n}
=&
-\sum_{j=0}^{n-1}\sum_{k^\pr_1+\cdots+k^\pr_s=j }
\Fr{1}{s!}\ell^{K^\pr}_{s+1}\left(\hat\Phi_{k^\pr_1}, \dotsc, \hat\Phi_{k^\pr_{s}},\hat\eta_{n-j}\right)
\\
&
-\sum_{k=0}^{n-1}\Fr{1}{k!} \eta_{k+1}\left(\g,\dotsc,\g, \hat\ell^K_{n-k}\right).
}
where we have introduced the following shorthand notation:
\[
\hat\Phi_j =\Fr{1}{j!} \Phi_j(\g,\dotsc,\g)
,\qquad \hat\eta_j =\Fr{1}{j!} \eta_j(\g,\dotsc,\g)
,\qquad\hat \ell^K_k =\Fr{1}{k!}\ell^K_k(\g,\dotsc,\g)
.
\]
Using the same notation, we note that the identity \eq{zzzx} is equivalent to the following:
\eqnalign{zxb}{
&\sum_{i=0}^{n-1} \sum_{k_1+\cdots + k_r=i}\Fr{1}{r!}M^\pr_{r+1}
\left(\hat\Phi_{k_1},\dotsc, \hat\Phi_{k_r}, \Fr{d}{d\t}{\hat\Phi}_{n-i}\right)
\\
&
=-\Fr{1}{n!}K^\pr\dot\Theta_n(\g,\dotsc,\g)
-\sum_{k=0}^{n-1}\Fr{1}{k!} \dot\Theta_{k+1}\left(\g,\dotsc,\g, \hat\ell^K_{n-k}	\right).
}
Also the defining formula \eq{zxzc} of $\underline{\eta}$ is equivalent to the following;
\eqn{zxc}{
\frac{1}{n!}\dot\Theta_n(\g,\dotsc,\g)
=\sum_{j=0}^{n-1}\sum_{k^\pr_1+\cdots+k^\pr_s=j }
\Fr{1}{s!}M^\pr_{s+1}\left(\hat\Phi_{k^\pr_1}, \dotsc, \hat\Phi_{k^\pr_{s}},\hat\eta_{n-j}\right).
}

To prove the claim, we will show that the identities \eq{zxb} and \eq{zxc} imply \eq{zxa} by induction.

For $n=1$, \eq{zxb} becomes $\Fr{d}{d\t}\hat\Phi_1 = -K^\pr \dot\Theta_1(\g) - \dot\Theta_1(K\g)$. From \eq{zxc} we know that $\dot\Theta_1=\eta_1$, so we obtain the $n=1$ case of \eq{zxa}. Then we must show that \eq{zxa} is true for a particular value of $n$, assuming that it holds for all lower indices.

Substituting \eq{zxa} into \eq{zxb}, we obtain
\eqnalign{zxe}{
\Fr{d}{d\t}\hat\Phi_{n}&
-\sum_{\substack{1\le i; 0\le j\\i+j\le n-1}}
\sum_{\substack{k_1+\cdots + k_r = i\\ k^\pr_1+\cdots + k^\pr_s = j}}
\Fr{1}{r!s!}M^\pr_{r+1}\left(\hat\Phi_{k_1},\dotsc, \hat\Phi_{k_r}, \ell^K_{s+1}
\left(\hat\Phi_{k^\pr_1}, \dotsc, \hat\Phi_{k^\pr_{s}},\hat\eta_{n-i-j}\right)\right)
\\
&
-\sum_{\substack{1\le i;0\le k\\i+k\le n-1}}
\sum_{i=0}^{n-1} \sum_{j_1+\cdots + j_r=i}
M^\pr_{r+1}\left(\hat\Phi_{j_1},\dotsc, \hat\Phi_{j_r}, \eta_{k+1}\left(\g,\dotsc,\g, \hat\ell^K_{n-i-k}\right)\right)
\\
&
=-\Fr{1}{n!}K^\pr\dot\Theta_n(\g,\dotsc,\g)
-\sum_{k=0}^{n-1}\Fr{1}{k! } \dot\Theta_{k+1}\left(\g,\dotsc,\g, \hat\ell^K_{n-k}\right).
}
Consider the terms in the last line of the above.
By \eq{zxc} and Definition~\ref{GDesA}, we obtain that
\eqnalign{zxf}{
\Fr{1}{n!}&K^\pr\dot\Theta_n(\g,\dotsc,\g) 
\\
=&\sum_{\substack{0\le i,j\\i+j\le n-1}}
\sum_{\substack{k_1+\cdots + k_r = i\\ k^\pr_1+\cdots + k^\pr_s = j}}
\Fr{1}{r!s!}M^\pr_{r+1}\left(\hat\Phi_{k_1},\dotsc,\hat\Phi_{k_r},
\ell^K_{s+1}\left(\hat\Phi_{k^\pr_1},\dotsc,\Phi_{k^\pr_s}, \hat\eta_{n-i-j}\right)\right)
\\
&\sum_{\substack{0\le i,1\le j\\i+j\le n-2}}
\sum_{\substack{k_1+\cdots + k_r = i\\ k^\pr_1+\cdots + k^\pr_s = j}}
\Fr{1}{r!s!}M^\pr_{r+2}\left(\ell^K_s\left(\hat\Phi_{k^\pr_1},\dotsc,\hat\Phi_{k^\pr_s}\right)
,\hat\Phi_{k_1},\dotsc,\hat\Phi_{k_r}, \hat\eta_{n-i-j}\right).
\\
}
Recall that $\underline{\Phi}$ is a one parameter family of ${sL}_\infty$-morphisms from $\sC_{\Lie}=\big(\sC,1_\sC, \underline{\ell}^K\big)$ to $\sC^\pr_{\Lie}=\big(\sC^\pr,1_{\sC^\pr}, \underline{\ell}^{K^\pr}\big)$ so that we have the following identity;
\begin{align*}
\sum_{k^\pr_1+\cdots+k^\pr_s=j}\Fr{1}{s!}\ell^{K^\pr}_{s}\left(
\hat\Phi_{k^\pr_1}, \dotsc, \hat\Phi_{k^\pr_s}\right)
=
\sum_{k=0}^{j-1}\Fr{1}{k!} \Phi_{k+1}\left(\g,\dotsc,\g, \hat\ell^K_{j-k}\right).
\end{align*}
It follows that \eq{zxf} becomes (after reparameterizing the sum)
\eqnalign{zxg}{
\Fr{1}{n!}&K^\pr\dot\Theta_n(\g,\dotsc,\g) 
\\
=&
\sum_{\substack{0\le i,j\\i+j\le n-1}}\sum_{\substack{k_1+\cdots + k_r = i\\ k^\pr_1+\cdots + k^\pr_s = j}}
\Fr{1}{r!s!}M^\pr_{r+1}\left(\hat\Phi_{k_1},\dotsc,\hat\Phi_{k_r},
\ell^K_{s+1}\left(\hat\Phi_{k^\pr_1},\dotsc,\Phi_{k^\pr_s}, \hat\eta_{n-i-j}\right)\right)
\\
&+\!\!\!\!
\sum_{\substack{0\le i,k\\1\le j\\i+j+k\le n-1}}\!\!\!\!
\sum_{\substack{k_1+\cdots + k_r = i}}
\Fr{1}{r!k!}
M^\pr_{r+2}\left(\Phi_{k+1}\left(\g,\dotsc,\g, \hat\ell^K_{j}\right)
,\hat\Phi_{k_1},\dotsc,\hat\Phi_{k_r}, \hat\eta_{n-i-j-k}\right).
\\
}
For the other term involving $\dot\Theta$ in \eq{zxe}, the defining formula \eq{zxzc} of $\underline{\eta}$ implies that
\eqnalign{zxh}{
&\Fr{1}{m!} \dot\Theta_{m+1}\left(\g,\dotsc,\g, \hat\ell^K_{j}\right)
\\
=
&
-
\sum_{\substack{0\le i,k\\i+k\le m-1}}
\!\!\sum_{k_1+\cdots+k_r=i }\!\!
\Fr{1}{r!k!}M^\pr_{r+2}\left(\Phi_{k+1}(\g,\dotsc,\g,\hat\ell^K_{j}),\hat\Phi_{k_1}, \dotsc, \hat\Phi_{k_{r}},\hat\eta_{m-i-j}\right)
\\
&
+\sum_{i=0}^{m}\sum_{k_1+\cdots+k_s=i }
\Fr{1}{s!m!}M^\pr_{s+1}\left(\hat\Phi_{k_1}, \dotsc, \hat\Phi_{k_{s}},\eta_{m-i+1}(\g,\dotsc,\g,\hat\ell^K_{j})\right)
.
}
Adding together \eq{zxg} and \eq{zxh} after rearranging the summations, we obtain that
\begin{align*}
\Fr{1}{n!}&K^\pr\dot\Theta_n(\g,\dotsc,\g) +\sum_{k=0}^{n-1}\Fr{1}{k!} \dot\Theta_{k+1}\left(\g,\dotsc,\g, \hat\ell^K_{n-k}\right)
\\
=
&\sum_{\substack{0\le i,j\\i+j\le n-1}}
\sum_{\substack{k_1+\cdots + k_r = i\\ k^\pr_1+\cdots + k^\pr_s = j}}
\Fr{1}{r!s!}M^\pr_{r+1}\left(\hat\Phi_{k_1},\dotsc,\hat\Phi_{k_r},
\ell^K_{s+1}\left(\hat\Phi_{k^\pr_1},\dotsc,\Phi_{k^\pr_s}, \hat\eta_{n-i-j}\right)\right)
\\
&
+\sum_{\substack{0\le i,k\\i+k\le n-1}}
\sum_{j_1+\cdots + j_r=i}\Fr{1}{r!k!}
M^\pr_{r+1}\left(\hat\Phi_{j_1},\dotsc, \hat\Phi_{j_r}, \eta_{k+1}\left(\g,\dotsc,\g, \hat\ell^K_{n-i-k}\right)\right).
\end{align*}
The two sums on the right hand side are almost identical to the two sums on the left hand side of \eq{zxe}. The only difference is that in one case $i=0$ is allowed and in the other case it is not. Thus, subtracting, we obtain:
\begin{align*}
 \Fr{d}{d\t}{\hat\Phi}_{n}
=
&-\sum_{j=0}^{n-1}\sum_{ k^\pr_1+\cdots + k^\pr_s = j}
\Fr{1}{s!}M^\pr_{1}\left(
\ell^K_{s+1}\left(\hat\Phi_{k^\pr_1},\dotsc,\Phi_{k^\pr_s}, \hat\eta_{n-j}\right)\right)
\\
&
-
\sum_{k=0}^{n-1} \Fr{1}{k!}
M^\pr_{1}\left( \eta_{k+1}\left(\g,\dotsc,\g, \hat\ell^K_{n-k}\right)\right).
\end{align*}
Using the definition that $M_1^\pr$ is the identity map, we conclude that \eq{zxa} is true.
\hfill\qed\end{proof}

\begin{lemma}
Let $f$ and $\tilde f$ be morphisms from $\sC_{\Comm}$ to $\sC^\pr_{\Comm}$ in the same homotopy type. Then the descendant $\underline{\phi}^{f,\underline{\La}}$ of $f$ up to the homotopy $\underline{\La}$ and the the descendant $\underline{\phi}^{\tilde f,\underline{\tilde \La}}$ of $\tilde f$ up to the homotopy $\underline{\La}$ are homotopic as unital $sL_\infty$-morphisms.
\end{lemma}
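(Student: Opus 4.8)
The plan is to reduce the statement to the already-established Lemma~\ref{bnc} together with standard homotopy-invariance facts for the composition of unital $sL_\infty$-morphisms. The hypothesis is that $f$ and $\tilde f$ are \emph{pointed cochain homotopic} morphisms of \padj{} probability algebras, say $\tilde f = f + K^\pr r + r K$ for some pointed cochain homotopy $r:\sC\to\sC^\pr$ of degree $-1$ with $r(1_\sC)=0$. The goal is to prove $\underline{\phi}^{f,\underline{\La}}\sim_\infty \underline{\phi}^{\tilde f,\underline{\tilde\La}}$. By Lemma~\ref{bnc}, the homotopy type of the descendant of a fixed morphism does not depend on the chosen homotopy $\underline{\La}$, so it suffices to compare $\underline{\phi}^{f,\underline{\La}}$ with $\underline{\phi}^{\tilde f,\underline{\La}}$ for \emph{one convenient} choice of $\underline{\La}$ on each side; I am free to pick the same $\underline{\La}$ for both. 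Thus the whole problem collapses to: if $f$ and $\tilde f$ are cochain homotopic, then their descendants up to a common homotopy are $sL_\infty$-homotopic.

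First I would interpolate linearly between $f$ and $\tilde f$. Set $f_\t \ceq f + \t(K^\pr r + r K)$ for $\t\in[0,1]$, so that $f_0 = f$, $f_1 = \tilde f$, and each $f_\t$ is a pointed cochain map (since $K^\pr r + r K$ commutes with the differentials and kills the unit). For a fixed homotopy $\underline{\La}$, the descendant construction of Definition~\ref{GDesM} is manifestly polynomial in the input morphism, so $\underline{\Phi}(\t)\ceq\underline{\phi}^{f_\t,\underline{\La}}$ is a polynomial-in-$\t$ family of linear maps, and by Lemma~\ref{bnb} each $\underline{\Phi}(\t)$ is a unital $sL_\infty$-morphism from $\sC_{\Lie}$ to $\sC^\pr_{\Lie}$, with $\underline{\Phi}(0)=\underline{\phi}^{f,\underline{\La}}$ and $\underline{\Phi}(1)=\underline{\phi}^{\tilde f,\underline{\La}}$. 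The plan is then to exhibit a polynomial family $\underline{\eta}(\t)$ of unital $sL_\infty$-homotopies generating $\underline{\Phi}(\t)$ via the homotopy flow equations of Definition~\ref{l-hom}; producing such an $\underline{\eta}$ is exactly what is needed to conclude $\underline{\Phi}(0)\sim_\infty\underline{\Phi}(1)$.

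The key computation is to differentiate the defining recursion of $\underline{\Phi}(\t)$ in $\t$ and massage it into homotopy-flow form. Since only $f_\t$ depends on $\t$ in that recursion (with $\underline{\La}$ held fixed), applying $\tfrac{d}{d\t}$ yields, on the left, $\dot f_\t\big(M(\cdots)\big) = (K^\pr r + r K)\big(M(\cdots)\big)$, and on the right a sum of derivatives $\dot\Phi$ threaded through the multiplications $M^\pr$. This is structurally the same manipulation carried out in the proof of Lemma~\ref{bnc}, with the single new feature that the $\t$-derivative of the source morphism is the explicit expression $K^\pr r + r K$ rather than $K^\pr\dot\Theta + (\text{lower terms})$. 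I would define $\eta_n(\t)$ by the analogue of equation~\eqref{zxzc}, namely declaring $\dot\Phi$'s generating data in terms of $r$ composed with the $\underline{\Pi}$-type or $\underline{M}^\pr$-type operators built from $\underline{\Phi}$, and then verify that the claim of Lemma~\ref{bnc} goes through verbatim with $\dot\Theta$ replaced by the fixed combination coming from $r$. The bulk of the verification is an induction on word length identical in shape to the one in Lemma~\ref{bnc}, using that $\underline{\Phi}(\t)$ is a family of $sL_\infty$-morphisms.

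The main obstacle will be the bookkeeping in extracting the correct $\underline{\eta}(\t)$: I must show that the degree $-1$ maps $\eta_n(\t)$ so defined genuinely satisfy the unital constraint $\eta_n(\t)(x_1,\dotsc,x_{n-1},1_\sC)=0$ and that they depend polynomially on $\t$, and then that the differentiated recursion rearranges into precisely the homotopy flow equation~\eqref{zxa}. The cleanest route is to avoid re-deriving the combinatorics from scratch: instead I would observe that the triple $\big(f_\t, \underline{\La}, \underline{\Phi}(\t)\big)$ satisfies exactly the hypotheses feeding the proof of Lemma~\ref{bnc} once we recognize $(K^\pr r + r K)\big(M(\cdots)\big)$ as playing the role of $K^\pr\dot\Theta_n + \sum \dot\Theta_{|\pi|}(\cdots,\ell^K,\cdots)$ with the substitution $\dot\Theta_n \rightsquigarrow$ the descendant-style expression generated by $r$. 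Granting that identification, the induction of Lemma~\ref{bnc} applies word for word and the desired $sL_\infty$-homotopy is produced, completing the proof that $\underline{\phi}^{f,\underline{\La}}$ and $\underline{\phi}^{\tilde f,\underline{\tilde\La}}$ are homotopic unital $sL_\infty$-morphisms.
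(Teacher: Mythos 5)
Your proposal is correct, and it runs on the same engine as the paper's proof---the observation that post-composing the pointed cochain homotopy $r$ with the products $M_n$ produces a legitimate descendant homotopy that exactly absorbs the difference $\tilde f-f$---but it packages this dynamically where the paper's argument is static, and it does more work than necessary. The paper's proof sets $\check\La_n\ceq\varsigma\circ M_n$ (with $\varsigma$ the pointed cochain homotopy), observes that the defining recursions of $\underline{\phi}^{f,\underline{\check\La}}$ and $\underline{\phi}^{\tilde f,\underline{0}}$ are \emph{identical}, and then closes the chain $\underline{\phi}^{f,\underline{\La}}\sim_\infty\underline{\phi}^{f,\underline{\check\La}}=\underline{\phi}^{\tilde f,\underline{0}}\sim_\infty\underline{\phi}^{\tilde f,\underline{\tilde\La}}$ by two applications of Lemma~\ref{bnc}, with no new flow-equation analysis at all. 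The identity it uses is precisely the identification you ``grant'' at the end of your argument: by Definition~\ref{GDesA}, the operator $(K^\pr\varsigma+\varsigma K)\circ M_n$ expands as $K^\pr\check\La_n$ plus the $\ell^K$-terms of Definition~\ref{GDesM} formed from $\underline{\check\La}$, and $\underline{\check\La}$ satisfies the required unit constraints because $\varsigma$ kills the unit and $M_{n+1}(x_1,\dotsc,x_n,1_\sC)=M_n(x_1,\dotsc,x_n)$. In your version this same identification shows that your interpolating family is not genuinely new: since $\dot f_\t=K^\pr r+rK$ is constant in $\t$, the family $\underline{\phi}^{f_\t,\underline{\La}}$ satisfies the defining recursion of the descendant of the \emph{fixed} morphism $f$ up to the $\t$-dependent homotopy $\Theta_n(\t)\ceq\La_n-\t\,(r\circ M_n)$, that is, $\underline{\phi}^{f_\t,\underline{\La}}=\underline{\phi}^{f,\underline{\Theta}(\t)}$; hence Lemma~\ref{bnc}, applied as a black box to the polynomial family $\underline{\Theta}(\t)$, already yields $\underline{\phi}^{f,\underline{\La}}\sim_\infty\underline{\phi}^{\tilde f,\underline{\La}}$, and no re-run of its word-length induction or of the unit-condition checks on $\underline{\eta}(\t)$ is needed. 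Your plan as written (re-deriving the homotopy flow equations with $\dot\Theta_n$ replaced by $r\circ M_n$) does succeed, since the source term has exactly the required shape, but the shortcut above terminates the argument immediately and is, in substance, the paper's proof.
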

\begin{proof}
Let $\varsigma$ be a pointed cochain homotopy between $f$ and $\tilde{f}$. Write $\underline{\check\La}=\varsigma\circ M_1,\varsigma\circ M_2,\varsigma \circ M_3,\dotsc$ Then Lemma~\ref{bnc} says that $\underline{\phi}^{\tilde f,\underline{0}}\sim_\infty \underline{\phi}^{\tilde f,\underline{\tilde\La}}$ and $\underline{\phi}^{f,\underline{\check\La}}\sim_\infty \underline{\phi}^{f,\underline{\La}}$. But using the equations of Definition~\ref{GDesM} for the definition of $\underline{\phi}^{f,\underline{\check\La}}$, and simplifying using the equations of Definition~\ref{GDesA}, we find that the defining equations for $\underline{\phi}^{\tilde f,\underline{0}}$ and $\underline{\phi}^{f,\underline{\check\La}}$ are identical.
\hfill\qed\end{proof}

Now we are going to check the functoriality of our construction.
\begin{lemma}
The descendant of the identity morphism $I$ of a \padj{} probability algebra up to the homotopy $\underline{\La}$ is unital $sL_\infty$-homotopic to the identity $\underline{I}=I,0,0,\dotsc$
\end{lemma}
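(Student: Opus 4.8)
The plan is to reduce at once to the trivial homotopy and then observe that the descendant of the identity up to $\underline{0}$ is literally the identity $sL_\infty$-morphism. By {\bf Lemma \ref{bnc}}, applied to the morphism $f=I$, the descendant $\underline{\phi}^{I,\underline{\La}}$ of $I$ up to any homotopy $\underline{\La}$ is $sL_\infty$-homotopic to the descendant $\underline{\phi}^{I,\underline{0}}$ of $I$ up to the zero homotopy $\underline{0}$ (take $\underline{\tilde\La}=\underline{0}$ there). Since $\sim_\infty$ is an equivalence relation, it therefore suffices to prove the cleaner statement that $\underline{\phi}^{I,\underline{0}}=\underline{I}$, where $\underline{I}=I,0,0,\dotsc$, and chain the two facts together.

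First I would recall that, by the Remark following {\bf Definition \ref{GDesM}}, the recursion defining $\underline{\phi}^{I,\underline{0}}$ determines it uniquely, so it is enough to verify that the candidate sequence $\phi_1=I$ and $\phi_n=0$ for $n\geq 2$ solves that recursion when $f=I$, $\sC^\pr_{\Comm}=\sC_{\Comm}$, and $\underline{\La}=\underline{0}$. In this case the last two lines of the defining relation of {\bf Definition \ref{GDesM}} vanish and the relation collapses to
\[
M_n(x_1,\dotsc,x_n)=\sum_{\pi\in P(n)}\ep(\pi)\,M_{|\pi|}\!\left(\phi(x_{B_1}),\dotsc,\phi(x_{B_{|\pi|}})\right).
\]
Substituting the candidate, any partition $\pi$ possessing a block $B_j$ with $|B_j|\geq 2$ contributes a factor $\phi_{|B_j|}(x_{B_j})=0$ and so drops out; the only surviving partition is the one into singletons, for which $|\pi|=n$, the associated permutation is the identity so $\ep(\pi)=1$, and each $\phi(x_{B_j})=\phi_1(x_j)=x_j$. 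Thus the right-hand side reduces to $M_n(x_1,\dotsc,x_n)$, matching the left-hand side for every $n\geq 1$. By uniqueness, $\underline{\phi}^{I,\underline{0}}=\underline{I}$, and in particular this sequence is the identity $sL_\infty$-morphism of $\sC_{\Lie}$.

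Combining the two steps yields $\underline{\phi}^{I,\underline{\La}}\sim_\infty\underline{\phi}^{I,\underline{0}}=\underline{I}$, which is the assertion. The only point requiring any care is the sign bookkeeping: one must confirm that in the strictly ordered representation the all-singletons partition corresponds to the identity permutation, so that $\ep(\pi)=1$ and no Koszul sign intrudes. This is immediate from the partition and sign conventions fixed earlier, so I do not expect a genuine obstacle here; the real content of the functoriality check lies in the companion statement about composites, which rests on the more substantial {\bf Lemma \ref{bnc}} already established.
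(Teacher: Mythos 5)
Your proposal is correct and follows essentially the same route as the paper: reduce to the zero homotopy via Lemma \ref{bnc}, then check that the descendant of $I$ up to $\underline{0}$ is literally $\underline{I}$ (the paper dispatches this second step as ``an easy induction,'' which your uniqueness-of-recursion verification makes explicit). No gaps.
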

\begin{proof}
By {\bf Lemma \ref{bnc}} it suffices to consider the case $\underline{\La}=\underline{0}$. But the descendant of the identity up to the zero homotopy is equal to the $\underline{I}$ by an easy induction.
\end{proof}

\begin{lemma}
Fix the following diagram in the category $\category{\HProb}_{\Comm}{\overk}$;
\[
\xymatrix{\sC_{\Comm}\ar[r]^f&\sC^\pr_{\Comm}\ar[r]^{f^\pr}&\sC^\ppr_{\Comm}}.
\]
Let $\underline{\phi}^{f,\underline{\La}}$ be the descendant of $f$ up to the homotopy $\underline{\La}$ and let $\underline{\phi}^{f^\pr,\underline{\La}^\pr}$ be the descendant of $f^\pr$ up to the homotopy $\underline{\La}^\pr$.
Then for any family $\underline{\La}^\ppr=\La_1^\ppr,\La_2^\ppr,\dotsc$ where $\La^\ppr_n$ is a linear map $\La^\ppr_n: S^n(\sC)\rightarrow \sC^\ppr$ of degree $-1$ satisfying $\La^\ppr_1\big( 1_\sC\big)=0$ and $\La^\ppr_{n+1}\big(x_1,\dotsc,x_{n}, 1_\sC\big)=\La^\ppr_{n}\big(x_1,\dotsc,x_{n}\big)$ for all $n\geq 1$,
we have
\[
\underline{\phi}^{f^\pr,\underline{\La}^\pr}\bullet
\underline{\phi}^{f,\underline{\La}}
\sim_\infty\underline{\phi}^{f^\pr\circ f,\underline{\La}^\ppr}.
\]
\end{lemma}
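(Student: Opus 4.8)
The plan is to reduce the statement to the case of vanishing homotopies and then to verify a single recursion. The homotopy type of each descendant morphism is independent of the chosen auxiliary homotopy by {\bf Lemma \ref{bnc}}, and composition of unital $sL_\infty$-morphisms respects $sL_\infty$-homotopy (the lemma following {\bf Lemma \ref{compho}}, which makes $\bullet_h$ well defined in $\homotopycat\category{\UsL}_\infty{\overk}$). Combining these,
\[
\underline{\phi}^{f^\pr,\underline{\La}^\pr}\bullet \underline{\phi}^{f,\underline{\La}} \sim_\infty \underline{\phi}^{f^\pr,\underline{0}}\bullet \underline{\phi}^{f,\underline{0}}, \qquad \underline{\phi}^{f^\pr\circ f,\underline{\La}^\ppr}\sim_\infty \underline{\phi}^{f^\pr\circ f,\underline{0}},
\]
so it suffices to prove the \emph{strict} identity $\underline{\phi}^{f^\pr,\underline{0}}\bullet \underline{\phi}^{f,\underline{0}} = \underline{\phi}^{f^\pr\circ f,\underline{0}}$; in particular the choice of $\underline{\La}^\ppr$ becomes irrelevant once this is established.

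For the strict identity, set $\underline{\psi}\ceq \underline{\phi}^{f^\pr,\underline{0}}\bullet \underline{\phi}^{f,\underline{0}}$. Since the descendant of $f^\pr\circ f$ up to the zero homotopy is the \emph{unique} solution of the recursion of {\bf Definition \ref{GDesM}} with $\underline{\La}=\underline{0}$, I would simply verify that $\underline{\psi}$ satisfies the same recursion,
\[
(f^\pr\circ f)\big(M_n(x_1,\dotsc,x_n)\big)=\sum_{\pi\in P(n)}\ep(\pi)\,M^\ppr_{|\pi|}\big(\psi(x_{B_1}),\dotsc,\psi(x_{B_{|\pi|}})\big).
\]
To generate the left-hand side I would apply the descendant relation for $f$, then push $f^\pr$ through the resulting sum by linearity, and finally apply the descendant relation for $f^\pr$ to each $M^\pr_{|\rho|}\big(\phi^f(x_{R_1}),\dotsc\big)$. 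This produces a double sum indexed by a partition $\rho\in P(n)$ together with a partition $\sigma$ of the set of blocks of $\rho$.

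The combinatorial heart is the standard factorization of the partition lattice: a pair $(\rho,\sigma)$ as above corresponds bijectively to a coarser partition $\pi\in P(n)$ (merge the blocks of $\rho$ lying in a common block of $\sigma$) together with the partition $\rho|_{B}$ induced on each block $B$ of $\pi$, i.e. the interval $[\hat 0,\pi]$ factors as a product over the blocks of $\pi$. After reindexing the double sum along this bijection, the contribution attached to a block $B$ of $\pi$ is precisely $\sum_{\tau\in P(B)}\ep(\tau)\,\phi^{f^\pr}_{|\tau|}\big(\phi^f(x_{C}):C\in\tau\big)$, which is $\big(\underline{\phi}^{f^\pr}\bullet\underline{\phi}^f\big)_{|B|}(x_B)=\psi(x_B)$ by the composition formula of {\bf Definition/Lemma \ref{lmocom}}. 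This is exactly the required recursion, whence $\underline{\psi}=\underline{\phi}^{f^\pr\circ f,\underline{0}}$.

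The one genuinely delicate point is the Koszul sign bookkeeping, namely that $\ep(\rho)\,\ep(\sigma)$ matches $\ep(\pi)\prod_{B}\ep(\rho|_B)$ under the reindexing; I expect this to be the main obstacle in a fully rigorous write-up, although it is routine in principle, since with $\underline{\La}=\underline{0}$ every operator in sight ($f$, $f^\pr$, all $M_k$, all $\phi_k$) has degree $0$ and no $\ell^K$ terms intervene, so the signs are purely the reordering signs of the inputs $x_i$. To make this transparent I would pass to the generating-function form used throughout this section: writing $\g=\sum_i t_i x_i$ turns the two descendant relations into the binomial identity \eq{zza} (with $\underline{\La}=\underline{0}$) and turns $\bullet$ into substitution of the associated exponential-type series, so that the nested-partition identity above is just the associativity of this substitution and all signs are absorbed automatically by the graded-commuting parameters $t_i$.
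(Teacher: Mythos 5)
Your proposal is correct and takes essentially the same route as the paper: the paper's own proof likewise invokes Lemma~\ref{bnc} to reduce to $\underline{\La}=\underline{\La}^\pr=\underline{\La}^\ppr=\underline{0}$ and then asserts that the strict identity $\underline{\phi}^{f^\pr,\underline{0}}\bullet\underline{\phi}^{f,\underline{0}}=\underline{\phi}^{f^\pr\circ f,\underline{0}}$ holds on the nose ``by another easy induction.'' Your partition-lattice reindexing and generating-function sign bookkeeping simply supply the details of that induction, which the paper leaves to the reader.
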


\begin{proof}
By {\bf Lemma \ref{bnc}} it suffices to check the Lemma for $\underline{\La}=\underline{\La}^\pr=\underline{\La}^{\ppr}=\underline{0}$. In this case the equation is true on the nose by another easy induction.
\hfill\qed\end{proof}

\subsection{Spaces of homotopical random variables and correlation functions}\label{subs:homotopical random variables-theory}

This subsection contains three fundamental lemmas that will be used to define spaces of homotopical random variables and their laws as invariants of the homotopy types of certain $sL_\infty$-morphisms.

Consider a \padj{} probability space $\xymatrix{\sC_{\Comm} \ar[r]^\mc &\fieldk}$. 
Let $\sC_{\Lie}=\big(\sC, \underline{\ell}^K\big)$ be the descendant of $\sC_{\Comm}=\big(\sC, \underline{M}, K\big)$ and let $\underline{\phi}^{\mc,\underline{\La}}$ be the descendant of $\mc$ up to the homotopy $\underline{\La}$.
Consider a graded vector space $V$ regarded as an $sL_\infty$-algebra $\big(V, \underline{0}\big)$ with zero $sL_\infty$-structure. Also consider $S(V)=V\oplus S^2V\oplus S^3V\oplus\cdots$ as a a cochain complex $\big(S(V),0\big)$ with zero differential. 

In the first lemma, given an $sL_\infty$-morphism $\underline{\w}:\big(V, \underline{0}\big)
\rightarrow \big(\sC, \underline{\ell}^K\big)$, we construct a cochain map $\Pi^{\underline{\w}}:\big(S(V), 0)\rightarrow \big(\sC, K\big)$ such that homotopic $sL_\infty$-morphisms produce homotopic cochain maps. The second lemma is converse to the first one. In both constructions, we shall only use the product structure $\underline{M}$ on $\sC$. The third lemma states, firstly, that both the compositions $\underline{\k}=\underline{\phi}^{\mc,\underline{\La}}\bullet \underline{\w}$ and $\m=\mc\circ \Pi^{\underline{\w}}$ depend only on the homotopy types of the constituent morphisms so that both commutative diagrams
\[
\xymatrix{
\big(V,\underline{0}\big)\ar@{..>}[r]^{\underline{\w}}
\ar@{..>}@/_1.5pc/[rr]_{\underline{\k}}
&\big(\sC,\underline{\ell}^K\big)\ar@{..>}[r]^{\underline{\phi}^{\mc, \La}}
&\big(\fieldk,\underline{0}\big)
}
\quad{\color{blue}\Big|}\quad
\xymatrix{
\big(S(V),0\big)\ar@{->}[r]^{\underline{\Pi}^{\underline{\w}}}
\ar@{->}@/_1.5pc/[rr]_{\underline{\m}}
&\big(\sC,K\big)\ar@{->}[r]^{\mc}
&\big(\fieldk,0\big)
}
\]
induce commutative diagrams in the appropriate homotopy categories, and (ii) the two morphisms $\underline{\k}$ and $\underline{\m}$ are related by 
the moment/cumulant formula that, for every $n\geq 1$ and $v_1,v_2,\dotsc, v_n \in V$,
we have
\eqn{morcum}{
\m_n(v_1,\dotsc, v_n)=
\sum_{\pi \in P(n)}\ep(\pi)\k\big(v_{B_1}\big)\cdots \k\big(v_{B_{|\pi|}}\big),
}

Then we define a space $\sV$ of (homotopical) random variables as a graded vector space $|\sV|=V$, regarded as an $sL_\infty$-algebra with zero $sL_\infty$-structure, together with the homotopy type $[\sV]$ of an $sL_\infty$-morphism into $\big(\sC,\underline{\ell}^K\big)$. Such a space comes with canonically defined moment and cumulant morphisms $\underline{\m}^\sV$ and $\underline{\k}^\sV$. This also leads to a natural definition of the law $\hat\m: S(V^*)\rightarrow \fieldk$ for $\sV$.

\begin{lemma}\label{cobra}
Let $V$ be a graded vector space regarded as a trivial $sL_\infty$-algebra $\big(V, \underline{0}\big)$. 
For any $sL_\infty$-morphism $\underline{\w}=\w_1,\w_2,\dotsc$ from $\big(V, \underline{0}\big)$ to $\big(\sC, \underline{\ell}^K\big)$, associate the family $\underline{\Pi}^{\underline{\w}} =\Pi^{\underline{\w}} _1, \Pi^{\underline{\w}} _2,
\dotsc$ of degree zero operators $\Pi^{\underline{\w}} _n: S^n V\to \sC$ defined by 
\[
\Pi^{\underline{\w}} _n(v_1,\dotsc, v_n)=
\sum_{\pi \in P(n)}\ep(\pi)M_{|\pi|}\left(\w\big(v_{B_1}\big),\dotsc ,\w\big(v_{B_{|\pi|}}\big)\right),
\]
for every $n\geq 1$ and $v_1,v_2,\dotsc, v_n \in V$.
Then, for all $n\geq 1$,
\begin{enumerate}
\item the family satisfies $K\Pi^{\underline{\w}} _n=0$ and
\item whenever $\underline{\tilde\w}$ is homotopic to $\underline{\w}$ as an $sL_\infty$-morphism, then $\Pi^{\underline{\tilde\w}} _n$ is chain homotopic to $\Pi^{\underline{\w}} _n$.
\end{enumerate}
\end{lemma}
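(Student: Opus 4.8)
The plan is to pass to generating functions throughout, exactly as in the proofs of {\bf Lemma~\ref{bna}} and {\bf Lemma~\ref{bnc}}. Fix $v_1,\dotsc,v_n\in V$, introduce graded-commutative parameters $t_i$ with $|t_i|=-|v_i|$, and set $\g=\sum_i t_i v_i$, a degree-zero element; since $M_r$, $\ell^K_r$ and $\Pi^{\underline{\w}}_r$ are graded symmetric, an identity in the single variable $\g$ determines the corresponding multilinear maps in characteristic zero. Write $\G_\w\ceq\sum_{k\geq1}\fr1{k!}\w_k(\g,\dotsc,\g)$ for the twisting cochain of $\underline{\w}$, a degree-zero element of $\fieldk[\![t]\!]\otimes\sC$. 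The defining formula for $\underline{\Pi}^{\underline{\w}}$ packages into
\[
\Pi_\w\ceq\sum_{n\geq1}\fr1{n!}\Pi^{\underline{\w}}_n(\g,\dotsc,\g)=\sum_{r\geq1}\fr1{r!}M_r(\G_\w,\dotsc,\G_\w),
\]
so that statement (1) is equivalent to $K\Pi_\w=0$. The one fact I would extract from the hypothesis that $\underline{\w}$ is an $sL_\infty$-morphism out of the trivial algebra $(V,\underline{0})$ is the Maurer--Cartan equation $\sum_{r\geq1}\fr1{r!}\ell^K_r(\G_\w,\dotsc,\G_\w)=0$: the right-hand side of the morphism relations in {\bf Definition~\ref{l-mor}} vanishes because the source $sL_\infty$-structure is zero, and in the all-$\g$ form this is precisely the claim that $\G_\w$ is a Maurer--Cartan element of $(\sC,\underline{\ell}^K)$.

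For (1) I would apply $K$ to $\Pi_\w$ term by term using the generating-function form \eqref{xxa} of {\bf Definition~\ref{GDesA}}, which holds verbatim with $\g$ replaced by the even element $\G_\w$:
\[
KM_r(\G_\w,\dotsc,\G_\w)=\sum_{k=1}^r\binom{r}{k-1}M_k\big(\G_\w,\dotsc,\G_\w,\ell^K_{r-k+1}(\G_\w,\dotsc,\G_\w)\big).
\]
Summing against $\fr1{r!}$ and reindexing by $j=r-k+1$ collapses $\fr1{r!}\binom{r}{k-1}$ to $\fr1{(k-1)!\,j!}$, so that, using multilinearity of $M_k$ to pull the inner sum into its last slot,
\[
K\Pi_\w=\sum_{k\geq1}\fr1{(k-1)!}M_k\Big(\G_\w,\dotsc,\G_\w,\sum_{j\geq1}\fr1{j!}\ell^K_j(\G_\w,\dotsc,\G_\w)\Big).
\]
The inner sum is the Maurer--Cartan expression, hence zero, giving $K\Pi_\w=0$ and therefore (1).

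For (2) I would realize $\underline{\tilde\w}\sim_\infty\underline{\w}$ by a polynomial family $\underline{\eta}(\t)$ of unital $sL_\infty$-homotopies generating, via {\bf Definition~\ref{l-hom}}, a flow $\underline{\Phi}(\t)$ of $sL_\infty$-morphisms with $\underline{\Phi}(0)=\underline{\w}$ and $\underline{\Phi}(1)=\underline{\tilde\w}$. Let $\G_\t$ and $\eta_\t$ denote the generating series $\sum_{k\geq1}\fr1{k!}\Phi_k(\t)(\g,\dotsc,\g)$ and $\sum_{k\geq1}\fr1{k!}\eta_k(\t)(\g,\dotsc,\g)$, of degree $0$ and $-1$ respectively. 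Since the source structure vanishes, the homotopy flow equation reads $\dot{\G}_\t=\sum_{r\geq1}\fr1{(r-1)!}\ell^K_r(\G_\t,\dotsc,\G_\t,\eta_\t)$. Writing $\Pi_\t\ceq\sum_{r\geq1}\fr1{r!}M_r(\G_\t,\dotsc,\G_\t)$, differentiating in $\t$ (no signs appear, as $\G_\t$ and $\dot\G_\t$ are even) and substituting the flow equation, I would aim to show
\[
\dot{\Pi}_\t=K\,\S_\t,\qquad \S_\t\ceq\sum_{r\geq1}\fr1{(r-1)!}M_r\big(\eta_\t,\G_\t,\dotsc,\G_\t\big),
\]
a degree $-1$ series. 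Expanding $K\S_\t$ with the two-term form \eqref{xxc} of the descendant relation (taking $\b=\eta_\t$, so $(-1)^{|\b|}=-1$) splits it into a sum where $\ell^K$ sees only copies of $\G_\t$ and a sum where $\ell^K$ absorbs $\eta_\t$; graded symmetry of $M_r$ and $\ell^K_r$ identifies the latter with $\dot\Pi_\t$, while the former factors through $\sum_{m\geq1}\fr1{m!}\ell^K_m(\G_\t,\dotsc,\G_\t)=0$, the Maurer--Cartan equation obeyed by each $\underline{\Phi}(\t)$. Integrating $\dot\Pi_\t=K\S_\t$ over $\t\in[0,1]$ and polarizing in $\g$ gives $\Pi^{\underline{\tilde\w}}_n-\Pi^{\underline{\w}}_n=K\S_n$ with $\S_n\colon S^nV\to\sC$ of degree $-1$ coming from $\int_0^1\S_\t\,d\t$, which is (2).

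The main obstacle is the bookkeeping in (2): correctly guessing the primitive $\S_\t$ and verifying that the extra summand produced by the two-term identity \eqref{xxc} is \emph{exactly} the Maurer--Cartan expression for $\underline{\Phi}(\t)$, with every Koszul sign accounted for. Here it is convenient that $\G_\t$ and $\dot\G_\t$ are even while $\eta_\t$ is the only odd element in play, so the transpositions needed to match $M_r(\ell^K_s(\eta_\t,\G_\t,\dotsc),\G_\t,\dotsc)$ with a term of $\dot\Pi_\t$ are sign-free. Everything else is the same resummation pattern already used for {\bf Lemma~\ref{bna}} and {\bf Lemma~\ref{bnc}}, so I would carry out the whole computation in the all-$\g$ generating-function form to keep the partition combinatorics implicit.
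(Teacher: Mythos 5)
Your proposal is correct and takes essentially the same route as the paper's own proof: part (1) is the descendant relation applied to $\Pi^{\underline{\w}}$ combined with the vanishing of the $sL_\infty$-morphism expressions (the Maurer--Cartan identity for $\G_\w$), and part (2) constructs exactly the paper's primitive ($\hat\Xi_n$ in the paper, your $\S_\t$), shows $\dot\Pi_\t = K\S_\t$ via the two-term descendant identity \eq{xxc} plus the Maurer--Cartan equation obeyed by each $\underline{\Phi}(\t)$, and integrates over $\t$. The only difference is that you phrase both parts uniformly in the twisting-cochain/generating-function form, which the paper itself uses for part (2), so this is a repackaging rather than a different argument.
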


\begin{proof}
It is obvious that $\Pi^{\underline{\w}} _n$ is a linear map from $S^n V$ to $\sC$ of degree $0$.
By definition, we also have
\begin{align*}
K\Pi^{\underline{\w}} _n(v_1,\dotsc, v_n)=&
\sum_{\pi \in P(n)}\ep(\pi)KM_{|\pi|}\left(\w\big(v_{B_1}\big),\dotsc ,\w\big(v_{B_{|\pi|}}\big)\right).
\end{align*}
Applying the formula in {\bf Definition} \ref{GDesA} to the above, we obtain that
\eqnalign{gaf}{
K&\Pi^{\underline{\w}} _n(v_1,\dotsc, v_n)
\\
&=
\sum_{\pi \in P(n)}\sum_{i=1}^{|\pi|}\ep(\pi,i)M_{|\pi|}\left(\w\big(v_{B_1}\big),\dotsc \w\big(v_{B_{i-1}}\big) ,
E\big(v_{B_{i}}\big),\w\big(v_{B_{i+1}}\big),\dotsc,\w\big(v_{B_{|\pi|}}\big)\right),
}
where $E(v_B)= E_r(v_{j_1},\dotsc, v_{j_r})$ if $B=\{j_1,j_2,\dotsc, j_r\}$ and
\[
E_r(y_1,\dotsc,y_r)\ceq \sum_{\substack{\pi \in P(r)}}\ep(\pi)
\ell^K_{|\pi|}\left(\w\left( y_{B_1}\right), \dotsc, \w\big( y_{B_{|\pi|}}\big)\right).
\]
Recall that any $sL_\infty$-morphism $\underline{\w}$ from $(V,\underline{0})$ to $(\sC, \underline{\ell}^K)$ satisfies $E_r(y_1,\dotsc,y_r)=0$ for all $r\geq 1$ and $y_1,\dotsc, y_r \in V$.
Hence \eq{gaf} implies that $K\Pi^{\underline{\w}} _n(v_1,\dotsc, v_n)=0$.
This proves the first property.

Now assume that $\underline{\w}$ and $\underline{\tilde\w}$ are $L_\infty$-homotopic. That is, let $\underline{\Phi}=\Phi_1,\Phi_2,\dotsc$ be a smooth polynomial family parametrized by $\t\in [0,1]$, of degree $0$ maps $\Phi_n:S^n V\to V^\pr[\t]$ satisfying the homotopy flow equations generated by the $sL_\infty$-homotopy $\underline{\eta}$;
\begin{align*}
\dot\Phi_n&\big(v_1,\dotsc, v_n\big)
\\
=
&
-\sum_{\substack{\pi \in P(n)}}\sum_{i=1}^{|\pi|}\ep(\pi,i)
\ell^K_{|\pi|}\left({\Phi}\left( v_{B_1}\right), \dotsc, {\Phi}\big(v_{B_{i-1}}\big)
 ,\eta\big(v_{B_i}\big), {\Phi}\big( v_{B_{i+1}}\big),\dotsc, {\Phi}\big( v_{B_{|\pi|}}\big)\right)
\end{align*}
with the initial and the final conditions that $\underline{\Phi}\big|_{\t=0}= \underline{\w}$ and $\underline{\Phi}\big|_{\t=1}=\underline{\tilde\w}$. Recall that the family $\underline{\Phi}$ is a family of $sL_\infty$-morphisms from the $sL_\infty$-algebra $(V,\underline{0})$ with zero $sL_\infty$-structure to the $sL_\infty$-algebra $\big(\sC, \underline{\ell}^K\big)$. It follows that, for all $n\geq 1$ and every $v_1,\dotsc, v_n \in V$,
\eqn{gfrb}{
\sum_{{\pi \in P(n)}}\ep(\pi)
\ell_{|\pi|}^{K^\pr} \left(\Phi\left( v_{B_1}\right), \dotsc, \Phi\big( v_{B_{|\pi|}}\big)\right)
=0.
}
Define
\[
\Pi^{\underline{\Phi}} _n(v_1,\dotsc, v_n)=
\sum_{\pi \in P(n)}\ep(\pi)M_{|\pi|}\left(\Phi\big(v_{B_1}\big),\dotsc ,\Phi\big(v_{B_{|\pi|}}\big)\right),
\]
which is a one parameter family $\Pi^{\underline{\Phi}} _n$ of linear maps from $S^n V$ to $\sC$ which satisfy $\Pi^{\underline{\Phi}} _n\big|_{\t=0}= \Pi^{\underline{\w}} _n$ and $\Pi^{\underline{\Phi}} _n\big|_{\t=1}= \Pi^{\underline{\tilde\w}} _n$.
We claim that there exists, for all $n\geq 1$, a one parameter family $\Xi_n$ of linear maps from $S^n V$ to $\sC$ which satisfy 
\[
\dot \Pi^{\underline{\Phi}} _n
=K \Xi_n 
\]
which implies that
\[
\Pi^{\underline{\tilde\w}} _n -\Pi^{\underline{\w}} _n = 
\int^1_0 \dot\Pi^{\underline{\Phi}} _n \;d\t = K \int^1_0 \Xi_n\; d\t.
\]
This proves the second part of the lemma, since $V$ has no differential.

It remains to prove the claim. As before, set $\g=\sum_{i=1}^n t_i v_i$ and write: 
\[
\hat\Pi^{\underline{\Phi}} _n=\Fr{1}{n!}\Pi^{\underline{\Phi}} _n(\g,\dotsc, \g)
,\quad
\hat\Phi_{j}=\Fr{1}{j!}\Phi_{j}(\g,\dotsc,\g)
,\quad
\hat\l_{j}=\Fr{1}{j!}\l_{j}(\g,\dotsc,\g)
.
\]
We then have
\eqnalign{gfra}{
\hat\Pi^{\underline{\Phi}} _n=
&\sum_{j_1+\cdots + j_r=n}\Fr{1}{r!}M_{r}\left(\hat\Phi_{j_1},\dotsc ,\hat\Phi_{j_r}\right)
,\\
\Fr{d}{d\t}\hat\Phi_{\ell}
=&
-\sum_{j=0}^{\ell-1}\sum_{k^\pr_1+\cdots+k^\pr_s=j }
\Fr{1}{s!}\ell^K_{s+1}\left(\hat\Phi_{k^\pr_1}, \dotsc, \hat\Phi_{k^\pr_{s}},\hat\l_{\ell-j}\right)
,
}
It will then suffice to show that $\Fr{d}{d\t}\hat\Pi^{\underline{\Phi}} _n$ is $K$-exact to prove the claim.
By applying the derivative $\Fr{d}{d\t}$ to the first relation in \eq{gfra}, we obtain that
\begin{align*}
\Fr{d}{d\t}\hat\Pi^{\underline{\Phi}} _n
&
=\sum_{i=0}^{n-1} \sum_{k_1+\cdots + k_r=i}
\Fr{1}{r!}M_{r+1}\left(\hat\Phi_{k_1},\dotsc, \hat\Phi_{k_r}, \Fr{d}{d\t}{\hat\Phi}_{n-i}\right)
\\
&=
-\sum_{\substack{i=0}}^{n-1}\sum_{j=0}^{n-i-1}\sum_{\substack{k_1+\cdots + k_r = i\\ k^\pr_1+\cdots + k^\pr_s = j}}
\Fr{1}{r!s!}M_{r+1}\left(\hat\Phi_{k^\pr_1},\dotsc,\hat\Phi_{k^\pr_r},
\ell^K_{s+1}\left(\hat\Phi_{k^\pr_1},\dotsc,\hat\Phi_{k^\pr_s}, \hat\l_{n-i-j}\right)\right),
\end{align*}
where we have used the second relation in \eq{gfra} for the second equality.
Now we consider the following quantity:
\[
\hat\Xi_n
=\sum_{s=0}^{n-1}\sum_{k_1+\cdots + k_r = s}\Fr{1}{r!}M_{r+1}\left(\hat\Phi_{k_1},\dotsc,\hat\Phi_{k_r}, \hat\l_{n-s}\right).
\]
By a direct computation, we have
\begin{align*}
K\hat\Xi_n =
&\sum_{\substack{i=0}}^{n-1}\sum_{j=0}^{n-i-1}\sum_{\substack{k_1+\cdots + k_r = i\\ k^\pr_1+\cdots + k^\pr_s = j}}
\Fr{1}{r!s!}M_{r+1}\left(\hat\Phi_{k^\pr_1},\dotsc,\hat\Phi_{k^\pr_r},
\ell^K_{s+1}\left(\hat\Phi_{k^\pr_1},\dotsc,\Phi_{k^\pr_s}, \hat\l_{n-i-j}\right)\right)
\\
&+\sum_{\substack{i=0}}^{n-2}\sum_{j=0}^{n-i-1}\sum_{\substack{k_1+\cdots + k_r = i\\ k^\pr_1+\cdots + k^\pr_s = j}}
\Fr{1}{r!s!}M_{r+2}\left(\ell^K_s\left(\hat\Phi_{k^\pr_1},\dotsc,\hat\Phi_{k^\pr_s}\right)
,\hat\Phi_{k_1},\dotsc,\hat\Phi_{k_r}, \hat\l_{s}\right)
\\
=
&\sum_{\substack{i=0}}^{n-1}\sum_{j=0}^{n-i-1}\sum_{\substack{k_1+\cdots + k_r = i\\ k^\pr_1+\cdots + k^\pr_s = j}}
\Fr{1}{r!s!}M_{r+1}\left(\hat\Phi_{k^\pr_1},\dotsc,\hat\Phi_{k^\pr_r},
\ell^K_{s+1}\left(\hat\Phi_{k^\pr_1},\dotsc,\hat\Phi_{k^\pr_s}, \hat\l_{n-i-j}\right)\right)
,\\
\end{align*}
where we have used the identity \eq{gfrb}, which implies that, for all $j=1,2,3,\dotsc$,
\[
\sum_{\substack{k^\pr_1+\cdots + k^\pr_s = j}}
\Fr{1}{s!}\ell^K_s\left(\hat\Phi_{k^\pr_1},\dotsc,\hat\Phi_{k^\pr_s}\right)=0.
\]
Hence we have confirmed that $\Fr{d}{d\t}\hat\Pi^{\underline{\Phi}} _n=K \hat \Xi_n$.
\hfill\qed\end{proof}

\begin{remark}
The above lemma is the natural generalization of {\bf Proposition \ref{cora}}.
\hfill$\natural$
\end{remark}

Assume that we have a family $\underline{\Pi}=\Pi_1,\Pi_2,\dotsc$, where $\Pi _n$ is a linear map from $S^n V$ to $\sC$ of degree $0$ such that $K\Pi _n=0$. Then the family $\underline{\Pi}$ can be naturally regarded as a cochain map $\Pi$ from the cochain complex $\big(S(V), 0\big)$ to $\big(\sC,K\big)$ 

We therefore interpret the above proposition as the construction of a cochain map $\Pi^{\underline{\w}}:(S(V), 0)\rightarrow (\sC, K)$ from the $sL_\infty$-morphism $\underline{\w}:(V,\underline{0})\rightarrow \big(\sC,\underline{\ell}^K\big)$ which preserves homotopy types. The following lemma reverses this construction.

\begin{lemma}\label{cobrax}
For a cochain map $\Pi: \big(S(V),0)\rightarrow \big(\sC, K\big)$,
let $\underline{\w}^{\Pi} =\w^{\Pi} _1, \w^{\Pi} _2, \dotsc$ be a family such that, for every $n\geq 1$ and $v_1,v_2,\dotsc, v_n \in V$,
\[
\Pi _n(v_1,\dotsc, v_n)=
\sum_{\substack{\pi \in P(n)}}\ep(\pi)M_{|\pi|}\left(\w^{\Pi}\big(v_{B_1}\big),\dotsc ,\w^{\Pi}\big(v_{B_{|\pi|}}\big)\right).
\]
Then
\begin{enumerate}
\item $\underline{\w}^{\Pi}$ is an $sL_\infty$-morphism from $\big(V,\underline{0}\big)$ to $\big(\sC,\underline{\ell}^K\big)$ and
\item $\underline{\w}^{\tilde\Pi}$ is $sL_\infty$-homotopic to $\underline{\w}^{\Pi}$ if $\tilde\Pi$ is cochain homotopic to $\Pi$.
\end{enumerate}
\end{lemma}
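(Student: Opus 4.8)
The plan is to prove both assertions by running, in reverse, the combinatorial machinery already assembled for Lemma~\ref{cobra}. Throughout, recall that since $\big(S(V),0\big)$ carries the zero differential, the statement that $\Pi$ is a cochain map is exactly the collection of identities $K\Pi_n=0$ for all $n\geq 1$. Recall also that the recursion defining $\underline{\w}^\Pi$ determines each $\w^\Pi_n$ uniquely: the single-block partition contributes $M_1\big(\w^\Pi_n(v_1,\dotsc,v_n)\big)=\w^\Pi_n(v_1,\dotsc,v_n)$, while every other partition involves only the maps $\w^\Pi_k$ with $k<n$. In particular $\w^\Pi_1=\Pi_1$, and each $\w^\Pi_n$ is a degree zero map $S^nV\to\sC$.

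For part~(1), I would observe that the defining recursion says exactly $\Pi_n=\Pi^{\underline{\w}^\Pi}_n$ in the notation of Lemma~\ref{cobra}. The identity \eqref{gaf} derived there is a purely formal consequence of Definition~\ref{GDesA} and does not use that the input is an $sL_\infty$-morphism; hence it applies verbatim and yields
\[
0=K\Pi_n(v_1,\dotsc,v_n)=\sum_{\pi\in P(n)}\sum_{i=1}^{|\pi|}\ep(\pi,i)M_{|\pi|}\Big(\w^\Pi(v_{B_1}),\dotsc,E\big(v_{B_i}\big),\dotsc,\w^\Pi(v_{B_{|\pi|}})\Big),
\]
where $E_r(y_1,\dotsc,y_r)=\sum_{\tau\in P(r)}\ep(\tau)\ell^K_{|\tau|}\big(\w^\Pi(y_{D_1}),\dotsc,\w^\Pi(y_{D_{|\tau|}})\big)$ is precisely the defect measuring failure of $\underline{\w}^\Pi$ to be an $sL_\infty$-morphism into $\big(\sC,\underline{\ell}^K\big)$ (the source structure being zero). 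Isolating the single-block term, which equals $E_n(v_1,\dotsc,v_n)$, I would then induct on $n$: the base case is $E_1(v)=K\w^\Pi_1(v)=K\Pi_1(v)=0$, and if $E_k=0$ for all $k<n$ then every term with $|\pi|\geq 2$ has each block strictly smaller than $n$, so all those $E$'s vanish and the displayed identity collapses to $E_n=0$. This is exactly the required $sL_\infty$-morphism relation.

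For part~(2), write the cochain homotopy as $\tilde\Pi-\Pi=Ks$ with $s=(s_n)$, each $s_n\colon S^nV\to\sC$ of degree $-1$, and interpolate by $\Pi(\t)\ceq\Pi+\t Ks$. Each $\Pi(\t)$ satisfies $K\Pi(\t)_n=0$, so by part~(1) the family $\underline{\Phi}(\t)\ceq\underline{\w}^{\Pi(\t)}$ is a family of $sL_\infty$-morphisms, polynomial in $\t$, with $\underline{\Phi}(0)=\underline{\w}^\Pi$ and $\underline{\Phi}(1)=\underline{\w}^{\tilde\Pi}$. I would then define a degree $-1$ family $\underline{\eta}(\t)$, polynomial in $\t$, recursively by the analogue of \eqref{zxzc} with the derivative of the interpolating homotopy replaced by the constant $s$, namely $s_n=\sum_{\pi\in P(n)}\sum_{i=1}^{|\pi|}\ep(\pi,i)M_{|\pi|}\big(\Phi(v_{B_1}),\dotsc,\eta(v_{B_i}),\dotsc,\Phi(v_{B_{|\pi|}})\big)$, which is solvable since the single-block term is $\eta_n$. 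The remaining task is to verify that $\underline{\Phi}(\t)$ solves the homotopy flow equations of Definition~\ref{l-hom} generated by $\underline{\eta}(\t)$ (with zero source structure): differentiating $\Pi(\t)_n=\Pi^{\underline{\Phi}(\t)}_n$ in $\t$ and using $\dot\Pi(\t)_n=Ks_n$ gives one identity relating $\dot\Phi$ to $s$, while applying $K$ to the definition of $\eta$ and expanding through the products via Definition~\ref{GDesA}, together with the morphism identity $E_n[\Phi]=0$ from part~(1), gives a second; reconciling the two produces the flow equations, and Definition~\ref{l-hom} then exhibits $\underline{\w}^\Pi\sim_\infty\underline{\w}^{\tilde\Pi}$.

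The hard part will be that last verification in part~(2): it is the same multi-index binomial bookkeeping as in the proof of Lemma~\ref{bnc} (and the second half of Lemma~\ref{cobra}), where one passes to the generating variable $\g=\sum_i t_i v_i$ and the hatted quantities $\hat\Phi_j,\hat\eta_j,\hat\ell^K_k$, then checks term-by-term that the $K$-exact contribution produced by $\underline{\eta}$ matches $\dot\Phi$, the cancellations being forced precisely by the morphism relation $\sum_{\pi}\ep(\pi)\ell^K_{|\pi|}(\Phi(v_{B_1}),\dotsc)=0$. Since this computation runs entirely parallel to ones already carried out, I would present the interpolation and the construction of $\underline{\eta}$ in full and then invoke the earlier argument rather than repeat the resummation.
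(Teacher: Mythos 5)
Your proposal is correct and follows essentially the same route as the paper's own proof: part (1) is the paper's argument verbatim (the formal identity expressing $K\Pi_n$ through the defect terms $E^\Pi$, followed by induction on word length using that $M_1$ is the identity), and part (2) matches the paper's interpolation $\Sigma(\t)=\Pi+\t K\chi$, the recursive definition of $\underline{\Phi}(\t)$ and of the generating family $\underline{\eta}(\t)$, and the inductive verification of the homotopy flow equation, which the paper likewise outlines rather than writes out in full. No gaps to report.
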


\begin{proof}
Let \[
E^\Pi_r(y_1,\dotsc,y_r)\ceq \sum_{\substack{\pi \in P(r)}}\ep(\pi)
\ell^K_{|\pi|}\left(\w^\Pi\left( y_{B_1}\right), \dotsc, \w^\Pi\big( y_{B_{|\pi|}}\big)\right)
\]
and define $E^\Pi(v_B)= E^\Pi_r(v_{j_1},\dotsc, v_{j_r})$ if $B=\{j_1,j_2,\dotsc, j_r\}$.
Define a family of operators $\underline{L}=L_1, L_2,\dotsc$ by
\begin{align*}
L_n(&v_1,\dotsc,v_n)
\\
\ceq 
&\sum_{\pi \in P(n)}\sum_{i=1}^{|\pi|}\ep(\pi,i)M_{|\pi|}\!\!\left(\w^\Pi\big(v_{B_1}\big),\dotsc \w^\Pi\big(v_{B_{i-1}}\big) ,
E^\Pi\big(v_{B_{i}}\big),\w^\Pi\big(v_{B_{i+1}}\big),\dotsc,\w^\Pi\big(v_{B_{|\pi|}}\big)\right)
\\
=
&E^\Pi_n(v_1,\dotsc, v_n)
\\
&+\!\!\!\!\sum_{\substack{\pi \in P(n)\\|\pi\neq 1}}\sum_{i=1}^{|\pi|}\ep(\pi,i)M_{|\pi|}\!\!\left(\w^\Pi\big(v_{B_1}\big),\dotsc \w^\Pi\big(v_{B_{i-1}}\big) ,
E^\Pi\big(v_{B_{i}}\big),\w^\Pi\big(v_{B_{i+1}}\big),\dotsc,\w^\Pi\big(v_{B_{|\pi|}}\big)\right)
.
\end{align*}
Then, by definition, we have $K\Pi_n= L_n$. 
From the condition that $K\Pi_n =0$, we have $L_n=0$ for all $n\geq 1$.
Note that $L_1=E_1^\Pi$ so that $E^\Pi_1=0$. If $E^\Pi_1=\dotsc=E^\Pi_{n-1}=0$, then $0=L_n =E^\Pi_n$. Thus by induction we have $E^\Pi_n=0$ for all $n\geq 1$, which means that $\underline{\phi}^\Pi$ is an $sL_\infty$-morphism.

Now consider a chain map $\tilde\Pi$ homotopic to $\Pi$. Since $S(V)$ has zero differential, this means that $\tilde\Pi = \Pi + K \chi$, where $\chi$ is a linear map from $S(V)$ to $\sC$ of degree $-1$. 
Set
\[
\S(\t) \ceq \Pi + \t K\chi \Longrightarrow 
\] 
so that $\S(0) = \Pi$ and $\S(1)=\tilde \Pi$, and define a polynomial family $\underline{\Phi}(\t)$ by the following recursive relation:
\eqn{borri}{
\S _n(v_1,\dotsc, v_n)=
\sum_{\substack{\pi \in P(n)}}\ep(\pi)M_{|\pi|}\left(\Phi\big(v_{B_1}\big),\dotsc ,\Phi\big(v_{B_{|\pi|}}\big)\right).
}
Then it is obvious that $\underline{\Phi}(0)=\underline{\phi}^\Pi$ and $\underline{\Phi}(1)=\underline{\phi}^{\tilde\Pi}$. To conclude, we will show that $\underline{\Phi}(\t)$ also satisfies the homotopy flow equation.

By applying $\fr{d}{d\t}$ to \eq{borri}, we have
\eqnalign{boria}{
\dot\Phi_n(v_1,\dotsc, v_n) 
= 
&
K \chi_n(v_1,\dotsc, v_n)
\\
&
-\sum_{\substack{\pi \in P(n)\\|\pi|\neq 1}}\sum_{i=1}^{|\pi|}
\ep(\pi)M_{|\pi|}\left(\Phi\big(v_{B_1}\big),\dotsc ,\dot\Phi\big(v_{B_i}\big),\dotsc,\Phi\big(v_{B_{|\pi|}}\big)\right).
}
Define a polynomial family $\underline{\eta}(\t)=\eta(\t)_1,\eta(\t)_2,\dotsc$, parametrized by $\t$, by the recursive equation
\eqnalign{biux}{
\chi_n&(x_1,\dotsc, x_n)
\\
&
=
\!\!\!\!\sum_{\substack{\pi \in P(n)}}\sum_{i=1}^{|\pi|}\ep(\pi,i)
M_{|\pi|}\!\left({\Phi}\big( x_{B_1}\big), \dotsc, {\Phi}\big(x_{B_{i-1}}\big)
 ,\eta\big(x_{B_i}\big), {\Phi}\big( x_{B_{i+1}}\big),\dotsc, {\Phi}\big( x_{B_{|\pi|}}\big)\right).
}
For example 
\begin{align*}
\eta_1 &=\chi_1
,\\
\eta_2(v_1,v_2)&=\chi_2(v_1,v_2) -M_2\big(\eta_1(v_1), \Phi_1(v_2)\big) 
-(-1)^{|v_1|}M_2\big(\Phi_1(v_1), \eta_1(v_2)\big).
\end{align*}
Note that $\dot{\Phi}_1 = K \eta_1$, so that $\Phi_1$ satisfies the homotopy flow equation generated by $\eta_1$.
We claim that the family $\Phi_1,\Phi_2,\dotsc,\Phi_{n}$ satisfy the homotopy flow equation generated by $\eta_1,\eta_2,\dotsc,\eta_{n}$ if $\Phi_1,\Phi_2,\dotsc,\Phi_{n-1}$ satisfy the homotopy flow equation generated by $\eta_1,\eta_2,\dotsc,\eta_{n-1}$, $n \geq 2$.
(This claim can be checked by computing $K\chi_n$ using \eq{biux} and by substituting the result into \eq{boria}.)
By induction it follows that $\underline{\Phi}$ satisfies the homotopy flow equation generated by $\underline{\eta}(\t)$. 
\hfill\qed\end{proof}

\begin{lemma}\label{lemb}
For any $sL_\infty$-morphism $\w:(V,\underline{0})\rightarrow (\sC,\underline{\ell}^K)$,
define the families $\underline{\m}=\m_1, \m_2,\dotsc$ and $\underline{\k}=\k_1, \k_2,\dotsc$ of degree zero linear maps $\m_n,\k_n:S^n V\to \fieldk$, by
\begin{align*}
\m_n&\ceq \mc\circ \Pi^{\underline{\w}} _n
,\\
{\k}_n& \ceq \left(\underline{\phi}^{\mc,\underline{\La}}\bullet \underline{\w}\right)_n.
\end{align*}
Then, for all $n\geq 1$,
\begin{enumerate}
\item both $\m_n$ and $\k_n$ depend only on the homotopy types of the expectation morphism $\mc$ and the $sL_\infty$-morphism $\underline{\w}$ and
\item for $v_1,v_2,\dotsc, v_n \in V$
\[
\m_n(v_1,\dotsc, v_n)=
\sum_{\pi \in P(n)}\ep(\pi)\k\big(v_{B_1}\big)\cdots \k\big(v_{B_{|\pi|}}\big).
\]
\end{enumerate}
\end{lemma}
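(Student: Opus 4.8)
The plan is to treat \textbf{Lemma \ref{lemb}} as a direct corollary of the three preceding structural facts, so that almost no new computation is required. For part (1), I would argue purely at the level of homotopy types. The map $\underline{\k} = \underline{\phi}^{\mc,\underline{\La}}\bullet\underline{\w}$ is the $\bullet$-composition of two $sL_\infty$-morphisms; by {\bf Definition/Lemma \ref{l-hom}} together with the well-definedness of $\bullet_{\!h}$ established just before that definition, the $sL_\infty$-homotopy type of a composition depends only on the homotopy types of its factors. The homotopy type of $\underline{\phi}^{\mc,\underline{\La}}$ is independent of $\underline{\La}$ and depends only on the cochain homotopy type of $\mc$ (by {\bf Lemma \ref{bnc}} and the lemma following it), and $\underline{\w}$ enters only through its $sL_\infty$-homotopy type. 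Hence $\underline{\k}$ depends only on the homotopy types of $\mc$ and $\underline{\w}$. Crucially, $\underline{\k}$ is an $sL_\infty$-morphism between the trivial $sL_\infty$-algebras $(V,\underline{0})$ and $(\fieldk,\underline{0})$; in that degenerate setting two $sL_\infty$-morphisms are homotopic if and only if they are equal (the homotopy flow equations collapse since all $\ell_n$ vanish), so $\underline{\k}$ \emph{itself}, not merely its homotopy type, is determined. For $\underline{\m}$, I would invoke {\bf Lemma \ref{cobra}}: the operators $\Pi^{\underline{\w}}_n$ are $K$-closed and their cochain homotopy type depends only on the $sL_\infty$-homotopy type of $\underline{\w}$, while $\mc$ is defined up to cochain homotopy with $\mc\circ K = 0$; therefore $\m_n = \mc\circ\Pi^{\underline{\w}}_n$ is unchanged under $\Pi^{\underline{\w}}_n \mapsto \Pi^{\underline{\w}}_n + K\S_n$ and under $\mc \mapsto \mc + rK$.

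For part (2), the moment/cumulant relation, I would unwind the definition of $\bullet$ from {\bf Definition/Lemma \ref{lmocom}}. Since $\underline{\w}$ is an $sL_\infty$-morphism into $\sC_{\Lie}$ and $\underline{\phi}^{\mc,\underline{\La}}$ is the descendant of $\mc$, the composite is
\[
\k_n(v_1,\dotsc,v_n)=\sum_{\pi\in P(n)}\ep(\pi)\,\phi^{\mc,\underline{\La}}_{|\pi|}\!\left(\w(v_{B_1}),\dotsc,\w(v_{B_{|\pi|}})\right).
\]
The target $\fieldk$ is concentrated in degree zero with zero $sL_\infty$-structure, so the descendant relation of {\bf Definition \ref{GDesM}} for $\mc$, with all $K^\pr=0$ and $\underline{\ell}^{K'}=\underline 0$ on $\fieldk$, reduces to the purely multiplicative relation $\mc(M(x_{B_1})\cdots) = \sum \phi^{\mc,\underline{\La}}(x_{B_1})\cdots$. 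The plan is then to substitute the definition of $\Pi^{\underline{\w}}_n$ from {\bf Lemma \ref{cobra}} into $\m_n = \mc\circ\Pi^{\underline{\w}}_n$, apply $\mc$ to each $M_{|\pi|}(\w(\cdot),\dotsc)$, expand using the descendant formula for $\mc$, and collect terms indexed by the refinement poset of partitions. The combinatorial identity that organizes this is exactly the statement that summing over a partition $\pi$ and then over a partition of each block reproduces a single sum over all partitions, with Koszul signs multiplying correctly.

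The step I expect to be the main obstacle is the careful partition bookkeeping and sign tracking in part (2): verifying that the double sum arising from first grouping the $v_i$ into blocks $B_j$ (via $\Pi^{\underline{\w}}$) and then subgrouping each $\w(v_{B_j})$-product (via the descendant of $\mc$) recombines precisely into $\sum_{\pi\in P(n)}\ep(\pi)\k(v_{B_1})\cdots\k(v_{B_{|\pi|}})$, with the Koszul signs $\ep(\pi)$ behaving multiplicatively under this composition of partitions. This is the same associativity-of-$\bullet$ computation underlying {\bf Definition/Lemma \ref{lmocom}}, so rather than grinding it out I would note that it is the defining compatibility of the moment and cumulant morphisms under the functor $\Des_\La$ and that it is structurally identical to the classical recursion \eq{aab}; the signs are handled by the convention fixed for $\ep(\pi)$ in the Conventions section. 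Finally, I would remark, as stated in the excerpt, that the whole lemma is the natural generalization of {\bf Proposition \ref{cora}}, so consistency with the binary case provides a check on the result.
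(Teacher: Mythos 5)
Your overall architecture coincides with the paper's own proof: part (1) is argued exactly as there (for $\underline{\k}$, that a morphism between the trivial $sL_\infty$-algebras $(V,\underline{0})$ and $(\fieldk,\underline{0})$ is alone in its homotopy type, plus independence of $\underline{\phi}^{\mc,\underline{\La}}$ from $\underline{\La}$; for $\underline{\m}$, the identities $K\Pi^{\underline{\w}}_n=0$ and $\mc\circ K=0$ together with Lemma \ref{cobra}), and part (2) is the same expand-and-recombine computation over the refinement of partitions that the paper carries out explicitly after setting $y_i=\w(v_{B_i})$.

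There is, however, one step in your part (2) that is false as written. You claim that since $K^\pr=0$ and the $sL_\infty$-structure on $\fieldk$ is zero, the descendant relation of Definition \ref{GDesM} for $\mc$ ``reduces to the purely multiplicative relation'' for an arbitrary homotopy $\underline{\La}$. It does not: besides the multiplicative terms and $K^\pr\La_n$, the right-hand side of Definition \ref{GDesM} contains the terms
\[
\sum_{\substack{\pi \in P(n)\\ |B_i| = n-|\pi|+1}}\ep(\pi,i)\,
\La_{|\pi|}\left(x_{B_1},\dotsc,x_{B_{i-1}},\ell^K\left(x_{B_i}\right),x_{B_{i+1}},\dotsc,x_{B_{|\pi|}}\right),
\]
which involve the structure $\underline{\ell}^K$ on the \emph{source} $\sC$, not the target differential $K^\pr$; they survive even though the target is $\fieldk$. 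So the identity $\mc\big(M_{|\pi|}(y_1,\dotsc,y_{|\pi|})\big)=\sum_{\pi^\pr}\ep(\pi^\pr)\phi^{\mc,\underline{\La}}\big(y_{B^\pr_1}\big)\cdots\phi^{\mc,\underline{\La}}\big(y_{B^\pr_{|\pi^\pr|}}\big)$ that you feed into the recombination is wrong for $\underline{\La}\neq\underline{0}$. The repair lies inside your own proposal: by your part (1), $\underline{\k}$ is independent of $\underline{\La}$, so you may take $\underline{\La}=\underline{0}$ before beginning part (2); this is exactly what the paper does, writing $\phi^\mc$ (the descendant up to the zero homotopy) throughout its three-line computation. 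Alternatively one can keep general $\underline{\La}$ and verify that, after summing over $\pi\in P(n)$ with Koszul signs, the offending terms reassemble into $\La$ evaluated on expressions $\sum_{\rho}\ep(\rho)\,\ell^K_{|\rho|}\big(\w(v_{D_1}),\dotsc,\w(v_{D_{|\rho|}})\big)$, which vanish because $\underline{\w}$ is an $sL_\infty$-morphism out of $(V,\underline{0})$ --- but that is an additional combinatorial argument you would need to supply, and the reduction to $\underline{\La}=\underline{0}$ makes it unnecessary.
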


\begin{proof}
Consider $\m_n=\mc\circ \Pi^{\underline{\w}} _n$. Let $\tilde\mc= \mc + r\circ K$, where $r$ is a pointed cochain homotopy. 
Let $\underline{\w}$ be an $sL_\infty$-morphism that is homotopic to $\underline{\w}$.
Then, from {\bf Proposition \ref{cobra}}, we have, for all $n\geq 1$,
$\tilde\mc\circ \Pi^{\underline{\tilde\w}} _n = \tilde\mc\circ \Pi^{\underline{\w}} _n
= \mc\circ \Pi^{\underline{\tilde\w}} _n= \mc\circ \Pi^{\underline{\w}} _n$, where we have used $\mc\circ K=K\circ \Pi^{\underline{\w}} _n=K^2=0$.
Hence the family $\underline{\m}$ depends only on the homotopy types of the expectation morphism $\mc$ and the $sL_\infty$-morphism $\underline{\w}$. 

It is obvious that the family $\underline{\k}= \underline{\phi}^{\mc,\underline{\La}}\bullet \underline{\w}$,
is an $sL_\infty$-morphism from $(V,\underline{0})$ to $(\fieldk,\underline{0})$. Note that there $\underline{\k}$ is alone in its $sL_\infty$-homotopy type since it is a morphism between trivial $sL_\infty$-algebras. It follows that $\underline{\k}$ depends only on the homotopy types of the $sL_\infty$-morphisms $\underline{\phi}^{\mc,\underline{\La}}$ and $\underline{\w}$. Recall that the homotopy type of $\underline{\phi}^{\mc,\underline{\La}}$ depends only on the cochain homotopy type of $\mc$ and is independent of $\underline{\La}$. 
Hence the family $\underline{\k}^V$ depends only on the homotopy types of the expectation morphism $\mc$ and the $sL_\infty$-morphism $\underline{\w}$.

It remains to show the second property. By definition, we have
\begin{align*}
\m_n(v_1,\dotsc, v_n)
&= \mc\left( \Pi^{\underline{\w}} _n(v_1,\dotsc, v_n)\right)
\\
&=\sum_{\pi \in P(n)}\ep(\pi)\mc\left(M_{|\pi|}\left(\w\big(v_{B_1}\big),\dotsc ,\w\big(v_{B_{|\pi|}}\big)\right)\right)
\\
&=\sum_{\pi \in P(n)}\ep(\pi)\mc\left(M_{|\pi|}\left(y_1,\dotsc ,y_{|\pi|}\right)\right)
,
\end{align*}
where we have set $y_i=\w\big(v_{B_i}\big)$ for each $\pi=B_1\sqcup\cdots\sqcup B_{|\pi|} \in P(n)$ for the third equality.
Then we have
\begin{align*}
\m_n(v_1,\dotsc, v_n)
&=\sum_{\pi \in P(n)}\ep(\pi)\sum_{\pi^\pr \in P(|\pi|)}\e(\pi^\pr)\phi^\mc\big(y_{B^\pr_1}\big)\cdots \phi^\mc\big(y_{B^\pr_{|\pi^\pr|}}\big)
\\
&=\sum_{\pi \in P(n)}\ep(\pi)
\left( \underline{\phi}^\mc\bullet \underline{\w}\right)_{|B_1|}\big(v_{B_1}\big)\cdots \left( \underline{\phi}^\mc\bullet \underline{\w}\right)_{|B_{|\pi|}|}\big(v_{B_{|\pi|}}\big)
\\
&=\sum_{\pi \in P(n)}\ep(\pi)
\k\big(v_{B_1}\big)\cdots \k\big(v_{B_{|\pi|}}\big)
.
\end{align*}
\hfill\qed\end{proof}

\begin{definition}
A space $\sV$ of homotopical random variables is a finite dimensional graded vector space $|\sV|=V$ regarded as an $sL_\infty$-algebra $(V,\underline{0})$ equipped with the homotopy type $[\sV]$ of an $sL_\infty$-morphism $\underline{\w}^V:(V,\underline{0})\rightarrow (\sC,\underline{\ell}^K)$.
The moment and cumulants morphism $\underline{\m}^\sV$ and $\underline{\k}^\sV$ on $\sV$ are $\underline{\m}^\sV\ceq \mc\circ\underline{\Pi}^{\underline{\w}^V}$ and $\underline{\k}^\sV\ceq \underline{\phi}^{\mc,\underline{\La}}\bullet\underline{\w}^{\!V}$ and satisfy, for all $n\geq 1$ and $v_1,\dotsc, v_n \in V$,
\[ 
 \m^\sV_n(v_1,\dotsc, v_n)=\!\sum_{\pi \in P(n)}\!\ep(\pi)\k^{\sV}\!\big(v_{B_1}\big)\cdots \k^{\sV}\!\big(v_{B_{|\pi|}}\big).
\]
\end{definition}

\begin{remark}\label{remark:unique and intrinsic}
Recall that both $\m^\sV_n = \mc\circ \Pi^{\underline{\w}^V} _n$ and $\k^\sS_n = \underline{\phi}^{\mc,\underline{\La}}\bullet \underline{\w}^V$ depend only on 
 the cochain homotopy type of $\mc$, which determine the $sL_\infty$-homotopy type of 
 $\underline{\phi}^{\mc,\underline{\La}}$, and the $sL_\infty$-homotopy type $[\sV]$ of $\underline{\w}^{V}$.
Hence, both families $\underline{\m}^\sV$ and $\underline{\k}^\sV$ are unique and intrinsic structures of the space $\sV$. On the other hand, there are infinitely many different ways of getting the same result. This phenomenon could be viewed as (i) a wide-ranging computational flexibility, which can be exploited by choosing the simplest or easiest representatives of the relevant homotopy types, or (ii) a guarantee of the independence of the construction of any choice of suitable regularization scheme, where such a scheme can be viewed as a kind of deformation of a bad choice of representatives. 
\hfill$\natural$
\end{remark}
In what follows, we shall demonstrate the power of this philosophy by considering a scenario which is extreme but comes up in applications.

\subsection{Homotopically completely integrable \padj{} probability spaces}\label{subs:completely integrable}

\begin{definition}
We say a space $\sV$ of homotopical random variables is \emph{homotopically completely integrable} if there is a representative $sL_\infty$-morphism $\underline{\w}$ in its homotopy type $[\sV]$
 such that $\Im {\w}_n \in \fieldk \cdot 1_\sC$ for all $n$. A \padj{} probability space is called homotopically completely integrable if every space of homotopical random variables is so.
\end{definition}
Consider a homotopically completely integrable space of homotopical random variables $\sV$. Then there is a representative $sL_\infty$-morphism $\underline{\w}$ of the homotopy type $[\sV]$ and a family of functions $\underline{\vk}=\vk_1,\vk_2,\ldots$, where $\vk_n:S^n V\to \fieldk$ such that, for all $n\geq 1$ and $v_1,\dotsc, v_n\in V$,
\[
\w _n(v_1,\dotsc, v_n) = \vk_n(v_1,\dotsc, v_n)\cdot 1_\sC
\]
Recall that $\underline{\phi}^\mc\bullet \underline{\w}$ and
\begin{align*}
\left(\underline{\phi}^\mc\bullet \underline{\w}\right)_n (v_1,\dotsc, v_n)
= 
&\mc\left({\w}_n (v_1,\dotsc, v_n)\right)
+\sum_{k=2}^n\sum_{\substack{\pi \in P(n)\\ |\pi|=k}}\ep(\pi)\phi^\mc_k\left(\w(v_{B_1}),\dotsc, \w(v_{B_k})\right).
\end{align*}
Furthermore, for $n\ge 2$, $\phi^\mc_n$ annihilates $1_\sC$.
It follows that 
\begin{align*}
\k^\sV_n(v_1,\dotsc, v_n)\equiv\left(\underline{\phi}^\mc\bullet \underline{\w}\right)_n (v_1,\dotsc, v_n)
=\mc\left( \vk_n(v_1,\dotsc, v_n)\cdot 1_\sC\right)
= \vk_n(v_1,\dotsc, v_n).
\end{align*}
Hence, the law of family of random variables can be determined completely by purely algebraic/homotopy theoretic methods. 
Recall that we have already encountered such phenomena in {\em Example \ref{gaussian-ex3}}.

\begin{definition}
A \padj{} probability space is \emph{non-degenerated} if every random variable of degree $0$ with zero expectation belongs to the image of its differential.
\end{definition}

Let $\xymatrix{\sC_{\Comm}\ar[r]^{\mc}&\fieldk}$ be a non-degenerated \padj{} probability space. Recall that the unit $1_\sC$ has non-trivial cohomology class, denoted $1_H$.
For any $y\in \sC^0$, $x\ceq y -\mc(y)\cdot 1_\sC \in \Ker \mc$. By assumption, there exists $\l \in \sC^{-1}$
 such that $y-\mc(y)\cdot 1_\sC= K\l$. it follows that $Ky=0$ for any $y\in \sC^0$. It also
 follows that $y$ and $\mc(y) \cdot 1_\sC$ belong to the same cohomology class for all $y\in \sC^0$.
Hence, $\sC^0 \subset \Ker K$ and $H^0 =\fieldk\cdot 1_H$.

\begin{theorem} 
Every non-degenerated \padj{} probability space is homotopically completely integrable.
\end{theorem}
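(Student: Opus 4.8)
The plan is to exploit the bijection between homotopy types of $sL_\infty$-morphisms into $\big(\sC,\underline{\ell}^K\big)$ and cochain-homotopy types of cochain maps into $(\sC,K)$, reducing the statement to a purely cohomological one that the non-degeneracy hypothesis resolves. Fix a space $\sV$ of homotopical random variables with representative $\underline{\w}$, and let $\Pi^{\underline{\w}}\colon\big(S(V),0\big)\to(\sC,K)$ be the associated cochain map of \textbf{Lemma \ref{cobra}}, so that each $\Pi^{\underline{\w}}_n$ is a $K$-cocycle. First I would observe that it suffices to produce a cochain map $\tilde\Pi$ that is cochain homotopic to $\Pi^{\underline{\w}}$ and takes values in $\fieldk\cdot 1_\sC$: feeding $\tilde\Pi$ into \textbf{Lemma \ref{cobrax}} yields an $sL_\infty$-morphism $\underline{\w}^{\tilde\Pi}\sim_\infty\underline{\w}$, and an induction on $n$ using the defining recursion of \textbf{Lemma \ref{cobrax}} together with the unit identities $M_n(1_\sC,\dotsc,1_\sC)=1_\sC$ shows that every $\w^{\tilde\Pi}_n$ again lands in $\fieldk\cdot 1_\sC$ (each term of the recursion with $|\pi|\geq 2$ is $M_{|\pi|}$ applied to multiples of $1_\sC$, hence a multiple of $1_\sC$, so the $|\pi|=1$ term $\w^{\tilde\Pi}_n$ must be as well). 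Thus $\underline{\w}^{\tilde\Pi}$ witnesses homotopic complete integrability of $\sV$.

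The construction of $\tilde\Pi$ is where non-degeneracy enters. Here I would use the two consequences established just before the theorem: $\sC^0\subseteq\Ker K$, and for every $y\in\sC^0$ the relation $y-\mc(y)\cdot 1_\sC\in\Im K$ with $H^0=\fieldk\cdot 1_H$. Choose once and for all a linear right inverse $\sigma$ to $K$ on $\Im\big(K\colon\sC^{-1}\to\sC^0\big)=\Ker\mc\cap\sC^0$. On the degree-zero part of $S(V)$ define $\chi(w)\ceq\sigma\big(\Pi^{\underline{\w}}(w)-\mc(\Pi^{\underline{\w}}(w))\cdot 1_\sC\big)$; this is linear in $w$ and of degree $-1$, and by construction
\[
\big(\Pi^{\underline{\w}}-K\chi\big)(w)=\mc\big(\Pi^{\underline{\w}}(w)\big)\cdot 1_\sC\in\fieldk\cdot 1_\sC .
\]
Setting $\tilde\Pi\ceq\Pi^{\underline{\w}}-K\chi$ then gives a cochain map cohomologous to $\Pi^{\underline{\w}}$ and valued in $\fieldk\cdot 1_\sC$ on the degree-zero sector, which is exactly the sector carrying all moments and cumulants (the degree-zero functional $\mc$ annihilates every homogeneous piece of nonzero degree). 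Tracking the scalars, $\tilde\Pi_n(w)=\m^\sV_n(w)\cdot 1_\sC$, which recovers the identity $\k^\sV_n=\vk_n$ recorded just after the definition of homotopic complete integrability.

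The main obstacle I anticipate is the bookkeeping away from degree zero: to meet the condition $\Im\w_n\subseteq\fieldk\cdot 1_\sC$ literally, one must also send every homogeneous input of nonzero total degree to $0$, which requires the corresponding cocycle $\Pi^{\underline{\w}}_n(w)$ to be $K$-exact. The stated form of non-degeneracy controls precisely the degree-zero cocycles through $H^0=\fieldk\cdot 1_H$, so the crux of a complete argument is to confirm that in a non-degenerate space the only cohomology a space of homotopical random variables can detect is $\fieldk\cdot 1_H$; this is what lets one extend $\sigma$ and $\chi$ over all of $S(V)$ and set $\tilde\Pi$ to zero off degree zero. Granting this, \textbf{Lemmas \ref{cobra}} and \textbf{\ref{cobrax}} assemble the pointwise statements into an honest cochain homotopy and transport it back to an $sL_\infty$-homotopy, completing the proof; the concluding check that $\w_n=\vk_n\cdot 1_\sC$ forces $\k^\sV_n=\vk_n$ is then immediate from the unitality of $\underline{\phi}^{\mc,\underline{\La}}$ and is already carried out in the text.
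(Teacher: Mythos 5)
Your proposal takes a genuinely different route from the paper's proof, and it is correct exactly where the paper's own argument is. The paper never leaves the $sL_\infty$ world: it uses non-degeneracy to split each $\w_n = \vk_n\cdot 1_\sC - K\varsigma_n$, then inductively solves the homotopy flow equation $\dot\Phi(\t)_n = K\eta(\t)_n + L(\t)_n$, choosing $\eta(\t)_{n+1}=\xi(\t)_{n+1}+\varsigma_{n+1}$ after splitting the inhomogeneous term $L(\t)_{n+1}=\upsilon(\t)_{n+1}\cdot 1_\sC - K\xi(\t)_{n+1}$, and integrates in $\t$. You instead push everything through \textbf{Lemmas \ref{cobra}} and \textbf{\ref{cobrax}} to the cochain level, where, since $(S(V),0)$ has zero differential, moving $\Pi^{\underline{\w}}$ within its homotopy class is literally adding $K$-exact terms; non-degeneracy (in the form $\sC^0\subset\Ker K$ and $y-\mc(y)\cdot 1_\sC\in\Im K$ for $y\in\sC^0$) then produces $\tilde\Pi=\mc\circ\Pi^{\underline{\w}}\cdot 1_\sC$ in one stroke, and your induction (using $M_n(1_\sC,\dotsc,1_\sC)=1_\sC$ together with the fact that the recursions of the two lemmas are mutually inverse, so $\underline{\w}^{\Pi^{\underline{\w}}}=\underline{\w}$) transports this back to a representative with image in $\fieldk\cdot 1_\sC$. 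What this buys is economy and transparency: the flow-equation bookkeeping occupying most of the paper's proof is absorbed into two lemmas already proved, and the precise role of non-degeneracy ($H^0=\fieldk\cdot 1_H$ plus $\sC^0\subset\Ker K$) is laid bare.

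Concerning the obstacle you flag at the end: it is a real logical issue, but it is not a defect of your argument relative to the source, because the paper's proof has the identical gap and handles it by assertion. The paper opens its proof with the claim ``By assumption we have $\Im\w_n\subset\sC^0\subset\Ker K$ for all $n\geq 1$,'' which is not a consequence of the stated non-degeneracy hypothesis; it holds automatically only when the inputs, hence the images of the degree-zero maps $\w_n$, sit in degree zero. The obstruction is genuine: if $H^j\neq 0$ for some $j\neq 0$ and $\w_1$ sends a degree-$j$ vector to a cocycle with nontrivial class, no $sL_\infty$-homotopy can repair this, since the flow moves $\Phi_1$ only by exact terms ($\dot\Phi_1=K\eta_1$), so $\Im\tilde\w_1\subset\fieldk\cdot 1_\sC$ is unattainable. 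Closing the gap requires either strengthening non-degeneracy (e.g.\ demanding that every cocycle of nonzero degree be exact, i.e.\ $H=\fieldk\cdot 1_H$) or restricting to spaces $\sV$ concentrated in degree zero; under either reading both your argument and the paper's go through verbatim. In short: a different and cleaner route, with the same reach as the paper's proof, and your explicit acknowledgment of the degree bookkeeping is if anything more careful than the original.
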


\begin{proof}
We begin by considering the relevant homotopy flow equation:
\begin{align*}
\dot\Phi_n&\big(v_1,\dotsc, v_n\big)
\\
=
&
\sum_{\substack{\pi \in P(n)}}\sum_{i=1}^{|\pi|}\ep(\pi,i)
\ell^K_{|\pi|}\left({\Phi}\left( v_{B_1}\right), \dotsc, {\Phi}\big(v_{B_{i-1}}\big)
 ,\eta\big(v_{B_i}\big), {\Phi}\big( v_{B_{i+1}}\big),\dotsc, {\Phi}\big( v_{B_{|\pi|}}\big)\right).
\end{align*}
For $n\ge 2$, define a polynomial family $L(\t)$ in $\t$ of degree zero linear maps from $S^{n}V$ to $\sC$ by the equation
\begin{align*}
L_{n}\big(&v_1,\dots,v_{n}\big)
\\
&
\ceq \sum_{\substack{\pi \in P(n)\\\color{red}|\pi|\neq 1}}\sum_{i=1}^{|\pi|}\ep(\pi,i)
\ell^K_{|\pi|}\left({\Phi}\left( v_{B_1}\right), \dotsc, {\Phi}\big(v_{B_{i-1}}\big)
 ,\eta\big(v_{B_i}\big), {\Phi}\big( v_{B_{i+1}}\big),\dotsc, {\Phi}\big( v_{B_{|\pi|}}\big)\right).
 \end{align*}
It is obvious that $L(\t)_{n}$ depends only on $\Phi(\t)_1,\dotsc, \Phi(t)_{n-1}$ and $\eta(\t)_1,\dotsc, \eta(\t)_{n-1}$.
The $n$-th flow equation, for $n\geq 2$, can be written as: 
\[
\dot\Phi(\t)_n =K \eta(\t)_n + L(\t)_{n}.
\] 

Pick any space $\sV$ of homotopical random variables.
Given an arbitrary representative $sL_\infty$-morphism $\underline{\w}:(V, \underline{0})\rightarrow \big(\sC, \underline{\ell}^K\big)$ in the homotopy type $[\sV]$, our goal is to construct another representative $sL_\infty$-morphism $\underline{\tilde\w}$ such that ${\tilde\w}_n= {\k}_n\cdot 1_\sC$, where $\k_n: S^n V\rightarrow \fieldk$, for all $n\geq 1$.

By assumption we have $\Im \varphi_n \subset \sC^0\subset \ker K$, for all $n\geq 1$, so that the degree zero linear map $\vk_n\ceq \mc\circ \varphi_n: S^n V\rightarrow \fieldk$ depends only the homotopy type of $\mc$. It also follows that for $n\ge 1$ there exists a linear map $\varsigma_n:S^n V\rightarrow \sC$ of degree $-1$ such that
\[
\w_n = \vk_n\cdot 1_\sC - K \varsigma_n.
\]
We shall construct $\underline{\tilde\w}$ and $\underline{\k}$ by induction.

Set $\eta_1 =\varsigma_1$, $\k_1=\vk_1$, and $\Phi(\t)_1 \ceq \w_1 + \t K\eta_1$.
Then, we have $\Phi(0)_1 =\w_1$, $\tilde{\w}_1\ceq \Phi(1)_1=\k_1\cdot 1_\sC$, and
\[
\dot\Phi_1 = K \eta_1.
\]
Fix $n\geq 2$ and assume that we have a sequence $\underline{\Phi(\t)}^{(n)}=\Phi(\t)_1,\cdots,\Phi(\t)_n$ of polynomial families, parametrized by $\t$, and another sequence $\underline{\eta(\t)}^{(n)}=\eta(\t)_1,\cdots, \eta(\t)_n$ of polynomial families such that, for all $1 \leq j\leq n$,
we have $\Phi_j(0)=\w_j$, $\tilde\w_j\ceq \Phi_j(1)= \k_j\cdot 1_A$, where $\Im \k_j \in \fieldk$, and 
\[
\dot\Phi(\t)_j = K\eta(\t)_j + L(\t)_j.
\]
Consider $L(\t)_{n+1}$, which is a degree $0$ polynomial family in $\t$ of linear maps from $S^{n+1}V$ to $\sC$ determined by $\underline{\Phi(\t)}^{(n)}$ and $\underline{\eta(\t)}^{(n)}$.
Then there is a polynomial family of linear maps $\upsilon(\t)_{n+1}$ from $S^{n+1}V$ to $\fieldk$ of degree $0$ and another polynomial family of linear maps $\xi(\t)_{n+1}$ from $S^{n+1}V$ to $\sC$ of degree $-1$ such that
\[
L(\t)_{n+1} = \upsilon(\t)_{n+1}\cdot 1_\sC - K \xi(\t)_{n+1}.
\] 
Now we would like to find two polynomial families, $\Phi(\t)_{n+1}$ and $\eta(t)_{n+1}$ such that $\Phi(0)_{n+1}=\w_{n+1}$, $\Phi(1)_{n+1}=\k_{n+1}\cdot 1_\sC$ for some linear map $\k_{n+1}:S^{n+1}V\rightarrow \fieldk$,
and
\[
\dot\Phi(\t)_{n+1} = K\eta(\t)_{n+1} + L(\t)_{n+1}. 
\]
Set $\eta(\t)_{n+1} =\xi(\t)_{n+1} + \varsigma_{n+1}$; then we have
\[
\dot\Phi(\t)_{n+1} = \upsilon(\t)_{n+1}\cdot 1_\sC +K\varsigma_{n+1}.
\]
It follows that
\begin{align*}
\Phi(\t)_{n+1} 
&=\w_{n+1}+ \t K\varsigma_{n+1} + \int^\t_0 \upsilon(\s)_{n+1}d\s\cdot 1_\sC 
\\
&=\left( \vk_n + \int^\t_0 \upsilon(\s)_{n+1}d\s\right)\cdot 1_\sC +(\t-1)K\varsigma_{n+1}.
\end{align*}
Then $\tilde \phi_{n+1}\ceq \Phi(1)_{n+1} = \k_{n+1}\cdot 1_\sC$,
where $\k_{n+1}=
\left( \vk_{n+1} + \int^\t_0 \upsilon(\s)_{n+1}d\s\right)\cdot 1_\sC$.
\hfill\qed\end{proof}

\subsection{Formality of descendant \texorpdfstring{$sL_\infty$}{sL-infinity}-algebras}\label{subs:formality}

Recall that a \padj{} probability space $\xymatrix{\sC_{\Comm} \ar[r]^c &\fieldk}$, where $\sC_{\Comm} =(\sC, \underline{M}, K)$,
contains as part of its data the pointed cochain complex $(\sC, K)$. Let $H$ be the cohomology, which is a pointed graded vector space of possibly infinite dimension, since $1_\sC \in \Ker K$ and $1_H\ceq [1_\sC] \neq 0$ since $c(1_\sC)=1$. 
Note that $H$ is also the cohomology of the unital $sL_\infty$-algebra $\sC_{\Lie}$. Recall too that there is the structure of a minimal $sL_\infty$-algebra on the cohomology of any $sL_\infty$-algebra.

\begin{theorem}\label{lemc}
On the cohomology $H$ of the descendant $sL_\infty$-algebra $\sC_{\Lie}$ of a \padj{} probability space, there is the structure of a unital $sL_\infty$-algebra $H_{\Lie}=\big(H, \underline{0}\big)$ with zero $sL_\infty$-structure $\underline{0}$ that is quasi-isomorphic to $\sC_{\Lie}$.
\end{theorem}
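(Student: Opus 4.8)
The statement asserts that the descendant $sL_\infty$-algebra $\sC_{\Lie}$ of any \padj{} probability space is \emph{formal}, and moreover quasi-isomorphic to its cohomology $H$ equipped with the \emph{zero} $sL_\infty$-structure (not merely the transferred minimal structure). By Lemma~\ref{coL}, the cohomology $H$ carries a canonical minimal $sL_\infty$-structure $\underline{\ell}^H=0,\ell^H_2,\ell^H_3,\dotsc$ together with an $sL_\infty$-quasi-isomorphism $\underline{\phi}^H$ to $\sC_{\Lie}$, and this structure is unique up to $sL_\infty$-isomorphism. So the entire content of the theorem reduces to showing that this transferred structure vanishes: $\ell^H_n=0$ for all $n\geq 2$. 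The plan is therefore to prove that every $\ell^H_n$ with $n\geq 2$ is zero, exploiting the special feature of a descendant algebra that the differential $K$ and the products $\underline{M}$ interact through the defining recursion of Definition~\ref{GDesA}.

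The key mechanism is the fact proved in Lemma~\ref{cobra}: for any $sL_\infty$-morphism $\underline{\w}:(V,\underline{0})\to(\sC,\underline{\ell}^K)$, the associated operators $\Pi^{\underline{\w}}_n(v_1,\dotsc,v_n)=\sum_{\pi}\ep(\pi)M_{|\pi|}(\w(v_{B_1}),\dotsc,\w(v_{B_{|\pi|}}))$ satisfy $K\Pi^{\underline{\w}}_n=0$. First I would apply Lemma~\ref{coL} to obtain the cochain quasi-isomorphism $f:(H,0)\to(\sC,K)$ inducing the identity on cohomology, together with its homotopy inverse $h$ and the contracting homotopy $\beta$ satisfying $f\circ h=I+K\beta+\beta K$. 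The central idea is that a \GCCorAname{} structure on $\sC$ lets us build, for any multilinear inputs coming from cohomology representatives, elements that are automatically $K$-closed. Concretely, I would argue that $L_2(x_1,x_2)=\ell_2^K(f(x_1),f(x_2))$ is itself $K$-exact: indeed $\ell_2^K(f(x_1),f(x_2))=KM_2(f(x_1),f(x_2))$ because $Kf(x_i)=0$ and the descendant relation for $\ell_2^K$ reads $\ell_2^K(a,b)=KM_2(a,b)-M_2(Ka,b)-(-1)^{|a|}M_2(a,Kb)$. Hence $L_2=K(M_2\circ(f\otimes f))$ is $K$-exact, so $\ell_2^H=h\circ L_2=h\circ K(\cdots)=0$ since $h\circ K=0$. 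This is the base case.

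For the inductive step I would show that each $L_{n+1}$ constructed in the proof of Lemma~\ref{coL} is $K$-exact, whence $\ell^H_{n+1}=h\circ L_{n+1}=0$. The inductive hypothesis is that $\ell^H_2=\dotsb=\ell^H_n=0$ and that the partial morphism $\underline{\phi}^H_{(n)}$ can be arranged so that each $\phi^H_k$ factors appropriately through $\beta$ and the operators $M_{|\pi|}$. Under the hypothesis $\ell^H_{\leq n}=0$, the second summand in the definition of $L_{n+1}$ (the one built from $\ell^H(x_{B_i})$) vanishes entirely, leaving $L_{n+1}(x_1,\dotsc,x_{n+1})=\sum_{|\pi|\neq1}\ep(\pi)\ell_{|\pi|}^K(\phi^H(x_{B_1}),\dotsc,\phi^H(x_{B_{|\pi|}}))$. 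The crucial computation is to recognize this sum as $K$ applied to $\sum_{|\pi|\neq 1}\ep(\pi)M_{|\pi|}(\phi^H(x_{B_1}),\dotsc,\phi^H(x_{B_{|\pi|}}))$, using the descendant formula of Definition~\ref{GDesA} together with the already-established $sL_\infty$-morphism relations $E_r=0$ for $\underline{\phi}^H$ (as in Lemma~\ref{cobra}). This is essentially the statement that the ``$\Pi$-type'' combination of a morphism into a descendant algebra is $K$-closed up to the lower terms that have been arranged to vanish inductively. Applying $h$ then kills the $K$-exact expression, giving $\ell^H_{n+1}=0$, and defining $\phi^H_{n+1}=\beta\circ L_{n+1}$ continues the morphism.

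\textbf{Main obstacle.} The hard part will be the bookkeeping in the inductive step that lets us genuinely rewrite $L_{n+1}$ as a single $K$-exact term. The descendant relation of Definition~\ref{GDesA} expresses $KM_n$ as a sum that mixes all block structures with the insertion of $\ell^K$, and to invert this into ``$L_{n+1}=K(\text{something})$'' one must carefully cancel the terms involving $\ell^K(x_{B_i})$ against the inductively vanishing $\ell^H$-terms, tracking Koszul signs $\ep(\pi,i)$ throughout. The cleanest route is probably to pass to the generating-function bookkeeping with $\g=\sum_i t_i x_i$ used elsewhere in the paper (as in Lemmas~\ref{bna} and~\ref{bnb}), reducing the combinatorial sums over partitions to binomial identities; this is exactly the device that makes the analogous claims $F_n=0$ and $E_n=0$ tractable. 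I expect the verification that all the $\ell^K$-insertion terms reorganize into $K$ of the $M$-combination to be the one genuinely delicate point, with everything else being formal consequences of Lemma~\ref{coL} and the vanishing $h\circ K=0$.
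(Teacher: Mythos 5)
Your proposal is correct and takes essentially the same route as the paper: the paper's proof is exactly this induction, defining $L_{n+1}$ from the partial morphism data, showing it is $K$-exact via the descendant relation of Definition~\ref{GDesA} combined with the inductively known morphism relations (the same mechanism as in Lemma~\ref{cobra}), and then setting the next component equal to $\beta\circ L_{n+1}$, so that your statement ``$\ell^H_{n+1}=h\circ L_{n+1}=0$'' is just the paper's construction of a quasi-isomorphism out of $(H,\underline{0})$ phrased through Lemma~\ref{coL}. The only point you leave out is the routine verification of the unit conditions (choosing $\w_1(1_H)=1_\sC$ and $\beta(1_\sC)=0$, and checking $L_{n+1}$ annihilates $1_H$), which the paper handles at the end by the same choices you already have in place.
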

\begin{proof}
We will prove the theorem by exhibiting a unital $sL_\infty$-quasi-isomorphism $\underline{\w}$ from $H_{\Lie}=\big(H, \underline{0}\big)$ to $\sC_{\Lie}$. That is, we will construct a family $\underline{\w}=\w_1,\w_2,\dotsc$, where $\w_k$ is a linear map from $S^k(H)$ to $\sC$ of degree $0$ for all $k\geq 2$ such that $\w_1$ is a cochain quasi-isomorphism from $(H, 0)$ to $(\sC, K)$ and, for all $n\geq 2$ and $a_1,\dotsc, a_n$ of $H$, $\underline{\w}$ satisfies
\begin{align*}
\sum_{\pi \in P(n)} 
\ep(\pi)
{\ell}^{K}_{|\pi|}\Big(
{\w}\big(a_{B_1}\big), 
\dotsc,{\w}\big( a_{B_{|\pi|}}\big)
\Big)
&=0,
\qquad \phi_{n}\big(a_1,\dotsc, a_{n-1}, 1_H\big)=0.
\end{align*}
We shall first construct $\w_1$ and then proceed by induction.

Define a linear map $\w_1:H\rightarrow \sC$ of degree $0$ as follows. First fix a basis $\{e_\a\}_{\a\in J}$ of $H$ with a distinguished element $e_0=1_H$. Then arbitrarily choose representatives $\w_1(e_\a)$ of each element in the basis, choosing them so that $\w_1(1_H)= 1_\sC$. Finally extend $\w_1$ linearly to all of $H$. Then $\w_1$ is a pointed cochain map from $(H, 0)$ to $(\sC, K)$ which induces the identity map on $H$;
that is
\[
K\w_1=0,\qquad \w_1(1_H)=1_\sC,
\]
and the cohomology class of $f(a)$ is $a$ for all $a\in H$.
Hence $\w_1$ is is a cochain quasi-isomorphism from $(H, 1_H, 0)$ to $(\sC,1_\sC, K)$.
Let $h:\sC\rightarrow H$ be a homotopy inverse to $\w_1$;
\begin{align*}
h\circ \w_1 &=I_H
,\\
\w_1\circ h &= I_\sC + K\b+\b K,
\end{align*}
where $\b:\sC\rightarrow \sC$ is a linear map of degree $-1$.
It follows that 
\[
hK=0,\qquad h(1_\sC)=1_H.
\]
From $h(1_\sC)=1_H$ and $\w_1(1_H)=1_\sC$ we can choose $\b$ so that $\b(1_\sC)=0$.


Fix $n\ge 1$ and assume that there is a family $\mb{\w}^{[n]}=\w_1,\w_2, \dotsc, \w_n$,
where $\w_k:S^k(H)\rightarrow \sC$ is a linear map of degree $0$ for $1\leq k\leq n$ satisfying $\w_{k}\big(a_1,\dotsc, a_{k-1},1_H\big)=0$ for $2\leq k\leq n$,
such that, for $1\le k\le n$ and $v_1,\ldots, v_k\in H$,
\eqnalign{tec}{
\sum_{\pi \in P(k)}
\ep(\pi)
{\ell}^{K}_{|\pi|}\Big(
\mb{\w}^{[n]}\big(a_{B_1}\big), \dotsc,\mb{\w}^{[n]}\big( a_{B_{|\pi|}}\big)
\Big)
&=0.
}
Define a linear map $L_{n+1}: S^{n+1} H\rightarrow \sC$ of degree $1$ by
\begin{align*}
L_{n+1}\big(&a_1,\dotsc, a_{n+1}\big)
\ceq 
\sum_{\substack{\pi \in P(n+1)\\|\pi|\neq 1}}
\ep(\pi)
{\ell}^{K}_{|\pi|}\Big(
\mb{\w}^{[n]}\big(a_{B_1}\big), \dotsc,\mb{\w}^{[n]}\big( a_{B_{|\pi|}}\big)
\Big).
\end{align*}
From the assumption in \eq{tec}, together with the definition of the descendant $sL_\infty$-structure $\underline{\ell}^K$, it is direct that
\[
L_{n+1}\big(a_1,\dotsc, a_{n+1}\big)=
K\sum_{\substack{\pi \in P(n+1)\\|\pi|\neq 1}}
\ep(\pi)M_{|\pi|}\left(
\mb{\w}^{[n]}\big(a_{B_1}\big),\dotsc,\mb{\w}^{[n]}\big( a_{B_{|\pi|}}\big)\right).
\]
The above identity implies that $KL_{n+1}=0$ and that $h\circ L_{n+1}=0$. Define a degree $0$ linear map $\w_{n+1}:S^{n+1} H\to \sC$ by
\[
\w_{n+1}\ceq \b\circ L_{n+1}.
\]
%
Then $0 = \w_1\circ h\circ L_{n+1} -\beta\circ KL_{n+1}= L_{n+1}+ K \w_{n+1}$. Hence, for $a_1,\dotsc, a_{n+1}\in H$, we have
\eqn{tecca}{
 L_{n+1}\big( a_1,\dotsc, a_{n+1}\big)
+K\w_{n+1}\big( a_1,\dotsc, a_{n+1}\big)=0.
}
Set $\mb{\w}^{[n+1]}= \w_1,\w_2,\dotsc,\w_n, \w_{n+1}$ so that the identity \eq{tecca} becomes the desired formula:
\[
\sum_{\substack{\pi \in P(n+1)}}
\ep(\pi)
{\ell}^{K}_{|\pi|}\Big(
\mb{\w}^{[n+1]}\big(a_{B_1}\big),
\dotsc,\mb{\w}^{[n+1]}\big( a_{B_{|\pi|}}\big)
\Big)=0.
\]
This completes the induction. 

For unitality, the definition of $L_{n+1}$ along with the inductive assumptions related to the unit on $\w_k$ for $1\le k\le n$ and the unital conditions on the descendant $sL_\infty$-structure $\underline{\ell}^K$ imply that $L_{n+1}\big(a_1,\dotsc, a_{n}, 1_H\big)=0$ so that $\w_{n+1}(a_1,\dotsc, a_{n}, 1_H)=0$. 
\hfill\qed\end{proof}

\begin{remark}
It is important to note that the $sL_\infty$-quasi-isomorphism $\underline{\w}$ from $H_{\Lie}$ to $\sC_{\Lie}$ is not unique.
For example we may replace $\w_1$ and $\b$, in the above proof, with $\tilde\w_1$ and $\tilde\b$ such that $\tilde\w_1$ is cochain homotopic to $\w_1$ and $\tilde\b:\sC\rightarrow \sC$ is another linear map of degree $-1$ such that $\tilde\b(1_\sC)=0$ satisfying the appropriate conditions. Then the resulting $sL_\infty$-quasi-isomorphism, say, $\underline{\tilde\w}$ can be shown to be unital $sL_\infty$-homotopic to $\underline{\w}$. On the other hand, we may, for example, replace $\w_2\ceq \beta\circ L_2$ with $\w_2^\pr = \w_2 + \chi_2$, where $\chi_2$ is a bi-linear map from $S^2H$ to $\Ker K \subset \sC$ of degree $0$ such that $\chi_2(v, 1_H)=0$ for all $v\in H$ and $\chi_2(v_1,v_2) \notin \Im K$ for some fixed pair $v_1,v_2\in H$. Then the construction in the above proof still works but the resulting $sL_\infty$-quasi-isomorphism is by no means homotopic to $\underline{\w}$.
\hfill$\natural$
\end{remark}

\subsection{Complete spaces of homotopy random variables}\label{subs:complete space}
Fix a \padj{} probability space $\xymatrix{\sC_{\Comm} \ar[r]^\mc &\fieldk}$. 
Let $\sC_{\Lie}$ be the descendant of $\sC_{\Comm}$ and $\underline{\phi}^{\mc,\underline{\La}}$ be the descendant of $\mc$ up to the homotopy $\underline{\La}$. From {\bf Theorem \ref{lemc}}, the cohomology $H$ of $\sC_{\Lie}$ has the unique structure $H_{\Lie}=\big(H, 1_H,\underline{0}\big)$ of unital $sL_\infty$-algebra which is unital $sL_\infty$-quasi-isomorphic to $\sC_{\Lie}$. On the other hand, the $sL_\infty$-quasi-isomorphism $\underline{\w}$ from $H_{\Lie}$ to $\sC_{\Lie}$ is not unique even up to homotopy. To sidestep this ambiguity, we introduce the notion of complete sample spaces.
\begin{definition}
A \emph{complete space} $\comprandvars$ of homotopical random variables is a unital $sL_\infty$-algebra $\sS_{\Lie}=(\sS, 1_\sS,\underline{0})$ with zero $sL_\infty$-structure together with the homotopy type of a unital $sL_\infty$-quasi-isomorphism into $\sC_{\Lie}$. This homotopy type is denoted $[\comprandvars]$,
\end{definition}
The underlying graded vector space $\sS$ of $\comprandvars$ is isomorphic to the cohomology $H$, but the space $\comprandvars$ has additional structures which depend only on the attached homotopy type $[\comprandvars]$ of unital $sL_\infty$-quasi-isomorphisms to $\sC_{\Lie}$. 
\begin{definition}
The moment morphism and cumulant morphism on $\sS$ are the families of maps $S^n \sS\to\fieldk$ denoted $\underline{\m}^\sS={\m}^\comprandvars_1,{\m}^\comprandvars_2,\dotsc$ and $\underline{\k}^\comprandvars={\k}^\comprandvars_1,{\k}^\comprandvars_2,\dotsc$, respectively. They are defined for $n\geq 1$ as
\begin{align*}
\m^\comprandvars_n &\ceq \mc\circ \Pi^{\underline{\w}^\sS} _n :S^n(\sS)\rightarrow \fieldk,
\\
\k^\comprandvars_n &\ceq \left( \underline{\phi}^{\mc,\underline{\La}}\bullet \underline{\w}^\sS\right)_n:S^n(\sS)\rightarrow \fieldk,
\end{align*}
where $\underline{\w}^\sS:\xymatrix{\sS_{\Lie} \ar@{..>}[r]&\sC_{\Lie}}$ is an arbitrary representative of the homotopy type $[\comprandvars]$.
\end{definition}

\begin{remark}
As in Remark~\ref{remark:unique and intrinsic}, both families $\underline{\m}^\comprandvars$ and $\underline{\k}^\comprandvars$ are unique and intrinsic structures of the space $\comprandvars$.
\hfill$\natural$
\end{remark}

Now the following two theorems shall complete a circle.
\begin{theorem}
On a complete space $\comprandvars$ of homotopical random variables, there is the canonical structure $\sS_{\Comm}=\big(\sS, \underline{M}^{\comprandvars}, 0\big)$ of a \padj{} probability algebra with zero differential that is quasi-isomorphic to the \padj{} probability algebra $\sC_{\Comm}$ such that
\begin{enumerate}
\item $\sS_{\Lie}$ is the descendant algebra of $\sS_{\Comm}$, and
\item
any $sL_\infty$-quasi-morphism of the homotopy type $[\comprandvars]$ from $\sS_{\Lie}$ to $\sC_{\Lie}$ is a descendant morphism.
\end{enumerate}
\end{theorem}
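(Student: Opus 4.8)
The plan is to build the \GCCorAname{} structure $\underline{M}^\comprandvars$ by transporting the $K$-closed operators $\underline{\Pi}^{\underline{\w}^\sS}$ of {\bf Lemma \ref{cobra}} back to $\sS$ along a homotopy inverse of the first component of a chosen representative. Fix a representative unital $sL_\infty$-quasi-isomorphism $\underline{\w}^\sS=\w_1^\sS,\w_2^\sS,\dotsc$ of $[\comprandvars]$. As in the proof of {\bf Theorem \ref{lemc}}, choose a cochain homotopy inverse $h:\sC\to\sS$ and a degree $-1$ map $\b:\sC\to\sC$ with $h\circ\w_1^\sS=I_\sS$, $\w_1^\sS\circ h=I_\sC+K\b+\b K$, $hK=0$, $h(1_\sC)=1_\sS$, and $\b(1_\sC)=0$. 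I then set
\[
M^\comprandvars_n\ceq h\circ\Pi^{\underline{\w}^\sS}_n:S^n\sS\to\sS.
\]
By {\bf Lemma \ref{cobra}} each $\Pi^{\underline{\w}^\sS}_n$ is $K$-closed and of degree $0$, so $M^\comprandvars_n$ is a well-defined degree $0$ map $S^n\sS\to\sS$. First I would verify the \GCCorAname{} axioms: $M^\comprandvars_n$ is symmetric because $\Pi^{\underline{\w}^\sS}_n$ already factors through $S^n\sS$; $M^\comprandvars_1=h\circ\w_1^\sS=I_\sS$; and the unit constraint $M^\comprandvars_{n+1}(s_1,\dotsc,s_n,1_\sS)=M^\comprandvars_n(s_1,\dotsc,s_n)$ follows from the corresponding constraint for $\Pi$, which is a short computation using $\w_1^\sS(1_\sS)=1_\sC$, $\w_k^\sS(\dotsc,1_\sS)=0$ for $k\ge2$, and the unit property of $\underline{M}$ on $\sC$.

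Next I would establish canonicity and claim (1) and the quasi-isomorphism. Canonicity: any two inverses satisfying $hK=0$ and $h\circ\w_1^\sS=I_\sS$ agree on $\Ker K$, so $M^\comprandvars_n=h\circ\Pi^{\underline{\w}^\sS}_n$ is independent of the choice of $h$; and if $\underline{\tilde\w}^\sS\sim_\infty\underline{\w}^\sS$ then {\bf Lemma \ref{cobra}} gives $\Pi^{\underline{\tilde\w}^\sS}_n-\Pi^{\underline{\w}^\sS}_n=K\S_n$, which $h$ kills, so $\underline{M}^\comprandvars$ depends only on $[\comprandvars]$. For claim (1), the descendant of a \padj{} probability algebra with zero differential is the zero $sL_\infty$-structure: with $K=0$ the recursion of {\bf Definition \ref{GDesA}} forces $\ell^0_1=0$ and then $\ell^0_n=0$ for all $n$ by induction, so $\sS_{\Lie}$ is the descendant algebra of $\sS_{\Comm}$. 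Finally $\w_1^\sS$ is a pointed cochain map from $(\sS,0)$ to $(\sC,K)$ inducing an isomorphism on cohomology; since a morphism of \padj{} probability algebras is merely a pointed cochain map, $\w_1^\sS:\sS_{\Comm}\to\sC_{\Comm}$ is the desired quasi-isomorphism, and one checks $\m^\comprandvars_n=\iota^\comprandvars\circ M^\comprandvars_n$ using $\w_1^\sS\circ h=I+K\b+\b K$ together with $\mc\circ K=0=K\circ\Pi^{\underline{\w}^\sS}_n$.

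For claim (2) I would show that the chosen representative is literally the descendant of its own first component. Put $\La_n\ceq\b\circ\Pi^{\underline{\w}^\sS}_n$; this is a degree $-1$ family, and the unit constraints $\La_1(1_\sS)=\b(1_\sC)=0$ and $\La_{n+1}(\dotsc,1_\sS)=\La_n(\dotsc)$ hold by the unit properties of $\Pi$ together with $\b(1_\sC)=0$, so $\underline{\La}$ is admissible in the sense of {\bf Definition \ref{GDesM}}. Since the source differential is zero, the descendant recursion for $\underline{\phi}^{\w_1^\sS,\underline{\La}}$ loses its $\ell^0$-terms and reads
\[
\w_1^\sS\big(M^\comprandvars_n(s_1,\dotsc,s_n)\big)
=\sum_{\pi\in P(n)}\ep(\pi)M_{|\pi|}\big(\phi^{\w_1^\sS,\underline{\La}}(s_{B_1}),\dotsc,\phi^{\w_1^\sS,\underline{\La}}(s_{B_{|\pi|}})\big)+K\La_n.
\]
Expanding the left side with $\w_1^\sS\circ h=I_\sC+K\b+\b K$ and $K\Pi^{\underline{\w}^\sS}_n=0$ gives $\w_1^\sS(M^\comprandvars_n)=\Pi^{\underline{\w}^\sS}_n+K\b\,\Pi^{\underline{\w}^\sS}_n=\Pi^{\underline{\w}^\sS}_n+K\La_n$. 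Cancelling $K\La_n$ leaves exactly the defining expansion $\Pi^{\underline{\w}^\sS}_n=\sum_\pi\ep(\pi)M_{|\pi|}(\w^\sS(s_{B_1}),\dotsc)$; since the single-block partition isolates the top term via $M_1=I$ while all other partitions involve only lower $\w^\sS_k$, an induction on $n$ identifies $\phi^{\w_1^\sS,\underline{\La}}_n=\w^\sS_n$. Thus $\underline{\w}^\sS=\underline{\phi}^{\w_1^\sS,\underline{\La}}$ is a descendant morphism, and {\bf Lemma \ref{bnc}} absorbs the choice of $\underline{\La}$.

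The main obstacle I anticipate is the bookkeeping in claim (2): one must check that $\underline{\La}=\b\circ\underline{\Pi}^{\underline{\w}^\sS}$ genuinely satisfies the unit constraints required of a descendant homotopy, and that the inductive cancellation is clean, i.e., that the $K\La_n$ generated on the left precisely matches the prescribed $K\La_n$ term of {\bf Definition \ref{GDesM}} so that no spurious correction survives. Everything else reduces to the $K$-closedness and homotopy-invariance already packaged in {\bf Lemma \ref{cobra}} and to the elementary fact that a zero differential has a zero descendant.
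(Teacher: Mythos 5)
Your proposal is correct and follows essentially the same route as the paper's own proof: you define $M^\comprandvars_n\ceq h\circ\Pi^{\underline{\w}^\sS}_n$, invoke Lemma \ref{cobra} for $K$-closedness and homotopy invariance, and for claim (2) take $\La_n\ceq\b\circ\Pi^{\underline{\w}^\sS}_n$ so that $f\circ M^\comprandvars_n=\Pi^{\underline{\w}^\sS}_n+K\La_n$ reproduces the descendant recursion of Definition \ref{GDesM} with vanishing source $sL_\infty$-terms. Your added verifications (independence of the choice of $h$ via agreement on $\Ker K$, and the vanishing of the descendant structure of a zero-differential algebra) are correct elaborations of points the paper treats as immediate.
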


\begin{proof}
Pick an arbitrary representative 
 $\underline{\w}^\sS:\xymatrix{\sS_{\Lie} \ar@{..>}[r]&\sC_{\Lie}}$ of the homotopy type $[\comprandvars]$ and set $f=\w_1^\sS$, which is a cochain quasi-isomorphism from $(\sS, 0)$ to $(\sC, K)$ such that $f(1_\sS)= 1_\sC$.
Let $h:\sC\rightarrow \sS$ be a homotopy inverse to $f$:
\eqnalign{chw}{
h\circ f &=I_\sS
,\\
f\circ h &= I_\sC + K\b+\b K,
}
where $\b:\sC\rightarrow \sC$ is a linear map of degree $-1$.
It follows that 
\eqn{chwa}{
hK=0,\qquad h(1_\sC)=1_\sS.
}
Since $h(1_\sC)=1_\sS$ and $f(1_\sS)=1_\sC$, without loss of generality we can choose $\b$ so that $\b(1_\sC)=0$.
We will construct a family $\underline{\Pi}^{\underline{\w}^\sS}$ of \emph{correlation densities} where $\Pi^{\underline{\w}^\sS}_n$ is a degree $0$ linear map $S^n\sS\to\sC$. These densities will be defined in terms of $\underline{\w}^\sS$ and the family $\underline{M}$ of multiplications in $\sC_{\Comm}$ by the following formula:
\eqn{chwb}{
\Pi^{\underline{\w}^{\sS}} _n(s_1,\dotsc, s_n)=
\sum_{\pi \in P(n)}\ep(\pi)M_{|\pi|}\left(\w^\sS\big(s_{B_1}\big),\dotsc, \w^\sS\big(s_{B_{|\pi|}}\big)\right).
}
\begin{property}\label{propertyPi}
The family $\underline{\Pi}^{\underline{\w}^{\sS}}=\Pi^{\underline{\w}^{\sS}}_1,\Pi^{\underline{\w}^{\sS}}_2,\dotsc$ has the following properties;
\begin{enumerate}[label=$(\alph*)$]
\item\label{Pi item Pi1}
$\Pi^{\underline{\w}^{\sS}} _1 = \w_1^\sS=f$.
\item\label{Pi item unit}
$\Pi^{\sS} _{n+1}(s_1,\dotsc, s_n, 1_\sS)=\Pi^{\underline{\w}^{\sS}} _{n}(s_1,\dotsc, s_n)$ for all $n\geq 1$ and $s_1,\dotsc, s_n \in \sS$.
\item \label{Pi item K}$K \Pi^{\underline{\w}^{\sS}} _n=0$ and the chain homotopy type of $\Pi^{\underline{\w}^{\sS}}_n$ depends only on the homotopy type $[\sS]$ for all $n\geq 1$.
\end{enumerate}
\end{property}
Note that the first two properties are direct consequences of \eq{chwb}, while the last property is from {\bf Lemma \ref{cobra}}.

Define a family $\underline{M}^\comprandvars=M^\comprandvars_1, M^\comprandvars_2,\dotsc$ of degree zero operators $S^n(\sS)\to \sS$ by
\eqn{chwc}{
M^\comprandvars_n\ceq h\circ \Pi^{\underline{\w}^{\sS}} _n.
}
From Property~\ref{propertyPi}\ref{Pi item Pi1}, we have $M^\comprandvars_1 = h\circ \w_1^\sS = h\circ f = I_\sS$. From Property~\ref{propertyPi}\ref{Pi item unit}, we also have $M^{\comprandvars} _{n+1}(s_1,\dotsc, s_n, 1_\sS)=h\circ \Pi^{\underline{\w}^\sS} _{n+1}(s_1,\dotsc, s_n, 1_\sS)
=h\circ \Pi^{\underline{\w}^{\sS}} _{n}(s_1,\dotsc, s_n)
=M^{\comprandvars} _{n}(s_1,\dotsc, s_n)$. Finally Property~\ref{propertyPi}\ref{Pi item K} implies that $M_n^\sS$ is a canonical structure on $\sS$.
Hence, we conclude the following:
\begin{property}
On $\sS$, there is a canonically defined family $\underline{M}^\comprandvars=M^\comprandvars_1, M^\comprandvars_2,\dotsc$ such that
\begin{itemize}
\item $M^\comprandvars_1= I_\sS$ and $M^\comprandvars_n$ is a linear map from $S^n(\sS)$ to $\sS$ of degree $0$ for all $n\geq 2$.
\item $M^{\comprandvars} _{n+1}(s_1,\dotsc, s_n, 1_\sS)=M^{\comprandvars} _{n}(s_1,\dotsc, s_n)$ for all $n\geq 1$ and $s_1,\dotsc, s_n \in \sS$.
\end{itemize}
\end{property}

We have given $\sS$ the structure of a \padj{} probability algebra with zero differential: $\sS_{\Comm}=\big(\sS, \underline{M}^{\comprandvars}, 0\big)$. Also observe that $f:\sS\rightarrow \sC$ is a quasi-isomorphism of \padj{} probability algebras:
\[
\xymatrix{\sS_{\Comm}\ar[r]^f &\sC_{\Comm}}.
\]

It is clear that $\sS_{\Lie}=\big(\sS, \underline{0}\big)$ is the descendant of 
 $\sS_{\Comm}=\big(\sS, \underline{M}^{\comprandvars}, 0\big)$.
To conclude, we shall prove that any unital $sL_\infty$-morphism $\underline{\w}^\sS$ with the homotopy type $[\comprandvars]$ from $\sS_{\Lie}$ to $\sC_{\Lie}$ is a descendant of $f$.
To begin with we define a family of degree $-1$ linear operators for $n\geq 1$,
\eqn{chwe}{
\La_n \ceq \b\circ \Pi^{\underline{\w}^{\sS}} _n:S^n(\sS)\to \sC.
}
It is trivial to check the following;

\begin{property}\label{property Lambda}
The family $\underline{\La}=\La_1, \La_2,\dotsc$ has the following properties:
\begin{itemize}
\item $\La_1(1_\sS)=0$ and 
\item $\La_{n+1}(s_1,\dotsc, s_n, 1_\sS)=\La_{n}(s_1,\dotsc, s_n)$ for all $n\ge 1$ and $s_1,\dotsc, s_n \in \sS$.
\end{itemize}
\end{property}

Applying $f$ to \eq{chwc}, we have
\begin{align*}
f\circ M^\comprandvars_n
&=f\circ h\circ \Pi^{\underline{\w}^{\sS}} _n
\\
&=\Pi^{\underline{\w}^{\sS}} _n + K\left(\b\circ \Pi^{\underline{\w}^{\sS}} _n\right) + \b\left(K \Pi^{\underline{\w}^{\sS}} _n\right)
\\
&=\Pi^{\underline{\w}^{\sS}} _n + K\left(\b\circ \Pi^{\underline{\w}^{\sS}} _n\right),
\end{align*}
where we have used \eq{chw} for the second equality and Property~\ref{propertyPi}\ref{Pi item K} for the last equality.
Using \eq{chwe}, we obtain that, for all $n\geq 1$,
\eqn{chwd}{
f\circ M^\comprandvars_n=\Pi^{\underline{\w}^{\sS}} _n + K\La _n.
}
It follows that, for every $n\geq 1$ and $s_1,\dotsc, s_n \in \sS$,
we have
\eqnalign{chwee}{
f\left(M^\comprandvars_n(s_1,\dotsc, s_n)\right)=&
\sum_{\pi \in P(n)}\ep(\pi)M_{|\pi|}\left(\w^\sS\big(s_{B_1}\big),\dotsc, \w^\sS\big(s_{B_{|\pi|}}\big)\right)
\\
&
+K\La_n(s_1,\dotsc, s_n).
}
From {\em Property~\ref{property Lambda}}, we conclude that $\underline{\w}^\sS$ is the descendant of $f$ up to the homotopy $\underline{\La}=\La_1,\La_2,\dotsc$
\hfill\qed\end{proof}

Let's summarize what we have so far before stating the second theorem.
For a given \padj{} probability space $\xymatrix{\sC_{\Comm}\ar[r]^{\mc}&\fieldk}$, we consider a complete space $\comprandvars$ of homotopical random variables, which has the canonical structure $\sS_{\Comm}=\big(\sS, \underline{M}^{\comprandvars}, 0\big)$ of a \padj{} probability algebra together with a quasi-isomorphism $\xymatrix{\sS_{\Comm}\ar[r]^f &\sC_{\Comm}}$ of \padj{} probability algebras.

Define a linear map $\iota^\comprandvars: \sS\rightarrow \fieldk$ by $\iota^\comprandvars\ceq \mc\circ f$. Recall that the morphism $\mc$ is unique up to pointed cochain homotopy. Also the quasi-isomorphism $f$ is the first component of the unital $sL_\infty$-morphism $\underline{\w}^\sS$, which is a representative of the homotopy type $[\comprandvars]$. It follows that the first component of another representative of unital $sL_\infty$-morphism differs at most by a pointed cochain homotopy. Hence $f$ is also unique up to pointed cochain homotopy. Then $\iota^\sS$ depends only on the cochain homotopy types of $f$ and $\mc$ and satisfies $\iota^\comprandvars(1_\sS) =1$, since 
\[
\iota^\comprandvars(1_\sS)=\mc\big(f(1_\sS)\big)=\mc\big(1_\sC\big)=1.
\]
Now consider the definition of moment morphism $\m^\comprandvars_n = \mc\circ \Pi^{\underline{\w}^\sS} _n :S^n(\sS)\rightarrow \fieldk$ on $\sS$.
From the relation $\Pi^{\underline{\w}^{\sS}} _n =f\circ M^\sS_n + K\La _n$ in \eq{chwd}, we have
\[
\m^\sS_n = \mc\circ \Pi^{\underline{\w}^\sS} _n =\mc\circ \big( f\circ M^\comprandvars_n\big) +\mc\circ( K\La _n)
\]
which implies that $\m^\comprandvars_n= \iota^\comprandvars\circ M^\comprandvars_n$,
since $\mc\circ K=0$.
We hence have the following.
\begin{theorem}\label{chra}
On a complete space $\comprandvars$ of homotopical random variables, there is the canonical structure $\xymatrix{\sS_{\Comm}\ar[r]^{\iota^\comprandvars}&\fieldk}$ of a \algebraic{} probability space, where $\sS_{\Comm}=\big(\sS, \underline{M}^{\comprandvars}, 0\big)$,
with moment and cumulant morphisms $\underline{\m}^\comprandvars$ and $\underline{\k}^\comprandvars$ such that, for $n\geq 1$ and $s_1,\dotsc, s_n\in \sS$, we have
\begin{align*}
\iota^\comprandvars\left(M^\comprandvars_n(s_1,\dotsc, s_n)\right)
&=\m^\comprandvars_n(s_1,\dotsc, s_n)
\\
&=\sum_{\pi \in P(n)}\ep(\pi)\k^\comprandvars\big(s_{B_1}\big)\dotsc \k^\comprandvars\big(s_{B_{|\pi|}}\big).
\end{align*}

\end{theorem}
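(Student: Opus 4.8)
The plan is to assemble Theorem~\ref{chra} from the machinery already established, since nearly all the ingredients are in place. First I would recall that the preceding theorem has already endowed $\sS$ with the canonical \GCCorAname{} structure $\underline{M}^\comprandvars$ via $M^\comprandvars_n = h\circ \Pi^{\underline{\w}^\sS}_n$, together with the quasi-isomorphism $f=\w_1^\sS:\sS_{\Comm}\to\sC_{\Comm}$ of \padj{} probability algebras. The definition $\iota^\comprandvars \ceq \mc\circ f$ is then forced, and the first task is to verify that this is a well-defined unital linear map depending only on homotopy types. For this I would invoke that $\mc$ is unique up to pointed cochain homotopy (as the expectation is defined up to homotopy) and that $f$, being the first component of a representative $\underline{\w}^\sS$ of the fixed homotopy type $[\comprandvars]$, is itself unique up to pointed cochain homotopy; hence $\iota^\comprandvars$ is intrinsic. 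Unitality is the one-line computation $\iota^\comprandvars(1_\sS)=\mc(f(1_\sS))=\mc(1_\sC)=1$, using $f(1_\sS)=1_\sC$.

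Next I would establish the identity $\m^\comprandvars_n=\iota^\comprandvars\circ M^\comprandvars_n$. The key relation is \eq{chwd}, namely $f\circ M^\comprandvars_n = \Pi^{\underline{\w}^\sS}_n + K\La_n$, which was proven in the course of the preceding theorem. Applying $\mc$ to both sides and using $\mc\circ K=0$ (part of the data of an expectation morphism) gives
\[
\mc\circ\Pi^{\underline{\w}^\sS}_n = \mc\circ f\circ M^\comprandvars_n = \iota^\comprandvars\circ M^\comprandvars_n.
\]
Since by definition $\m^\comprandvars_n=\mc\circ\Pi^{\underline{\w}^\sS}_n$, this is exactly the first claimed equality $\iota^\comprandvars(M^\comprandvars_n(s_1,\dotsc,s_n))=\m^\comprandvars_n(s_1,\dotsc,s_n)$.

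The remaining moment/cumulant relation is then immediate from Lemma~\ref{lemb}, applied to the $sL_\infty$-morphism $\underline{\w}^\sS$: that lemma gives precisely
\[
\m^\comprandvars_n(s_1,\dotsc,s_n)=\sum_{\pi\in P(n)}\ep(\pi)\k^\comprandvars(s_{B_1})\cdots\k^\comprandvars(s_{B_{|\pi|}}),
\]
where $\underline{\k}^\comprandvars=\underline{\phi}^{\mc,\underline{\La}}\bullet\underline{\w}^\sS$ and $\underline{\m}^\comprandvars=\mc\circ\underline{\Pi}^{\underline{\w}^\sS}$ are the moment and cumulant morphisms already defined on $\comprandvars$. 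Finally, to conclude that $(\sS_{\Comm};\iota^\comprandvars)$ is genuinely a \algebraic{} probability space I would note that $\sS_{\Comm}=(\sS,\underline{M}^\comprandvars,0)$ is a \padj{} probability algebra concentrated (in the relevant sense) so that with zero differential it is a \GCCorAname{} equipped with a unital $\iota^\comprandvars$, which is exactly the definition of a \algebraic{} probability space.

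I expect no serious obstacle, since the substantive work---constructing $\underline{M}^\comprandvars$, proving \eq{chwd}, and establishing the moment/cumulant formula in Lemma~\ref{lemb}---has all been done upstream; the only care needed is bookkeeping the three homotopy-invariance claims (for $\mc$, for $f$, and hence for $\iota^\comprandvars$, $\underline{\m}^\comprandvars$, $\underline{\k}^\comprandvars$) to justify the word \emph{canonical}. The mild subtlety worth spelling out is \emph{why} $f$ is unique up to pointed cochain homotopy: this follows because any two representatives of $[\comprandvars]$ are $sL_\infty$-homotopic, and the $n=1$ component of an $sL_\infty$-homotopy between them is exactly a pointed cochain homotopy between their first components. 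Once that is noted, the theorem is a clean corollary of the structures already built.
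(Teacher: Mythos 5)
Your proposal is correct and takes essentially the same route as the paper: the paper likewise defines $\iota^\comprandvars \ceq \mc\circ f$, notes that $\mc$ and $f$ (the latter being the first component of any representative of $[\comprandvars]$) are each unique up to pointed cochain homotopy, checks $\iota^\comprandvars(1_\sS)=1$, applies $\mc$ to the relation $f\circ M^\comprandvars_n=\Pi^{\underline{\w}^\sS}_n+K\La_n$ of \eq{chwd} together with $\mc\circ K=0$ to obtain $\m^\comprandvars_n=\iota^\comprandvars\circ M^\comprandvars_n$, and gets the moment/cumulant formula from Lemma~\ref{lemb}. Your only addition is spelling out why $f$ is unique up to pointed cochain homotopy (integrating the $n=1$ component of the homotopy flow), a point the paper asserts without detail.
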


Combining the previous two theorems, we have the following.
\begin{theorem}
We have $\text{ho}\!\Des\left(\sS_{\Comm}\right)=\sS_{\Lie}$,
$\text{ho}\!\Des(\iota_\comprandvars)=\underline{\k}^{\comprandvars}$, and $\text{ho}\!\Des([f])= [\comprandvars]$ such that the following diagram commutes:
\[
\xymatrix{
&&&\fieldk&&
\\
&&&\fieldk\ar@{=>}[u]&&
\\
& &\ar@{=>}[dll] \sS_{\Comm}\ar[ru]^{\iota_{\!\comprandvars}}\ar[rr]_{[f]}&&\sC_{\Comm}\ar@{=>}[drr] \ar[lu]_{[c]}&
 \\
\sS_{\Lie}\ar@{-->}[rrrrrr]_{\text{ho}\!\Des([f])}\ar@{-->}[rrruuu]^{\underline{\k}^\comprandvars}
&&&&&&
\sC_{\Lie}\ar@{-->}[llluuu]_{\text{ho}\!\Des([c])}
}.
\]
Here the inner diagram commutes in the homotopy category $\homotopycat\category{\HProb}_{\Comm}{\overk}$ of \padj{} probability algebras and the outer diagram commutes in the homotopy category $\homotopycat\category{\UsL}_\infty{\overk}$ of unital $sL_\infty$-algebras.
\end{theorem}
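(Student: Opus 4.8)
The plan is to assemble this statement from the two theorems immediately preceding it together with the functoriality of $\text{ho}\!\Des$ (Theorem~\ref{maina}), rather than to prove anything genuinely new: the three identifications $\text{ho}\!\Des(\sS_{\Comm})=\sS_{\Lie}$, $\text{ho}\!\Des([f])=[\comprandvars]$ and $\text{ho}\!\Des(\iota_\comprandvars)=\underline{\k}^\comprandvars$ are to be checked one at a time, after which commutativity of the two triangles is formal. The object-level identity $\text{ho}\!\Des(\sS_{\Comm})=\sS_{\Lie}$ is part~(1) of the theorem preceding Theorem~\ref{chra}: by definition $\text{ho}\!\Des$ sends an object to its descendant algebra, and for $\sS_{\Comm}=(\sS,\underline{M}^\comprandvars,0)$ the vanishing of the differential forces $\ell^0_1=0$, so that the recursion of Definition~\ref{GDesA} collapses to $\underline{\ell}^0=\underline{0}$ and the descendant algebra is $(\sS,\underline{0})=\sS_{\Lie}$.

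Next I would establish $\text{ho}\!\Des([f])=[\comprandvars]$. By definition $\text{ho}\!\Des([f])$ is the homotopy type of the descendant morphism $\underline{\phi}^{f,\underline{\La}}$, and by Lemma~\ref{bnc} this type is independent of the auxiliary homotopy $\underline{\La}$. The force of part~(2) of the theorem preceding Theorem~\ref{chra}, made explicit by \eq{chwee} in its proof, is that the fixed representative $\underline{\w}^\sS$ of $[\comprandvars]$ is \emph{itself} a descendant morphism of $f$, namely $\underline{\w}^\sS=\underline{\phi}^{f,\underline{\La}}$ for the homotopy with components $\La_n=\b\circ\Pi^{\underline{\w}^\sS}_n$. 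Therefore $\text{ho}\!\Des([f])=[\underline{\phi}^{f,\underline{\La}}]=[\underline{\w}^\sS]=[\comprandvars]$.

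The last identity $\text{ho}\!\Des(\iota_\comprandvars)=\underline{\k}^\comprandvars$ then follows from the first two by functoriality. Since $\iota_\comprandvars=\mc\circ f$ by definition, Theorem~\ref{maina} gives $\text{ho}\!\Des(\iota_\comprandvars)=\text{ho}\!\Des([\mc])\bullet_{\!h}\text{ho}\!\Des([f])=[\underline{\phi}^{\mc,\underline{\La}}]\bullet_{\!h}[\underline{\w}^\sS]=[\underline{\phi}^{\mc,\underline{\La}}\bullet\underline{\w}^\sS]=[\underline{\k}^\comprandvars]$. As $\underline{\k}^\comprandvars$ is an $sL_\infty$-morphism into the trivial $sL_\infty$-algebra $(\fieldk,\underline{0})$, its homotopy class is a singleton (two such morphisms are homotopic if and only if equal, as recorded in the proof of Lemma~\ref{lemb}), so the displayed homotopy-type equality sharpens to the honest equality $\text{ho}\!\Des(\iota_\comprandvars)=\underline{\k}^\comprandvars$. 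This same chain of equalities is exactly the commutativity of the outer ($sL_\infty$) triangle; the inner triangle in $\homotopycat\category{\HProb}_{\Comm}{\overk}$ commutes because $\iota_\comprandvars=\mc\circ f$ is a genuine composite there, and the vertical faces sending $\sS_{\Comm},\sC_{\Comm},\fieldk$ to $\sS_{\Lie},\sC_{\Lie},\fieldk$ commute tautologically because $\text{ho}\!\Des$ is a functor carrying the inner triangle onto the outer one.

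The assembly is purely formal, so the main obstacle lies entirely in the two cited inputs. The load-bearing point is that the chosen representative $\underline{\w}^\sS$ of $[\comprandvars]$ be realized \emph{on the nose} as a descendant morphism $\underline{\phi}^{f,\underline{\La}}$, so that $\text{ho}\!\Des([f])$ and $[\comprandvars]$ coincide as homotopy types and not merely up to some further isomorphism; this is what the proof of the preceding theorem secures through \eq{chwee}. The secondary, easier point---the rigidity of $sL_\infty$-morphisms into $(\fieldk,\underline{0})$, which promotes the homotopy-type equality to a strict equality of $\underline{\k}^\comprandvars$---is furnished by Lemma~\ref{lemb}. With both in hand, nothing remains but to cite them and invoke functoriality.
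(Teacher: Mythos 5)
Your proposal is correct and follows essentially the same route as the paper: the paper's entire proof is the sentence ``Combining the previous two theorems, we have the following,'' and your argument is precisely that assembly made explicit---part (1) of the preceding theorem for $\text{ho}\!\Des(\sS_{\Comm})=\sS_{\Lie}$, equation \eq{chwee} (i.e.\ $\underline{\w}^\sS=\underline{\phi}^{f,\underline{\La}}$) together with Lemma~\ref{bnc} for $\text{ho}\!\Des([f])=[\comprandvars]$, and functoriality (Theorem~\ref{maina}) plus the rigidity of morphisms between trivial $sL_\infty$-algebras for $\text{ho}\!\Des(\iota_\comprandvars)=\underline{\k}^\comprandvars$. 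Nothing is missing; the identification of which cited facts are load-bearing is accurate.
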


\subsection{Geometry of finite dimensional complete spaces of homotopical random variables.}\label{subs:geometry II}
Now we consider the case that the cohomology $H$ of the \padj{} probability algebra $\sC_{\Comm}$ is finite dimensional. 
From {\bf Theorem \ref{lemc}}, it follows that
\begin{lemma}
The deformation functor associated with descendant $sL_\infty$-algebra $\sC_{\Lie}$ of the \padj{} probability algebra $\sC_{\Comm}$ is pro-representable by the complete symmetric algebra $\widehat{S(H^*)}$.
\end{lemma}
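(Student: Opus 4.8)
The plan is to reduce the statement to the formality result {\bf Theorem \ref{lemc}} and then to a direct computation for the abelian $sL_\infty$-algebra, assembling three natural isomorphisms. Recall that the deformation functor $\hbox{Def}_{\sC_{\Lie}}$ sends a graded Artin local algebra $\ma=\fieldk\cdot 1_\ma\oplus\mm$ to the set $MC_{\sC_{\Lie}}(\mm)$ of Maurer--Cartan solutions in $(\mm\otimes\sC)^0$ modulo gauge equivalence, exactly as set up in the Maurer--Cartan subsection. The goal is to produce a natural isomorphism $\hbox{Def}_{\sC_{\Lie}}(\ma)\cong\Hom_{\mathrm{cts}}\big(\widehat{S(H^*)},\ma\big)$, which directly exhibits $\widehat{S(H^*)}$ as the representing object and hence proves pro-representability without any separate appeal to Schlessinger-type criteria.

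First I would invoke {\bf Theorem \ref{lemc}} to fix a unital $sL_\infty$-quasi-isomorphism $\underline{\w}\colon H_{\Lie}=(H,\underline{0})\to\sC_{\Lie}$, where the cohomology $H$ is finite dimensional by hypothesis. The key structural input is then that this quasi-isomorphism induces a natural isomorphism of the associated deformation functors. Concretely, pushforward along $\underline{\w}$ sends $\gamma\in MC_{H_{\Lie}}(\mm)$ to $\w_*(\gamma)\ceq\sum_{n\geq 1}\tfrac{1}{n!}\w_n(\gamma,\dotsc,\gamma)\in(\mm\otimes\sC)^0$, the sum being finite by nilpotence of $\mm$. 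I would check that this lands in $MC_{\sC_{\Lie}}(\mm)$ and descends to gauge-equivalence classes, using that $\underline\w$ is an $sL_\infty$-morphism (so it intertwines the two Maurer--Cartan equations) and that homotopic morphisms induce gauge-equivalent pushforwards. The twisting-cochain formalism of the subsection on Maurer--Cartan solutions and $sL_\infty$-morphisms, equations \eqref{mxa} and \eqref{mxb}, is precisely the mechanism that makes this well-defined and natural.

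Next I would compute the source functor explicitly. Because $H_{\Lie}$ carries the zero $sL_\infty$-structure ($\ell_1=0$ and $\ell_n=0$ for $n\geq 2$), the Maurer--Cartan equation over $\mm$ is vacuous, so $MC_{H_{\Lie}}(\mm)=(\mm\otimes H)^0$; likewise the gauge homotopy-flow equation collapses to $\dot\Gamma=0$, so gauge equivalence is trivial and $\hbox{Def}_{H_{\Lie}}(\mm)=(\mm\otimes H)^0$. Finally, since $H$ is finite dimensional, a continuous unital local homomorphism $\widehat{S(H^*)}\to\ma$ is determined by its restriction to the generators $H^*$, i.e.\ by a degree-preserving linear map $H^*\to\mm$, hence by an element of $\Hom^0(H^*,\mm)\cong(\mm\otimes H)^0$, and this identification is natural in $\ma$. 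Composing the three natural isomorphisms yields $\hbox{Def}_{\sC_{\Lie}}\cong\Hom_{\mathrm{cts}}(\widehat{S(H^*)},-)$.

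The main obstacle is the quasi-isomorphism invariance in the second step: showing that $\w_*$ is a \emph{bijection} on deformation functors rather than merely a well-defined natural transformation. The standard route is an Artinian induction on the length of $\ma$, inverting $\w_*$ order by order using that $\w_1$ is a quasi-isomorphism of the underlying complexes and checking that obstructions vanish; alternatively one can build the inverse directly from a homotopy inverse $h$ and the contracting homotopy $\b$ produced in the proof of {\bf Theorem \ref{lemc}}, which are already available. I would present this inductive inversion carefully, since it is the only genuinely nontrivial point: formality (already established) makes the target functor smooth and unobstructed, so the free complete symmetric algebra on the tangent space $H$ is forced, and everything else is a direct unwinding of the definitions of the Maurer--Cartan and gauge structures.
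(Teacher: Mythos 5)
Your proposal is correct and follows essentially the same route as the paper: the paper derives this lemma directly from the formality result (\textbf{Theorem \ref{lemc}}) together with the standard quasi-isomorphism invariance of Maurer--Cartan deformation functors and the representability of the functor attached to the abelian $sL_\infty$-algebra $(H,\underline{0})$, delegating those standard details to the reference \cite{BK}. Your write-up simply makes explicit the three steps (pushforward along a quasi-isomorphism, triviality of the Maurer--Cartan equation and gauge flow for the zero structure, and the identification $(\mm\otimes H)^0\cong\Hom_{\mathrm{cts}}(\widehat{S(H^*)},\ma)$) that the paper treats as standard.
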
 
It is, then, standard that we have a formal based super manifold $\sM$ whose algebra of functions is the topological algebra $\widehat{S(H^*)}$ and whose tangent space $T_o\sM$ at the base point $o\in \sM$ is isomorphic to $H$. (See \cite{BK} for details on all of this). 
Note that the space of all complete spaces of homotopical random variables can be identified with the space of all gauge equivalence classes of versal solutions to the Maurer--Cartan equation of $\sC_{\Lie}$ parametrised by $\widehat{S(H^*)}$.
Hence the space $\sM$ is the moduli space of complete spaces of homotopical random variables and each such complete space of homotopical random variables $\comprandvars$ gives coordinates at the base point $o \in \sM$.

Consider a complete space of homotopical random variables $\comprandvars$ and let $\xymatrix{\sS_{\Comm}\ar[r]^{\iota^\comprandvars}&\fieldk}$ be the structure of a \padj{} probability space on it, where $\sS_{\Comm}=\big(\sS, \underline{M}^{\comprandvars}, 0\big)$. 

Choose a basis $\{e_\a\}_{\a\in J}$ on $\sS\simeq H$ with distinguished element $e_0=1_\sS$. 
Let $t_{\sS}=\{t^\a\}_{\a\in J}$ be the dual basis, which gives an affine coordinate system on the based moduli space $\sM$ via a unital $sL_\infty$-quasi-isomorphism 
of the homotopy type $[\comprandvars]$ from $\sS_{\Lie}=(\sS, \underline{0})$ to $\sC_{\Lie}$. Then $\left\{\rd_\a \ceq \fr{\rd}{\rd t^\a}\right\}$ is a formal frame field on $T_o\sM$,
and we also use $\rd_\a$ to denote the graded derivation of the formal power-series ring $\fieldk[\![t_\sS]\!]$ satisfying $\rd_\a t^\b = \d_\a{}^\b$.

Define a $(2,1)$-tensor $\{A^\comprandvars_{\a\b}{}^\g\}$ and $1$-tensor $T_\comprandvars^\g$ in $\fieldk[\![t_\sS]\!]$ by
\begin{align*}
A^\comprandvars_{\a\b}{}^\g &\ceq m^{\comprandvars}_{\a\b}{}^\g +\sum_{n=1}^\infty\Fr{1}{n!}\sum_{\r_1,\dotsc,\r_n \in J} t^{\r_n}\cdots t^{\r_1} m^{\comprandvars}_{\r_1\cdots\r_n \a\b}{}^\g,
\\
T_\comprandvars^\g &\ceq \sum_{n=1}^\infty\Fr{1}{n!}\sum_{\r_1,\dotsc,\r_n \in J} t^{\r_n}\cdots t^{\r_1} M^{\comprandvars}_{\r_1\cdots\r_n}{}^\g
,
\end{align*}
where $\{m^{\comprandvars}_{\a_1\cdots\a_n}{}^\g\}$ and $\{M^{\comprandvars}_{\a_1\cdots\a_n}{}^\g\}$ are the structure constants of $m^{\comprandvars}_n$ and $M^{\comprandvars}_n$, respectively.
Then the following is a corollary of 
{\bf Lemmas \ref{flatcoor}}, {\bf\ref{finitec}}, and {\bf\ref{finited}}:
\begin{theorem}
The formal $(2,1)$-tensor $\{A^\comprandvars_{\a\b}{}^\g\}$ is the connection one-form for a graded flat and torsion-free affine connection $\nabla$ on $T\sM$ in the coordinates neighborhood $\{t^\a\}_{\a\in J}$.  That is, define $\nabla_{\rd_\a} \rd_\b =\sum_{\g\in J} A^\comprandvars_{\a\b}{}^\g\rd_\g$; then we have the following:
\begin{align*}
A^\comprandvars_{\a\b}{}^\g - (-1)^{|e_\a||e_\b|} A^\comprandvars_{\b\a}{}^\g =0
;\\
\rd_\a A^\comprandvars_{\b\g}{}^\s -(-1)^{|e_\a||e_\b|}\rd_\b A^\comprandvars_{\a\g}{}^\s
+\sum_{\r\in J} \left(A^\comprandvars_{\b\g}{}^\r A^\comprandvars_{\a\r}{}^\s
-(-1)^{|e_\a||e_\b|}A^\comprandvars_{\a\g}{}^\r A^\comprandvars_{\b\r}{}^\s\right)
=0
,\\
\end{align*}
along with the unit condition $
A^\comprandvars_{0\b}{}^\g =\d_{\b}{}^\g$. 
The formal $1$-tensor $\{T_\comprandvars^\g\}$ satisfies
\begin{align*}
T_\comprandvars^\g\big|_{\underline{t}=\underline{0}} &=0
,\\
\rd_\b T_\comprandvars^\g\big|_{\underline{t}=\underline{0}} &=\d_\b{}^\g
,\\
\end{align*}
with the unit condition $\rd_0 T_\comprandvars^\g= T_\comprandvars^\g +\d_0^\g$.

This means that $\{T_\comprandvars^\g\}$ are the unique affine flat coordinates of this connection. That is, we can write $\rd_\a = \sum_{\g\in J} \rd_\a T_\comprandvars^\g\tilde\rd_\g$ and then due to the above conditions on $T_\comprandvars^\g$, this can be taken as a definition of $\tilde{\rd}_\g$ which satisfies $\nabla_{\tilde\rd_\a}\tilde \rd_\b =0$.
\end{theorem}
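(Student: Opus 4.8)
The plan is to recognize the whole statement as an instance of the finite-dimensional theory of Section~\ref{subsubs:fdcase}, applied to the $\GCCorAname$ $(\sS,\underline{M}^{\comprandvars})$ underlying the complete space $\comprandvars$, together with the short translation into connection language already rehearsed in the ungraded case in Section~\ref{subs:formal geo I}. First I would record the structural input from the two preceding theorems on complete spaces: such a space carries the canonical $\padj$ probability algebra structure $\sS_{\Comm}=(\sS,\underline{M}^{\comprandvars},0)$ with zero differential, and a $\padj$ probability algebra with zero differential is precisely a $\GCCorAname$. Since $\sS\simeq H$ is finite dimensional by hypothesis, $(\sS,\underline{M}^{\comprandvars})$ is a finite-dimensional $\GCCorAname$, so Section~\ref{subsubs:fdcase} applies verbatim to its structure constants $m^{\comprandvars}_{\cdots}$ and $M^{\comprandvars}_{\cdots}$.

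Next I would match the two formal power series defined just above the theorem with the ones used in that section: $A^{\comprandvars}_{\a\b}{}^\g$ is literally the series \eqref{flatcon} built from $m^{\comprandvars}_{\cdots}$, and $T^\g_{\comprandvars}$ is the series appearing in the proof of Lemma~\ref{flatcoor}. The only point to check here is the normalization of the linear term: because $M^{\comprandvars}_1=I_\sS$ (established in the first of the two structural theorems on complete spaces), the $n=1$ contribution to $T^\g_{\comprandvars}$ is $\sum_\r t^\r M^{\comprandvars}_\r{}^\g=t^\g$, so $T^\g_{\comprandvars}=t^\g+O(t^2)$ exactly matches the normalization of $\check T^\g$ in Lemma~\ref{flatcoor}.

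With these identifications the properties of $A^{\comprandvars}$ and $T^\g_{\comprandvars}$ are direct citations. The unit condition $A^{\comprandvars}_{0\b}{}^\g=\d_\b{}^\g$, the graded symmetry $A^{\comprandvars}_{\a\b}{}^\g=(-1)^{|e_\a||e_\b|}A^{\comprandvars}_{\b\a}{}^\g$, and the flatness relation are precisely items (1)--(3) of Lemma~\ref{finitec}, where I use $|t^\a|=-|e_\a|$ so that $(-1)^{|t^\a||t^\b|}=(-1)^{|e_\a||e_\b|}$; note that the combined integrability relation (3) of Lemma~\ref{finitec} holds for an arbitrary finite-dimensional $\GCCorAname$, so no appeal to Property~Q is needed. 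Lemma~\ref{flatcoor} then identifies $T^\g_{\comprandvars}$ as the unique solution of $(\rd_\b\rd_\g-\sum_\r A^{\comprandvars}_{\b\g}{}^\r\rd_\r)T^\s=0$ with $T^\s|_{\underline{t}=\underline{0}}=0$ and $\rd_\g T^\s|_{\underline{t}=\underline{0}}=\d_\g{}^\s$, yielding the two initial conditions, and Lemma~\ref{finited} supplies the unit condition $\rd_0 T^\g_{\comprandvars}=T^\g_{\comprandvars}+\d_0^\g$.

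The one step that is genuinely new relative to the cited lemmas is the geometric reinterpretation, which I would carry out as in Remark~\ref{remark:intro to affine}, now in the graded formal setting. Defining $\nabla_{\rd_\a}\rd_\b=\sum_\g A^{\comprandvars}_{\a\b}{}^\g\rd_\g$, the graded symmetry of $A^{\comprandvars}$ makes $\nabla$ torsion-free and the flatness relation makes its curvature vanish, so $\nabla$ is a graded flat torsion-free affine connection on $T\sM$ near $o$. For the flat coordinates I would set $\sG_\b{}^\g\ceq\rd_\b T^\g_{\comprandvars}$; the condition $\rd_\b T^\g_{\comprandvars}|_{\underline{t}=\underline{0}}=\d_\b{}^\g$ gives $\sG|_{\underline{t}=\underline{0}}=I$, so $\sG$ is invertible over $\fieldk[\![t_\sS]\!]$ and $\tilde\rd_\a\ceq\sum_\g(\sG^{-1})_\a{}^\g\rd_\g$ is well-defined. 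A computation identical to the one in Remark~\ref{remark:intro to affine}, with Koszul signs inserted, then shows that the PDE $(\rd_\b\rd_\g-\sum_\r A^{\comprandvars}_{\b\g}{}^\r\rd_\r)T^\s=0$ is equivalent to $\nabla_{\tilde\rd_\a}\tilde\rd_\b=0$. I expect the only real obstacle to be bookkeeping: making the Koszul signs propagate correctly through the graded Leibniz rule and the inversion of $\sG$ in the equivalence between the $T^\g_{\comprandvars}$-PDE and $\nabla_{\tilde\rd_\a}\tilde\rd_\b=0$. Since everything takes place in $\fieldk[\![t_\sS]\!]$ with $\sG$ invertible, no convergence or nondegeneracy issues arise, and the result follows as the advertised corollary of Lemmas~\ref{flatcoor},~\ref{finitec}, and~\ref{finited}.
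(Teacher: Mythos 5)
Your proposal is correct and takes essentially the same route as the paper: the theorem is presented there precisely as a corollary of Lemmas~\ref{flatcoor}, \ref{finitec}, and \ref{finited} applied to the finite-dimensional \GCCorAname{} $(\sS,\underline{M}^{\comprandvars})$, with the translation into connection language done exactly as in Remark~\ref{remark:intro to affine} and Section~\ref{subs:formal geo I} (via $\sG_\b{}^\g=\rd_\b T_\comprandvars^\g$ and its invertibility). Your normalization check $M^{\comprandvars}_1=I_\sS$, the sign identification $(-1)^{|t^\a||t^\b|}=(-1)^{|e_\a||e_\b|}$, and the observation that Property~Q is not needed all agree with the paper's conventions.
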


\begin{theorem}
The affine flat coordinates $\{T_\comprandvars^\g\}$ determine the moment generating function of $\comprandvars$
\[
Z_\comprandvars\ceq 1 + \sum_{n=1}^\infty\Fr{1}{n!} \sum_{\r_1,\dotsc,\r_n \in J} 
t^{\r_n}\cdots t^{\r_1} \m^{\comprandvars}_{n}\big(e_{\r_1},\dotsc, e_{\r_n}\big)
\in \fieldk[\![t_\sS]\!]
\]
up to the finite unknowns $\{\iota^\comprandvars(e_\g)\}$ in $\fieldk$ by the equation
\[
Z_\comprandvars= 1+\sum_{\g\in J} T_\comprandvars^\g\; \iota^\comprandvars(e_\g).
\]
Furthermore, $Z_\comprandvars$ satisfies the following system of formal differential equations: for $\a,\b \in J$,
\begin{align*}
\left(\rd_\a\rd_\b- \sum_{\g\in J} A^\comprandvars_{\a\b}{}^\g \rd_\g\right)Z_\comprandvars=0,\\
\left(\rd_0-1\right)Z_\comprandvars=0.\\
\end{align*}
\end{theorem}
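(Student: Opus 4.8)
The plan is to establish the two displayed identities by combining the geometric structure already proven for $\{T_\comprandvars^\g\}$ and $\{A^\comprandvars_{\a\b}{}^\g\}$ in the previous theorem with the moment/cumulant relation from Theorem~\ref{chra}. First I would prove the generating-function identity $Z_\comprandvars = 1 + \sum_{\g\in J} T_\comprandvars^\g\, \iota^\comprandvars(e_\g)$ directly from the definitions. Expanding $T_\comprandvars^\g$ as a formal power series in $t_\sS$ and inserting it into the right-hand side gives
\[
1 + \sum_{\g\in J}\sum_{n=1}^\infty \Fr{1}{n!}\sum_{\r_1,\dotsc,\r_n\in J} t^{\r_n}\cdots t^{\r_1} M^\comprandvars_{\r_1\cdots\r_n}{}^\g\,\iota^\comprandvars(e_\g).
\]
Since $M^\comprandvars_n(e_{\r_1},\dotsc,e_{\r_n}) = \sum_\g M^\comprandvars_{\r_1\cdots\r_n}{}^\g e_\g$, the inner sum over $\g$ is exactly $\iota^\comprandvars\big(M^\comprandvars_n(e_{\r_1},\dotsc,e_{\r_n})\big)$, which equals $\m^\comprandvars_n(e_{\r_1},\dotsc,e_{\r_n})$ by the identity $\m^\comprandvars_n = \iota^\comprandvars\circ M^\comprandvars_n$ from Theorem~\ref{chra}. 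Matching this against the definition of $Z_\comprandvars$ closes the first identity termwise. The ``finite unknowns'' remark is then automatic: the coordinate tensors $T_\comprandvars^\g$ are determined algebraically by the structure constants, so all dependence on actual expectation values is packaged into the finitely many scalars $\iota^\comprandvars(e_\g)$, one for each basis element of the finite-dimensional $\sS\simeq H$.

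Next I would derive the two PDEs by applying the differential operators to the closed form just obtained. For the first equation, apply $\rd_\a\rd_\b - \sum_\g A^\comprandvars_{\a\b}{}^\g\rd_\g$ to $Z_\comprandvars = 1 + \sum_\g T_\comprandvars^\g\,\iota^\comprandvars(e_\g)$. Since the $\iota^\comprandvars(e_\g)$ are constants in $\fieldk[\![t_\sS]\!]$, the operator passes through them and the result is $\sum_\g \big(\rd_\a\rd_\b - \sum_{\r} A^\comprandvars_{\a\b}{}^\r\rd_\r\big)T_\comprandvars^\g \cdot \iota^\comprandvars(e_\g)$. By the defining property of the affine flat coordinates in the previous theorem, namely the identity $\big(\rd_\a\rd_\b - \sum_\r A^\comprandvars_{\a\b}{}^\r\rd_\r\big)T_\comprandvars^\g = 0$ (which is Property~\ref{item: TbgA} of Lemma~\ref{finitef}, transcribed to the current notation), every summand vanishes, so $\big(\rd_\a\rd_\b - \sum_\g A^\comprandvars_{\a\b}{}^\g\rd_\g\big)Z_\comprandvars = 0$. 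For the second equation, apply $\rd_0 - 1$ to the same closed form; using $\rd_0 T_\comprandvars^\g = T_\comprandvars^\g + \d_0^\g$ (the unit condition on $T_\comprandvars^\g$), I get $(\rd_0-1)Z_\comprandvars = \sum_\g \d_0^\g\,\iota^\comprandvars(e_\g) = \iota^\comprandvars(e_0) = \iota^\comprandvars(1_\sS) = 1$, but the constant term $1$ of $Z_\comprandvars$ is killed by $\rd_0$ and picked up with a sign by $-1$, cancelling exactly; I would check the bookkeeping carefully, since $\rd_0(1)=0$ while $-1\cdot(1+\sum_\g T_\comprandvars^\g\iota^\comprandvars(e_\g))$ contributes the $-1$ that cancels the $\iota^\comprandvars(e_0)=1$ coming from $\rd_0 T^0_\comprandvars$.

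The main obstacle I anticipate is precisely this constant-term bookkeeping in the $(\rd_0-1)Z_\comprandvars=0$ equation, together with correctly tracking Koszul signs when differentiating formal power series in the graded variables $t^\a$. The operators $\rd_\a$ are graded derivations with $\rd_\a t^\b = \d_\a{}^\b$, and in the super setting the sign conventions in $A^\comprandvars_{\a\b}{}^\g = (-1)^{|t^\a||t^\b|}A^\comprandvars_{\b\a}{}^\g$ must be consistent between the tensor definitions and the flat-coordinate equations inherited from Lemmas~\ref{flatcoor} and~\ref{finitef}. I would handle this by working entirely within the already-established identities for $T_\comprandvars^\g$ and $A^\comprandvars_{\a\b}{}^\g$, so that no new sign computation is required beyond verifying that the operators commute past the constant scalars $\iota^\comprandvars(e_\g)$ and that the unit conditions produce the stated $1$. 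Everything else reduces to quoting Theorem~\ref{chra} and the preceding geometric theorem, so the proof is essentially a formal manipulation once the generating-function identity is in hand.
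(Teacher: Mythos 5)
Your proposal is correct and follows essentially the same route as the paper's proof: establish $Z_\comprandvars = 1+\sum_{\g\in J} T_\comprandvars^\g\,\iota^\comprandvars(e_\g)$ termwise from Theorem~\ref{chra} together with the definitions of $T_\comprandvars^\g$ and $Z_\comprandvars$, then obtain both differential equations by applying the operators to this closed form, using the flat-coordinate equation for the first and the unit condition $\rd_0 T_\comprandvars^\g = T_\comprandvars^\g+\d_0{}^\g$ (with $\iota^\comprandvars(e_0)=\iota^\comprandvars(1_\sS)=1$ cancelling the constant term) for the second. The only slip is a citation: the identity $\left(\rd_\a\rd_\b-\sum_{\r\in J}A^\comprandvars_{\a\b}{}^\r\rd_\r\right)T_\comprandvars^\g=0$ should be quoted from Lemma~\ref{flatcoor} (or, as the paper does, extracted from $\nabla_{\tilde\rd_\a}\tilde\rd_\b=0$ via Remark~\ref{remark:intro to affine}), not from Lemma~\ref{finitef}, whose hypothesis of Property Q is not assumed for $\comprandvars$.
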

\begin{proof}
From {\bf Theorem \ref{chra}}, we have $\m^{\comprandvars}_{n}\big(e_{\a_1},\dotsc, e_{\a_n}\big)=
\sum_{\r\in J} M^\comprandvars_{\a_1\cdots \a_n}{}^\r \iota^\comprandvars(e_\r)$. It follows directly that
\[
Z_\comprandvars= 1+\sum_{\r\in J} T_\comprandvars^\r \iota^\comprandvars(e_\r).
\]
This clearly implies that 
\begin{align*}
\rd_\g Z_\comprandvars&= \sum_{\r\in J} \rd_\g T_\comprandvars^\r \iota^\comprandvars(e_\r)
,\\
\rd_\a\rd_\b Z_\comprandvars&= \sum_{\g\in J} \rd_\a\rd_\b T_\comprandvars^\r \iota^\comprandvars(e_\r)
.
\end{align*}
Note that the condition $\nabla_{\tilde\rd_\a}\tilde \rd_\b =0$ is equivalent to the following (see Remark~\ref{remark:intro to affine}):
\[
\rd_\a\rd_\b T_\comprandvars^\g =\sum_{\r\in J} A^\comprandvars_{\a\b}{}^\r \rd_\r T_\comprandvars^\g.
\]
It follows that $\left(\rd_\a\rd_\b- \sum_{\g\in J} A^\comprandvars_{\a\b}{}^\g \rd_\g\right)Z_\comprandvars=0$.
Apply $\rd_0$ to $Z_\comprandvars$ to obtain that
\[
\rd_0 Z_\comprandvars=\sum_{\g\in J}\rd_0 T_\comprandvars^\g\; \iota^\comprandvars(e_\g) 
= \sum_{\g\in J} T_\comprandvars^\g \; \iota^\comprandvars(e_\g) + \d_0^\g \; \iota^\comprandvars(e_\g) 
=1+\sum_{\g\in J} T_\comprandvars^\g \; \iota^\comprandvars(e_\g) = Z_\comprandvars,
\]
where we have used the unit condition $\rd_0 T_\comprandvars^\g= T_\comprandvars^\g +\d_0^\g$ for the second equality.
\qed
\end{proof}

Note that the cohomology $H$ of a \padj{} probability algebra $\sC_{\Comm}$ can be an infinite dimensional graded vector space---recall that the underlying graded vector space of every complete space of homotopical random variables is isomorphic to $H$.
Define a {\em finite super-selection sector} $\comprandvars^\dia$ of a complete space of homotopical random variables $\comprandvars$ as a finite dimensional subalgebra of the \GCCorAname{}
$\big(\sS, \underline{M}^\sS\big)$.

Let $\comprandvars^\dia\subset \comprandvars$ be a finite super-selection sector with the \GCCorAname{} $\big(\sS^{\dia}, \underline{M}^{\sS^{\dia}}\big)$. 
Equivalently $\sS^\dia$ has the structure of a \padj{} probability algebra 
$\sS_{\dia\color{blue}C}=\big(\sS^\dia, \underline{M}^{S^\dia},0\big)$ with zero differential 
whose descendant algebra is a unital $sL_\infty$-algebra 
$\sS^{\dia}_{\color{blue}L}=\big(\sS^\dia, \underline{0}\big)$ with zero $sL_\infty$-structure.
It is obvious that $\sS^{\dia}_{\color{blue}L}$ is a subalgebra of $\sS_{\Lie}=(\sS, \underline{0})$. 
Furthermore any unital $sL_\infty$-morphism $\underline{\w}^\sS$ of the homotopy type 
$[\comprandvars]$ from $\sS_{\Lie}$ to $\sC_{\Lie}$ induces a unital $sL_\infty$-morphism 
$\underline{\w}^{\sS^\dia}$ from $\sS^\dia_{\Lie}$ to $\sC_{\Lie}$. 
In fact $\underline{\w}^{\sS^{\dia}}$ is a descendant of the morphism 
$\w_1^\sS\big|_{\sS^{\dia}}:\sS^{\dia}_{\color{blue}C}\rightarrow \sC_{\Comm}$ when $\underline{\w}^\sS$ is 
a descendant of the quasi-isomorphism $\w_1^\sS:\sS_{\Comm}\rightarrow \sC_{\Comm}$. It follows that
 we have the structure of a \padj{} probability space on $\sS^{\dia}$:
\[
\xymatrix{\sS^{\dia}_{\color{blue}C}\ar[r]^{\iota^{\comprandvars^\dia}}&\fieldk}
\]
where $\iota^{\comprandvars^\dia}\ceq \mc\circ \w_1^{\sS^{\dia}}$.

Choose a basis $\{e_\a\}_{\a \in J^{\dia}}$ of $\sS^{\dia}$ such that $e_0 = 1_{\sS^{\dia}}=1_\sS$. 
Let $t_{\sS^{\dia}}=\{t^\a\}_{\a\in J^{\dia}}$ be the dual basis and 
let $\rd_\a =\fr{\rd}{\rd t^\a}$ be the corresponding graded linear derivation on $\fieldk[\![t_{\sS^{\dia}}]\!]$.
Consider the two formal power series $A^\dia_{\b\g}{}^\s$ and $T_\dia^\g \in \fieldk[\![t_{\sS^{\dia}}]\!]$, defined as follows:
\begin{align*}
A^\dia_{\b\g}{}^\s &\ceq m^\dia_{\b\g}{}^\s 
+\sum_{n\geq 1}\sum_{\r_1,\dotsc,\r_n}\Fr{1}{n!}t^{\r_n}\cdots t^{\r_1}m^\dia_{\r_1\cdots \r_n\b\g}{}^\s
,\\
T_\dia^\g &\ceq t^\g +\sum_{n\geq 2}\Fr{1}{n!}\sum_{\r_1,\dotsc,\r_n}t^{\r_n}\cdots t^{\r_1}M^\dia_{\r_1\cdots \r_n}{}^\g
,
\end{align*}
representing a $(2,1)$-tensor and a $1$-tensor on $\sS^{\dia}$, respectively. 
Then the moment generating function $Z_{\comprandvars^{\dia}}$ of the finite super-selection sector is given by 
\[
Z_{\comprandvars^{\dia}}
= 1 +\sum_{\g\in J^{\dia}}T_{\!\dia}^\g\; \iota^{\comprandvars_{\!\dia}}(e_\g).
\]
Hence the moment generating function is determined by the $1$-tensor $\{T_\dia^\g\}$ up to the finite set 
$\left\{\iota^{\comprandvars_{\!\dia}}(e_\g)\right\}_{\g\in J^{\dia}}$ of unknown numbers. Of course 
$c_0=\iota^{\sS_{\!\dia}}(e_0)=1$. 
Then the following is obvious:
\begin{corollary}
On a finite dimensional super-selection sector $\sS^\dia$, there is a formal power series 
$A^{\dia}_{\a\b}{}^\g\in \fieldk[\![t_{\sS^\dia}]\!]$ representing a $(2,1)$-tensor with the following properties:
\begin{align*}
A^\dia_{0\b}{}^\g =\d_{\b}{}^\g
,\\
A^\dia_{\a\b}{}^\g = (-1)^{|e_\a||e_\b|} A^\dia_{\b\a}{}^\g
,\\
\rd_\a A^\dia_{\b\g}{}^\s -(-1)^{|e_\a||e_\b|}\rd_\b A^\dia_{\a\g}{}^\s
+\sum_{\r\in J_{\!\dia}} \left(A^\dia_{\b\g}{}^\r A^\dia_{\a\r}{}^\s-(-1)^{|e_\a||e_\b|}A^\dia_{\a\g}{}^\r A^\dia_{\b\r}{}^\s\right)
=0,
\end{align*}
such that the moment generating function $Z(t_{\sS^{\dia}})$ satisfies the following system of formal differential equations 
that, for all $\a,\b \in J_{\!\dia}$:
\begin{align*}
\left(\rd_\a\rd_\b - \sum_{\g\in J_{\!\dia}} A^\dia_{\a\b}{}^\g \rd_\g\right)Z_{\comprandvars^{\dia}}=0,\\
\left(\rd_0-1\right) Z_{\comprandvars^{\dia}}=0.\\
\end{align*}
\end{corollary}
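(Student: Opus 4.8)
The final statement to prove is the corollary asserting that on a finite-dimensional super-selection sector $\sS^\dia$ the formal $(2,1)$-tensor $A^\dia_{\a\b}{}^\g$ has the three stated properties and that $Z_{\comprandvars^\dia}$ satisfies the two differential equations. My plan is to reduce everything directly to results already established for finite-dimensional \GCCorAnames{} and to the two preceding theorems about complete spaces, since $\sS^\dia$ is itself a finite-dimensional \GCCorAname{} (a finite super-selection sector is a finite-dimensional subalgebra of $\big(\sS,\underline{M}^\sS\big)$, hence carries its own family $\underline{M}^{\sS^\dia}$ and induced $\underline{m}^{\sS^\dia}$).

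First I would set up the structure explicitly. The finite super-selection sector $\sS^\dia$ comes with the \padj{} probability space structure $\xymatrix{\sS^\dia_{\color{blue}C}\ar[r]^{\iota^{\comprandvars^\dia}}&\fieldk}$, where $\sS^\dia_{\color{blue}C}=\big(\sS^\dia,\underline{M}^{\sS^\dia},0\big)$ has zero differential. Thus $\big(\sS^\dia,\underline{M}^{\sS^\dia}\big)$ is a finite-dimensional \GCCorAname{} in the precise sense of Definition~\ref{GCCorA}, and the structure constants $m^\dia_{\a_1\cdots\a_n}{}^\g$ and $M^\dia_{\a_1\cdots\a_n}{}^\g$ entering the definitions of $A^\dia_{\a\b}{}^\g$ and $T_\dia^\g$ are exactly those of this sub-\GCCorAname{}. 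This is the key observation: once I identify $\sS^\dia$ as a finite-dimensional \GCCorAname{} with its own dual coordinates $t_{\sS^\dia}$, the hard analytic work has already been done in Section~\ref{subsubs:fdcase}.

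Next I would invoke the finite-dimensional lemmas verbatim. The three properties of $A^\dia_{\a\b}{}^\g$ — the unit normalization $A^\dia_{0\b}{}^\g=\d_\b{}^\g$, the graded symmetry $A^\dia_{\a\b}{}^\g=(-1)^{|e_\a||e_\b|}A^\dia_{\b\a}{}^\g$, and the flatness/zero-curvature identity — are precisely the content of {\bf Lemma \ref{finitec}} applied to the \GCCorAname{} $\big(\sS^\dia,\underline{M}^{\sS^\dia}\big)$. The existence of the flat coordinate $1$-tensor $\{T_\dia^\g\}$ and its defining equation $\big(\rd_\b\rd_\g-\sum_\r A^\dia_{\b\g}{}^\r\rd_\r\big)T^\s=0$ with the initial conditions follows from {\bf Lemma \ref{flatcoor}}, and the unit identity $\rd_0 T_\dia^\g=T_\dia^\g+\d_0^\g$ from {\bf Lemma \ref{finited}}. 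The expression $Z_{\comprandvars^\dia}=1+\sum_{\g\in J^\dia}T_\dia^\g\,\iota^{\comprandvars_\dia}(e_\g)$ is obtained exactly as in the proof of the previous theorem: using {\bf Theorem \ref{chra}} (applied to $\sS^\dia$, whose \algebraic{} probability space structure is inherited because $\underline{\w}^{\sS^\dia}$ is the descendant of $\w_1^\sS\big|_{\sS^\dia}$) one has $\m^{\comprandvars^\dia}_n\big(e_{\a_1},\dotsc,e_{\a_n}\big)=\sum_{\r}M^\dia_{\a_1\cdots\a_n}{}^\r\,\iota^{\comprandvars_\dia}(e_\r)$, and summing against $\tfrac{1}{n!}t^{\r_n}\cdots t^{\r_1}$ gives the closed-form expression. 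Finally, differentiating $Z_{\comprandvars^\dia}$ and substituting $\rd_\a\rd_\b T_\dia^\g=\sum_\r A^\dia_{\a\b}{}^\r\rd_\r T_\dia^\g$ and $\rd_0 T_\dia^\g=T_\dia^\g+\d_0^\g$ yields the two PDEs, exactly mirroring the computation in the proof just above.

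The one point requiring genuine care — and the step I expect to be the main obstacle — is verifying that a finite super-selection sector genuinely inherits a \GCCorAname{} structure on the nose, i.e.\ that $\underline{M}^{\sS^\dia}$ restricts to maps $S^n\sS^\dia\to\sS^\dia$ preserving the unit and the recursive passage to $\underline{m}^{\sS^\dia}$. The delicate part is that the defining recursion of Definition~\ref{GCCorA} relating $\underline{M}$ and $\underline{m}$ must close up within $\sS^\dia$: since $\sS^\dia$ is assumed to be a subalgebra of the \GCCorAname{} $\big(\sS,\underline{M}^\sS\big)$, the restricted operations $m^\dia_n=m^\sS_n\big|_{\sS^\dia}$ land in $\sS^\dia$, and one must check the compatibility with the induced $\sL_\infty$-morphism $\underline{\w}^{\sS^\dia}$ being a descendant of $\w_1^\sS\big|_{\sS^\dia}$, which the excerpt asserts. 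Once this closure is confirmed, the corollary is a direct transcription of the finite-dimensional lemmas and the preceding theorem, so the remainder is routine and I would state it as such rather than repeating the computations.
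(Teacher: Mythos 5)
Your proposal is correct and follows exactly the route the paper intends: the paper states this corollary as ``obvious'' immediately after noting that a finite super-selection sector is by definition a finite-dimensional sub-\GCCorAname{} of $\big(\sS,\underline{M}^\comprandvars\big)$ carrying the induced probability space structure $\iota^{\comprandvars^\dia}=\mc\circ\w_1^{\sS^\dia}$, so that the tensor identities are Lemmas \ref{finitec}, \ref{flatcoor}, and \ref{finited} applied verbatim and the differential equations follow by the same computation as in the preceding theorem. The ``closure'' concern you flag is settled by the definition itself (a sector is assumed to be a subalgebra, so $\underline{M}^{\sS^\dia}$ and hence $\underline{m}^{\sS^\dia}$ land in $\sS^\dia$ by hypothesis), so there is no genuine gap.
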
 
Remark~\ref{remark: doesnt have to be finite superselection sector} applies word for word again here.

\end{document}